\documentclass[a4paper]{article}

\usepackage[T1]{fontenc}
\usepackage[utf8]{inputenc}
\usepackage{xcolor,amsmath,amssymb,amsthm,mathtools,hyperref,subcaption}
\hypersetup{colorlinks=true,allcolors=blue}
\usepackage[capitalize,nameinlink,noabbrev]{cleveref}
\usepackage{lmodern}
\usepackage[english]{babel}
\usepackage[all,cmtip]{xy}
\usepackage{tkz-fct}
\usepackage{tikz}
\usepackage{bm}
\textwidth 5in

\usepackage{comment,graphicx,soul}
\usepackage{amsfonts}
\usepackage{multicol}

\theoremstyle{definition} 

 \newtheorem{definition}{Definition}[section]
 \newtheorem{remark}[definition]{Remark}
 \newtheorem{example}[definition]{Example}


\theoremstyle{plain}      

 \newtheorem{proposition}[definition]{Proposition}
 \newtheorem{theorem}[definition]{Theorem}
 \newtheorem{corollary}[definition]{Corollary}
 \newtheorem{lemma}[definition]{Lemma}
 
 \newtheorem*{theorem*}{Theorem}
  \newtheorem*{proposition*}{Proposition}

\newcommand{\Z}{\mathbb Z}

\newcommand{\R}{\mathbb R}
\newcommand{\C}{\mathbb C}
\newcommand{\CP}{\mathbb {CP}}
\newcommand{\CPstar}{\mathbb {CP}^{2*}}

\newcommand{\RP}{\mathbb {RP}}

\newcommand{\A}{{\mathbb A}}
\newcommand{\la}{\langle}
\newcommand{\ra}{\rangle}
\renewcommand{\P}{\mathbb {P}}

\renewcommand{\S}{\mathbb S}

\newcommand{\F}{\mathcal F}

\newcommand{\bA}{\bm A}
\newcommand{\ba}{\bm a}
\newcommand{\bb}{\bm b}
\newcommand{\bc}{\bm c}
\newcommand{\bd}{\bm d}
\newcommand{\bp}{\bm p}
\newcommand{\bo}{\bm o}
\newcommand{\bq}{\bm q}
\newcommand{\br}{\bm r}

\newcommand{\bmm}{\bm m}

\renewcommand{\phi}{\varphi}

\newcommand{\HdC}{{\mathbf H}_\C^2}
\newcommand{\HuuC}{{\mathbf H}_\C^{1,1}}
\newcommand{\HdR}{{\mathbf H}_\R^2}
\newcommand{\HuuR}{{\mathbf H}_\R^{1,1}}

\newcommand{\bHdC}{\partial{\mathbf H}_\C^2}
\newcommand{\bHdR}{\partial{\mathbf H}_\R^2}

\newcommand{\unit}[1]{\mathrm{UT}#1}

\newcommand{\arc}[2]{{#1}\curvearrowright{#2}}

\renewcommand{\Re}{{\rm Re}}
\renewcommand{\Im}{{\rm Im}}

\newcommand{\PGL}{\mathrm{PGL}}

\newcommand{\PU}{\mathrm{PU}}
\newcommand{\PO}{\mathrm{PO}}

\newcommand{\SU}{\mathrm{SU}}

\author{E. Falbel, A. Guilloux, P. Will \footnote{P.Will is partially supported by the French National Research Agency in the framework of the « Investissements d'avenir » program « ANR-15-IDEX-02» and the LabEx PERSYVAL « ANR-11-LABX-0025-01»}.}

\title{Slim curves, limit sets and spherical CR uniformisations}

\begin{document}

\maketitle

\begin{abstract}
We consider here the $3$-sphere $\S^3$ seen as the boundary at infinity of the complex hyperbolic plane $\HdC$. It comes equipped with a contact structure and two classes of special curves. First $\R$-circles are boundaries at infinity of  totally real totally geodesic subspaces and are tangent to the contact distribution. Second, $\C$-circles, which  are boundaries of complex totally geodesic subspaces and are transverse to  the contact distribution. 

We define a quantitative notion, called slimness, that measures to what extent a continuous path in the sphere $\S^3$ is near to be an $\R$-circle.  We analyze the classical foliation  of the complement of an $\R$-circle by arcs of $\C$-circles. Next, we consider deformations of this situation where the $\R$-circle becomes a slim curve.  We apply these concepts to the particular case where the slim curve is the limit set of a quasi-Fuchsian subgroup of $\PU(2,1)$. As a consequence, we describe a class of spherical CR uniformizations of certain cusped $3$-manifolds.
\end{abstract}


\section{Introduction}

The frame of this work is the study of quasi-Fuchsian deformations in complex hyperbolic space $\HdC$, which can be thought of as the unit ball in $\C^2$. Using a projective model, the isometry group of $\HdC$ can be identified with ${\rm PU(2,1)}$, the subgroup of PGL(3,$\C$) corresponding to those transformations preserving a Hermitian form of signature $(2,1)$.  

Complex hyperbolic space is a rank one Hermitian symmetric space and as such, it is a K\"ahler manifold with negative $\frac14$-pinched curvature. Totally geodesic real planes and complex lines realize the extremal values of the sectional curvature (namely, $-1$ for complex lines and $-\frac14$ for real planes). The boundary at infinity of $\HdC$ can be seen as the $3$-sphere $\S^3$. Complex lines and totally geodesic real planes give rise to two distinguished classes of curves in $\S^3$ : $\C$-circles and $\R$-circles respectively (see \cite{Goldman}). The sphere $\S^3$ inherits  a CR structure from the complex hyperbolic space. This CR structure defines a contact structure for which $\C$-circles are everywhere transverse (they are the chains of the CR structure) and $\R$-circles are Legendrian. We review these structures in \cref{section:PU21-action}.

 We first consider ${\rm PO(2,1)}$ seen as the stabilizer of a  totally real totally geodesic subspace of $\HdC$. These subspaces are often called real planes for short, and the typical example is 
$\HdR\subset \HdC$, which, in coordinates, is the set of real points of the complex unit ball. This embedding ${\rm PO(2,1)}\subset {\rm PU(2,1)}$ gives isometric actions of Fuchsian subgroups of PO(2,1) preserving $\HdR$. Such subgroups are called $\R$-Fuchsian. The main theme we address here is to study deformations of $\R$-Fuchsian subgroups of ${\rm PU(2,1)}$. 

The complex hyperbolic plane  has another type of totally geodesic subspaces : complex lines, which give rise to the notion of $\C$-Fuchsian subgroups of PU(2,1). But, contrary to the $\R$-Fuchsian case,  any deformation of a cocompact $\C$-Fuchsian subgroup of ${\rm PU(2,1)}$ is still $\C$-Fuchsian (see \cite{T} for this rigidity result and \cite{Koz-Mau} for a review and further generalizations).

For a discrete subgroup of $\PU(2,1)$, a most natural object to consider is its limit set in $\S^3$ , which is a topological circle in the quasi-Fuchsian case. 
We aim at understanding the relative position of the limit set of a  quasi-Fuchsian group and $\C$-circles in $\S^3$.

\subsection{Horizontality, hyperconvexity and slimness in the sphere}

We consider three related notions for subsets in $\S^3$. For the definition of these notions, we use the Cartan invariant $\A$ of  triples of points in $\S^3$. It is a numerical invariant that classifies oriented triples up to the action of $\PU(2,1)$. 
For now, let us only mention that the Cartan invariant takes all values in $[-\frac\pi2,\frac\pi2]$, and that a triple (of pairwise distinct points) $(p_1,p_2,p_3)$ is contained in an $\R$-circle (resp. a $\C$-circle) if and only if $\A(p_1,p_2,p_3) = 0$ (resp. $\A(p_1,p_2,p_3) = \pm\frac\pi2$). In particular, we note that if 
$|\A(p_1,p_2,p_3)|<\frac\pi2$ and the three points are distinct then the triangle ($p_1,p_2,p_3)$ does not belong to any $\C$-circle. A more detailed presentation is given in \cref{section:subspaces}.

Though our initial interest was for limit sets, we will first drop the invariance assumption. In \cref{sec:horizontality-slimness}, we work with arbitrary compact subsets $E$ of $\S^3$ and consider the following three properties:

\begin{itemize}
 \item {\it Horizontality.} This is an extension for arbitrary compact subsets of $\S^3$ of the concept of Legendrian submanifolds and is a local property. It is defined in \cref{def:horizontality}. It amounts to ask that convergences $p_n\to p$ in $E$ only happen tangentially to the contact structure, see \cref{lem:local}. We describe in \cref{subsec:horizontality-orbits-1} some horizontal orbits of one-parameter subgroups.
 \item {\it Hyperconvexity.} A subset of $\S^3$ is called hyperconvex if  its intersection with any $\C$-circle contains at most two points. This notion is a version of a central notion in the theory of Anosov representations, stemming from \cite{Labourie}
and, in a context similar to this paper, in \cite{PozzettiSambarinoWienhard}.
 \item {\it Slimness.} This is a quantitative notion that implies hyperconvexity and horizontality. For a closed subset $E$ of 
 $\S^3$, we define
 $$\A(E) = \sup\lbrace |\A(p,q,r)|, p,q,r \in E\rbrace.$$
We say that $E$ is \emph{$\alpha$-slim} whenever $\A(E)\leqslant \alpha <\pi/2$, see \cref{def:slimness}. It directly implies hyperconvexity, from the above mentionned properties of $\A$. But it also implies horizontality, as proven in \cref{prop:alphaslim_horizontal}. In case $E$ is the limit set of a representation of a surface group, the quantity $\A(E)$ can be interpreted using bounded cohomology as a Gromov norm of a cohomology class in the case of limit sets, see Point 5 in \cref{rem:definition}.
\end{itemize}
We give geometric interpretations of slimness in \cref{sec:projections}.
The simplest examples to study these three properties and their consequences are $\R$-circles (boundaries of real planes). They are Legendrian, hyperconvex and $0$-slim since any ideal triangle in an $\R$-circle has vanishing Cartan invariant. 
We will describe other families of examples and non-examples in \cref{section:examples}. In particular, we show that  slim deformations of $\R$-circles do exist. We define  \emph{bent $\R$-circles}, see \cref{section:bent}. In Heisenberg coordinates, for each $0< \theta< \pi$, the set
\[E_\theta = \{[r,0], r\in \R_+\} \cup \{[re^{i\theta},0], r\in \R_+\}\cup \{\infty\}\]
is slim, see \cref{prop:bent-Rcircles-slim}. Note that $E_\pi$ is in fact an $\R$-circle.

Moreover, 
as explained in \ref{sec:deformation-R-fuchsian}, if $\Gamma \subset \PO(2,1)$ is a cocompact $\R$-Fuchsian group, then it can be deformed in $\PU(2,1)$ and the limit sets will be slim along this deformation, at least locally. This remark is essentially borrowed from \cite{PozzettiSambarinoWienhard}.

\subsection{A foliation on the complement of an $\R$-circle}\label{intro:R-circles}

We relate the three properties above and a known identification between the complement of $\R$-circles and the unit tangent bundle $\unit{\HdR}$. Assume $\Lambda_0$ is the $\R$-circle $\partial_\infty\HdR$ and denote by $\Omega_0$ its complement in $\S^3$. Then for any pair of distinct points $p\neq q$ in $\Lambda_0$, denote by $\mathcal L(p,q)\subset \HdC$ the unique complex line containing  $p$ and $q$. 
The  $\C$-circle $\partial_\infty \mathcal L(p,q) \subset \S^3$ is naturally oriented by the complex structure of $\mathcal L(p,q)$. Moreover, it intersects $\Lambda_0$ only at $p$ and $q$ since $\Lambda_0$ is hyperconvex. It is therefore divided into two connected components, which are oriented intervals. We will denote these intervals by $\arc{p}{q}$ and $\arc{q}{p}$.  The starting point of our work is the following classical proposition:

\begin{proposition*}
 The open set $\Omega_0$ is homeomorphic to the unit tangent bundle of $\HdR$. In this homeomorphism, the arcs 
 $\arc{p}{q}$ correspond to the orbits of the geodesic flow on $ \unit{\HdR}$.
\end{proposition*}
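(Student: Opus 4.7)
The plan is to produce an explicit homeomorphism $\Phi\colon\Omega_0\to\unit{\HdR}$, built from the antiholomorphic involution $\sigma$ of $\HdC$ that fixes $\HdR$ pointwise (in coordinates, complex conjugation). The extension of $\sigma$ to $\S^3$ fixes $\Lambda_0$ and acts freely on $\Omega_0$. For $z\in\Omega_0$, the points $z$ and $\sigma(z)$ are distinct and thus span a unique complex projective line $\mathcal{L}(z)$, which is $\sigma$-invariant. Hence $\sigma$ restricts to an antiholomorphic involution of the disk $\mathcal{L}(z)\cap\HdC$, whose fixed locus $\gamma_z=\mathcal{L}(z)\cap\HdR$ is a real geodesic of $\HdR$. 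Its endpoints form $\{p,q\}=\partial\mathcal{L}(z)\cap\Lambda_0$, the only two intersection points (hyperconvexity of $\Lambda_0$). The arc of $\partial\mathcal{L}(z)\smallsetminus\{p,q\}$ containing $z$ selects one of the two orientations of $\gamma_z$, say from $p$ to $q$ when $z\in\arc{p}{q}$. Finally, orthogonal projection inside the hyperbolic disk $\mathcal{L}(z)$ from the boundary point $z$ to the geodesic $\gamma_z$ yields a well-defined foot $x(z)\in\gamma_z\subset\HdR$. Setting $v(z)$ to be the unit tangent to $\gamma_z$ at $x(z)$ in the chosen direction, I define $\Phi(z)=(x(z),v(z))$.

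\textbf{Inverse and continuity.} Conversely, given $(x,v)\in\unit{\HdR}$, let $\gamma$ be the oriented real geodesic through $(x,v)$, let $\mathcal{L}_{x,v}$ be the complex line that contains $\gamma$ (unique, since $\gamma$ has real dimension one and its complex span is a complex line), and define $\Psi(x,v)$ to be the endpoint in $\partial\mathcal{L}_{x,v}$ of the geodesic in $\mathcal{L}_{x,v}$ starting at $x$ in direction $Jv$. A short verification shows $\Phi\circ\Psi=\mathrm{id}$ and $\Psi\circ\Phi=\mathrm{id}$: the complex line $\mathcal{L}_{x,v}$ is $\sigma$-invariant and contains both $\Psi(x,v)$ and its image under $\sigma$, so it coincides with $\mathcal{L}(\Psi(x,v))$; and the $Jv$-geodesic meets $\gamma$ orthogonally at $x$, so $x$ is indeed the foot of perpendicular. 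Continuity of $\Phi$ and $\Psi$ follows because $z\mapsto(z,\sigma(z))$ is continuous, the span of two distinct points in $\S^3$ depends continuously on them in the Grassmannian of complex lines, and the foot of perpendicular and the endpoint of the $Jv$-geodesic depend continuously on their data.

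\textbf{Arcs and the geodesic flow.} To identify the $\C$-arcs with geodesic flow orbits, I fix distinct $p,q\in\Lambda_0$ and note that the unique complex line containing $p$ and $q$ is precisely $\mathcal{L}(p,q)$, so every $z\in\arc{p}{q}$ satisfies $\mathcal{L}(z)=\mathcal{L}(p,q)$, yielding the same $\gamma_{pq}$ and the same orientation. Thus $\Phi$ sends $\arc{p}{q}$ into the fiber of $\unit{\HdR}$ above $\gamma_{pq}$ consisting of unit vectors tangent to $\gamma_{pq}$ pointing from $p$ to $q$, which is exactly the geodesic flow orbit determined by $(p,q)$. As $z$ traverses the arc, the foot of perpendicular $x(z)\in\gamma_{pq}$ varies; its monotonicity and surjectivity onto $\gamma_{pq}$ follow from the analogous statement for the foot-of-perpendicular map from an arc of the boundary circle of a hyperbolic disk to a diameter.

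\textbf{Main obstacle.} Nothing is technically difficult here; the main points to verify carefully are (i) that $z\mapsto \mathcal{L}(z)$ is well-defined and continuous (which requires checking $z\neq\sigma(z)$ precisely when $z\notin\Lambda_0$), and (ii) that the foot-of-perpendicular assignment produces a bijection between each arc $\arc{p}{q}$ and the corresponding geodesic-flow orbit, a one-dimensional fact best seen by working in the disk model of the complex line $\mathcal{L}(p,q)$ with $\gamma_{pq}$ drawn as a diameter.
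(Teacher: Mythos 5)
Your construction is essentially the paper's own: the paper defines the homeomorphism $\phi(p,\vec{u})=\gamma(p,-J_p\vec{u},+\infty)$ from $\unit{\HdR}$ to $\Omega_0$ (your $\Psi$), and it produces the unique complex line through a point $z\in\Omega_0$ meeting $\HdR$ along a geodesic from the conjugate point $\bar z$ exactly as you do with $\sigma$ (see \cref{prop:tangent-line-RP2} and \cref{coro:foliation}); you merely write out the inverse map explicitly via the foot of perpendicular. The only detail to watch is that the sign of $J$ in $\Psi$ must be matched to your orientation convention for $\arc{p}{q}$ (the paper inserts the minus sign in $-J_p\vec{u}$ for this reason), a pure bookkeeping point that does not affect correctness.
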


We refer to \cref{subs:foliation} for more details. This proposition also tells us that $\Omega_0$ is foliated by the arcs $\arc pq$. All along this paper, we reinterpret it in various ways, see \cref{coro:foliation}, \cref{R-Fuchsian-UTB}, \cref{coro:foliation-2}, \cref{proposition:homeoUT}. 

 If $\HdR$ is acted on by an $\R$-Fuschsian subgroup of PO(2,1)$\subset$PU(2,1) then so is $\Omega_0$ and the above homeomorphism descends to a homeomorphism between $\Gamma\backslash\Omega_0$ and the unit tangent bundle $\unit{(\Gamma\backslash\HdR)}$ of $\Gamma\backslash\HdR$ where orbits of the geodesic flow correspond to projection of arcs.

\subsection{Deforming the foliation}

Describing deformations of this foliation when deforming 
$\Lambda_0$ is one of the main points of this article (\cref{section:deform-foliation}).
As explained before, there exist deformations $(\Lambda_t)$ of $\Lambda_0$ such that all $\Lambda_t$ are slim. Denote by $\Omega_t$ the complement of $\Lambda_t$.
 First, we prove that arcs of $\C$-circles sweep out $\Omega_t$:
\begin{theorem*}[First point of \cref{thm:foliation-deformation}]
Let $\Lambda_t$ be a continuous family of slim circles, with $\Lambda_0$ an $\R$-circle. Then, for all $t$, the arcs $\arc pq$, for $p\neq q \in \Lambda_t$, sweep out $\Omega_t$.
\end{theorem*}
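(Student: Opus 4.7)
The plan is to introduce
\[
\mathcal U_t \;=\; \bigcup_{p \neq q \in \Lambda_t}\bigl(\arc{p}{q}\cup\arc{q}{p}\bigr)
\]
and prove $\mathcal U_t = \Omega_t$ by showing that $\mathcal U_t$ is non-empty, closed and open in $\Omega_t$. The connectedness of $\Omega_t = \S^3 \setminus \Lambda_t$ (as the complement in $\S^3$ of a topological circle) will then force the equality. Non-emptyness is immediate.

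The closedness step is the heart of the matter, and is where the slimness assumption is crucial. Assume $x_n \in \mathcal U_t$ converges to some $x \in \Omega_t$, with $x_n \in \arc{p_n}{q_n}$ for $p_n \neq q_n \in \Lambda_t$. By compactness of $\Lambda_t$, extract $p_n \to p$ and $q_n \to q$. If $p \neq q$, then $\mathcal L(p_n,q_n) \to \mathcal L(p,q)$ in $\CPstar$, the associated $\C$-circles $\partial \mathcal L(p_n,q_n)$ Hausdorff-converge to $\partial \mathcal L(p,q)$, and the limit point $x$ lies in $\partial \mathcal L(p,q)\setminus\{p,q\}\subseteq \mathcal U_t$. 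The case $p=q$ is ruled out as follows: slimness implies horizontality (\cref{prop:alphaslim_horizontal}), so any subsequential limit at $p$ of chord directions of $[p_n,q_n]$ lies in the contact plane at $p$. Since the contact plane is a complex line in $T_p\CP^2$, the complex line $\mathcal L(p_n,q_n)$ is thereby forced to converge in $\CPstar$ to the complex projective tangent line to $\S^3$ at $p$, independently of the precise limit of the chord direction within the contact plane. This complex tangent line intersects $\overline{\HdC}$ only at $p$ (by strict convexity of the ball), so $\partial \mathcal L(p_n,q_n)$ Hausdorff-converges to the singleton $\{p\}$. But then $x_n \to p$, forcing $x = p \in \Lambda_t$, a contradiction.

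For openness, let $x \in \arc{p_0}{q_0}$. Hyperconvexity of $\Lambda_t$ implies that $(p_0,q_0)$ is locally the only pair $(p,q) \in \Lambda_t^2$ with $\mathcal L(p,q)=\mathcal L(p_0,q_0)$, so the continuous parametrization $(p,q,s) \mapsto$ the point at parameter $s$ of $\arc{p}{q}$, from an open subset of the topological $3$-manifold $(\Lambda_t^2 \setminus \Delta)\times(0,1)$ into $\S^3$, is locally injective at $(p_0,q_0,s_0)$. Invariance of domain then makes its image open in $\S^3$, providing the required neighborhood of $x$ inside $\mathcal U_t$. The main obstacle is indeed the degeneracy $p=q$ in the closedness step; slimness, via horizontality, is precisely the ingredient that forces the limiting complex line to be tangent to $\S^3$ at $p$ from outside, so that no point of $\Omega_t$ can slip into the Hausdorff limit.
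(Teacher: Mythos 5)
Your closedness step is sound: compactness of $\Lambda_t$ together with horizontality (which follows from slimness by \cref{prop:alphaslim_horizontal}) does force $\mathcal L(p_n,q_n)\to p^\perp$ whenever $p_n,q_n\to p$ in $\Lambda_t$, and since $p^\perp\cap\S^3=\{p\}$ the degenerate case cannot produce a limit in $\Omega_t$. The gap is in the openness step. Hyperconvexity tells you that $(p_0,q_0)$ is, up to order, the unique pair of points of $\Lambda_t$ spanning the line $\mathcal L(p_0,q_0)$; it does \emph{not} make the sweeping map $(p,q,s)\mapsto\F(p,q,s)$ locally injective. Local injectivity fails exactly when two \emph{distinct} complex lines $\mathcal L(p,q)\neq\mathcal L(p',q')$, both close to $\mathcal L(p_0,q_0)$, meet at a point of $\S^3$ near $x$, and this genuinely happens: \cref{thm:non-injectivity} produces, for any hyperconvex circle invariant under a non-real loxodromic element, infinitely many arcs crossing the axis at infinity at points interior to that axis. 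So invariance of domain cannot be applied, and openness of $\mathcal U_t$ is precisely the content of the theorem rather than a formal consequence of hyperconvexity. A symptom of the problem is that your argument never uses the hypothesis that $\Lambda_t$ is joined to an $\R$-circle through slim circles; if it were correct it would prove surjectivity for \emph{every} slim circle, which the paper states as an open question immediately after \cref{thm:foliation-deformation}.

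The paper's proof (\cref{thm:surjectivity}) is a global intersection-theoretic argument instead. The line map extends any horizontal hyperconvex circle $E$ to an embedded M\"obius band $\mathcal L(E,E)\subset\HuuC$ (\cref{prop:extension_outside}); capped off by a disc inside the ball this gives a surface $\mathrm{RP}(E)\subset\CP^2$ homeomorphic to $\RP^2$. The deformation hypothesis enters to show that $\mathrm{RP}(E_t)$ is homotopic to the standard $\RP^2$ (\cref{cor:continous_RP}), so that $i([\mathrm{RP}(E_t)],[l])=1$ in $H_2(\CP^2;\Z/2\Z)$ for every complex line $l$. Applying this to $l=p^\perp$ for $p\in\Omega_t$ forces $p^\perp$ to meet the M\"obius band, i.e.\ $p=x\boxtimes y$ for some $x\neq y\in E_t$, which is surjectivity. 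If you wish to salvage a local-to-global argument you would need a substitute for local injectivity (some mod-$2$ degree count of arcs through a point), and making that rigorous essentially reproduces the paper's homological argument.
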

The strategy to prove this theorem is interesting per se. We first prove in \cref{sec:extension} that a horizontal and hyperconvex circle $\Lambda$ can be continuously extended \emph{outside} the complex hyperbolic space: there is an explicit continuous embedding of the Möbius strip in $\CP^2\setminus\HdC$ whose intersection with $\partial_\infty \HdC$ is exactly $\Lambda$. Our construction is flexible enough to prove that, under deformations of $\Lambda$, the Möbius strips deform by homotopy, see \cref{sec:surjectivity}. One can then apply an argument of intersection in homology to prove the theorem.

Thanks to this theorem, we can exhibit an actual deformation $\Lambda_t$ such that arcs of $\C$-circles define a foliation of $\Omega_t$:
\begin{theorem*}[\cref{prop:deform-standard-foliation}]
For any $\theta\in [\pi/2,3\pi/2]$, the set of arcs of $\C$-circles with endpoints in $E_\theta$ defines a foliation of $\S^3\setminus E_{\theta}$.
\end{theorem*}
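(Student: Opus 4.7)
The plan is to leverage the sweep-out theorem (first point of \cref{thm:foliation-deformation}) together with a disjointness argument for the arcs. By \cref{prop:bent-Rcircles-slim}, $E_\theta$ is slim for every $\theta\in[\pi/2,3\pi/2]$, hence horizontal and hyperconvex by \cref{prop:alphaslim_horizontal}. Applying the first part of \cref{thm:foliation-deformation} to the continuous family $(E_{\pi+s(\theta-\pi)})_{s\in[0,1]}$, which joins the $\R$-circle $E_\pi$ to $E_\theta$ through slim circles, yields that the arcs $\arc{p}{q}$ with $p\neq q$ in $E_\theta$ sweep out $\Omega_\theta:=\S^3\setminus E_\theta$.

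To upgrade this sweep-out to a foliation, I would establish pairwise disjointness of the arcs. Suppose $x\in \arc{p_1}{q_1}\cap \arc{p_2}{q_2}\cap \Omega_\theta$ for distinct unordered pairs $\{p_1,q_1\}\neq\{p_2,q_2\}$, and denote $\mathcal{C}_i=\partial_\infty\mathcal{L}(p_i,q_i)$. Two distinct complex lines in $\CP^2$ meet at a single point, so two distinct $\C$-circles share at most one point of $\S^3$. Combined with hyperconvexity of $E_\theta$, which forbids a $\C$-circle from meeting $E_\theta$ in three points, this rules out $\mathcal{C}_1=\mathcal{C}_2$ (which would force $\{p_1,q_1\}=\mathcal{C}_1\cap E_\theta=\{p_2,q_2\}$), as well as any coincidence between the pairs (which would yield two common points between $\mathcal{C}_1$ and $\mathcal{C}_2$ and hence collapse them). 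I am therefore reduced to ruling out the existence of four distinct points $p_1,q_1,p_2,q_2\in E_\theta$ whose associated complex lines $\mathcal{L}(p_1,q_1)$ and $\mathcal{L}(p_2,q_2)$ intersect on $\S^3$.

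To handle this four-point configuration, I would exploit the Möbius strip extension from \cref{sec:extension}: the slim circle $E_\theta$ extends to an embedded Möbius strip $M_\theta\subset \CP^2\setminus\HdC$ with $\partial M_\theta=E_\theta$, and its interior $M_\theta^\circ$ is parametrized by the polar points $\ell(p,q)$ of the complex lines $\mathcal{L}(p,q)$ for unordered pairs $\{p,q\}\subset E_\theta$. For $x\in\Omega_\theta$, the number of arcs through $x$ equals $|\check x\cap M_\theta^\circ|$, where $\check x$ is the projective line polar to $x$. At $\theta=\pi$ the classical foliation of $\Omega_\pi$ recalled in \cref{subs:foliation} gives $|\check x\cap M_\pi^\circ|=1$ for every $x\in\Omega_\pi$. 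Using the continuous deformation of Möbius strips provided by \cref{sec:surjectivity}, I would argue that this cardinality persists as $1$ along $\theta$: any additional intersection point could only appear by migrating through the boundary $\partial M_\theta=E_\theta$, but $\check x$ intersects $\S^3$ only tangentially at the single point $x\notin E_\theta$, precluding such migration.

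The main obstacle will be formalising this persistence step: controlling $|\check x\cap M_\theta^\circ|$ along the path in $\theta$ and excluding any birth or death of intersection points, either through the boundary of $M_\theta$ or at infinity in $\CP^2\setminus\HdC$. The key technical input should be that $\check x\cap\S^3=\{x\}$ with $x\in\Omega_\theta\subset\S^3\setminus\partial M_\theta$, so a limit of interior intersection points remains in $M_\theta^\circ$, combined with the embeddedness of $M_\theta$ established in \cref{sec:extension}. Once disjointness is secured, the continuity of $(p,q)\mapsto\mathcal{L}(p,q)$ immediately produces the local chart structure needed for a foliation.
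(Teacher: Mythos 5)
Your first step (surjectivity of the sweep-out via \cref{prop:bent-Rcircles-slim} and the first part of \cref{thm:foliation-deformation}) coincides with the paper's, and your reduction of disjointness to ruling out four distinct points $p_1,q_1,p_2,q_2\in E_\theta$ with $\mathcal L(p_1,q_1)\cap\mathcal L(p_2,q_2)\in\S^3$ is also fine. The gap is in the persistence argument for $|\check x\cap M_\theta^\circ|$. It is not true that intersection points of $\check x$ with the deforming Möbius strip can only appear or disappear by migrating through the boundary $\partial M_\theta=E_\theta$: a pair of transverse intersection points can be born in the \emph{interior} of $M_\theta^\circ$ (a tangency splitting into two crossings), which changes the cardinality by $2$ while preserving the mod-$2$ intersection number. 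Only the $\Z/2\Z$ count is a homotopy invariant — that is exactly what the paper uses to get surjectivity (at least one arc through each point) — and it cannot give uniqueness. The decisive evidence that your argument has a gap is that it never uses the hypothesis $\theta\in[\pi/2,3\pi/2]$: every input you invoke (slimness of $E_\theta$, the continuous path of slim circles joining $E_\pi$ to $E_\theta$, embeddedness and continuity of the Möbius strips) holds for \emph{all} $\theta\in(0,2\pi)$, yet the paper explicitly notes that for $|\pi-\theta|>\pi/2$ some arcs do intersect, so the foliation property fails there. Precisely such an interior birth of a pair of intersection points occurs when $|\pi-\theta|$ crosses $\pi/2$.

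The paper's proof of disjointness is instead an explicit computation: using the criterion of \cref{lem:Ccircles-meet}, two $\C$-circles spanned by $(a,b)$ and $(c,d)$ are disjoint iff $\la (\ba\boxtimes\bb)\boxtimes(\bc\boxtimes\bd),(\ba\boxtimes\bb)\boxtimes(\bc\boxtimes\bd)\ra\neq 0$. Splitting into cases according to how the four endpoints distribute over the two half-lines $\Delta_1,\Delta_2$, the worst case yields an expression of the form $(x-z)(t-y)\bigl(-\alpha\cos^2\theta+\beta\cos\theta-\gamma\bigr)$ with $\alpha,\beta\geqslant 0$ and $\gamma>0$, which can only vanish (for distinct pairs) when $\cos\theta>0$. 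The restriction $\cos\theta\leqslant 0$, i.e.\ $\theta\in[\pi/2,3\pi/2]$, enters the proof exactly at this point, and any correct argument must use it. To repair your approach you would need a quantitative input playing the same role; the topological deformation argument alone cannot supply it.
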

A caveat is necessary here: not all bent $\R$-circles give rise to a foliation. Indeed, if the bending is too strong ($|\pi -\theta|>\frac{\pi}{2}$), then some arcs do intersect.

It is hard to deform an $\R$-circle into a slim circle invariant under a group and such that the arcs between couples of its points define a foliation. Indeed, the invariance by a single non-real loxodromic element implies that some arcs intersect. Recall that a loxodromic element of $\PU(2,1)$ is non real if the trace of its cube - which is well defined - is not real.
\begin{theorem*}[Second point of \cref{thm:foliation-deformation}]
Let $\Lambda$ be a slim circle, that is invariant by a non-real loxodromic transformation. Then there are arcs $\arc pq$, with $p\neq q \in \Lambda$, that intersect in the complement $\Omega$ of $\Lambda$.
\end{theorem*}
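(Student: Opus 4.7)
The plan is a proof by contradiction. Suppose that no two distinct arcs $\arc{p}{q}$ and $\arc{p'}{q'}$ (with $\{p,q\}\neq\{p',q'\}$) intersect in $\Omega$. Combined with the first point of \cref{thm:foliation-deformation}, which says the arcs sweep out $\Omega$, this upgrades to the assertion that the arcs form a foliation $\mathcal{F}$ of $\Omega$ by (open) arcs. The map $\Phi\colon(\Lambda\times\Lambda)\setminus\Delta\to\Omega/\mathcal{F}$, $(p,q)\mapsto\arc{p}{q}$, is then a continuous bijection; continuous dependence of the $\C$-circle $\partial_\infty\mathcal{L}(p,q)$ on $(p,q)$ makes $\Phi$ an open map, hence a homeomorphism. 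It conjugates the product homeomorphism $g\times g$ with the induced $g$-action on the leaf space $\Omega/\mathcal{F}$. The points $(p_-,p_+)$ and $(p_+,p_-)$ are the two fixed points, corresponding to the two arcs of the axis $\C$-circle $\partial_\infty\mathcal{L}(p_-,p_+)$.

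I will compute the local Lefschetz index of the $g$-action at the fixed leaf $\arc{p_-}{p_+}$ in two ways and obtain $-1\neq+1$. Via $\Phi$: the restriction $g|_\Lambda$ is a hyperbolic homeomorphism of the circle $\Lambda\cong\S^1$ with $p_-$ repelling (1-dimensional fixed-point index $-1$) and $p_+$ attracting (index $+1$), so $g\times g$ is topologically a saddle at $(p_-,p_+)$ with local index $(-1)\cdot(+1)=-1$. The same calculation gives index $-1$ for every iterate $g^n\times g^n$.

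Via a transverse to the axis leaf: pick an interior point $x_0\in\arc{p_-}{p_+}$. The tangent to the axis $\C$-circle at $x_0$ is the Reeb direction, so the contact plane $H_{x_0}\subset T_{x_0}\S^3$ is a natural transverse to $\mathcal{F}$. Since $g\in\PU(2,1)$ preserves the CR structure and the complex line $\mathcal{L}$, the derivative $Dg_{x_0}|_{H_{x_0}}$ is complex multiplication by a scalar $\mu\in\C$ with $|\mu|<1$ and, crucially, $\arg\mu\not\equiv 0\pmod\pi$: this argument is precisely the rotation angle of $g$ around its complex axis $\mathcal{L}$, which is non-zero exactly because $g$ is non-real. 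In the $H_{x_0}$-chart of the leaf space, the $g$-action is $\psi\circ(\mu\cdot)$, where $\psi$ is the holonomy of $\mathcal{F}$ along the axis leaf from $H_{g(x_0)}$ back to $H_{x_0}$; the $n$-th iterate has transverse linearization $\psi_n'(0)\cdot\mu^n$. Since $|\mu|<1$ and the holonomy derivatives $\psi_n'(0)$ stay controlled as $n$ grows (they are built from a fixed generator holonomy by $g$-equivariance), the modulus of this linearization tends to $0$. For $n$ large enough, the linear map $L=\psi_n'(0)\cdot\mu^n$ on $H_{x_0}\cong\R^2$ has operator norm strictly less than $1$, so $I-L$ is close to the identity, $\det(I-L)>0$, and the local fixed-point index of $g^n$ at the axis leaf equals $+1$.

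The local fixed-point index of a homeomorphism germ at an isolated fixed point is a topological invariant, so it cannot be simultaneously $-1$ (from the $\Lambda^2$ side) and $+1$ (from the transverse side) for the same map $g^n$. The assumption that the arcs form a foliation must therefore fail, proving the existence of two arcs $\arc{p}{q}$ and $\arc{p'}{q'}$ meeting in $\Omega$. The hardest step of the plan is controlling the contribution of the transverse holonomy $\psi$ as one iterates: the non-realness of $g$ enters essentially here, ensuring that the rotational component of $\mu$ is not killed and that iteration drives the transverse linearization into the strict-contraction regime where the index is robustly $+1$.
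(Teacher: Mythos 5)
Your reduction to a fixed-point index computation on the leaf space is an appealing idea, and the first half is sound: under the no-intersection hypothesis the map $\F_\Lambda$ is a continuous bijection from the $3$-manifold $M_\Lambda$ onto $\Omega$, hence a homeomorphism by invariance of domain, the leaf space is $(\Lambda\times\Lambda)\setminus\Delta$, and the index of $g\times g$ at $(p_-,p_+)$ is indeed $(-1)\cdot(+1)=-1$ (a topological saddle), and likewise for every iterate. The fatal gap is in the second, ``transversal'' computation. The induced germ on the local leaf space at the axis leaf is \emph{the same germ} as $g\times g$ at $(p_-,p_+)$, read in a different chart; so under the foliation hypothesis it is a topological saddle and is certainly not a contraction for any iterate. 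Your claim that it eventually contracts rests on the assertion that the holonomy contributions $\psi_n'(0)$ ``stay controlled''. First, these derivatives do not exist: $\Lambda$ is only a topological circle (limit sets of deformed groups are typically nowhere differentiable), so the putative foliation is only $C^0$ transversally and its holonomy is a germ of homeomorphism with no linearization. Second, and more decisively, your index computation never actually uses the non-realness of $g$: you invoke only $|\mu|<1$ and the boundedness of the holonomy to conclude $\det(I-L)>0$. The same reasoning applied to a \emph{real} loxodromic element preserving an $\R$-circle would also yield index $+1$, contradicting the genuine foliation of \cref{coro:foliation}, whose index at the axis leaf is $-1$. This shows that the holonomy along the axis leaf must expand at a rate comparable to $|\mu|^{-n}$ in one direction (it is precisely what reconstructs the expanding direction of the saddle), so the step ``for $n$ large the transverse linearization is a strict contraction'' is not merely unjustified but false in the model case, and nothing in your argument isolates what changes when $g$ is non-real.

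For comparison, the paper's proof of \cref{thm:non-injectivity} is direct rather than by contradiction, and the non-realness enters through a concrete geometric mechanism: invariance under a loxodromic element with non-zero rotation factor forces $\Lambda$ to spiral around the axis, i.e.\ the lifted argument of the Heisenberg coordinate $z$ is onto $\R$ and proper (\cref{lem:arg-unbounded}). Writing $a$ near $p^-$ and $b$ near $p^+$ in the local coordinates of \cref{lem:local}, the condition that the $\C$-circle through $a,b$ meets the axis $\C$-circle reduces, via a box-product computation, to the vanishing of $-\Re(x\bar y)+o(|x|^2+|y|^2)$; the oscillation of $\arg x$ and $\arg y$ forces infinitely many sign changes, hence infinitely many intersections. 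If you want to pursue an index-theoretic route, the quantity $\Re(x\bar y)$ (essentially a linking sign between the two $\C$-circles) is the invariant you would need to track, and its sign changes are exactly the spiraling phenomenon your holonomy term was hiding.
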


We get as a corollary that no non$-\R$-fuchsian deformation of a lattice in $\PO(2,1)$ determines a foliation of the complement of its limit set by arcs of $\C$-circles.  This can also be interpreted as  the following rigidy theorem:

\begin{theorem*}[See \cref{thm:crown-uniformisations}]
Let $\Gamma$ be a cocompact lattice in $\PO(2,1)$ and $\rho:\Gamma\to \PU(2,1)$ be a small deformation of the inclusion. Let $\Lambda$ be its limit set and $\Omega$ its complement.

If $\Omega$ is foliated by arcs $\arc pq$, for $p\neq q \in \Lambda$, then $\rho$ is $\R$-fuchsian.
\end{theorem*}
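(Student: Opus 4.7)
The plan is to argue by contradiction, combining the two key inputs from the paper. Assume $\rho$ is not $\R$-Fuchsian. I will establish (a) that $\Lambda$ is slim and (b) that $\rho(\Gamma)$ contains a non-real loxodromic element. The second point of \cref{thm:foliation-deformation} then produces two arcs $\arc pq$ and $\arc{p'}{q'}$ crossing inside $\Omega$, contradicting the foliation hypothesis and forcing $\rho$ to be $\R$-Fuchsian.

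Point (a) is the easy step. The inclusion has limit set $\partial_\infty\HdR$, which is an $\R$-circle and hence $0$-slim. As discussed in \cref{sec:deformation-R-fuchsian} (after \cite{PozzettiSambarinoWienhard}), slimness is an open condition in the representation variety, so $\A(\Lambda) < \pi/2$ for $\rho$ close enough to the inclusion, and \cref{thm:foliation-deformation} applies.

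Point (b) is the substantive step. Loxodromicity is both abundant in $\Gamma$ and stable under small deformation, so the content reduces to producing some $\gamma_0 \in \Gamma$ with $\mathrm{tr}(\rho(\gamma_0)^3) \notin \R$. I would argue contrapositively: suppose every $\rho(\gamma)$ has real cube-trace. Since $\Gamma$ is Zariski-dense in $\PO(2,1)$ and the real-cube-trace condition cuts out a proper real algebraic subvariety of $\PU(2,1)$, the Zariski closure of $\rho(\Gamma)$ is a proper real algebraic subgroup of $\PU(2,1)$ containing a Zariski-dense copy of (a deformation of) $\PO(2,1)$. For $\rho$ near the inclusion a continuity argument identifies this Zariski closure with a conjugate of $\PO(2,1)$, so $\rho$ is $\R$-Fuchsian, contradicting the assumption. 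With such a non-real loxodromic $\rho(\gamma_0)$ in hand, \cref{thm:foliation-deformation} closes the argument.

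The main obstacle is precisely the rigidity step in (b): verifying that near the inclusion, the real-cube-trace locus of representations of $\Gamma$ into $\PU(2,1)$ coincides with the $\R$-Fuchsian locus. Equivalently, one must rule out intermediate real algebraic subgroups of $\PU(2,1)$ through which a small non-$\R$-Fuchsian deformation could factor. I expect the cleanest attack is a dimension count on the two relevant loci in the character variety, together with the standard classification of connected real algebraic subgroups of $\PU(2,1)$ containing (the identity component of) $\PO(2,1)$ — a list which, for dimension reasons, should contain only $\PO(2,1)$ itself and the whole $\PU(2,1)$. This classification, while folklore, is where the careful verification is concentrated.
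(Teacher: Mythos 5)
Your overall architecture is the same as the paper's: establish that $\Lambda$ is slim (hence hyperconvex) for small deformations, and then, assuming $\rho$ is not $\R$-Fuchsian, produce a non-real loxodromic element of $\rho(\Gamma)$ so that the non-injectivity half of \cref{thm:foliation-deformation} (i.e.\ \cref{thm:non-injectivity}) yields two intersecting arcs, contradicting the foliation hypothesis. Step (a) matches the paper exactly (\cref{proposition:deformation-R-fuchsian}). The divergence is entirely in step (b), and you have correctly located where the real content sits: the statement you are trying to establish --- that if every loxodromic element of $\rho(\Gamma)$ has real cube-trace then $\rho$ is $\R$-Fuchsian --- is precisely what the paper invokes as a citation to a theorem of Acosta (\cite{Acosta-real}): a group all of whose loxodromic elements have real trace preserves a totally geodesic real plane. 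Note that this cited result needs no smallness hypothesis and no classification of algebraic subgroups, so the paper's version of step (b) is both cleaner and stronger than what you sketch.

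Your proposed Zariski-density route to (b) is plausible and, I believe, completable: the real-cube-trace locus is a proper real algebraic subvariety of $\PU(2,1)$, so the Zariski closure of $\rho(\Gamma)$ would be a proper algebraic subgroup containing a non-elementary discrete group, hence (up to conjugacy and finite index issues) contained in the stabilizer of a real plane or of a complex line; the latter is excluded either because generic elements of $\mathrm{P}(\mathrm{U}(1,1)\times\mathrm{U}(1))$ do not have real cube-trace, or more simply because a $\C$-Fuchsian limit set has $\A=\pi/2$ while slimness forces $\A(\Lambda)<\pi/2$. But as written you leave exactly this classification unverified and label it folklore, which is the one genuine gap in the proposal. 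Since the needed statement is available in the literature in the precise form the paper uses, the efficient fix is to replace your sketch of (b) by that citation; otherwise you must actually carry out the classification of proper real algebraic subgroups of $\PU(2,1)$ that can contain the image of a Zariski-dense surface group, rather than assert it.
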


\subsection{Drilling and crown-type uniformisations}

We will call here CR-spherical uniformization of a manifold $M$ a homeomorphism $M \simeq \Omega/\rho(\pi_1(M))$, where $\Omega$ is an open subset of the sphere on which $\rho(\pi_1(M))$ acts properly discontinuously (see \cite{Fanny-ICM}). One should be careful with this definition as, sometimes,  uniformization refers to the case  where $\Omega$ is assumed to be the domain of discontinuity of $\rho(\pi_1(M))$. This is for instance the definition taken by Deraux in \cite{deraux-experimental} (see Definition 1.3 there). In particular when $\Omega$ is the domain of discontinuity of $\rho(\pi_1(M))$, then the 3-manifold that is uniformized appears as the boundary at infinity of a quotient of the complex hyperbolic  plane. 
This happens for most of the examples of uniformizations of hyperbolic 3-manifolds that have been constructed (see for instance \cite{Schwartz-rhochi,DerauxFalbel,ParkerWill}), but we will consider here examples where it is not the case. Note also that $\Omega$ needs not be simply connected - and is not in our examples. As a consequence, $\rho$ is  not  injective in general.

Going back to deformation of $\R$-fuchsian surface groups, general arguments about geometric structures, namely Ehresmann-Thurston principle and work by Guichard-Wienhard \cite{GuichardWienhard}, imply that, when deforming $\Gamma$ by a deformation $\rho$ close enough to the inclusion, the complement $\Omega$ of the limit set $\Lambda$ of $\rho(\Gamma)$ still uniformizes $\unit{\Sigma}$. We recall these arguments in \cref{proposition:deformation-quotient}.

We can drill along closed orbits of the geodesic flow in $\unit{\Sigma}$. For an oriented closed geodesic $\lambda$, denote by $\unit{\Sigma}(\lambda)$ the unit tangent bundle drilled out along the natural lift of $\lambda$. The uniformizations of $\unit{\Sigma}$ described above naturally give uniformizations of $\unit{\Sigma}(\lambda)$. The manifolds constructed in this way cover in particular a number of hyperbolic cusped manifolds. We say that $\lambda$ is filling if its complement in $\Sigma$ is a union of discs. Then, by \cite{FoulonHasselblatt}, as soon as $\lambda$ is \emph{filling}, the drilled out unit tangent bundle is hyperbolic. We sum up this discussion in the proposition:
\begin{proposition}[\cref{coro:drilled}]
Every manifold obtained by drilling a closed orbit of the geodesic flow in the unit tangent bundle of a hyperbolic surface admits a family of CR-spherical uniformizations.

An infinite number of cusped hyperbolic $3$-manifolds can be obtained this way.
\end{proposition}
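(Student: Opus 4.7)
The plan is to construct the uniformization in the undeformed $\R$-Fuchsian case by explicitly removing the lift of $\lambda$, and then to transport this construction along a family of small deformations in $\PU(2,1)$. Let $\Gamma\subset\PO(2,1)$ be a cocompact lattice uniformizing $\Sigma = \Gamma\backslash\HdR$, write $\Lambda_0 = \partial_\infty\HdR$ and $\Omega_0 = \S^3\setminus\Lambda_0$. A closed oriented geodesic $\lambda\subset\Sigma$ corresponds to a primitive conjugacy class $[\gamma]\subset\Gamma$, and a representative $\gamma$ fixes two points $p,q\in\Lambda_0$. Under the homeomorphism $\Omega_0\cong\unit{\HdR}$ recalled in Section~\ref{intro:R-circles}, the oriented axis of $\gamma$ corresponds to the arc $\arc pq$, so the natural lift of $\lambda$ to $\unit{\Sigma}$ pulls back to the $\Gamma$-orbit of $\arc pq$, which is a locally finite disjoint union of embedded arcs (because $\Gamma$ acts properly discontinuously on $\Omega_0$ and $\arc pq$ is stabilized by $\langle\gamma\rangle$). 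Consequently $\Omega_0^\lambda := \Omega_0\setminus \Gamma\cdot\arc pq$ is open and $\Gamma$-invariant, $\Gamma$ still acts properly discontinuously on it, and $\Omega_0^\lambda/\Gamma$ is homeomorphic to $\unit\Sigma(\lambda)$ by construction.

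To obtain a family, pick a continuous path $(\rho_t)$ of deformations of the inclusion $\Gamma\hookrightarrow\PU(2,1)$. Proposition~\ref{proposition:deformation-quotient} provides, for small $t$, a continuous family of homeomorphisms $\phi_t:\Omega_t/\rho_t(\Gamma)\to \unit\Sigma$, where $\Omega_t$ is the complement in $\S^3$ of the slim limit set $\Lambda_t$ of $\rho_t(\Gamma)$. The attracting and repelling fixed points $p_t,q_t\in\Lambda_t$ of $\rho_t(\gamma)$ vary continuously in $t$; slimness implies hyperconvexity, so the $\C$-circle through $p_t$ and $q_t$ meets $\Lambda_t$ exactly in $\{p_t,q_t\}$, and one may select $\arc{p_t}{q_t}$ by continuity from $\arc pq$ at $t=0$. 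The closed loop $c_t := \phi_t(\rho_t(\Gamma)\cdot\arc{p_t}{q_t})$ in $\unit\Sigma$ represents the free homotopy class of $\tilde\lambda$ and depends continuously on $t$, hence is isotopic to $\tilde\lambda$ for $t$ small. Drilling $c_t$ out of $\unit\Sigma$ therefore yields a manifold homeomorphic to $\unit\Sigma(\lambda)$, so that $\bigl(\Omega_t\setminus\rho_t(\Gamma)\cdot\arc{p_t}{q_t}\bigr)\big/\rho_t(\Gamma)$ is the required family of CR-spherical uniformizations.

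For the second assertion, Foulon--Hasselblatt~\cite{FoulonHasselblatt} show that $\unit\Sigma(\lambda)$ carries a complete finite-volume hyperbolic metric whenever $\lambda$ is filling. Since any closed hyperbolic surface admits infinitely many filling closed geodesics, of unbounded length and self-intersection number, this construction produces infinitely many pairwise non-homeomorphic cusped hyperbolic $3$-manifolds, distinguishable for instance by their hyperbolic volumes.

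The main obstacle I expect is in the deformation step: verifying that the orbit $\rho_t(\Gamma)\cdot\arc{p_t}{q_t}$ remains a genuine disjoint union of embedded arcs along the path $(\rho_t)$, so that $c_t$ stays embedded and isotopic to $\tilde\lambda$. The second part of Theorem~\ref{thm:foliation-deformation} shows that arcs attached to distinct loxodromic axes generally intersect in $\Omega_t$, so one cannot hope for a foliation; what is needed here is the weaker statement that arcs within a \emph{single} $\rho_t(\Gamma)$-orbit stay pairwise disjoint for small $t$. This should follow by a continuity and compactness argument from the embedded situation at $t=0$ combined with properness of the $\rho_t(\Gamma)$-action on $\Omega_t$, but making the estimate uniform is the technical heart of the argument.
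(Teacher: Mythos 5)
Your overall architecture is the same as the paper's: establish the $\R$-Fuchsian case from the foliation of $\Omega_0$ by arcs of $\C$-circles and the identification $\Omega_0\cong\unit{\HdR}$ (this is \cref{proposition:R-Fuchsian-crown-embedded} and \cref{coro:drilled}), transport it along a small deformation using \cref{proposition:deformation-quotient}, and invoke Foulon--Hasselblatt for hyperbolicity of $\unit{\Sigma}(\lambda)$ when $\lambda$ is filling. The $t=0$ construction and the final hyperbolicity assertion are fine.

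The genuine gap is exactly the step you flag and then defer: that the translates $\rho_t(\delta)\cdot\arc{p_t}{q_t}$ remain pairwise disjoint for small $t$, so that $c_t$ is an embedded curve. Your proposed mechanism --- ``continuity and compactness \ldots combined with properness of the $\rho_t(\Gamma)$-action on $\Omega_t$'' --- does not close it as stated, because each arc is \emph{not} relatively compact in $\Omega_t$: it accumulates on $\Lambda_t$ at its endpoints, so proper discontinuity of the action on $\Omega_t$ does not bound the number of translates of the arc meeting a compact fundamental set $K$, and without such a finiteness statement the ``disjoint at $t=0$, hence disjoint for small $t$'' continuity argument cannot be made uniform. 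The paper's resolution (\cref{lem:finite-C-circles-K}) is a polarity argument rather than a properness argument: each arc in the orbit is recorded by the polar point $\rho_t(\delta)\cdot(p_t\boxtimes q_t)$ lying on the M\"obius band $\mathcal L(\Lambda_t,\Lambda_t)\subset\HuuC$ of \cref{prop:extension_outside}; the set of polars of $\C$-circles meeting $K$ intersects this M\"obius band in a compact set (because the band meets $\S^3$ only along $\Lambda_t$, which $K$ avoids), while the orbit of the polar point is discrete in the band, being the image of a discrete orbit in $\HuuR$ under the boundary-map embedding. Hence only finitely many arcs meet $K$, this finite set is locally constant in $\rho$, and the continuity argument then applies to finitely many arcs at once (proof of \cref{pro:deformed_crowns}). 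You would need to supply this lemma, or an equivalent finiteness statement, for your second paragraph to be a proof.
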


We use the previous work to describe explicitly these uniformizations: having fixed a small deformation $\rho$, we want to describe an open subset whose quotient by $\rho(\Gamma)$ is homeomorphic to $\unit{\Sigma}(\lambda)$. To achieve that goal, we consider an element $\gamma$  in $\Gamma$ whose oriented axis lifts $\lambda$. A small deformation $\rho$ verifies that $\rho(\gamma)$ is still a loxodromic transformation: it has a repelling and an attractive fixed points, denoted by $\rho(\gamma)_-$ and $\rho(\gamma)_+$, both belonging to the limit set $\Lambda$ of $\Delta:=\rho(\Gamma)$. We call the axis at infinity of $\delta:=\rho(\gamma)$ the arc $\alpha(\delta) = \arc{\rho(\gamma)_-}{\rho(\gamma)_+}$. Then, we define in \cref{sec:Crowns} the \emph{crown}:
\[{\rm Crown}_{\Delta,\delta} = \Lambda \cup\Bigl(\bigcup_{g\in\Gamma} \rho(g)\cdot\alpha_\delta\Bigr).\]
The crown is a closed set containing the limit set and is invariant under the action of $\Delta = \rho(\Gamma)$. We denote by $\Omega_{\Delta,\delta}$ its complement. We describe the following explicit family of uniformizations of $\unit{\Sigma}(\lambda)$:
\begin{theorem*}[See \cref{pro:deformed_crowns}]
For a small enough deformation $\rho$, the quotient $\Delta\backslash\Omega_{\Delta,\delta}$ is homeomorphic to $\unit{\Sigma}(\lambda)$.
\end{theorem*}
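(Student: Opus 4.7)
My plan is to leverage \cref{proposition:deformation-quotient}, which for $\rho$ close enough to the inclusion supplies a $\Delta$-equivariant homeomorphism $\Phi \colon \Omega_\rho \to \Omega_0$ descending to $\overline\Phi\colon\Delta\backslash\Omega_\rho \to \Gamma\backslash\Omega_0 \simeq \unit{\Sigma}$. The task then reduces to identifying the $\Gamma$-orbit of the axis arc $\alpha_\delta$ with the full preimage of the natural lift $\hat\lambda$ of $\lambda$, after which one removes matched closed sets on both sides before passing to quotients.

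I first observe that each arc $\alpha_\delta = \arc{\delta_-}{\delta_+}$ is properly embedded in $\Omega_\rho$: its endpoints lie in $\Lambda$ while its interior remains in $\Omega_\rho$ by hyperconvexity of $\Lambda$ (proved in \cref{sec:deformation-R-fuchsian} for small deformations), so that the carrying $\C$-circle meets $\Lambda$ exactly at $\{\delta_-,\delta_+\}$. The stabilizer of $\alpha_\delta$ in $\Delta$ is $\langle\delta\rangle$, which acts by loxodromic translation; hence the image of $\bigcup_{g\in\Gamma}\rho(g)\cdot\alpha_\delta$ in $\Delta\backslash\Omega_\rho$ is a single simple closed curve, and the union itself is closed in $\Omega_\rho$ because its image is a compact circle in $\unit\Sigma$.

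Next I make the geometric identification. In the undeformed $\R$-Fuchsian case, the Proposition recalled in \cref{subs:foliation} tells us that, under $\Omega_0\simeq\unit{\HdR}$, the arc $\arc{\gamma_-}{\gamma_+}$ corresponds to the geodesic flow orbit lifting the axis of $\gamma$, so $\Gamma\backslash\bigcup_g g\cdot\arc{\gamma_-}{\gamma_+}$ is exactly $\hat\lambda$. For the deformed case, the attracting and repelling fixed points of $\rho(g)$ depend continuously on $\rho$ for loxodromic $g$, so $\rho(g)\cdot\alpha_\delta$ is a small perturbation of $g\cdot\arc{\gamma_-}{\gamma_+}$; the Ehresmann--Thurston homeomorphism $\Phi$ may then be chosen (within its equivariant homotopy class) to send $\alpha_\delta$ to $\arc{\gamma_-}{\gamma_+}$, and $\Delta$-equivariance propagates this to the whole orbit, yielding $\overline\Phi\bigl(\Delta\backslash\bigcup_g\rho(g)\cdot\alpha_\delta\bigr) = \hat\lambda$.

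Finally, since $\Delta$ acts properly discontinuously on $\Omega_\rho$ and preserves the closed subset $\bigcup_g\rho(g)\cdot\alpha_\delta$, it still acts properly discontinuously on the complement $\Omega_{\Delta,\delta}$. Restricting $\overline\Phi$ then produces the desired homeomorphism
\[
\Delta\backslash\Omega_{\Delta,\delta} \;\simeq\; \unit{\Sigma}\setminus\hat\lambda \;=\;\unit{\Sigma}(\lambda).
\]
The main obstacle is the middle step: upgrading the Ehresmann--Thurston homeomorphism, which a priori is only defined up to equivariant homotopy, into one that matches axis arcs on the nose. Continuity of the fixed points $\delta_\pm$ in $\rho$, the fact that $\alpha_\delta$ is uniquely determined by its two endpoints together with its containing $\C$-circle, and the preservation of axis combinatorics under small deformation (non-conjugate $\Gamma$-axes remain non-conjugate by openness of faithful discrete representations) should combine to make this precise.
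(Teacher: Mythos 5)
The central gap is in your second paragraph: you assert that, because the stabilizer of $\alpha_\delta$ in $\Delta$ is $\langle\delta\rangle$, the image of $\bigcup_{g}\rho(g)\cdot\alpha_\delta$ in $\Delta\backslash\Omega_\rho$ is a \emph{simple} closed curve. The stabilizer computation only shows that $\alpha_\delta/\langle\delta\rangle$ is a circle; simplicity of its image in the quotient is equivalent to $\rho(g)\cdot\alpha_\delta\cap\alpha_\delta=\emptyset$ for every $g\notin\langle\delta\rangle$, which is precisely the embeddedness of ${\rm Crown}_{\Delta,\delta}$ --- the heart of the theorem, not a free observation. It is genuinely delicate: \cref{thm:non-injectivity} shows that as soon as $\rho$ is not $\R$-Fuchsian, some arcs of $\C$-circles with endpoints in $\Lambda_\rho$ \emph{do} intersect, so one cannot expect disjointness for abstract reasons. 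The paper's proof establishes it by (i) reducing, via cocompactness, to intersections occurring inside a fixed compact $K\subset\Omega_\rho$ containing a fundamental domain, (ii) proving in \cref{lem:finite-C-circles-K} --- using polarity and the discreteness of the orbit of the polar point $p_\gamma$ inside the M\"obius band $\mathcal L(\Lambda_\rho,\Lambda_\rho)$ --- that only finitely many translates of $\alpha_{\rho(\gamma)}$ meet $K$, locally uniformly in $\rho$, and (iii) combining disjointness at $\rho_0$ (\cref{proposition:R-Fuchsian-crown-embedded}) with continuity of this finite family of arcs. Nothing in your proposal substitutes for these steps, and your subsequent claim that the union is closed in $\Omega_\rho$ also implicitly relies on this local finiteness.

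The remainder of your argument is close in spirit to the paper's. The identification $\Delta\backslash\Omega_\rho\simeq\unit{\Sigma}$ via \cref{proposition:deformation-quotient} is used identically. For matching the projected curve with the lift $\hat\lambda$ of $\lambda$, the paper does not attempt to rigidify the Ehresmann--Thurston homeomorphism so that it carries $\alpha_\delta$ onto $\arc{\gamma_-}{\gamma_+}$ on the nose (the step you yourself flag as the main obstacle): it only observes that, once the crown is known to be embedded, its projection to $\unit{\Sigma}$ is an embedded closed curve depending continuously on $\rho$ and equal to $\hat\lambda$ at $\rho_0$, hence isotopic to $\hat\lambda$; the complement is then homeomorphic to $\unit{\Sigma}(\lambda)$. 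This sidesteps your obstacle entirely, but it is only available after the embeddedness has been proved.
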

The proof works by deformation: if $\rho$ is $\R$-fuchsian, this proposition is only a rephrasing of the foliation property. By small deformations, everything varies continuously and the family of axis $\rho(g)\cdot \alpha(\delta)$ do not intersect.

\subsection{Further questions and open problems}

As mentioned above, many of the previously known examples 
of spherical CR uniformizations of hyperbolic 3-manifolds have been constructed as quotients of the 
whole discontinuity region of a discrete subgroup of 
PU(2,1). Many of them also share a another common 
feature: the holonomy groups of the structure appear as 
degenerations of quasi-Fuchsian deformations of 
discrete subgroups of PO(2,1), typically $(p,q,r)$-triangle groups. Note that other uniformizations have 
been obtained by applying Dehn-filling techniques to 
uniformizations obtained from these degenerations 
\cite{Schwartz-book,Acosta-surgeries-1,Acosta-surgeries-2}. The typical situation observed is 
the following.

Let $\Gamma$ be a Fuchsian group, and $\rho_0 : \Gamma
 \longrightarrow \PO(2,1)\subset \PU(2,1)$ be an 
 $\R$-Fuchsian representation. For a variety of 
 examples of 1-parameter families of deformations 
 $\rho_t$ of $\rho_0$, there exists a word $w$ in 
 $\Gamma$ which becomes parabolic for a critical value 
 $t_w$ (for any $t<t_w$, all words are mapped to 
 loxodromic transformations). The representation 
 $\rho_t$ is discrete and faithful on the interval $[0,t_w]$ and is either non-discrete 
 or non-faithful for $t>t_w$. It is in particular the 
 conjectured situation when $\Gamma$ is a triangle group. 
 Indeed, the Schwartz conjectures 
 \cite{Schwartz-triangle} predict precisely which word 
 $w$ should become parabolic. In all cases where a detailed 
 study of the long-time deformations of a triangle 
 group have been achieved, the manifold at infinity for the critical value $t=t_w$ 
 is a hyperbolic knot or link complement \cite{Schwartz-rhochi,DerauxFalbel,ParkerWill,Jiang-Wang-XIe,Ma-Xie}. Note that in the 
 case of triangle groups the character variety has 
 dimension 1 thus the situation is relatively simple 
 algebraically. However, even in this simpler case, 
 doing a complete analysis is a difficult and very 
 technical task based on the construction of fundamental domains. 
 Also, it is not completely clear to this day if one can predict what 
 $3$-manifold is likely to appear as degeneration of a 
 given triangle group deformation (see for instance the ubiquity phenomenon described by Deraux in Theorem 1.5 of \cite{deraux-experimental} and extended recently by Alexandre in \cite{Raphael-Alexandre}). The Schwartz conjectures have been generalized to some extent for quasi-Fuchsian deformations of surface groups by Parker and Platis (see Problem 6.2 in \cite{Parker-Platis}). We hope that this work could be a step toward a better understanding of these long time deformations.

Let us describe the situation of the $(3,3,4)$-triangle group, generated by three reflections $\iota_1$, $\iota_2$, $\iota_3$, see \Cref{ex:triangle-groups} for precise notations. It is a known fact that  the degeneration of the $(3,3,4)$-triangle group  corresponds to the word $w=\iota_3\iota_2\iota_1\iota_2$ becoming parabolic and yields a uniformization of the figure eight knot complement by the even subgroup of the triangle group (see \cite{DerauxFalbel,Parker-Wang-Xie}). The trace of the image of $w$, denoted by $\tau$, can be used (up to a 2-fold covering) as a coordinate for the deformation space.  We thus have a 1-parameter family of representations $\rho_\tau$ of the $(3,3,4)$-triangle group in PU(2,1).

The $\R$-fuchsian representation corresponds to the value $\tau = 2+2\sqrt{2}$, whereas the
degeneration corresponds to $\tau = 3$ (in that case $w$ is mapped to a unipotent parabolic). Note that it can happen that $\tau$ becomes smaller than $3$, in which case $\rho_\tau(w)$ is elliptic, and the representation is either non discrete or non-faithful in that case. One can estimate the supremum of Cartan invariants $\A(\Lambda_\rho)$ for the limit sets of these representations. Numerical experimentations indicate that the supremum is strictly increasing from $0$ to $\pi/2$ as $\tau$ decreases from $2+2\sqrt{2}\sim 4.828$ to $3$, with $\pi/2$ being attained for the degeneration (see \cref{fig:CartanEstimation}, where the horizontal coordinate is $\tau$).  In other words, the limit sets of the representations $\rho_\tau$ seem to remain slim until the degeneration.

Applying techniques using fundamental domains (see \cite{DerauxFalbel,Parker-Wang-Xie}), it is possible to prove that the manifold uniformized by the action of the even subgroup of the $(3,3,4)$-triangle group on its discontinuity region is as follows.

\begin{itemize}
 \item For $\tau_0 = 2+2\sqrt{2}$ the group is $\R$-Fuchsian, and the 3-manifold uniformised by the action of the even subgroup of the $(3,3,4)$-triangle group on its discontinuity region is the unit tangent bundle of the $(3,3,4)$-orbisurface.
 \item For $\tau \in ]3,2+2\sqrt{2}[$, the image of the group group remains discrete and isomorphic to the $(3,3,4)$-triangle group. The manifold at infinity remains the same.
  \item For $\tau =3$, the word $w$ becomes unipotent parabolic. This implies a pinching of the limit set (the attractive and repulsive fixed points of $\rho_\tau(w)$ and of its conjugates coalesce), and the manifold at infinity changes : it is the figure eight knot complement.
\end{itemize}

However, at the initial value $\tau_0 = 2+2\sqrt{2}$, the action of the group on the complement of the {\it crown} associated to $\rho_{\tau_0}(w)$ already uniformizes the figure eight knot complement (this follows from \cite{Dehornoy}). Here the open subset giving the uniformisation is smaller than the discontinuity region. So we conjecture that all along this deformation, the crowns remain embedded and we have
a family of uniformizations of the figure eight knot
complement, with the last one being by the actual
domain of discontinuity of the represented group.

Our results show that in general the topological type of the 
uniformized 3-manifold remains constant close to the 
$\R$-Fuchsian crown-type uniformization, without considering explicit fundamental domains.

\begin{figure}[ht]
\begin{center}
\includegraphics[width=.6\textwidth]{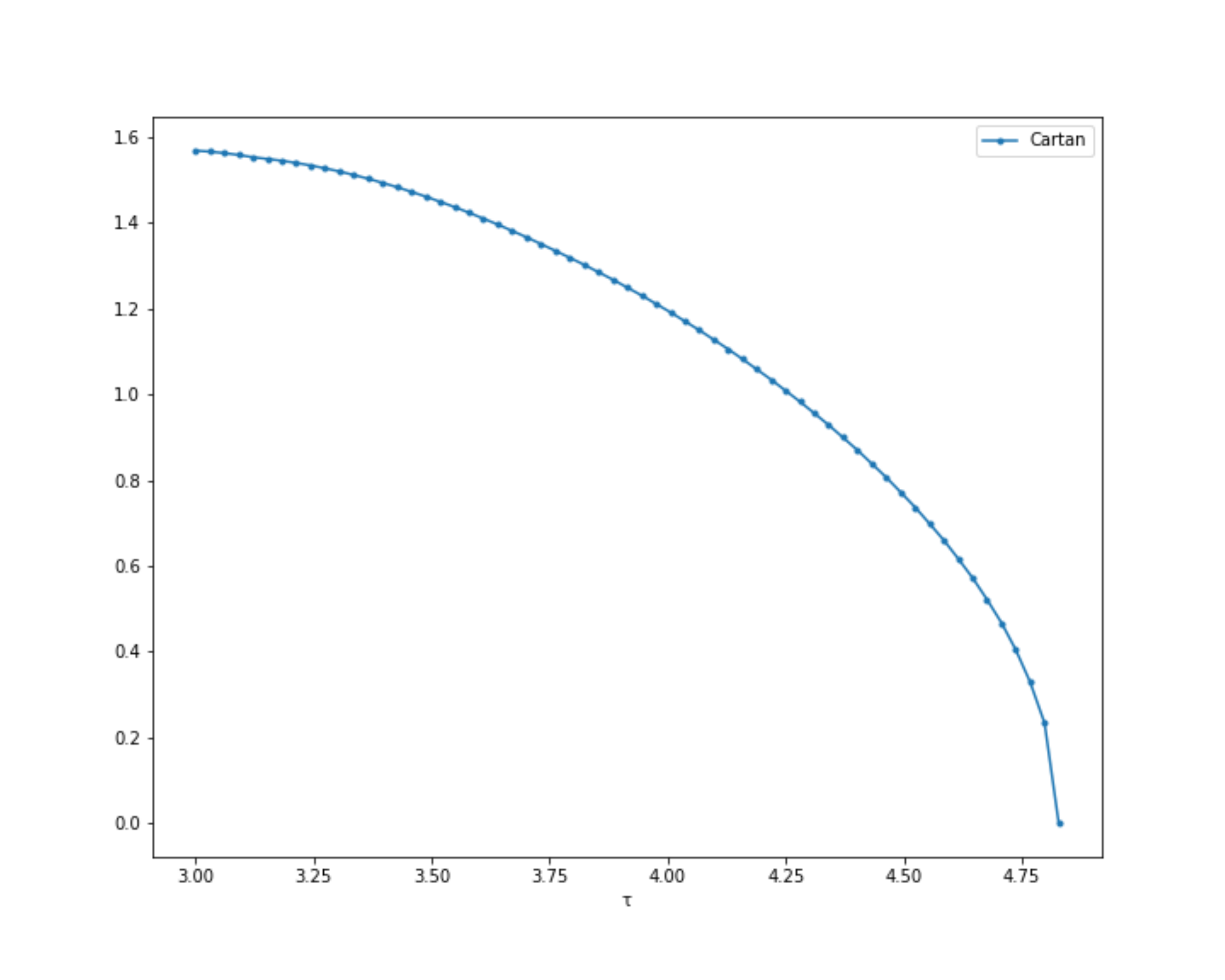}
\caption{Estimation of the supremum of Cartan invariant for the $(3,3,4)$-triangle groups.}\label{fig:CartanEstimation}
\end{center}
\end{figure}

One can also consider larger deformation spaces. The website \cite{Landscape} presents experimentations about the even subgroup of the $(3,3,4)$-triangle group, which has a $2$-parameter family of deformations around the $\R$-fuchsian one, together with an estimation of the supremum of the Cartan invariant. The following questions seem very natural, for any $\R$-Fuchsian group:
\begin{itemize}
	\item Does the whole connected component of convex-cocompact (or, equivalently, Anosov) deformation of the $\R$-fuchsian representations consists of slim ones (or, equivalently, hyperconvex Anosov)?
	\item Which words in the group can become parabolic at the boundary of slim convex-cocompact  representations?
	\item In the case of a representation at the boundary of slim convex-cocompact deformations with a finite set of classes of words having become parabolic, is the topology of the uniformized manifold related to the topology of a crown for the $\R$-fuchsian representation?
\end{itemize}

\medskip

\textbf{Acknowledgements. } We thank Danny Calegari, Pierre Dehornoy, Patrick Foulon, Julien March\'e and Andrés Sambarino for enlightening exchanges.


\section{$\PU(2,1)$-geometry of $\CP^2$\label{section:PU21-action}}

One of the main thrusts behind this paper is that the geometry of some convex-compact representations of surface groups in $\PU(2,1)$ are best understood considering not only the natural action on the complex hyperbolic space $\HdC$ and its $3$-sphere at infinity $\partial \HdC$ but also on its complement $\HuuC$ in $\CP^2$. We hope to illustrate how the whole $\PU(2,1)$-geometry of $\CP^2$ helps understanding these representations. In this section, we review necessary material about this geometry.

We will constantly use points in the projective space $\CP^2$ 
and lifts to $\C^3$. In this situation, we will denote the 
point and its lift by the same letter, but bolded for the 
lift. As example, if $p$ is a point (resp. $A$ is a projective 
transformation), $\bp$ is a lift of $p$ (resp. $\bA$ is a 
matrix lift of $A$).  We denote by $\PU(2,1)$ the projective 
unitary group associated to a Hermitian form $\la\cdot,
\cdot\ra$ of signature $(2,1)$ on $\C^3$. At this stage, we do 
not specify this form.

\subsection{Action of $\PU(2,1)$ on $\CP^2$.\label{section:orbits}}

The action of $\PU(2,1)$ on $\CP^2$ has three orbits which are the projections to $\CP^2$ of the three cones 
in $\C^3$ defined by 
\begin{eqnarray} V^- &=& \{ Z\in\C^3,\la Z,Z\ra<0\},\nonumber\\ 
                 V^+ &=& \{ Z\in\C^3,\la Z,Z\ra>0\},\\ 
                 V^0 &=& \{ Z\in\C^3,\la Z,Z\ra=0\}\nonumber.
\end{eqnarray}
Clearly the two orbits $\P(V^\pm)$ are open, and $\P(V_0)$ is closed. We will say that a point $p\in\CP^2$ has \emph{negative, null or positive type} when it belongs respectively to $\P(V^-)$, $\P(V^0)$ or $\P(V^+)$.
As sets, the two open orbits identify respectively to the homogeneous spaces
\begin{eqnarray}\label{eq:homog-spaces}
\P(V^-) & \sim & \PU(2,1)/\mathrm{P}(\mathrm{U}(2)\times \mathrm{U}(1)) =\HdC\nonumber\\ 
\P(V^+) & \sim & \PU(2,1)/\mathrm{P}(\mathrm{U}(1)\times \mathrm{U}(1,1)) = \HuuC 
\end{eqnarray}
We thus view these two homogeneous spaces as subsets of $\CP^2$, where each of them appears as the complement of the closure of the other. These two spaces  can be equipped with metrics : a Hermitian one for $\HdC$ and pseudo-Hermitian one for $\HuuC$. Let us describe these metrics (see also \cite{Weber}). First, whenever $p\in\CP^2$ does not have null type, we use the identification of the tangent space at $p$ given by

\begin{equation}\label{eq:tangent-space}T_p\CP^2 = {\rm Hom}\left( \C\bp,\bp^\perp\right).\end{equation}

Now, if $\alpha,\beta$ are two linear maps $\C\bp\longrightarrow \bp^\perp$, the metric is given by
\begin{equation}\label{eq:metric}
 h_p(\alpha,\beta)=-4\dfrac{\la\alpha(\bp),\beta(\bp)\ra}{\la\bp,\bp\ra}
\end{equation}
Choosing a lift $\bp$ of $p$ so that $\la\bp,\bp\ra=-1$ if $p\in\HdC$ and $\la\bp,\bp\ra=+1$ if $p\in\HuuC$, and 
identifying $\alpha$ and $\beta$ with the images of $\bp$ denoted by $\alpha(\bp)=u$, $\beta(\bp)=v$, we obtain
\begin{eqnarray}\label{eq:infinitesimal-metric}
 h_p(u,v) & = & 4\la u,v\ra\mbox{ if }p\in\HdC \nonumber\\
 h_p(u,v) & = & -4\la u,v\ra \mbox{ if } p\in\HuuC 
\end{eqnarray}
If $p\in\HdC$, the direction $\C \bp$ has negative type, and the restriction of $\la\cdot,\cdot\ra$ to $(\C \bp)^\perp$ has signature $(+,+)$. Thus in this case, $h$ is a Hermitian metric on $T\HdC$, whose real part is Riemannian. This is the complex hyperbolic metric. The factor $4$ in \eqref{eq:metric} corresponds to normalising the sectional curvature of $\HdC$ as being pinched between $-1$ and $-\frac14$. If $p\in\HuuC$, the direction $\C \bp$ has positive type, and the restriction of $\la\cdot,\cdot\ra$ to $(\C p)^\perp$ has signature $(+,-)$. Therefore, in this case, $h$ is a pseudo-Hermitian metric on $T \HuuC$ with (Hermitian) signature $(1,1)$, whose real part is pseudo-Riemannian with signature $(2,2)$.

 The complex hyperbolic distance on $\HdC$ can be expressed in Hermtian terms by
\begin{equation}\label{eq:hyp-distance}
\cosh^{2}\left(\dfrac{d(p,q)}{2}\right) = \dfrac{\la \bp,\bq\ra\la\bq,\bp\ra}{\la \bp,\bp\ra\la\bq,\bq\ra}. 
\end{equation}

The third orbit $\P(V^0)$ of the $\PU(2,1)$-action on $\CP^2$, the closed one, is the projection to $\CP^2$ of the quadric 
$\{Z\in\C^3,\la Z,Z\ra = 0\}$. This orbit can be thought of as the boundary at infinity of $\HdC$, and we will denote it as $\partial\HdC$ (it is of course also the boundary of $\HuuC$ as well). It is a $3$-sphere and we will also often denote it simply by $\S^3$. Once a lift $\bp$ of $p$ is chosen, the tangent space $T_p\partial\HdC$ can be identified with the $3$-dimensional real vector subspace of $\C^3$ defined by $ \{Z\in\C^3,\, \Re\left(\la Z,\bp\ra\right)=0\}$. This tangent space contains the complex $1$-dimensional subspace $\ker (\la\cdot,\bp\ra)$. This defines a $CR$-structure on $\partial\HdC$, which is the homogeneous CR structure given by the field of tangent complex lines $\bigl(\ker \la\cdot,\bp\ra\bigr)_{p\in\partial\HdC}$, see  \cite{BurnsShnider}. 
The contact structure defined by this field of planes allows one to define horizontal submanifolds

\begin{definition}\label{def:CR-horizontal}
A smooth submanifold of $\bHdC$ is \emph{horizontal}  if at each point its tangent space is included in the contact plane.
\end{definition}
Such a manifold, if connected, can only be a point or a Legendrian curve. One of the main point of Section \ref{sec:horizontality-slimness} will be to extend this notion to non-smooth locally closed sets.

\subsection{Coordinate systems}\label{section:coordinates}

Let us describe the objects considered in the previous section with the following two special choices of Hermitian forms.

\begin{equation}\label{eq:herm-models}
H_B = \begin{bmatrix} 1 & & \\ & 1 & \\ & & -1 \end{bmatrix}
\mbox{ and }
H_S = \begin{bmatrix}  & & 1\\ & 2 & \\1 & &  \end{bmatrix}.
\end{equation}
Using the Hermitian form given by $H_B$ leads to the so-called {\it ball model} of $\HdC$. With this choice of coordinates, $\HdC$ can be seen as the unit ball of $\C^2$, where $\C^2$ itself is seen as the affine chart $Z_3=1$ of $\CP^2$. Any point in $\HdC$ can be lifted to $\C^3$ in a unique way as a vector $[z_1,z_2,1]^T$, where $z_i\in\C$ and $|z_1|^2 + |z_2|^2<1$. In this model, the boundary $\partial\HdC$ is just the 3-sphere $\S^3$ defined by $|z_1|^2 + |z_2|^2=1$. 
 In turn,  $\HuuC$ identifies with the complement in $\CP^2$ of the closed ball 
$\HdC\cup\S^3$.

On the other hand, if one uses the form $H_S$, then the projection of $V^-\cup V_0$ to $\CP^2$ is contained in the affine chart $\{Z_3=1\}$, except for the projection of $[1,0,0]^T$, which is at infinity. Thus any point in the closure of $\HdC$ admits a unique lift to $\C^3$ which is given by

\begin{equation}\label{eq:lift-Siegel} v_{(z,t,u)} =  \begin{bmatrix} -|z|^2-u+it\\z\\1 \end{bmatrix}
\mbox{ and } {\bm \infty} = \begin{bmatrix} 1\\0\\0\end{bmatrix},\end{equation}
where $z\in\C$, $t\in\R$ and $u\geqslant 0$. These coordinates are often called {\it horospherical coordinates} since the level sets of $u>0$ are the horospheres centered at $\infty$. When necessary, we will call the vector  given in \eqref{eq:lift-Siegel} the {\it standard lift} of a point in $\HdC$. We will denote by $[z,t]$ the point in $\partial\HdC$ which is the projection of $v_{z,t,0}$. Note that
$$ \la v_{(z,t,u)},v_{(z,t,u)} \ra=-2u,$$
so that the vectors $v_{(z,t,u)}$ for which $u<0$ are lifts of those points of $\HuuC$ that belong to the affine chart $\{Z_3=1\}$. The line at infinity is the projection to $\CP^2$ of $\ker(\la \cdot, {\bm \infty}\ra)$. It can be identified with the tangent complex line at $\infty$. Similarly, the tangent complex line $\ker(\la,\bp\ra)$ at points $p=[x+iy,t]\in\partial\HdC$ is easily seen to be the  kernel of the $1$-form 
\begin{equation}\label{eq:contact-form}
 \alpha = dt -2xdy+2ydx.
\end{equation}

The $1$-form $\alpha$ is the contact form of the Heisenberg group. 
A $C^1$ curve $\gamma$ in $\partial\HdC$ is horizontal, or Legendrian, if and only if its velocity belongs to the contact plane. This condition can be written with lifts in a simple way:
 $\gamma$ is horizontal if and only if it satisfies
\begin{equation}\label{eq:CR-horizontal}
\forall s\in\R,\, \la \overset{{\bm \cdot}}{{\bm \gamma}}(s),{\bm \gamma}(s)\ra = 0,
\end{equation}
where ${\bm \gamma}(s)$ is the standard lift of $\gamma(s)$.

\subsection{Totally geodesic subspaces and the Cartan invariant\label{section:subspaces}}

The maximal totally geodesic spaces of $\HdC$ come in the following two types.
\begin{enumerate}
 \item The complex lines  of $\HdC$ are the non-empty intersections with $\HdC$ of projective lines in $\CP^2$. Note that a projective line intersects $\HdC$ iff it is the projectivisation of a hyperbolic $2$-plane of $\C^{2,1}$ (that is, those where the restriction of $\la\cdot ,\cdot \ra$ has signature $(+,-)$). The sectional curvature along a complex line is constant and equal to $-1$. Typical examples are the complex axes of coordinates in the ball model of $\HdC$.
\item The real planes of $\HdC$ are the non-empty intersections with $\HdC$ of real projective planes. Real projective planes intersecting $\HdC$ can be described as projectivisations of totally real subspaces of $\C^{2,1}$, that is $3$ dimensional real subspaces of $\C^{2,1}$ for which the restriction of $\la\cdot ,\cdot\ra$ is real. These real planes realize the other bound $-1/4$ of the sectional curvature.
 \end{enumerate}
 
  As just said, complex lines and real planes of $\HdC$ are the intersections with $\HdC$ of projective complex lines or projective real planes of $\CP^2$. When clear from the context, we will often use the words complex line or real plane for both the complex hyperbolic or projective objects. When necessary, we will precise complex hyperbolic lines or real hyperbolic planes, as opposed to complex projective lines and real projective planes.
  
  We will make a constant use of the curves defined in $\partial\HdC$ by intersecting complex lines and real planes with $\HdC$.
\begin{definition}
A {\it $\C$-circle} in $\partial\HdC$ is the intersection of a complex line of $\HdC$ with $\partial\HdC$. Similarly, an {\it $\R$-circle} in $\partial\HdC$ is the intersection of a real plane of $\HdC$ with $\partial\HdC$.
\end{definition}

\begin{example}\label{example:RandC-circles}
Examples of $\R$- and $\C$-circles in the Heisenberg space are depicted in Figures \ref{fig:Rcircles} and \ref{fig:Ccircles}. Their description is as follows:
\begin{enumerate} 
 \item  In Heisenberg coordinates, the two axes of coordinates in the plane $\C\times\{0\}$ are examples of $\R$-circles, and more generally, so is any line through the origin in that plane.  The axis $\{[0,t],t\in\R\}$ is a $\C$-circle. More generally, the $\R$-circles that contain the point $\infty$ are  the lines through a point $p$ that are contained in the contact plane at $p$. The $\C$-circles through $\infty$ are the vertical lines. 
 \item The $\R$-circles that do not contain the point $\infty$ are (compact) circles whose projections onto $\C$ is 
 a square lemniscate (the tangents at the double point of the projection are orthogonal). The $\C$-circles not containing $\infty$ are ellipses contained in contact planes, that are centered at the contact point. 
\end{enumerate}
 \end{example}
Note in particular that $\R$-circles are horizontal, whereas $\C$-circles are everywhere transverse to the contact distribution. The latter facts are clear in the situation where the considered $\R$ or $\C$-circle contains $\infty$, and follow from the transitivity of the action of $\PU(2,1)$ on the two families of complex hyperbolic lines and real hyperbolic planes.

Another notable difference between $\C$-circles and $\R$-circles is that $\C$-circles have a natural orientation which is induced by the complex structure of the complex line they bound, whereas $\R$-circles do not have a natural $\PU(2,1)$-invariant orientation.

\begin{figure}[ht]
\begin{center}
 \begin{tabular}{ccc}
\scalebox{0.5}{\includegraphics{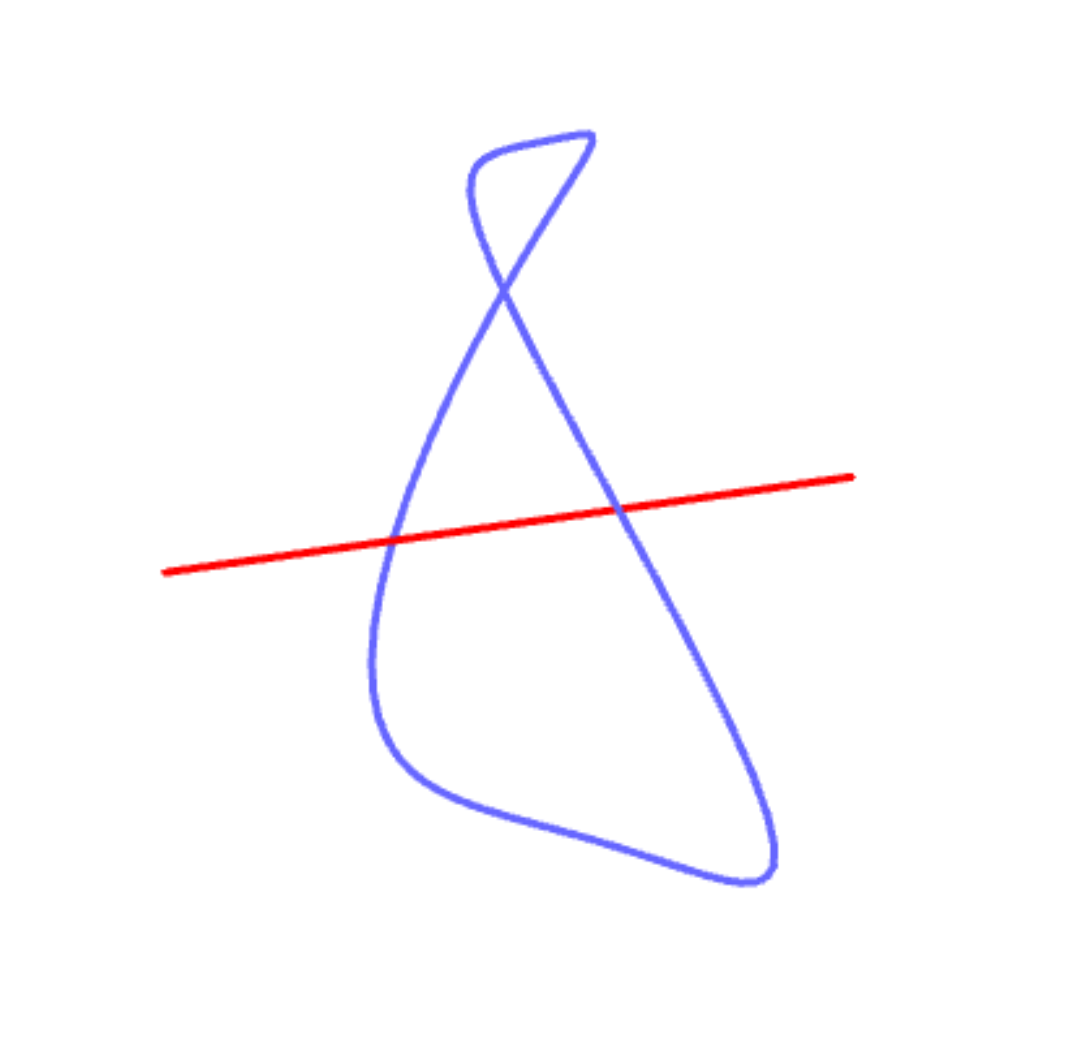}} 
   & &
\scalebox{0.5}{\includegraphics{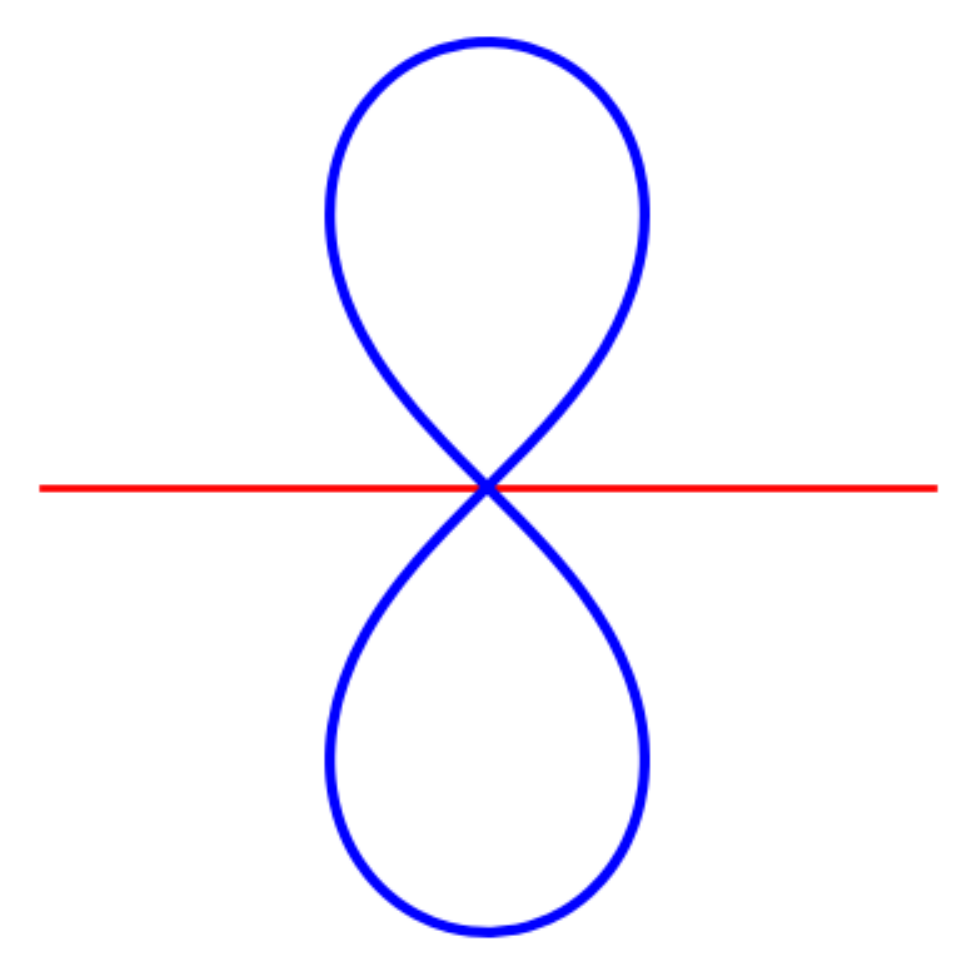}}\\
   \end{tabular}
\caption{Two $\R$-circles: the red line is the $x$ axis of the Heisenberg coordinates. It is the boundary of $\HdR=\R^2\cap\HdC$. The blue curve is the boundary of a real plane orthogonal to $\HdR$. The left picture is a view in perspective in Heisenberg space, and the right picture is the vertical projection of the two $\R$-circles on $\C$. \label{fig:Rcircles} }
\end{center}   
\end{figure}

\begin{figure}[ht]
\begin{center}
 \begin{tabular}{cc}
 \scalebox{0.5}{\includegraphics{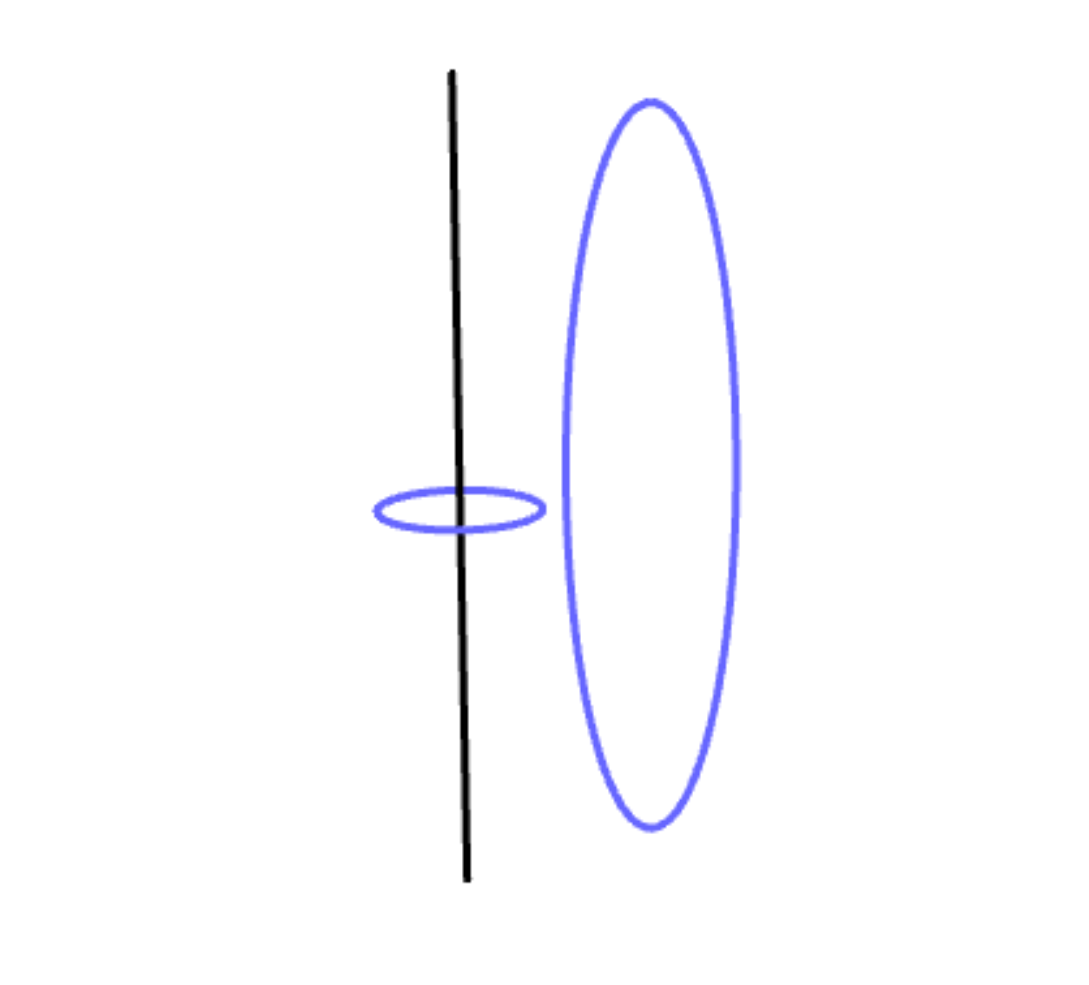}}
   & 
\scalebox{0.5}{\includegraphics{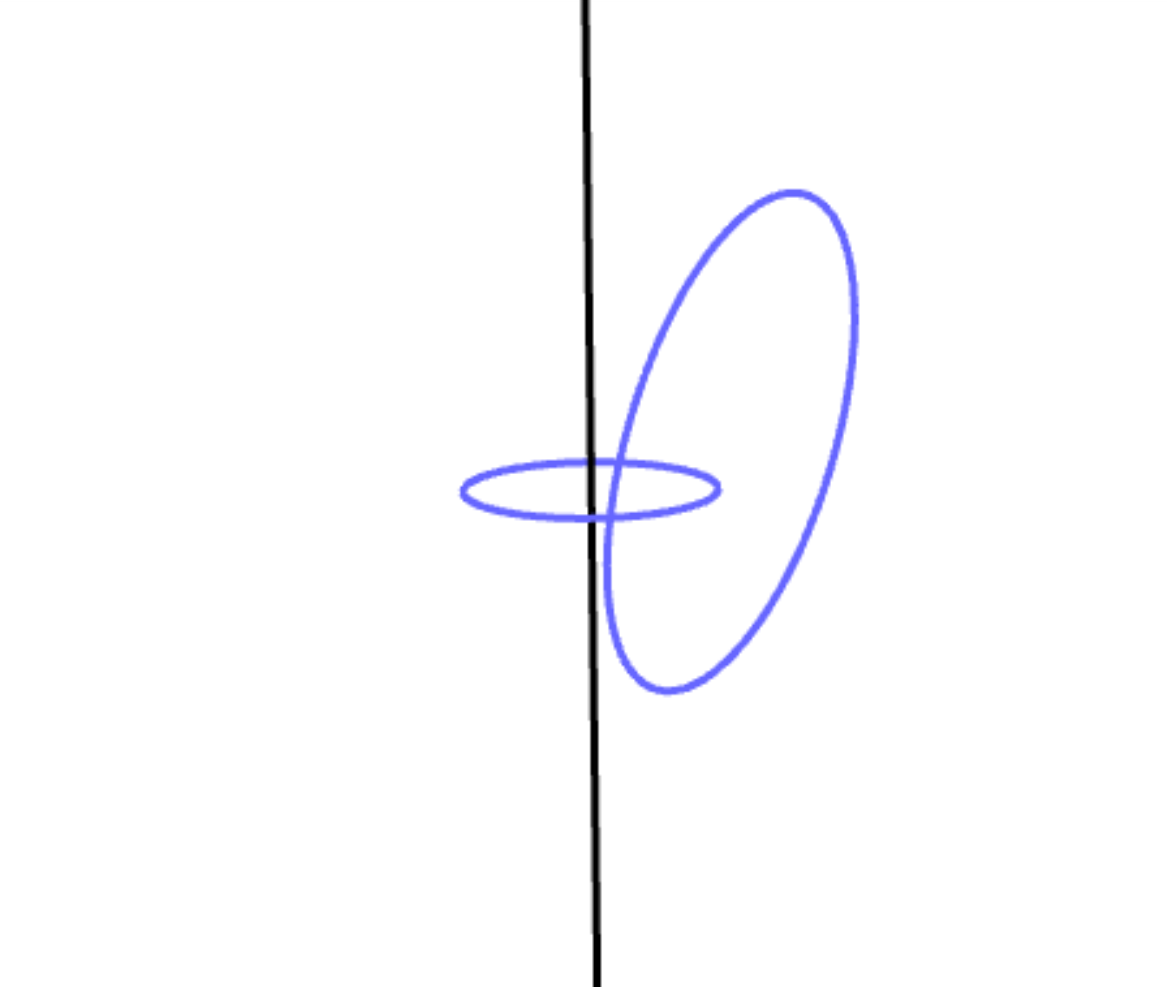}}\\
   \end{tabular}
\caption{Examples of $\C$-circles in Heisenberg space. On both pictures, the black line is a $\C$-circle passing through ${\bm \infty}$. Note that the pair of blue $\C$-circles on the left is unlinked, whereas the one the right picture they is linked.\label{fig:Ccircles}}
\end{center}   
\end{figure}

The Cartan invariant will play an important role in our work, from \cref{sec:slimness} on. It gives an easy characterization of triples of points that lie in a $\C$-circle or in an $\R$-circle.
\begin{definition}\label{defi:Cartan}
 Let $(p,q,r)$ be a triple of  points in $\partial\HdC$. If the points are pairwise distinct  we define the {\it Cartan invariant} of the triple $(p,q,r)$ to be
 \begin{equation}\label{eq:Cartan} 
  \A(p,q,r)=\arg\Bigl(-\la \bp,\bq\ra\la\bq,\br\ra\la\br,\bp\ra\Bigr).
 \end{equation}
 If at least two of the points coincide we define it to be  $\A(p,q,r)=0$.
\end{definition}
The quantity \eqref{eq:Cartan} does not depend on the choices made for lifts, and is $\PU(2,1)$-invariant.
The following statement sums up the main features of this invariant (see \cite[Chapter 7]{Goldman} for proofs).
\begin{proposition}\label{prop:Cartan-properties}
 The Cartan invariant enjoys the following properties.
 \begin{enumerate}
  \item For any triple $(p,q,r)$, $\A(p,q,r)\in[-\pi/2,\pi/2]$.
  \item Two triples of pairwise distinct points  $(p_1,p_2,p_3)$ and $(q_1,q_2,q_3)$ have the same Cartan invariant if and only if there exists a map $g\in \PU(2,1)$ such that $g(p_i)=q_i$ for $i=1,2,3$.
  \item For a triple of distinct points, $|\A(p,q,r)|=\pi/2$ if and only if the triple $(p,q,r)$ lies on the boundary of a complex line.
  \item $\A(p,q,r)=0$ if and only if the triple $(p,q,r)$ lies on the boundary of a real plane.
  \item $\A$ is a $3$-cocycle. In particular, if $p,q, r,s$ are four points, we have: 
  \begin{equation}\label{eq:Cartan-cocycle}
  \A(p,q,r) - \A(p,q,s) + \A(p,r,s) - \A(q,r,s)=0.\end{equation}
 \end{enumerate}
\end{proposition}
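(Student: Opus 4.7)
The plan is to establish all five properties by reducing everything to the Gram matrix $G = (\langle \bp_i, \bp_j\rangle)_{1\leqslant i,j\leqslant 3}$ of three null lifts $\bp,\bq,\br$, and then to exploit the $\PU(2,1)$-invariance of $\A$ together with explicit normal forms. Note first that $G$ is Hermitian with vanishing diagonal (since the $\bp_i$ are null), and that for the $3\times 3$ matrix $P$ whose columns are the $\bp_i$ we have $G = P^*HP$ where $H$ is the matrix of $\langle \cdot,\cdot\rangle$. Since $H$ has signature $(2,1)$, $\det H<0$, and hence $\det G = |\det P|^2 \det H \leqslant 0$. Expanding $\det G$ using the vanishing diagonal yields the key identity
\begin{equation}\label{eq:gramdet}
 \det G \;=\; 2\Re\bigl(\langle\bp,\bq\rangle\langle\bq,\br\rangle\langle\br,\bp\rangle\bigr)\leqslant 0,
\end{equation}
with equality if and only if the three vectors are linearly dependent.

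From \eqref{eq:gramdet}, property (1) is immediate: $-\langle\bp,\bq\rangle\langle\bq,\br\rangle\langle\br,\bp\rangle$ has non-negative real part, so its argument lies in $[-\pi/2,\pi/2]$. Property (3) follows similarly: $|\A|=\pi/2$ is equivalent to $\det G = 0$, i.e.\ to $\bp,\bq,\br$ being coplanar in $\C^3$, i.e.\ to $p,q,r$ lying in a single complex projective line; and since this line must meet $\partial\HdC$ in at least three points, it is the boundary of a complex line of $\HdC$, which is exactly a $\C$-circle. For property (4), $\A=0$ means $\langle\bp,\bq\rangle\langle\bq,\br\rangle\langle\br,\bp\rangle$ is real and positive, which (after rescaling $\bq$ and $\br$ by suitable unit complex numbers) is equivalent to the existence of real lifts $\bp_0, \bq_0, \br_0$ for which all three pairwise Hermitian products are real; the real span of such lifts is a totally real $3$-dimensional subspace, whose projectivisation meets $\HdC$ in a real plane containing $p,q,r$, so its boundary is the desired $\R$-circle. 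The converse is routine: on a real plane one can choose real lifts.

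For property (2), invariance under $\PU(2,1)$ is built into the definition. For the converse, I would use the double transitivity of $\PU(2,1)$ on pairs of distinct boundary points to bring both triples into the normalised form $(0,\infty,r_i)$ in Heisenberg coordinates, and then analyse the remaining stabiliser. Fixing $\infty = [1,0,0]^T$ and the origin $[0,0,1]^T$ in the Siegel model \eqref{eq:herm-models}, the subgroup of $\PU(2,1)$ fixing both is the one-parameter group of diagonal unitary dilations $(z,t)\mapsto (\lambda z, |\lambda|^2 t)$ with $\lambda\in\C^*$, together with a $U(1)$-rotation; a direct calculation of $\A(0,\infty,[z,t])$ in these coordinates shows that it depends only on $\arg(t+i|z|^2)$, and the stabiliser acts transitively on level sets of this quantity. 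Thus equal Cartan invariants imply that the third points lie in the same orbit, finishing the proof.

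Property (5) is the cocycle relation, and is the most delicate point because of the $\pi$-ambiguity in arguments. A direct algebraic simplification of the product
\[
\frac{\langle\bp,\bq\rangle\langle\bq,\br\rangle\langle\br,\bp\rangle \cdot \langle\bp,\br\rangle\langle\br,\bs\rangle\langle\bs,\bp\rangle}{\langle\bp,\bq\rangle\langle\bq,\bs\rangle\langle\bs,\bp\rangle \cdot \langle\bq,\br\rangle\langle\br,\bs\rangle\langle\bs,\bq\rangle}
\]
reduces to $|\langle\bp,\br\rangle|^2/|\langle\bq,\bs\rangle|^2$, a positive real, hence the alternating sum of the four arguments vanishes modulo $2\pi$. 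Since each Cartan invariant lies in $[-\pi/2,\pi/2]$ by (1), the alternating sum lies in $[-2\pi,2\pi]$, so a priori it equals $0$ or $\pm 2\pi$. The hard part is ruling out $\pm 2\pi$: I would use a continuity argument, deforming the quadruple $(p,q,r,s)$ to a degenerate configuration (say $r=s$) through generic quadruples avoiding the locus where some $\A$ is exactly $\pm \pi/2$, so that the alternating sum stays strictly inside $(-2\pi,2\pi)$, is continuous and integer-multiple-of-$2\pi$-valued modulo $2\pi$, hence identically $0$. This continuity step is the main technical obstacle and the place where the restricted range of $\A$ is genuinely used.
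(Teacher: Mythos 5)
The paper offers no proof of \cref{prop:Cartan-properties}: it defers to \cite[Chapter 7]{Goldman}. Your items (1)--(4) reproduce that standard argument faithfully. The Gram-determinant identity $\det G = 2\Re\bigl(\la\bp,\bq\ra\la\bq,\br\ra\la\br,\bp\ra\bigr)=|\det P|^2\det H\leqslant 0$ is exactly the right tool, and the normalisation-plus-stabiliser computation for (2) is the classical one. Two small slips there: in (4) the triple product $\la\bp,\bq\ra\la\bq,\br\ra\la\br,\bp\ra$ must be real and \emph{negative} (it can never be real and positive, precisely because of your determinant inequality), though your rescaling argument is unaffected; and the stabiliser of the ordered pair $(0,\infty)$ is the full two-real-parameter group $[z,t]\mapsto[\lambda z,|\lambda|^2t]$, $\lambda\in\C^*$, not a one-parameter group together with a rotation.

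The genuine gap is in (5). Your cancellation showing that the alternating combination of triple products equals $|\la\bp,\br\ra|^2/|\la\bq,{\bm s}\ra|^2>0$ is correct and gives the cocycle relation modulo $2\pi$. But the argument you propose to remove the $\pm2\pi$ ambiguity is misdirected. For a quadruple of pairwise distinct points none of whose sub-triples lies on a complex line, no deformation is needed at all: each term is strictly inside $(-\pi/2,\pi/2)$, so the alternating sum lies in $(-2\pi,2\pi)\cap 2\pi\Z=\{0\}$ outright. The deformation is only needed for the remaining quadruples, namely those with some $|\A|=\pi/2$ --- and these are exactly the quadruples your path is required to avoid, so the path cannot start at them. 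Moreover your chosen anchor $r=s$ is a configuration at which the Cartan invariant is genuinely discontinuous (this is the content of \cref{lem:Cartan-coalesce} in this very paper), so continuity up to that endpoint fails. The repair is easy: the alternating sum is continuous on the connected space of pairwise distinct quadruples, because each quantity $-\la\bp,\bq\ra\la\bq,\br\ra\la\br,\bp\ra$ stays in the closed right half-plane minus the origin, where $\arg$ is continuous; it is $2\pi\Z$-valued there by your computation; and it vanishes on the dense open subset of quadruples with no sub-triple on a complex line. Hence it vanishes identically. Alternatively, observe that the sum can equal $\pm2\pi$ only if all four sub-triples lie on complex lines, which for distinct points forces all four points onto a single $\C$-circle, where the claim reduces to the cocycle identity for the orientation cocycle of a circle. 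The cases with coincident points should then be checked separately from the convention $\A=0$, and they follow from the antisymmetry $\A(q,p,s)=-\A(p,q,s)$.
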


\subsection{The line map and the duality between $\HdC$ and $\HuuC$}\label{section:complex-lines}

We will often work with projective lines in $\CP^2$. The set of lines in $\CP^2$ can be described as the dual projective space, denoted by $\CPstar$. The Hermitian form gives a natural identification between $\CP^2$ and $\CPstar$, which in turn gives a polarity between points and lines of $\CP^2$. We review here some basic properties of this notion of polarity.

\begin{definition}\label{def:polar}
 Let $p$ be a point in $\CP^2$ and $L$ a projective complex line. We say that $p$ is {\it polar} to $L$ if $L=\P(\bp^\perp)$.
\end{definition}
The restriction of the Hermitian form on planes can be of signature $(+,+)$, $(-,+)$ or degenerate. Using polarity we can describe the situation as follows:
\begin{itemize}
 \item Positive type directions are orthogonal to $2$-planes with signature $(+,-)$. This means that points in $\HuuC$ are polar to complex lines that intersect $\HdC$.
 \item Negative type directions are orthogonal to $2$-planes with signature $(+,+)$. Thus points of $\HdC$ are polar to complex lines contained in $\HuuC$.
 \item Null type directions are orthogonal to $2$-planes with signature $(0,+)$. In fact, a point $p$ in $\partial\HdC$  is polar to its orthogonal complex line $p^\perp$ tangent to $\partial\HdC$ at $p$. It is the only case where $p$ belongs to its polar line.
\end{itemize}
In particular, we observe that $\HuuC$ is in bijection with the Grassmanian of complex lines of $\HdC$. This is indeed  another usual definition of $\HuuC$. 

Let us denote by $\Delta$ the diagonal of $\CP^2\times\CP^2$, and by $\Delta_{\S^3}$ the portion of $\Delta$ given by $\S^3$. For any pair $a,b$ of distinct points in $\CP^2$, we call $\mathcal{L}(a,b)$ the (unique) complex line containing $a$ and $b$. We note that this defines a $\PGL(3,\C)$-equivariant map on $(\CP^2\times\CP^2)\setminus \Delta$, where $\PGL(3,\C)$ acts diagonally on $\CP^2\times \CP^2$. We can extend this map to $\Delta_{\S^3}$ in a $\PU(2,1)$-equivariant way by defining $\mathcal{L}(a,a)$ to be the complex line tangent to $\S^3$ at $a$, that is $\mathcal{L}(a,a)=\P(\ker(\la ,\ba\ra))$. This is the largest $\PU(2,1)$-equivariant extension of $\mathcal{L}$ to a  subset of $\CP^2\times \CP^2$ larger than $(\CP^2\times\CP^2)\setminus \Delta$.

\begin{definition}\label{def:line-map}
The map 
\begin{equation}\label{eq:defi-linemap}\mathcal{L} : \Bigl(\left(\CP^2\times \CP^2\right)\setminus \Delta\Bigr)\cup\Delta_{\S^3} \longrightarrow \CPstar\end{equation} 
defined above is called the {\it line map}.
\end{definition}

The following proposition will play an important role in our work.

\begin{proposition}\label{prop:L-not-continuous}
The line map is continuous on $\CP^2\times \CP^2\setminus \Delta$, but it is not continous at any point of $\Delta_{\S^3}$.
\end{proposition}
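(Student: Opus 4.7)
The plan is to split the two assertions.

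\medskip

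\textbf{Continuity on $(\CP^2\times\CP^2)\setminus\Delta$.} I would argue via Plücker coordinates. Identifying $\CPstar$ with $\P(\Lambda^2\C^3)$ (a complex line in $\CP^2$ is determined by the $2$-plane of $\C^3$ it lifts to, which is determined up to scale by the wedge of any two independent vectors in it), the line map reads
\[
\mathcal L(a,b) = [\ba\wedge\bb] \in \P(\Lambda^2\C^3),
\]
where $\ba,\bb$ are any lifts of $a,b$. The wedge product $\C^3\times\C^3\to \Lambda^2\C^3$ is continuous and bilinear, and vanishes exactly when $\ba,\bb$ are colinear, i.e.\ when $a=b$ in $\CP^2$. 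Hence $[\ba\wedge\bb]$ depends continuously on $(a,b)$ on the complement of $\Delta$, proving the first claim.

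\medskip

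\textbf{Discontinuity at every point of $\Delta_{\S^3}$.} By the $\PU(2,1)$-equivariance of $\mathcal L$ and the transitivity of $\PU(2,1)$ on $\partial\HdC$, it is enough to exhibit the discontinuity at a single convenient point $(p,p)$ with $p\in\S^3$. Pick any $\C$-circle $C$ through $p$ (e.g., in horospherical coordinates with $p=[0,0]$, take the vertical axis $\{[0,t],\ t\in\R\}\cup\{\infty\}$), and let $L_C$ be the unique projective complex line containing $C$. The key observation is twofold. First, $L_C$ is a fixed projective line, and for any two distinct points $a,b\in C$ we have $\mathcal L(a,b)=L_C$. Second, $L_C\neq \mathcal L(p,p)$: indeed $\mathcal L(p,p)=\P(\ker\la\cdot,\bp\ra)$ is the complex projective line tangent to $\S^3$ at $p$, and meets $\S^3$ only at $p$, whereas $L_C$ meets $\S^3$ along the whole circle $C\ni p$. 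Choosing sequences $a_n,b_n\in C$ with $a_n\neq b_n$ and $a_n,b_n\to p$ (possible because $C$ is a topological circle through $p$), one has $(a_n,b_n)\to (p,p)$ in $(\CP^2\times\CP^2)\setminus\Delta$ while $\mathcal L(a_n,b_n)\equiv L_C\neq \mathcal L(p,p)$, which gives the desired discontinuity.

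\medskip

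The geometric point behind the failure at $\Delta_{\S^3}$ is exactly that $\S^3$ is a real hypersurface in $\CP^2$, so the projective limit of secants to $\S^3$ based at a point $p$ depends on the direction of approach; approaching $p$ along a $\C$-circle gives the containing complex line, while approaching along an $\R$-circle (or more generally tangentially to the CR contact plane) gives the tangent complex line $\mathcal L(p,p)$. The extension chosen in \Cref{def:line-map} records only the second of these limits, so no other direction of approach can be absorbed continuously. There is no real obstacle in the argument; the only thing to verify carefully is the characterization $\mathcal L(p,p)=\P(\ker\la\cdot,\bp\ra)$ and the elementary fact that this tangent complex line meets $\S^3$ only at $p$, both of which follow directly from $\la\bp,\bp\ra=0$.
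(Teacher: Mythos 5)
Your proof is correct and rests on the same observation as the paper's: secant lines from $p$ to nearby points of the domain realize lines through $p$ other than the tangent line $\mathcal L(p,p)$, the paper phrasing this as $\mathcal L(p,U)=\CP^1$ for every neighborhood $U$, while you instantiate it concretely by approaching $p$ along a $\C$-circle. Your additional verifications (continuity off the diagonal via the wedge product, and the fact that the tangent complex line meets $\S^3$ only at $p$) are accurate and merely make explicit what the paper leaves implicit.
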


\begin{proof}
Observe that for any neighborhood $U$ of a point $p$ in $\Bigl(\left(\CP^2\times \CP^2\right)\setminus \Delta\Bigr)\cup\Delta_{\S^3}$,
$\mathcal{L}(p,U)=\CP^1$ identified to the set of all lines passing through $p$.  This shows that the line map is not continuous at diagonal
points.
\end{proof}

As explained before, we can identify $\CPstar$ to $\CP^2$ using the Hermitian form. Using this polarity, we define a variant of the line map:
\begin{definition}
For any pair $(a,b)$ of distinct points in $\CP^2$ the point $a\boxtimes b$ is the projection of the unique line orthogonal to both $a$ and $b$.
\end{definition}
As a direct consequence of the above discussion and definitions, we have
\begin{lemma}\label{lem:box-polar}
\begin{enumerate}
\item For any pair $(a,b)$ of distinct points of $\CP^2$, the line $\mathcal{L}(a,b)$ is polar to $a\boxtimes b$.
\item For any point $a$ in $\S^3$, the line $\mathcal{L}(a,a)$ is polar to $a$.
\end{enumerate}
\end{lemma}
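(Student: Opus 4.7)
The plan is to unpack the definitions directly; both statements are essentially immediate from the way polarity, the line map, and the box operation are set up, so no deep machinery is required.

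For part (1), I would start by fixing lifts $\ba, \bb \in \C^3$ of the distinct points $a, b$ and letting $\bc$ be a nonzero vector spanning the one-dimensional subspace $(\C\ba + \C\bb)^\perp$, which is well defined since $\C\ba + \C\bb$ is a two-dimensional subspace of $\C^3$. By the definition of $a \boxtimes b$, the projective class $c = [\bc]$ is the point $a \boxtimes b$. From $\langle \ba, \bc\rangle = 0$ and $\langle \bb, \bc\rangle = 0$ I get $\ba, \bb \in \bc^\perp$, so the projective line $\P(\bc^\perp)$ contains both $a$ and $b$. Since there is a unique projective line through two distinct points, $\mathcal{L}(a,b) = \P(\bc^\perp)$, which by \cref{def:polar} is exactly the statement that $\mathcal{L}(a,b)$ is polar to $a\boxtimes b$.

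For part (2), I would observe that a point $a \in \S^3$ has a null lift, i.e.\ $\langle \ba, \ba\rangle = 0$, so $\ba \in \ba^\perp$. By the $\PU(2,1)$-equivariant extension of the line map to $\Delta_{\S^3}$ given just before \cref{def:line-map}, we have $\mathcal{L}(a,a) = \P(\ker \langle \cdot , \ba\rangle) = \P(\ba^\perp)$. Again by \cref{def:polar}, this says that $\mathcal{L}(a,a)$ is polar to $a$.

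There is no real obstacle here beyond checking that the objects are well defined: one must verify that when $a \neq b$ the orthogonal complement $(\C\ba + \C\bb)^\perp$ really is one-dimensional (which follows from non-degeneracy of $\langle \cdot, \cdot\rangle$ together with $\dim_\C(\C\ba + \C\bb) = 2$), and that the formulas are independent of the chosen lifts (which is clear since scaling a lift only scales the orthogonal complement by the same factor). Both items then follow in one line each from chasing the definitions.
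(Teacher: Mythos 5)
Your proof is correct and is exactly the definition-chasing the paper has in mind: the paper states this lemma without a written proof, introducing it with ``as a direct consequence of the above discussion and definitions,'' and your argument is just a careful expansion of that (identifying $a\boxtimes b$ with $\P\bigl((\C\ba+\C\bb)^\perp\bigr)$, taking double orthogonal complements for (1), and reading off $\mathcal{L}(a,a)=\P(\ba^\perp)$ for (2)). Nothing is missing; the only implicit ingredient worth naming is the non-degeneracy of the Hermitian form, which you do invoke.
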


By twisting the usual exterior product, one obtains a useful way of computing $a\boxtimes b$ (see also Section 2.2.7 of \cite{Goldman}). This will allow us, later on, to explicitly compute when working with the line map.
\begin{definition}\label{def:boxprod}
 Let $\ba$ and $\bb$ be two vectors in $\C^3$, and let $J$ be the matrix of the Hermitian form in the canonical basis. 
 We denote by $\ba \boxtimes \bb$ (the box product) the vector $\overline{J^{-1} \ba\wedge \bb}$
\end{definition}
Remark that the vector $\ba \boxtimes \bb$ is orthogonal to $\ba$ and $\bb$ : this follows directly from 
\begin{equation}\label{eq:boxtimes}
 \la X , \ba\boxtimes \bb\ra  =  X ^T J\cdot J^{-1} \ba\wedge \bb,
\end{equation}
which is clearly vanishing if $X=\ba$ or $X=\bb$. The vector $\ba\boxtimes\bb$ vanishes if and only if $\ba$ and $\bb$ are proportional. If $a\neq b$,  the point $\P(\ba\boxtimes\bb)$ is $a\boxtimes b$.

Computing with the box product is made easier by the following relations, that all come from standard identities for the usual exterior product. For any vectors $\ba,\bb,\bc,\bd \in \C^3$, we have (see also section 2.2.7 of \cite{Goldman})
\begin{eqnarray}
  \la \ba\boxtimes \bb,\bc\boxtimes \bd\ra & = & \la \bd,\ba\ra\la\bc,\bb\ra-\la\bc,\ba\ra\la\bd,\bb\ra\label{eq:box-expansion}\\
  \la \ba,\bb\boxtimes \bc\ra & = & \det(\ba,\bb,\bc)\label{eq:box-det}\\
  (\ba \boxtimes \bb)\boxtimes( \ba \boxtimes \bc) & = &\det(\ba,\bb,\bc)\cdot \ba\label{eq:box-det-vect}
\end{eqnarray}

\subsection{Geometry of $\HuuC$\label{section:Huuc}}

We will be most interested in this paper by $\C$-circles, which are intersections of a projective line meeting $\HdC$ with its sphere at infinity. As stated before, these lines are polar to points in $\HuuC$. We thus describe here some needed properties of the geometry of $\HuuC$, in particular with respect to $\C$-circles and polarity.

First, we can understand when two $\C$-circles meet, using polarity:
\begin{lemma}\label{lem:Ccircles-meet}
Let $x \neq y$ be two points of $\HuuC$. Then the $\C$-circles polar to $x$ and $y$ meet if and only if $x\boxtimes y \in \S^3$. Their intersection is then the point $x\boxtimes y$.

In particular, if $a\neq b$ and $c\neq d$ are four points in $\S^3$, not belonging to the same complex line, the $\C$-circle through $a,b$ and the one through $c,d$ meet if and only if $(a\boxtimes b)\boxtimes (c\boxtimes d)\in\S^3$.
\end{lemma}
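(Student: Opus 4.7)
The strategy I would adopt is to translate the question about $\C$-circles into a question about projective lines in $\CP^2$ via polarity, and then use the box product to write down their unique point of intersection explicitly.

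For the first statement, I would start by setting $L_x = \P(\bx^\perp)$ and $L_y = \P(\by^\perp)$, so that the $\C$-circles considered are $L_x\cap\S^3$ and $L_y\cap\S^3$. Since polarity is a bijection between $\CP^2$ and $\CPstar$ and $x\neq y$, the projective lines $L_x$ and $L_y$ are distinct, hence meet at a unique point $z\in\CP^2$. A lift $\bm z$ of $z$ must be simultaneously orthogonal to $\bx$ and to $\by$; such a direction exists and is unique, and by the very definition of the box product together with \eqref{eq:boxtimes}, it is spanned by $\bx\boxtimes\by$ (which is nonzero precisely because $\bx,\by$ are not proportional). Therefore $z = x\boxtimes y$. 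The two $\C$-circles meet in $\S^3$ if and only if $z$ itself lies in $\S^3$, i.e. $x\boxtimes y$ has null type; and in that case the intersection is the single point $x\boxtimes y$.

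For the second statement, I would first check that $a\boxtimes b$ and $c\boxtimes d$ are genuine points of $\HuuC$. Using identity \eqref{eq:box-expansion} together with $\la\ba,\ba\ra=\la\bb,\bb\ra=0$, one gets
\[
\la \ba\boxtimes\bb,\ba\boxtimes\bb\ra \;=\; |\la\ba,\bb\ra|^2 - \la\ba,\ba\ra\la\bb,\bb\ra \;=\; |\la\ba,\bb\ra|^2.
\]
The quantity $\la\ba,\bb\ra$ vanishes exactly when $b$ belongs to the tangent complex line $\mathcal{L}(a,a)=\P(\ba^\perp)$, which meets $\S^3$ only at $a$; since $a\neq b$, this is nonzero, so $a\boxtimes b$ has positive type, and similarly for $c\boxtimes d$. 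Moreover, $a\boxtimes b = c\boxtimes d$ in $\CP^2$ if and only if their polars $\mathcal{L}(a,b)$ and $\mathcal{L}(c,d)$ coincide, if and only if $\{a,b,c,d\}$ is contained in a common complex line; the hypothesis therefore guarantees $a\boxtimes b \neq c\boxtimes d$. I can then apply the first part to $x = a\boxtimes b$ and $y = c\boxtimes d$ to conclude.

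The bulk of the argument is algebraic book-keeping: there is no real obstacle, only the need to carefully confirm the non-degeneracy inputs (distinct points in $\S^3$ give a positive-type box product, and the four-point hypothesis is exactly what forces the two polars to be distinct). The geometric heart of the proof, namely that two distinct complex projective lines in $\CP^2$ always meet in a unique point expressible via the box product, is immediate from \eqref{eq:boxtimes} and \Cref{lem:box-polar}.
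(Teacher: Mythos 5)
Your argument is correct and follows exactly the paper's route: identify the two $\C$-circles as intersections of the polar lines with $\S^3$, note that these distinct projective lines meet at the unique point $x\boxtimes y$, and conclude that the circles meet precisely when that point is null. The only difference is that you spell out the non-degeneracy checks (positive type of $a\boxtimes b$ and distinctness of the two polars) that the paper dismisses with ``the second part follows readily''; these checks are accurate and welcome but do not change the argument.
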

\begin{proof}
The line polar to $x$ and $y$ meet in exactly the point $x\boxtimes y$. So the $\C$-circles meet if and only if this point belong to the sphere. The second part follows readily. 
\end{proof}

We denote by $\RP^2\subset \CP^2$ the projection to $\CP^2$ of $\R^3\subset \C^3$. As we have seen above, the intersection of $\RP^2$ with $\HdC$ is $\HdR$, and its intersection with $\S^3$ is the $\R$-circle $\partial\HdR$. No that $\RP^2$ is exactly the set of points in $\CP^2$ fixed by complex conjugation.

Restricted to $\R^3$, the Hermitian form gives a scalar product. This comes with a notion of polarity. This two notion are of course coherent:
\begin{lemma}\label{lem:RP2-Ccircle}
 The following are equivalent.
\begin{enumerate}
\item A point $m\in\HuuC$ belongs to $\RP^2$
\item The complex line $L_m$ polar to $m$ intersects $\HdR$ along a geodesic
\item The $\C$-circle $\partial L_m$ intersects $\partial \HdR$ in exactly two points.
\end{enumerate}
\end{lemma}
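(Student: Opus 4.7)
The plan is to base everything on the antiholomorphic involution $\sigma : \CP^2 \to \CP^2$ given by coordinate-wise complex conjugation in coordinates where the matrix of $\la\cdot,\cdot\ra$ is real (for instance $H_B$ of \eqref{eq:herm-models}). Then $\RP^2 = \mathrm{Fix}(\sigma)$, and since the matrix is real, $\sigma$ restricts to an isometric involution of $\HdC$ whose fixed locus is $\HdR$ and whose action on $\partial\HdC$ fixes exactly the $\R$-circle $\partial\HdR$. The crux is that polarity is $\sigma$-equivariant: $L_{\sigma(m)} = \sigma(L_m)$, so $m \in \RP^2$ if and only if $L_m$ is $\sigma$-invariant.

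For (1) $\Rightarrow$ (2): given $m \in \RP^2$, a real lift $\bmm \in \R^3$ exists (from $\sigma\bmm = \lambda\bmm$ one gets $|\lambda|=1$, and rescaling by a suitable unit phase yields a real representative), so $\bmm^\perp$ is $\sigma$-stable and $L_m$ is $\sigma$-invariant. A complex projective line cannot lie inside $\RP^2$, so $\sigma$ acts non-trivially on $L_m$ with fixed locus a real projective line $L_m \cap \RP^2$. Since $m \in \HuuC$, the form has signature $(+,-)$ on $\bmm^\perp$; restricted to the real $3$-plane $\bmm^\perp \cap \R^3$ this produces a real quadratic form still of signature $(+,-)$, which forces $L_m \cap \RP^2$ to meet $\HdC$, yielding a full geodesic of $\HdR$ contained in $L_m$.

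For (2) $\Rightarrow$ (3): the two ideal endpoints of the geodesic $L_m \cap \HdR$ lie in $\partial L_m \cap \partial\HdR$; any third point of this intersection would produce a triple of pairwise distinct points whose Cartan invariant would have to be both $0$ (as a triple on the $\R$-circle $\partial\HdR$) and $\pm\pi/2$ (as a triple on the $\C$-circle $\partial L_m$), contradicting \cref{prop:Cartan-properties}. For (3) $\Rightarrow$ (1): write $\{p,q\} = \partial L_m \cap \partial\HdR$. Both points admit real lifts $\bp,\bq$, and the unique complex line through them is $L_m = \P(\mathrm{span}_\C(\bp,\bq))$; this subspace is $\sigma$-invariant, hence so is its orthogonal, so $m$ is $\sigma$-fixed and $m \in \RP^2$.

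The one step requiring genuine care is the signature argument inside (1) $\Rightarrow$ (2): one must ensure that the real projective line $L_m \cap \RP^2$ actually meets $\HdC$ rather than lying entirely outside it, and this is precisely where the hypothesis $m \in \HuuC$ is essential. Indeed, had $m$ been of negative type, the form restricted to $\bmm^\perp \cap \R^3$ would be of signature $(+,+)$, producing a real projective line of positive-type points disjoint from $\HdR$.
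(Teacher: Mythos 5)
Your proof is correct and follows essentially the same route as the paper's: both rest on complex conjugation and the existence of real lifts, with your (1)$\Rightarrow$(2) and (3)$\Rightarrow$(1) matching the paper's two implications almost verbatim (the paper handles (2)$\Leftrightarrow$(3) by total geodesy of $\HdR$ where you invoke the Cartan invariant, and it leaves implicit the signature computation that you spell out). One small slip: $\bmm^\perp\cap\R^3$ is a real $2$-plane, not a $3$-plane --- which is in fact what makes your claim that the restricted form has signature $(+,-)$ meaningful.
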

\begin{proof}
The last two items are equivalent since $\HdR$ is totally geodesic.

Assume $L_m$ intersects $\HdR$ along a geodesic $\gamma$, and pick two points $p,q\in\gamma$. Then $m=p\boxtimes q$ by definition. As $p$ and $q$ belong to $\RP^2$, they are fixed by complex conjugation and so does $m$. So $m$ belongs to $\RP^2$. 

Conversely, assume that $m\in\RP^2$, and pick a lift $\bmm$ with real coefficients. Then the orthogonal $\bmm^\perp$ intersects $\R^3$ along a 2 dimensional (real) vector subspace $V$. The projection of this subspace to $\CP^2$ intersects $\HdC$ along a geodesic which is contained in $\HdR$. 
\end{proof}

We can look at intersections of tangent lines to the sphere with $\RP^2$. Elementary projective geometry gives:
\begin{proposition}\label{prop:tangent-line-RP2}
Let $p\in \S^3\setminus\partial\HdR$. Then we have:
\begin{enumerate}
\item The complex line $\mathcal{L}(p,p)$ tangent to $\S^3$ at $p$ intersects $\RP^2$ in exactly one point. 
\item The point $m = \mathcal{L}(p,p)\cap\RP^2$ is polar to a complex line whose $\C$-circle intersects $\partial\HdR$ twice and contains $p$.
\end{enumerate}
\end{proposition}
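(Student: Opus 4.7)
For part 1, the plan is to use complex conjugation on $\CP^2$, whose fixed locus is exactly $\RP^2$. Applying it to the equation $\la Z,\bp\ra = 0$ defining $\mathcal{L}(p,p)$ turns it into the equation of $\mathcal{L}(\bar p,\bar p)$, so the real points of $\mathcal{L}(p,p)$ are precisely $\mathcal{L}(p,p)\cap\mathcal{L}(\bar p,\bar p)$. The hypothesis $p\notin \partial\HdR = \S^3 \cap \RP^2$ gives $p\neq \bar p$ in $\CP^2$, so the lines $\mathcal{L}(p,p) = \P(\bp^\perp)$ and $\mathcal{L}(\bar p,\bar p) = \P(\overline{\bp}^\perp)$ are distinct (their poles differ, in view of \cref{lem:box-polar}). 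Two distinct projective lines of $\CP^2$ meet in exactly one point, which provides the unique $m$.

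For part 2, I would establish two facts about $m$: that $p$ lies on the $\C$-circle polar to $m$, and that this $\C$-circle meets $\partial\HdR$ in exactly two points. The first is immediate from $m\in \mathcal{L}(p,p) = \P(\bp^\perp)$, which gives $\la \bmm,\bp\ra = 0$ and places $p$ on the line $\P(\bmm^\perp)$ polar to $m$; combined with $p\in\partial\HdC$ this puts $p$ on the $\C$-circle. For the second, since $m\in\RP^2$, \cref{lem:RP2-Ccircle} reduces matters to showing $m\in\HuuC$, equivalently $\la \bmm,\bmm\ra > 0$.

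To carry out this computation I would work in the ball model, where $H_B$ is real, and write $\bp = \ba + i\bb$ with $\ba,\bb\in\R^3$. The equation $\la Z,\bp\ra = 0$ splits for real $Z$ into the two real conditions $\la Z,\ba\ra = 0$ and $\la Z,\bb\ra = 0$, so $\bmm = \ba\boxtimes\bb$ is a real lift of $m$. Applying the expansion formula \eqref{eq:box-expansion} with $\bc=\ba$, $\bd=\bb$, together with the reality of the inner products involved, gives
\[
\la \bmm,\bmm\ra = \la \ba,\bb\ra^{2} - \la \ba,\ba\ra\,\la \bb,\bb\ra.
\]
The condition $p\in\S^3$ reads $\la \bp,\bp\ra = 0$, which expands to $\la \ba,\ba\ra + \la \bb,\bb\ra = 0$; substituting yields
\[
\la \bmm,\bmm\ra = \la \ba,\bb\ra^{2} + \la \ba,\ba\ra^{2} \geqslant 0.
\]

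The main obstacle is to rule out equality in this last inequality. If $\la \bmm,\bmm\ra = 0$, then $\ba$ and $\bb$ are mutually orthogonal null vectors for the Lorentzian form of signature $(2,1)$ on $\R^3$; I would then invoke the standard fact that in Lorentzian signature two orthogonal null vectors must be proportional, which would make $\bp$ a complex multiple of a real vector and force $p\in\RP^2\cap\S^3 = \partial\HdR$, contradicting the hypothesis. Thus the inequality is strict, $m\in\HuuC$, and \cref{lem:RP2-Ccircle} then produces the two intersection points of the $\C$-circle polar to $m$ with $\partial\HdR$.
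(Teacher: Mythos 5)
Your proof is correct. Part 1 is essentially the paper's own argument: both identify the real points of $\mathcal{L}(p,p)$ with $\mathcal{L}(p,p)\cap\mathcal{L}(\bar p,\bar p)$ and use that two distinct projective lines meet in one point.

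For part 2 you take a more computational route than the paper. The paper simply observes that, since $m$ lies on both tangent lines $\bp^\perp$ and $\overline{\bp}^{\,\perp}$, its polar line is $\mathcal{L}(p,\bar p)$; this line visibly contains $p$ and contains the two distinct null points $p\neq\bar p$, so it meets $\HdC$ (equivalently $\la \bmm,\bmm\ra>0$) with no computation, and \cref{lem:RP2-Ccircle} finishes. You instead compute $\la\bmm,\bmm\ra$ for the real lift $\bmm=\ba\boxtimes\bb$ via \eqref{eq:box-expansion}, obtaining $\la\ba,\bb\ra^{2}+\la\ba,\ba\ra^{2}\geqslant 0$, and must then rule out equality by the Lorentzian fact that orthogonal null vectors in signature $(2,1)$ are proportional (which correctly forces $p\in\partial\HdR$, excluded by hypothesis). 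Both arguments are sound; yours costs an explicit computation plus the null-vector lemma, but has the side benefit of exhibiting the real lift $\ba\boxtimes\bb$ of $m$ and the exact value of its norm, while the paper's identification $m^\perp=\mathcal{L}(p,\bar p)$ gets positivity and the containment $p\in\partial L_m$ simultaneously and for free. One cosmetic remark: you should note (as is implicit in your equality analysis) that $\ba\boxtimes\bb\neq 0$, i.e.\ $\ba$ and $\bb$ are not proportional, which again follows from $p\notin\RP^2$.
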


\begin{proof}
Consider $\bar{p}$  the complex conjugate of $p$.  Two distinct complex lines intersect at only one point say $m=   \mathcal{L}(p,{p})\cap  \mathcal{L}(\bar{p},\bar{p})$.  This point is then fixed by complex conjugation and therefore $m\in \RP^2$. It is the only intersection point of $\mathcal L(p,p)$ with $\RP^2$. 

For the second part, observe that the line  $\mathcal{L}(p,\bar{p})$ is polar to $m$ as $m$ is in the tangent lines at $p$ and $\bar{p}$ simultaneously. It is invariant by complex conjugation, so belong to $\RP^2$.
One can then use the previous proposition to assert that its associated $\C$-circle intersects twice $\partial\HdR$.
\end{proof}

Given $a\neq b\in C$ and $C$ the $\C$-circle through $a,b$, we call {\it arc of $\C$-circle} a connected component of $C\setminus\{a,b\}$. The points $a$ and $b$ are called the endpoints of the arc. Two distinct points of $C$ define two arcs of $C$-circle supported by $C$. Note that these two arcs are naturally oriented by the orientation of $C$. This leads to the following notation:
\begin{definition}\label{def:arc}
Let $a\neq b$ be two distinct points of $\S^3$. We call \emph{arc from $a$ to $b$}, denoted by $\arc ab$, the portion of the $\C$-circle through $a$ and $b$ oriented from $a$ to $b$.
\end{definition}
These arcs are chains in the CR-geometry. They are the main character of our central object of study in this paper, that we describe below.

\subsection{A foliation by arc of $\C$-circles and unit tangent bundles\label{subs:foliation}}

The geometry explained above may be used to describe a well-known foliation of $\S^3\setminus \partial \HdR$ by arcs of $\C$-circles -- see \cref{fig:Ccircles-foliation}:

\begin{corollary}\label{coro:foliation}
Let $R$ be an $\R$-circle in $\S^3$. The set of arcs $\arc ab$ of $\C$-circles whose endpoints $a, b$ belong to $R$ defines a foliation of $\S^3\setminus R$.
\end{corollary}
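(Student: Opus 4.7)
The plan is to reduce to the canonical case $R = \partial\HdR$ (using transitivity of $\PU(2,1)$ on $\R$-circles) and then to show both that each $p \in \S^3 \setminus R$ lies on a \emph{unique} $\C$-circle meeting $R$ in two points, and that the resulting partition by arcs has the required local-product structure.

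For existence and uniqueness, the key input is \cref{prop:tangent-line-RP2}. Given $p \in \S^3 \setminus R$, the tangent complex line $\mathcal{L}(p,p)$ meets $\RP^2$ at a single point $m_0(p)$; the complex line $L$ polar to $m_0(p)$ then intersects $\HdR$ in a geodesic, so $\partial L$ contains $p$ and meets $R$ in two points $a(p), b(p)$. Conversely, if some other complex line $L' \ni p$ has $\partial L'$ meeting $R$ in two points, then $L'$ intersects $\HdR$ in a geodesic, and by \cref{lem:RP2-Ccircle} its polar $m'$ lies in $\HuuC \cap \RP^2$. Orthogonality $m' \perp p$ forces $m' \in \mathcal{L}(p,p) \cap \RP^2 = \{m_0(p)\}$, so $L' = L$. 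Hence $p$ lies on exactly one such $\C$-circle, and thus on exactly one of the two arcs $\arc{a(p)}{b(p)}$ and $\arc{b(p)}{a(p)}$, giving a partition of $\S^3 \setminus R$.

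To promote this partition into a foliation, I would check that the leaf map $p \mapsto m_0(p)$ is continuous from $\S^3 \setminus R$ into $\HuuC \cap \RP^2$. The tangent line $\mathcal{L}(p,p) = \P(\bp^\perp)$ varies continuously with the standard lift $\bp$, and the single-valued intersection with $\RP^2$ is continuous on $\S^3 \setminus R$; the endpoint pair $\{a(p), b(p)\}$ inherits this continuity as the two intersections of $\partial L_{m_0(p)}$ with $R$. Local trivialisation near any arc is then obtained by pulling back, via this continuous map, a small transversal disk in $\HuuC \cap \RP^2$, together with a continuous orientation choice to distinguish the two arcs in each fibre. The one mild subtlety is to confirm that $m_0(p)$ really belongs to $\HuuC$ (and not to $\S^3$ or $\HdC$): this follows from \cref{lem:RP2-Ccircle} once we know that $\partial L$ meets $R$ in two distinct points, which has already been established in the existence step.
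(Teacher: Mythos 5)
Your proposal is correct and follows essentially the same route as the paper: reduce to $R=\partial\HdR$ by transitivity and apply \cref{prop:tangent-line-RP2} to get that each $p\in\S^3\setminus R$ lies on a unique $\C$-circle meeting $R$ twice. The only differences are cosmetic — where the paper's second paragraph rules out intersecting arcs via complex conjugation (two arcs through $p$ would force their $\C$-circles to share $p$ and $\bar p$), you obtain the same uniqueness directly from the polarity argument $m'\in\mathcal{L}(p,p)\cap\RP^2=\{m_0(p)\}$, and you additionally spell out the continuity and local-triviality needed to upgrade the partition to a foliation, which the paper leaves implicit.
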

\begin{proof}
Since $\PU(2,1)$ acts transitively on the set of real planes of $\HdC$, we may assume that $R=\partial\HdR$.  Now, let $p\in \S^3\setminus \partial\HdR$. A complex line contains $p$ if and only if it is polar to a point $n$ in the tangent complex line $\mathcal{L}(p,p)$, and intersects $\partial\HdR$ in two points if and only if $n\in\RP^2$. The result follows thus directly from Proposition \ref{prop:tangent-line-RP2}.

Moreover, if to such arcs $\arc ab$ and $\arc cd$ meet at $p\in \S^3\setminus R$, then the $\C$-circles supporting these arcs also meet at $\bar{p}$. Since $p\neq \bar{p}$, they have two common points, so the $\C$-circles are indeed the same one. This $\C$-circle has only two intersection point with $R$, by \cref{prop:tangent-line-RP2}. So $\{a,b\}=\{c,d\}$. As two opposed arcs of a single $\C$-circle are disjoint, we have $(a,b)=(c,d)$.
\end{proof}

The foliation given by \cref{coro:foliation} is well-known and  has the following nice geometric interpretation. It follows from the fact that $\S^3\setminus\partial\HdR$ is naturally homeomorphic to the unit tangent bundle of $\HdR$. This homeomorphism can be described as follows. We denote by $J$ the complex structure on $\HdC$. Viewing $\HdR$ as a subspace of $\HdC$, a point in $\unit{\HdR}$ is a pair $(p,\vec{u})$, where $p\in\HdR$ and $\vec{u}$ is a unit vector at $p$, tangent to $\HdR$. Then the homeomorphism is given by
\begin{eqnarray}
\phi & : & \unit{\HdR} \longrightarrow \S^3\setminus\HdR \nonumber\\
     &  & (p,\vec{u}) \longmapsto \gamma(p,-J_p\vec{u},+\infty),\nonumber
\end{eqnarray}
where $\gamma(p,J_p\vec{u},+\infty)$ is the point at infinity of the geodesic from $p$ in the direction $\vec{u}$ (see \cref{figure:unit-tangent-bundle}). Note that $\phi$ could be in fact defined on the whole $\unit{\HdC}$. If $(p,\vec{u})\in \unit{\HdR}$, the associated orbit of the geodesic flow is obtained by applying $J$ to all unit tangent vectors along the geodesic in $\HdR$ spanned and oriented by $(p,\vec{u})$. In particular, it is contained in the boundary of the complex line of $\HdC$ spanned by $p$ and $\vec{u}$. In fact, it is exactly the arc of $\C$-circle connecting 
$\gamma(p,J_p\vec{u},-\infty)$ and $\gamma(p,J_p\vec{u},+\infty)$ on which the natural orientation (given by the complex structure) coincide with the one given by $\vec{u}$. On \cref{figure:unit-tangent-bundle}, this arc is the "lower" one.

This foliation and its link to the unit tangent bundle $\unit{\HdR}$ may in turn be used to understand CR-spherical structures on unit tangent bundles of hyperbolic surfaces.
Let us consider an $\R$-Fuchsian group $\Gamma\subset \PO(2,1)\subset \PU(2,1)$ acting on $\HdR$. We note that the limit set of such a group is $\Lambda_\Gamma=\partial\HdR$, and its disconinuity region is $\Omega_\Gamma = \S^3\setminus \partial\HdR$. The homeomorphism $\phi$ constructed above is clearly $\PO(2,1)$-equivariant, thus  $\phi$  descends to the quotient, and one easily obtain the following classical result -- see for instance  \cite[Proposition 2.7]{Goldman-Kapovich-Leeb}, where the same result is obtained by considering Euler numbers of circle bundles.

\begin{proposition}\label{R-Fuchsian-UTB}
Let $\Gamma\subset \PO(2,1)\subset \PU(2,1)$ be a Fuchsian group, preserving $\HdR$. 
Then $\phi$ realizes an homeomorphism between the unit tangent bundle $\unit{(\Gamma\backslash\HdR)}$ and the quotient of $\Omega_\Gamma$ by the action of $\Gamma$.

Moreover, in the cover, the map $\phi$ sends orbits of the geodesic flows in $\unit{\HdR}$ to arcs $\arc ab$ where $a\neq b\in \partial \HdR$. 
\end{proposition}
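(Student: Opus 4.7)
The plan is to assemble the three components set up just before the statement: that $\phi$ is a homeomorphism from $\unit{\HdR}$ onto $\Omega_\Gamma = \S^3\setminus\partial\HdR$, that it is $\PO(2,1)$-equivariant, and that it takes geodesic flow orbits in $\unit{\HdR}$ to arcs of $\C$-circles with endpoints in $\partial\HdR$. Since $\Gamma\subset\PO(2,1)$ is discrete with limit set contained in $\partial\HdR$, it acts properly discontinuously and freely on both $\unit{\HdR}$ and on $\Omega_\Gamma$, so once these three facts are in place the descent to the quotients is automatic.

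First I would verify that $\phi$ is a homeomorphism. Continuity follows from the smoothness of the geodesic flow on $\HdC$ and of $\vec u\mapsto -J\vec u$. To construct the inverse, given $q\in\Omega_\Gamma$, I would use \cref{prop:tangent-line-RP2}: the tangent complex line $\mathcal{L}(q,q)$ meets $\RP^2$ in a single point $m$, polar to a complex line $L_q$ whose boundary $\C$-circle contains $q$ and meets $\partial\HdR$ in exactly two points $a\neq b$. These are the endpoints of the geodesic $\gamma_q = L_q\cap\HdR$. The arc of $\partial L_q\setminus\{a,b\}$ containing $q$ selects one of the two orientations of $\gamma_q$, and there is then a unique $(p,\vec u)\in\unit{\HdR}$ with $p\in\gamma_q$ and $\phi(p,\vec u)=q$. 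Continuity of this inverse reduces to continuity of the line map on $(\CP^2\times\CP^2)\setminus\Delta$ (\cref{prop:L-not-continuous}) and of the relevant intersection operations.

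Equivariance is then immediate: any $g\in\PO(2,1)\subset\PU(2,1)$ is holomorphic, hence commutes with $J$, preserves $\HdR$, and extends continuously to $\partial\HdC$, giving
\[g\cdot\phi(p,\vec u)=g\cdot\gamma(p,-J_p\vec u,+\infty)=\gamma(gp,-J_{gp}(dg_p\vec u),+\infty)=\phi(gp,dg_p\vec u).\]
Hence $\phi$ intertwines the $\Gamma$-actions, and because these actions are properly discontinuous and free it descends to a homeomorphism $\unit{(\Gamma\backslash\HdR)}\to\Gamma\backslash\Omega_\Gamma$.

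For the moreover part, I would fix $(p,\vec u)\in\unit{\HdR}$ and let $L$ be the unique complex line of $\HdC$ containing the geodesic $\gamma$ in $\HdR$ with initial data $(p,\vec u)$. At each $s$, the vector $-J_{\gamma(s)}\dot\gamma(s)$ lies in $T_{\gamma(s)}L$, and since $L$ is totally geodesic the geodesic issuing from $\gamma(s)$ in this direction remains in $L$ and terminates in $\partial L$. Thus $\phi(\gamma(s),\dot\gamma(s))\in\partial L\setminus\{\gamma(\pm\infty)\}$ for all $s$; by continuity and connectedness this image is contained in one of the two arcs of the $\C$-circle $\partial L$ with endpoints $\gamma(\pm\infty)\in\partial\HdR$. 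The only delicate point I expect is the continuity of $\phi^{-1}$; the other assertions are formal consequences of the material of \cref{section:PU21-action}.
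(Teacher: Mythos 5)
Your argument is correct and follows essentially the same route as the paper, which in fact gives almost no proof here: it simply presents $\phi$ as the classical homeomorphism $\unit{\HdR}\simeq\S^3\setminus\partial\HdR$, observes that it is clearly $\PO(2,1)$-equivariant, and lets it descend to the quotients, so your construction of $\phi^{-1}$ via \cref{prop:tangent-line-RP2}, your equivariance computation, and your verification that a geodesic-flow orbit lands in an arc of $\partial L$ supply exactly the details the paper leaves implicit. One small caveat: freeness of the $\Gamma$-action is neither needed for the descent nor true in general (a Fuchsian group containing complex reflections fixes whole $\C$-circles pointwise inside $\Omega_\Gamma$), so you should drop that claim and rely only on equivariance of the homeomorphism to identify the two quotients.
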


\begin{figure}

\begin{tabular}{lc}

 \begin{minipage}[l]{.46\linewidth}
\includegraphics[width=\textwidth]{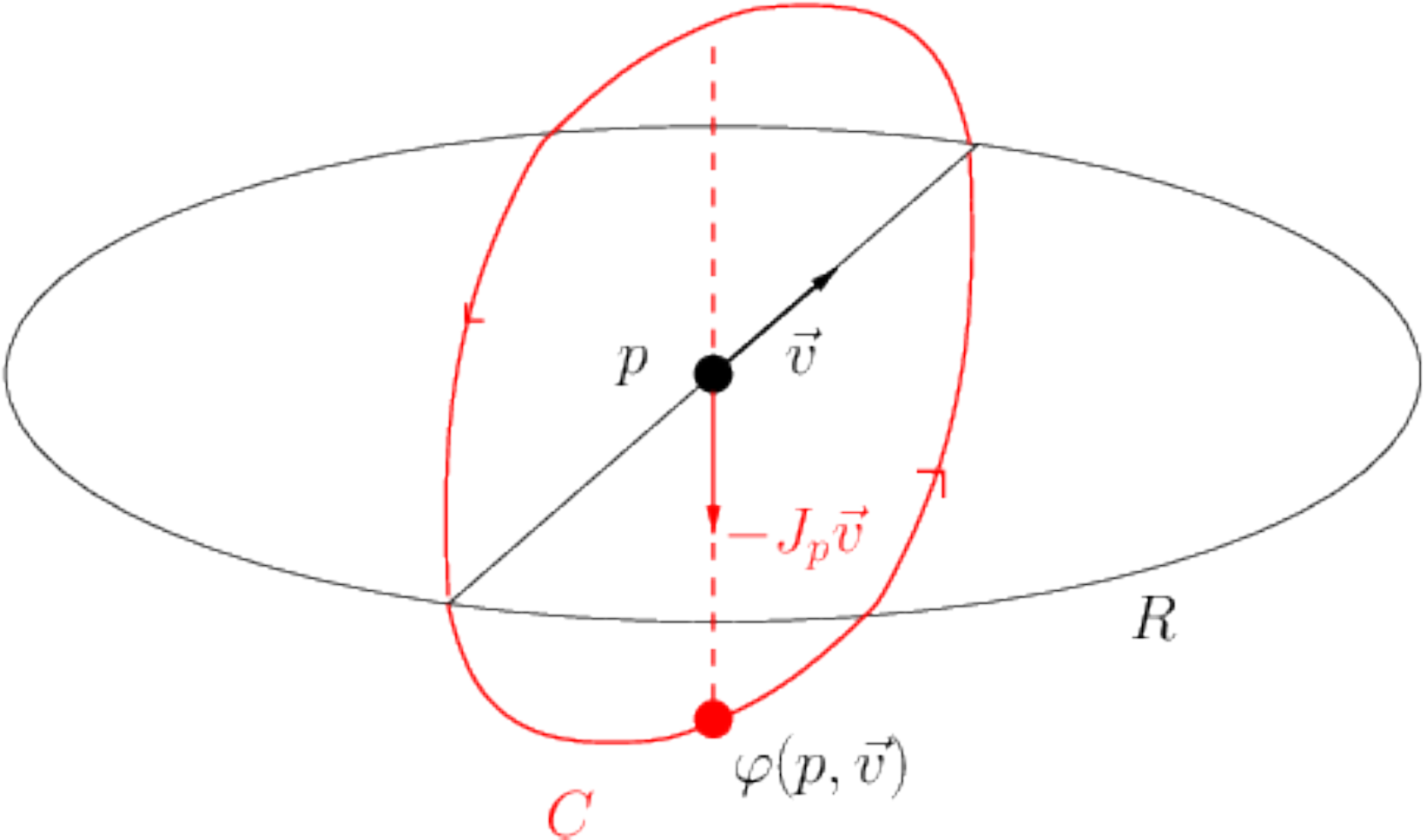}
   \end{minipage} \hfill
   &
   \begin{minipage}[c]{.46\linewidth}
\includegraphics[width=\textwidth]{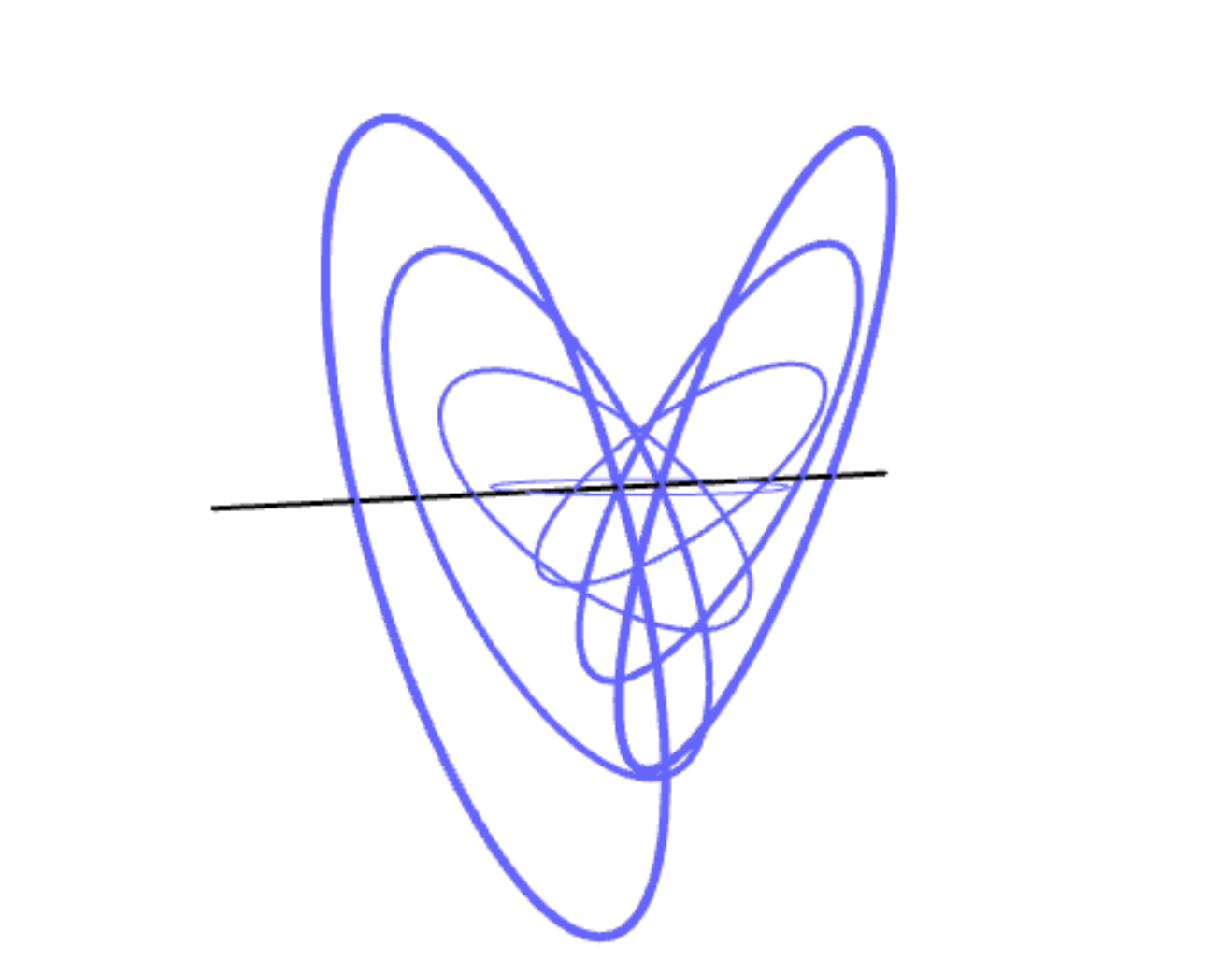}
   \end{minipage}\\

\begin{minipage}[l]{.46\linewidth}   
\caption{Identification between the complement in $\S^3$ of an $\R$-circle $R$ and the unit tangent bundle of $\HdR$ \label{figure:unit-tangent-bundle}} 
\end{minipage}

&

\begin{minipage}[l]{.46\linewidth}
\caption{A few leaves of the foliation of $\S^3\setminus\partial\HdR$ by arcs of $\C$-circles (see Corollary \ref{coro:foliation}) \label{fig:Ccircles-foliation}}
\end{minipage}   

\end{tabular}

\end{figure}


The main goal of this paper is first to understand what happens to this foliation
 when deforming the $\R$-circle and second, in presence of a group acting, to 
understand how the map $\phi$ deforms. But, if we consider any deformation, no 
meaningful description can be given. So, we first define in 
\cref{sec:horizontality-slimness} a notion of horizontality for non-smooth curves
 and a quantitative version called \emph{slimness}. Under these conditions we will
 be able to understand how the foliation deforms in \cref{section:deform-foliation}
 and understand better the equivariant case in \cref{sec:Crowns}.

\section{Horizontality and Slimness in the CR-sphere}\label{sec:horizontality-slimness}

From now on, we focus especially on the sphere at infinity $\S^3 = \bHdC$. So we will rather work with $\R$- and $\C$-circles than complex hyperbolic lines and real hyperbolic planes. All properties stated with $\R$ and $\C$-circles can be equivalently stated with their supporting complex lines and real planes.

Recall from \cref{section:orbits} that the sphere  comes with its contact structure, and the notion of \textsl{horizontality} for smooth submanifolds, see \cref{def:CR-horizontal}.
Horizontality will be one of the main concern of this paper. However, the typical sets we want to describe are limit sets of discrete subgroups 
of $\PU(2,1)$. Those subsets are not usually smooth. In fact, they are smooth 
iff it is the whole sphere or the group is not Zariski-dense.
So we have 
to devise a notion of horizontality suitable for general non-smooth subset of $\S^3$.

\subsection{Horizontality for non-smooth subsets of $\S^3 \subset \CP^2$}

One property of horizontal submanifolds can be expressed in the following way: any $\C$-circle through two close points is entirely contained in a small neighborhood of the two points. We could try and write a definition using the topology on the set of $\C$-circles. The problem is that this set is not compact, as $\C$-circles can degenerate to a single point. And we exactly want to use this degeneration: the $\C$-circle between two points in a horizontal submanifold degenerates as the two points collapse. With the previous section in mind, it appears more natural to take a step back
and work instead in $\CP^2$ and use the line map. Indeed, a $\C$-circle is defined by a unique line in $\CP^2$. Moreover,
when a family of $\C$-circles converges to a single point $p \in \S^3$, then the family of associated lines converges to the line $p^\perp$ in $\CPstar$ and their polar converge to $p$.

Recall that the restriction of the line map $\mathcal L$ to $\S^3\times \S^3$ is defined in \cref{def:line-map} by: for $e\neq f \in\S^3$, $\mathcal L(e,f)$ is the line through $e$ and $f$ and $\mathcal L(e,e)$ is the line $e^\perp$. Using polarity, we will write equivalently $\mathcal L(e,f)=e\boxtimes f$ and $\mathcal L(e,e)=e$.
Note in particular that this map is not continuous at the diagonal, see \cref{prop:L-not-continuous}.
We thus propose the following definition:
\begin{definition}\label{def:horizontality}
Let $E\subset \S^3$ be a closed subset. We say that $E$ is \emph{CR-horizontal} if the restriction $\mathcal L_E$ of
the line map $\mathcal{L}$ to $E\times E$ is continuous.
\end{definition}
Away from the diagonal, the map $\mathcal L_E$ is always continuous. So the previous definition is in fact a local property. One could restate the continuity hypothesis by asking that for any $e\in E$, for any sequences $(e_n\neq f_n)$ converging to $e$, the sequence of lines $(e_n f_n)$ converges to $e^\perp$.

We recover in the smooth case the usual definition:
\begin{proposition}
A submanifold $E$ is CR-horizontal iff it is horizontal.
\end{proposition}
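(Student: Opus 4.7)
My plan is to reduce the whole statement to continuity of $\mathcal{L}_E$ at the diagonal, then prove this via a local Taylor expansion. Off the diagonal $\mathcal{L}$ is already continuous (it is just the secant line map on $\CP^2\times\CP^2\setminus\Delta$). So the question is: for a fixed $e\in E$ and any sequences $e_n\neq f_n$ in $E$ converging to $e$, does $\mathcal{L}(e_n,f_n)$ converge in $\CPstar$ to $\mathcal{L}(e,e)=\P(\bm e^\perp)$? This has to hold iff $E$ satisfies the classical tangential condition $T_eE\subset\ker\alpha_e$ at $e$.

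Since the contact distribution on $\S^3$ is a maximally non-integrable $2$-plane field, any smooth submanifold with $T_pE$ included in the contact plane at each $p$ has dimension at most $1$ (the $0$-dimensional case is trivial), so I may assume $E$ is locally parametrized near $e$ by a smooth embedding $\gamma\colon(-\varepsilon,\varepsilon)\to E$ with $\gamma(0)=e$. Working with the standard lift, one has
\[\bm\gamma(s)=\bm e+s\bm v+O(s^2),\qquad \bm v:=\bm\gamma'(0)\neq 0,\]
with $\bm v$ not proportional to $\bm e$ (otherwise $\gamma$ would not be an immersion). For $s\neq t$ both small, the secant line $\mathcal{L}(\gamma(s),\gamma(t))$ is the projectivization of the $\C$-span of $\bm\gamma(s)$ and of $(\bm\gamma(t)-\bm\gamma(s))/(t-s)=\bm v+O(|s|+|t|)$. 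As $s,t\to 0$ (at any relative rate), this span converges in the Grassmannian to $\mathrm{span}_{\C}(\bm e,\bm v)$, hence $\mathcal{L}(\gamma(s),\gamma(t))\to\P(\mathrm{span}_\C(\bm e,\bm v))$ in $\CPstar$.

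It remains to match this limit with $\mathcal{L}(e,e)=\P(\bm e^\perp)$. The horizontality condition of \cref{def:CR-horizontal}, written infinitesimally as \eqref{eq:CR-horizontal} at $s=0$, is exactly $\langle\bm v,\bm e\rangle=0$, i.e.\ $\bm v\in\bm e^\perp$. Since $e\in\bHdC$ gives $\bm e\in\bm e^\perp$ too, and $\bm e,\bm v$ are linearly independent, one obtains $\mathrm{span}_\C(\bm e,\bm v)=\bm e^\perp$ by dimension, so the secants converge to $\P(\bm e^\perp)=\mathcal{L}(e,e)$, proving one direction. Conversely, if $E$ is not horizontal at $e$, then $\bm v\notin\bm e^\perp$, so $\mathrm{span}_\C(\bm e,\bm v)\neq\bm e^\perp$, and the explicit sequence $e_n=\gamma(1/n)$, $f_n=\gamma(-1/n)$ gives $\mathcal{L}(e_n,f_n)\to\P(\mathrm{span}_\C(\bm e,\bm v))\neq\mathcal{L}(e,e)$, violating CR-horizontality. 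The main thing to watch is that the remainder $O(|s|+|t|)$ is uniform enough that the limit does not depend on how $s_n,t_n$ approach $0$; this is the only potential subtlety, and it follows from the smoothness of $\gamma$.
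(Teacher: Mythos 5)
Your proof is correct and follows essentially the same route as the paper's: both identify the limit of the secant lines $\mathcal{L}(e_n,f_n)$ with $\P(\mathrm{span}_\C(\bm e,\bm v))$ for a tangent direction $v$ of $E$ at $e$, and observe that this line equals $\P(\bm e^\perp)=\mathcal{L}(e,e)$ precisely when $v$ lies in the contact plane (using $\bm e\in\bm e^\perp$ and a dimension count). You merely make explicit, via the local parametrization and the Taylor expansion with its uniform remainder, what the paper compresses into the characterization of tangent vectors by secant lines and the phrase ``after tensorizing by $\C$''.
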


\begin{proof}
Consider the tangent space to the submanifold $E$ at some point $e \in E$. This tangent space contains a vector $v$  iff there are sequences $(e_n), (f_n)$ of points in $E$, that converges to $e$ and such that the real line $(e_nf_n)$ converges to the real line $l$ through $e$ containing $v$. The vector $v$ belongs to the contact plane iff the line $l$ is included $p^\perp$. After tensorizing by $\C$, $v$ belongs to the contact plane iff the complex line $(e_nf_n)$ converges to $p^\perp$.
This proves the proposition.
\end{proof}

\begin{remark}
 In contact geometry and, more generally, in CR-geometry,   horizontal paths are usually defined as absolutly continuous paths with tangent vectors in a fixed distribution.   Here we don't need existence of derivatives.  On the other hand we are using the extrinsic geometry of the CR-structure of the sphere embedded in $\CP^2$.  
 An intrinsic way to define horizontality for arbitrary CR-structures is to use the special  paths called chains.   We are saying that $E$ is horizontal if for any converging sequence of points in $E$ the directions defined by chains between the limit and the points in the sequence converges to a direction in the contact plane.
 \end{remark}

\begin{remark}
Thanks to the previous definition, our notion of CR-horizontal manifold is an extension of the usual notion of horizontality to non-smooth sets. So from now on, we will drop the specification of "CR-" and just call this notion \emph{horizontality}.
\end{remark}

The following lemma translates in coordinates the local condition at a point $p$: at first order, points arrive at $p$ along the orthogonal line $\mathcal L(p,p)$ or equivalently tangentially to the contact structure. Recall from \cref{def:boxprod} that ${\bm p}\boxtimes {\bm q}$ is a specific lift of $p\boxtimes q$, even if in the following statement any lift could be used.
\begin{lemma}\label{lem:local}
Let $E$ be a horizontal subset of $\S^3$ containing a point $p$. Let $q$ be another point in $\S^3$. Fix lifts $\bp$ and $\bq$ for $p$ and $q$. Then any point $a\in E\setminus \{q\}$ admits a unique lift to $\C^3$ of the form
$$\ba = \bp + x \bp\boxtimes \bq + y\bq.$$
Moreover $y=o(x)$ in a neighbourhood of $p$
\end{lemma}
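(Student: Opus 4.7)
The plan is to split the proof into two independent parts: first verify that $(\bp,\bp\boxtimes\bq,\bq)$ is a basis of $\C^3$ and that every point of $\S^3\setminus\{q\}$ admits a lift with $\bp$-coefficient normalised to $1$; then use horizontality to derive $y=o(x)$. The first part is purely linear-algebraic and relies on the (implicit) mild genericity hypothesis $\la\bp,\bq\ra\neq 0$, i.e.\ $q\notin\mathcal{L}(p,p)$; the second part is where Definition~\ref{def:horizontality} enters, through the interpretation of the line map $a\mapsto\mathcal{L}(p,a)$ as a map with values in the set of complex lines through $p$.

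For the first part, I would exploit the Hermitian pairings with $\bq$. The identities $\la\bq,\bq\ra=0$ (since $q\in\S^3$), $\la\bp\boxtimes\bq,\bq\ra=0$ (by \eqref{eq:boxtimes}), together with $\la\bp,\bq\ra\neq 0$, immediately show that $\bp$ is not in the span of $\bp\boxtimes\bq$ and $\bq$ and that $\bp\boxtimes\bq$ and $\bq$ are themselves linearly independent. Hence $(\bp,\bp\boxtimes\bq,\bq)$ is a basis of $\C^3$ and any vector of $\C^3$ decomposes uniquely as $\alpha\bp+x\bp\boxtimes\bq+y\bq$. The projective locus $\{\alpha=0\}$ is exactly $\P(\mathrm{span}(\bp\boxtimes\bq,\bq))=\P(\bq^\perp)=\mathcal{L}(q,q)$, the tangent complex line to $\S^3$ at $q$, whose intersection with $\S^3$ reduces to $\{q\}$. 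Consequently every $a\in\S^3\setminus\{q\}$ admits a lift with $\alpha\neq 0$, and rescaling gives the unique normalisation $\alpha=1$.

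For the estimate $y=o(x)$, fix a sequence $a_n\to p$ in $E\setminus\{p,q\}$ and write $\ba_n=\bp+x_n\bp\boxtimes\bq+y_n\bq$. Continuity of the projective coordinates forces $(x_n,y_n)\to(0,0)$. The complex line $\mathcal{L}(p,a_n)$ equals $\P(\mathrm{span}(\bp,\,x_n\bp\boxtimes\bq+y_n\bq))$, and by horizontality of $E$ it converges in $\CPstar$ to $\mathcal{L}(p,p)=\P(\bp^\perp)=\P(\mathrm{span}(\bp,\bp\boxtimes\bq))$. Parameterising lines through $p$ by $\P(\C^3/\C\bp)$, in which the classes of $\bp\boxtimes\bq$ and $\bq$ form a basis, this convergence translates into $[x_n:y_n]\to[1:0]$ in $\P^1$, which forces $x_n\neq 0$ for $n$ large enough and $y_n/x_n\to 0$; this is exactly $y=o(x)$. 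The only genuinely delicate step is the first one, namely the identification $\{\alpha=0\}=\mathcal{L}(q,q)$ and its intersection with $\S^3$; once the basis is set up and lines through $p$ are read off in the affine chart $(x,y)$, horizontality delivers the estimate almost by definition.
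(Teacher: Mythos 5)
Your argument is correct and follows essentially the same route as the paper: the first part (the locus $\{\alpha=0\}$ is $\P(\bq^\perp)=\mathcal{L}(q,q)$, which meets $\S^3$ only at $q$) is the paper's argument made explicit, and your second part merely rephrases the paper's use of horizontality — the paper tracks the polar point $\bp\boxtimes\ba_n$ via box-product identities and argues by contradiction after extracting a subsequence, while you read the same convergence $\mathcal{L}(p,a_n)\to p^\perp$ directly as $[x_n:y_n]\to[1:0]$ in $\P(\C^3/\C\bp)$. Note only that $\la\bp,\bq\ra\neq 0$ is not an additional genericity hypothesis: it holds automatically for distinct points $p\neq q$ of $\S^3$, since $\mathcal{L}(q,q)\cap\S^3=\{q\}$.
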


\begin{proof}
Any point in $\CP^2$ has a unique lift of this kind but for points in the line through $q$ and $p\boxtimes q$. This line is orthogonal to $q$, so is $\mathcal L(q,q)$. Its intersection with the sphere is $q$, so the first point of the lemma is proven.

Suppose now by contradiction that there is a sequence of points $a_n$ in $E$, converging to $p$, with lifts $\ba_n = \bp + x_n \bp\boxtimes \bq + y_n \bq$, such that $y_n$ is not negligible in front of $x_n$. Up to passing to a subsequence, it implies that there exist $x_\infty \in\C$, $y_\infty \in \C\setminus\{0\}$ and a sequence $t_n \to 0$ of positive real numbers such that, at the first order, $\ba_n = \bp + t_n (x_\infty \bp\boxtimes \bq + y_\infty \bq) +o(t_n)$. Then the polar $p\boxtimes a_n$ to the line $\mathcal L(p,a_n)$  can be computed with box-product:
\begin{align*}
    \bp \boxtimes \ba_n & = \bp \boxtimes (\bp + t_n (x_\infty \bp\boxtimes \bq + y_\infty \bq)) + o(t_n)\\
    & = t_n (x_\infty \bp\boxtimes(\bp\boxtimes \bq) +y_\infty \bp\boxtimes \bq)+o(t_n)
\end{align*}
This implies that $p\boxtimes a_n$ converges to the projection of $x_\infty \bp\boxtimes(\bp\boxtimes \bq) +y_\infty \bp\boxtimes \bq$.
Note that $\bp\boxtimes(\bp\boxtimes \bq)$ is a multiple of $\bp$. So $p\boxtimes a_n$ is a point in $p^\perp$, different from $p$ as $y_\infty\neq 0$. This contradicts the horizontality condition on $E$: $p\boxtimes a_n$ should converge to $p$ as $a_n$ goes to $p$.
\end{proof}

The non-continuity of $\mathcal{L}$ at the diagonal has an 
interesting consequence in terms of Cartan invariant. We 
will make a repeated use of the following lemma:

\begin{lemma}\label{lem:Cartan-coalesce}
 Let $e$ be a point in $\S^3$, and $(e_n)$, $(f_n)$ be two sequences of points both converging to $e$, and such that such that $\mathcal{L}(e_n,f_n)$ converge to some complex line $\ell\neq e^\perp$. Then, for any point $x \in \S^3$ distinct from $e$, it holds that $|\A(x,e_n,f_n)|\longrightarrow \pi/2$.
 \end{lemma}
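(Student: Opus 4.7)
My plan is to reduce the statement to the basic characterisation $|\A|=\pi/2 \Leftrightarrow$ three points lie on a common $\C$-circle (item 3 of \cref{prop:Cartan-properties}), by exploiting the $3$-cocycle identity \eqref{eq:Cartan-cocycle}.

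First I would extract what the hypotheses tell us about $\ell$. Writing $L_n := \mathcal L(e_n,f_n)$, one has $e_n \in L_n$, $e_n \to e$ and $L_n \to \ell$, so $e \in \ell$. Since $\mathcal L(e,e)=e^\perp$ is the unique complex projective line tangent to $\S^3$ at $e$, and a tangent line meets $\S^3$ only at its tangency point, the assumption $\ell \neq e^\perp$ forces $\ell$ to be a secant complex line through $e$. Consequently $\ell\cap \S^3$ is a $\C$-circle, hence an infinite topological circle.

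Next I pick an auxiliary $z \in \ell\cap \S^3$ with $z\notin\{e,x\}$; this is possible because $\ell\cap \S^3$ is infinite. The condition ``to be secant'' is open in $\CPstar$, so for $n$ large each $L_n$ is secant, and the convergence $L_n \to \ell$ yields Hausdorff convergence of the $\C$-circles $L_n\cap \S^3$ to $\ell\cap \S^3$. In particular I can choose $z_n \in L_n\cap \S^3$ with $z_n\to z$. For $n$ large enough, the points $e_n$, $f_n$, $z_n$ are pairwise distinct and all lie on the $\C$-circle $L_n\cap \S^3$, so by item 3 of \cref{prop:Cartan-properties},
\[ |\A(e_n,f_n,z_n)|=\pi/2. \]

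Finally I would apply the cocycle identity \eqref{eq:Cartan-cocycle} to the quadruple $(x,e_n,f_n,z_n)$, which gives
\[ \A(x,e_n,f_n) \;=\; \A(x,e_n,z_n) \,-\, \A(x,f_n,z_n) \,+\, \A(e_n,f_n,z_n). \]
Both triples $(x,e_n,z_n)$ and $(x,f_n,z_n)$ are pairwise distinct for large $n$ (note $\langle \bx,\be\rangle\neq 0$ since $x\neq e$, and $z\neq x,e$) and both converge to the triple of distinct points $(x,e,z)$. The Cartan invariant being continuous on pairwise-distinct triples, the first two terms on the right tend to the common value $\A(x,e,z)$, so their difference vanishes. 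Hence $\A(x,e_n,f_n) - \A(e_n,f_n,z_n)\to 0$, and $|\A(x,e_n,f_n)|\to \pi/2$.

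The only point needing a little care is the geometric input that $L_n\cap \S^3$ Hausdorff-converges to $\ell\cap \S^3$ and therefore admits a sequence approaching any prescribed $z\in \ell\cap \S^3$; this is a routine consequence of the convergence $L_n\to\ell$ in $\CPstar$ together with the openness of secantness. Everything else is bookkeeping with the cocycle and the continuity of $\A$.
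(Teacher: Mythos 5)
Your argument is correct, but it follows a genuinely different route from the paper's. The paper first reduces (without much justification) to the case $e_n=e$, normalises $x=\infty$, $e=0$ in Heisenberg coordinates, and argues by contradiction: writing the standard lift of $f_n$ in terms of the value $\A(x,e,f_n)$, it shows that if these Cartan invariants stayed away from $\pm\pi/2$ along a subsequence, the lines $\mathcal{L}(e,f_n)$ would converge to $e^\perp$, contradicting the hypothesis. You instead work synthetically: you observe that $\ell$ must be a secant line through $e$, approximate a third point $z$ of the limit $\C$-circle $\ell\cap\S^3$ by points $z_n\in\mathcal{L}(e_n,f_n)\cap\S^3$, invoke item 3 of \cref{prop:Cartan-properties} to get $|\A(e_n,f_n,z_n)|=\pi/2$, and transfer this to $\A(x,e_n,f_n)$ via the cocycle identity \eqref{eq:Cartan-cocycle} and the continuity of $\A$ on pairwise distinct triples. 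Your version has two advantages: it is coordinate-free, and it treats the two sequences $(e_n)$ and $(f_n)$ symmetrically, so it does not need the paper's unargued reduction to $e_n=e$. What the paper's computation buys in exchange is an explicit local picture (the parametrisation of $f_n$ by its Cartan invariant in Heisenberg coordinates) consistent with the paraboloid description used elsewhere in the text. The one step you rightly flag --- Hausdorff continuity of the $\C$-circle $L\cap\S^3$ as a function of the secant line $L$, so that the $z_n$ exist --- is indeed routine (parametrise the null directions in the polar $2$-plane by a continuously varying pseudo-orthonormal basis), and the sign ambiguity $\A(e_n,f_n,z_n)=\pm\pi/2$ is harmless since only $|\A(x,e_n,f_n)|$ is claimed to converge.
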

 
\begin{proof}
 It suffices to prove the result for $e_n=e$.  Indeed, one can make $x=\infty$ and $e_n=0$ in Heisenberg coordinates
 by translating through $\PU(2,1)$. We have $f_n\to e$ and we suppose by contradiction that $|\A(x,e,f_n)|$ does not converge to $\pi/2$.  There is a subsequence of $\A(x,e,f_n)$ converging to $A\in ]-\pi/2,\pi/2[$. Using standard lifts, one write  in coordinates the corresponding subsequence of $f_n$  as 
 \[{\bm f_n} = \begin{bmatrix}|\lambda_n|^2(-1+ ia_n)\\\lambda_n\\1\end{bmatrix}\]
 where $a_n = \A(x,e,f_n) \to \tan{A}$ and $\lambda_n\to 0$ as $f_n\to e$.
 Now the line between $e=0$ and $f_n$ is generated by two points whose lifts are: 
 \[ {\bm e} = \begin{bmatrix}0\\0\\1\end{bmatrix} \quad \ \mathrm{ and }\ \ {\bm f_n}=\begin{bmatrix}|\lambda_n|^2(-1+ ia_n)\\\lambda_n\\0\end{bmatrix} =  \lambda_n \begin{bmatrix}\bar{\lambda_n}(-1+ ia_n)\\1\\0\end{bmatrix}.\]
 The complex lines determined by these vectors converge to the line $e^\perp$, which is a contradiction with the assumption.
\end{proof}

\subsection{Characteristic foliation and one-parameter subgroups}\label{subsec:horizontality-orbits-1}
A common construction of smooth horizontal curves is through the characteristic foliation
induced in an embedded surface in a contact manifold.  One defines that foliation (generically with singular points)  by the field of directions given by the intersection of the contact plane with the tangent space of the surface.

For example, if the surface is the complex plane through $0$ in the Siegel model union $\infty$, that is a sphere, the foliation has two singularities, at $0$ and $\infty$, where the contact plane is tangent to the surface. Closure of leaves of this foliation are exactly the half-lines between $0$ and $\infty$, so half of $\R$-circles. One can glue two such half lines at the two singularities, obtaining what we call \emph{bent $\R$-circles}. We will come back on this example in \cref{section:bent}.

Due to homogeneity, two-dimensional orbits of a group  $G\subset \PU(2,1)$ acting on  $\partial_\infty \HdC$ have a characteristic foliation with no singular points.  It is interesting to determine those orbits whose foliations contain orbits of one-parameter subgroups $H\subset G$.

There are four types of two dimensional orbits which correspond to the groups $\C^*$, $\R^2$, $\S^1\times \R$ and $\S^1\times \S^1$ and they are unique up to conjugation in $\PU(2,1)$.  The first one corresponds to the group of all loxodromic elements fixing two points in 
$\partial_\infty \HdC$.  The second one corresponds to a maximal abelian subgroup contained 
in the Heisenberg group, the third one to the group generated by vertical translations in the Hesisenberg group and rotations fixing  its vertical axis and the fourth one to a maximal torus.

Each group $G$ acts transitively on the leaves  of the characteristic foliation because it preserves the contact plane and therefore if one of the leaves is an orbit of a one parameter subgroup of $G$, the same holds for every other leaf.  Observe that $G$ will be the normalizer of each of its one-parameter subgroups.

We prove here that for the loxodromic case, all leaves are indeed orbits of a one-parameter subgroup.
\begin{proposition}\label{pro:paraboloids-1-parameter}
Let $L$ be a one-parameter loxodromic subgroup, normalized by a two-parameter subgroup $G$.

Then there are exactly two surfaces invariant under $G$ where the characteristic foliation is given by one-parameter orbits of $L$.
\end{proposition}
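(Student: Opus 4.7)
My plan is to put $L$ into standard Heisenberg coordinates, identify the normaliser $G$ and its two-dimensional orbits explicitly, and then reduce the matching of the characteristic foliation with the orbits of $L$ to a direct contact-form computation on a one-parameter family of paraboloids.

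\textbf{Reduction to standard coordinates.} By the transitivity of $\PU(2,1)$ on ordered pairs of distinct boundary points, I may conjugate so that the two fixed points of $L$ on $\S^3$ are $0$ and $\infty$ in Heisenberg coordinates. Then, as recalled in \cref{section:coordinates}, the elements of $L$ act on $\S^3$ as $(z,t)\mapsto (e^{s\mu}z,\,e^{2s\sigma}t)$ for some fixed $\mu=\sigma+i\tau\in\C$ with $\sigma\neq 0$ (the loxodromic condition), and the two-dimensional normaliser $G$ identifies with the abelian group $\{(z,t)\mapsto(\nu z,\,|\nu|^2 t):\nu\in\C^*\}$. The scalar $t/|z|^2$ is $G$-invariant on $\S^3\setminus\{0,\infty\}$ and $G$ acts transitively on its level sets, so the two-dimensional $G$-orbits there are exactly the paraboloids $S_c=\{(z,t)\in\S^3:t=c|z|^2\}$, $c\in\R$; each becomes a topological sphere after compactifying at $\{0,\infty\}$, and every $G$-invariant surface in $\S^3$ is of this form.

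\textbf{Contact computation.} The infinitesimal generator of $L$ is the vector field
\[
X=(\sigma x-\tau y)\,\partial_x+(\sigma y+\tau x)\,\partial_y+2\sigma t\,\partial_t,
\]
and evaluating the contact form $\alpha=dt-2x\,dy+2y\,dx$ of \cref{eq:contact-form} on it yields the simple expression $\alpha(X)=2\sigma t-2\tau|z|^2$. Because $L\subset G$ preserves each $S_c$, the vector $X$ is tangent to $S_c$ everywhere, so the characteristic foliation on $S_c$ coincides with the orbits of $L$ precisely when $X$ lies in the contact plane at every point of $S_c$, i.e.\ when $\alpha(X)$ vanishes identically on $S_c$. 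Restricting along $t=c|z|^2$ gives $\alpha(X)|_{S_c}=2(\sigma c-\tau)|z|^2$, so this vanishing reduces to the single algebraic condition $\sigma c=\tau$ on the parameter $c$ of the surface.

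\textbf{Counting and main obstacle.} The equation $\sigma c=\tau$ immediately picks out one paraboloid, $S_{\tau/\sigma}$. To recover the second surface claimed by the statement, I would apply the involution of the stabiliser of $\{0,\infty\}$ exchanging the two fixed points of $L$: this involution normalises $G$, sends $S_c$ to $S_{-c}$, and conjugates $L$ to a loxodromic one-parameter subgroup sharing the same complex axis, so rerunning the same contact computation for it singles out the companion paraboloid $S_{-\tau/\sigma}$. The main obstacle is therefore to verify carefully that these two paraboloids are the only $G$-invariant surfaces satisfying the condition and to make precise in what sense both are characterised by the orbits of $L$; this amounts to keeping track of the discrete symmetry of the problem and of its effect on both $L$ and on the parameter $c$.
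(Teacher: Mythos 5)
Your normalisation, your identification of the two\hyp{}dimensional $G$-orbits with the paraboloids $S_c=\{t=c|z|^2\}$, and your contact computation are all correct, and they reproduce the paper's own argument almost verbatim: the paper evaluates the horizontality criterion \eqref{eq:curve-horizontal}, $\bigl\la\dot{\bgamma}(0),\bp\bigr\ra=0$, on the generator of $L_\alpha$ instead of pairing the generating vector field $X$ with the contact form \eqref{eq:contact-form}, but the two computations are equivalent and both output a single linear condition on the parameter of the paraboloid (equation \eqref{eq:1-param-loxo-horizontal}, i.e.\ $t/|z|^2=3\Im(\alpha)/\Re(\alpha)$, which is your $\sigma c=\tau$ up to normalisation of the generator).

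The genuine gap is your production of the second surface. The involution exchanging $0$ and $\infty$ conjugates $L=L_\alpha$ to $L_{\bar\alpha}$, which is a \emph{different} one-parameter subgroup whenever $\alpha\notin\R$ (its action on the $z$-coordinate rotates in the opposite direction). Consequently the characteristic foliation of $S_{-\tau/\sigma}$ is given by the orbits of $\iota L\iota^{-1}$, not by the orbits of $L$: the $L$-orbits lying on $S_{-\tau/\sigma}$ are not horizontal there, since your own formula gives $2\sigma t-2\tau|z|^2=-4\tau|z|^2\neq0$ on that paraboloid when $\tau\neq0$. So $S_{-\tau/\sigma}$ does not satisfy the condition of the statement, and the symmetry argument cannot close the count; as it stands your argument establishes that exactly \emph{one} $G$-orbit has its characteristic foliation given by orbits of $L$. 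You correctly flag this as the unresolved obstacle. For comparison, the paper's proof derives exactly the same single equation and then simply describes its solution locus as a ``(two-sheeted) paraboloid'', counting sheets rather than invoking any swap of fixed points; it gives no further justification of the count. So the place where you are forced to improvise is precisely where the paper is tersest, but the route you chose (replacing $L$ by a conjugate) changes the subgroup and therefore does not prove the statement as written.
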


\begin{proof}
We work in the Siegel model. Loxodromic 1-parameter subgroups are conjugate to one given by 
\begin{equation}\label{eq:1-param-loxo}
L_\alpha : s\longmapsto \begin{bmatrix} e^{s\alpha} & & \\ & e^{s(\overline{\alpha}-\alpha)} & \\ & & e^{-s\overline{\alpha}} \end{bmatrix},\mbox{where $\alpha\in\C^*$ satisfies $|\alpha|\neq 1$.}
\end{equation}
 The 1-parameter subgroup $L_\alpha$ fixes $0$ and $\infty$. Now, if 
$p_0=[z_0,t_0]$ is a point in $\partial_\infty \HdC$, represented by the null vector $\bp = [ -|z_0|^2+it_0, z_0 ,1 ]^T$, then the orbit of $p_0$ under $L_\alpha$ is the curve $[z_0e^{s(2\bar\alpha-\alpha)},t_0e^{2\Re(\alpha)}]$, which is a spiral inscribed on the pararaboloïd with equation 
$t |z_0|^2=|z|^2t_0\cdot$. The normalizer of $L_\alpha$ consists of those elements of $\PU(2,1)$ that lift to diagonal matrices ${\rm diag}(\lambda,\bar\lambda/\lambda,1/\bar\lambda)$. It is easy to see that this normaliser acts simply transitively on any sheet of the paraboloïd.

Next, we remark that if $p$ is a point in $\bHdC$ and $s\longmapsto\gamma(s)$ is the orbit of $p$ under a 1-parameter subgroup $(G_s)_s$ of $\PU(2,1)$, then $\gamma$ is horizontal if and only if
  \begin{equation}  \label{eq:curve-horizontal}\Bigl\la\dfrac{d}{ds}\Bigl\vert_{s=0}\gamma(s),\bp \Bigr\ra=0.
  \end{equation}
  Indeed, the contact plane at $p$ is the kernel of $\la\cdot,\bp\ra$, and horizontality is preserved by the action of the 1-parameter subgroup. 
  
Consider now the point in $\bHdC$ given in Heisenberg coordinates by $p=[z,t]$. Then  using \eqref{eq:curve-horizontal}, we see that the orbit of $p$ is horizontal if and only if
   \begin{equation}\label{eq:1-param-loxo-horizontal}\dfrac{t}{|z|^2}=3\dfrac{\Im(\alpha)}{\Re(\alpha)}.\end{equation}
   This proves the existence part. The above condition on $t$ and $|z|$ defines a (two-sheeted) paraboloïd in the Heisenberg group, which is acted on uniquely transitively by the normaliser $G$ of $L$. This proves the uniqueness part.
\end{proof}

This analysis raises the natural question of which one-parameter subgroups do admit horizontal orbit. This question diverts us from our main goal, so we will answer it in another forthcoming paper.

\subsection{Slimness in the sphere}\label{sec:slimness}

The notion of horizontality captures a local property of subsets of $\S^3$. We  need a quantitative and global version of this property.  
We define the relevant notion in this section using the Cartan invariant to guarantee quantitatively that no $\C$-circle hits thrice the subset. We first single out this global property which is independent of horizontality. This is a particular case of one of the central notion of the theory of Anosov representations \cite{Labourie, PozzettiSambarinoWienhard}.

\begin{definition}\label{definition:hyperconvex}
We say a subset $E\subset S^3$ is hyperconvex if no three points in $E$ are contained in the same
complex line. 
\end{definition}

\subsubsection{A quantitative version of horizontality}

If $E$ is a subset of $\S^3$, we denote by $E^{3}$ the set of triples in $E$ and by 
$E^{(3)}$ the its subset of pairwise distinct triples.
We define now the central notion of this paper:
\begin{definition}\label{def:slimness}
Let $0\leqslant \alpha < \dfrac{\pi}{2}$. We call {\it $\alpha$-slim} any subset $E$ of $\S^3$ such that the absolute value of the Cartan invariant of any triple of  points is bounded above by $\alpha$ : 
\begin{equation}
\sup\left\{|\A(a,b,c)|,\, (a,b,c) \in E^{3} \right\} \leqslant \alpha
\end{equation}

As a shortcut, we say that a subset $E$ is \emph{slim} if it is $\alpha$-slim for some $\alpha < \frac{\pi}{2}$. Moreover, we denote by $\A(E)$ the supremum \( \A(E) = \sup\left(|\A(a,b,c)|, (a,b,c) \in E^{3} \right).\)
\end{definition}

This condition will prove to be a strong constraint
on subsets of $\S^3$. 
A first point to be proven is that this assumption indeed implies horizontality - see \cref{prop:alphaslim_horizontal}. Some preliminary remarks, though, follow from the definition.

\begin{remark}\label{rem:definition}
\begin{enumerate}

\item If a subset $E$  is $0$-slim, then it is included in an $\R$-circle.

\item If a subset $E$  is slim, then its intersection with any $\C$-circle has cardinality at most $2$: if $E$ had $3$ points on a $\C$-circle, then this triple of points would have Cartan invariant $\pi/2$ and thus we would have $\A(E)=\pi/2$.

\item One could likewise define the notion of $\alpha$-thickness by asking that every Cartan invariant have absolute value at least $\alpha$. A $\frac{\pi}{2}$-thick set is then included inside a $\C$-circle.

\item If $t\to E_t$ is a Hausdorff-continuous family of closed subsets in $\S^3$
then $t\to \A(E_t)$ is upper semi-continuous. Is is not continuous in general, cf \cref{subsec:Farey}.

\item
The Cartan (measurable bounded) cocycle  $\A$  determines a bounded cohomology class on 
$\PU(2,1)$ which coincides with the continuous bounded K\"ahler class $\kappa$. Let $\rho : \Gamma \to \PU(2,1)$ be a representation and $\rho^*(\kappa)\in H^2(\Gamma,\R)$ the corresponding bounded cohomology class of $\Gamma$.   One should observe that if $E=\Lambda_\rho$ is the limit set  of $\rho(\Gamma)$, we obtain that the Gromov norm of this class coincides with  the supremum of the Cartan cocycle restricted to the limit set (see Proposition 3.1 in \cite{BI}, see also \cite{BI2012}).  That is 
$$\vert\vert \rho^*(\kappa)  \vert\vert =  \A(E).$$

\end{enumerate}
\end{remark}

We now use \cref{lem:Cartan-coalesce} to prove that $\alpha$-slimness implies horizontality. Though in a different setting, the following proposition is almost the same as \cite[Theorem B]{PozzettiSambarinoWienhard}:

\begin{proposition}\label{prop:alphaslim_horizontal}
If $E$ is a slim subset of $\S^3$, then it is horizontal.
\end{proposition}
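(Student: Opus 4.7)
The plan is to argue by contradiction, reducing to the diagonal and applying \cref{lem:Cartan-coalesce}. By \cref{prop:L-not-continuous}, the line map is already continuous on $(\S^3\times\S^3)\setminus\Delta_{\S^3}$, so the restriction $\mathcal{L}|_{E\times E}$ is automatically continuous at every non-diagonal point of $E\times E$. The only potential obstruction to horizontality is therefore the existence of some $e\in E$ together with sequences $(e_n),(f_n)$ in $E$ converging to $e$ for which $\mathcal{L}(e_n,f_n)$ fails to converge to $\mathcal{L}(e,e)=e^\perp$.

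Suppose such data exists. If $E=\{e\}$ the statement is trivial, so we may fix a point $x\in E$ with $x\neq e$. Since the space of projective complex lines in $\CP^2$ is compact, after passing to a subsequence we may assume $\mathcal{L}(e_n,f_n)\to\ell$ for some line $\ell\neq e^\perp$. The polarity map $p\mapsto p^\perp$ is continuous on $\S^3$, so if $e_n=f_n$ for infinitely many indices we would have $\mathcal{L}(e_n,f_n)=e_n^\perp\to e^\perp$ along that subsequence, contradicting the assumption. Hence $e_n\neq f_n$ for all $n$ large enough, and since $x\neq e$ we also have $e_n\neq x$ and $f_n\neq x$ eventually, so the triples $(x,e_n,f_n)$ consist of pairwise distinct points for large $n$.

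Apply \cref{lem:Cartan-coalesce} with this $x$ and these sequences: it yields $|\A(x,e_n,f_n)|\to\pi/2$. On the other hand, each triple $(x,e_n,f_n)$ lies in $E$, so the slimness hypothesis forces $|\A(x,e_n,f_n)|\leqslant \A(E)<\pi/2$ for all $n$, a contradiction. The heavy lifting is done by \cref{lem:Cartan-coalesce}; the only delicate point is the bookkeeping that ensures the triples $(x,e_n,f_n)$ are eventually pairwise distinct, so that the Cartan invariant is genuinely the analytic quantity of \cref{defi:Cartan} rather than the value $0$ assigned by convention when points coincide.
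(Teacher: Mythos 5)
Your proof is correct and follows essentially the same route as the paper's: argue by contradiction at a diagonal point and invoke \cref{lem:Cartan-coalesce} to force a triple of Cartan invariant tending to $\pi/2$. The only (harmless) difference is that the paper selects the auxiliary point $x$ in $E\setminus\ell$ via hyperconvexity, whereas you only require $x\neq e$, which is all that \cref{lem:Cartan-coalesce} actually needs; your extra bookkeeping on eventual pairwise distinctness of the triples is a welcome clarification.
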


\begin{proof}
Let $E$ be a slim set. 
Suppose that it is not horizontal. This means there exist a point $e$ and two sequences $e_n \neq f_n$ converging to $e$ such that the sequence of lines $(e_n f_n)$ in $\CP^2$ converges to a line $l$ with $l\neq e^\perp$. 

Note that $E\cap l$ contains at most two points, as noted in the second point of the previous \cref{rem:definition}. So we fix an arbitrary $x\in E\setminus l$. By \cref{lem:Cartan-coalesce}, we have $|\A(e_n,f_n,x)| \to \dfrac{\pi}{2}$. 
The assumption that $(e_nf_n) \to l \neq p^\perp$ implies therefore that the sup of Cartan invariant is $\dfrac{\pi}{2}$ so contradicts the slimness assumption.
\end{proof}
For a submanifold, though, slimness implies that it is a Legendrian smooth curve:
\begin{corollary}
A connected slim submanifold of $\S^3$ is a smooth Legendrian curve.
\end{corollary}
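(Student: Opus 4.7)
The plan is to chain together facts already established or recalled in the paper: the implication ``slim $\Rightarrow$ horizontal'' from \cref{prop:alphaslim_horizontal}, the coincidence between CR-horizontality and classical horizontality in the smooth case (proven just after \cref{def:horizontality}), and the contact-geometric dichotomy stated right after \cref{def:CR-horizontal} — namely that a connected horizontal submanifold is either a point or a Legendrian curve.

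Concretely, I would proceed as follows. Let $M \subset \S^3$ be a connected slim smooth submanifold. First, apply \cref{prop:alphaslim_horizontal} to deduce that $M$ is horizontal in the sense of \cref{def:horizontality}. Since $M$ is smooth, this CR-horizontality is equivalent to the classical infinitesimal horizontality: for every $p \in M$, one has $T_pM \subset \xi_p$, where $\xi_p \subset T_p\S^3$ is the (2-dimensional, totally real over $\R$) contact plane introduced in \cref{section:orbits}. In particular $\dim M \leqslant 2$.

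Next, I would rule out the case $\dim M = 2$. Because $\xi$ is a genuine contact distribution on $\S^3$ — concretely the kernel of the contact form $\alpha = dt - 2x\,dy + 2y\,dx$ of \eqref{eq:contact-form}, which satisfies $\alpha \wedge d\alpha \neq 0$ — the Frobenius theorem prohibits any 2-dimensional integral submanifold. Hence $\dim M \leqslant 1$. Assuming $M$ is positive-dimensional (the point case being excluded or trivial), $M$ is a smooth 1-dimensional submanifold whose tangent line at each point lies in the contact distribution, i.e.\ a smooth Legendrian curve.

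This is essentially a one-line corollary; there is no real obstacle to overcome, only the bookkeeping of assembling the pieces. The only conceptual point worth emphasizing is that the quantitative/combinatorial notion of slimness really does collapse to the classical Legendrian picture once smoothness is assumed, via \cref{prop:alphaslim_horizontal} and contact non-integrability.
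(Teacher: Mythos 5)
Your proof is correct and follows exactly the route the paper intends: the paper states this corollary without any written proof, relying on the same chain you assemble (slimness implies CR-horizontality by \cref{prop:alphaslim_horizontal}, CR-horizontality coincides with classical horizontality for smooth submanifolds, and the contact condition forbids $2$-dimensional integral submanifolds, as remarked just after \cref{def:CR-horizontal}). One terminological slip worth fixing: the contact plane $\xi_p$ is the tangent \emph{complex} line $\ker\la\cdot,\bp\ra$, so it is a complex subspace rather than ``totally real over $\R$'', but this parenthetical does not affect your argument.
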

This applies as well as for absolutely continuous paths : their tangent vectors are horizontal wherever they are defined.

\subsubsection{Local picture in Heisenberg space}

A first geometric way of seeing the horizontality of a slim subset is the following. Assume $E$ is $\alpha$-slim and the points $\bo = [0,0]$ and $\infty$ belong to $E$. Then any point in $E$ satisfies $|\A(\infty,\bo,p)|\leqslant \alpha$. In Heisenberg coordinates the point $p$ is $[z,t]$. Lifting the three points of $\C^3$, we have
\begin{equation}
 \bf{\infty} = \begin{bmatrix} 1 \\ 0 \\ 0\end{bmatrix},\, \bo = \begin{bmatrix} 0 \\ 0 \\ 1\end{bmatrix}, \bp = \begin{bmatrix} -|z|^2+it \\ z \\ 1\end{bmatrix} \mbox{ where } z\in \C \mbox{ and } t\in \R.
\end{equation}
A straightforward computation leads directly to
\begin{equation}\label{eqn:paraboloids}
 |\A(\bf{\infty},\bo,\bp)| \leq \alpha \quad \Leftrightarrow \quad |t|\leqslant  \tan(\alpha) |z|^2
\end{equation}
This means that $E\setminus \{\bo,\infty\}$ is contained in the complement of the union of the two open solid paraboloids defined by $|t|>\tan(\alpha)|z|^2$. We denote by $\mathcal{P}_\alpha$ this region.

\begin{figure}
\begin{center}
\includegraphics[width=.5\textwidth]{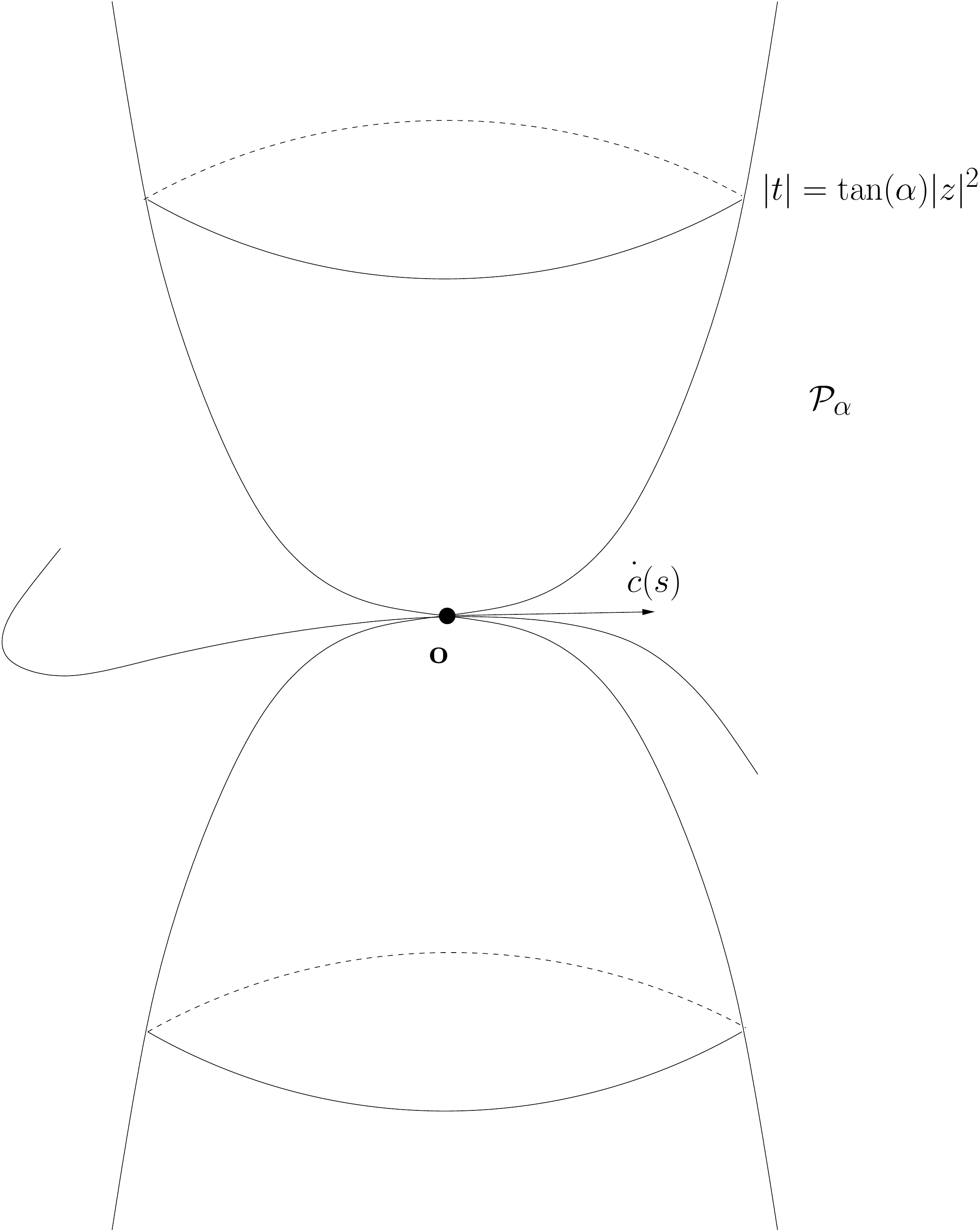}
\end{center}
\caption{Local aspect of a $\alpha$-slim curve close to the point $[0,0]$\label{alphaslim-curve}. }
\end{figure}

The paraboloïds are illustrated in \cref{alphaslim-curve}. It makes visible that, if $E$ is a slim submanifold of $\S^3$, then it is CR-horizontal (see \cref{prop:alphaslim_horizontal}). 
Moreover, Condition \eqref{eqn:paraboloids} may be used to enhance \cref{lem:local} under the hypothesis of slimness rather than horizontality: in this case and with the notation of the lemma, $y$ is easily seen to be in fact $O(|x|^2)$ instead of $o(x)$. We will not use this fact here, so we do not go into details.

\subsection{Projections of slim subsets}\label{sec:projections}

Before going on with the properties of slim subsets, one can give a few geometric interpretations of the slimness condition. 

In this section, we fix an $\alpha$-slim subset of $\S^3=\bHdC$. Recall that for $a \neq b$ in $\S^3$, the complex line through $a$ and $b$ is denoted by $\mathcal L(a,b)$, and that $\mathcal{L}(a,a)$ is the complex line tangent to $\S^3$ at the point $a$. We are now going to interpret the $\alpha$-slimness condition in terms of three different projections.


\subsubsection{Orthogonal projection on the hyperbolic disc through $a$ and $b$ in $E$.}
This interpretation explains the terminology: the maximum of the Cartan invariant is a measure of the width of the the projection of the set on hyperbolic discs defined by couple of points in the set. Let $(a,b,c)$ be a triple of pairwise distinct points in $\bHdC$. Denote by $L_{a,b}$ (resp. $\gamma_{a,b}$) the complex line (resp. the geodesic) spanned by $a$ and $b$. Let $\Pi_{a,b}$ be the orthogonal projection from $\S^3\setminus \{a,b\}$ onto $L_{a,b}$. Then the Cartan invariant of $(a,b,c)$ satisfies the following relation (see Theorem 7.1.2 in \cite{Goldman}).
\begin{equation}\label{eq:Cartan-interpretation}
\sinh^{-1}\left(\vert \tan(\A(a,b,c))\vert\right) = d(\Pi_{a,b}(c),\gamma_{a,b}).
\end{equation}

\begin{figure}
\begin{center}
\scalebox{0.5}{\includegraphics{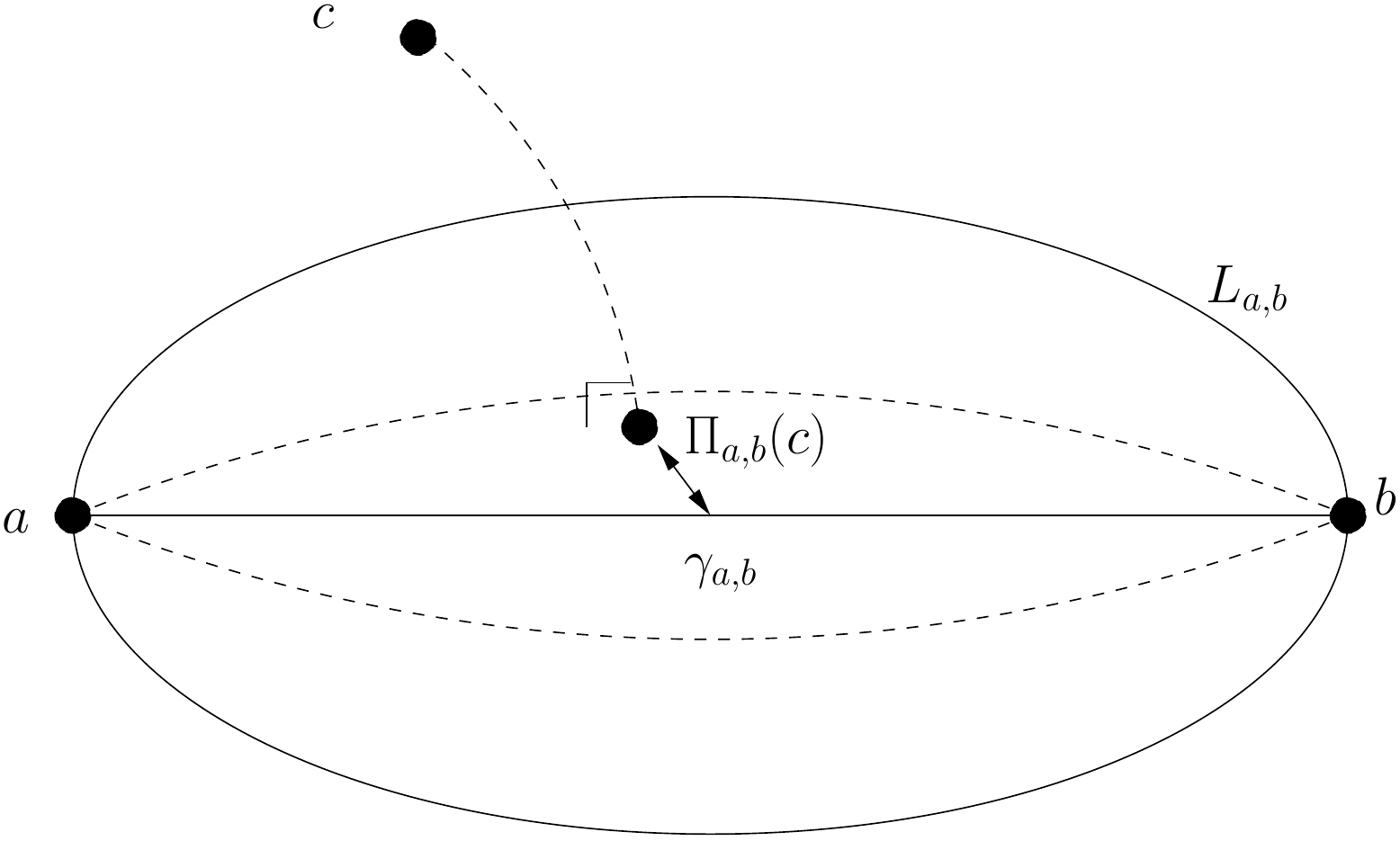}}
\end{center}
\caption{Orthogonal projection onto the complex line $L_{a,b}$. The region bounded by dashed lines in $L_{a,b}$ is a $k$-tubular neighbourhood of $\gamma_{a,b}$.}\label{fig:paraboloids}
\end{figure}

Denote by $k_{\alpha}$ the number $\sinh^{-1}\left(\tan(\alpha)\right)$. Observe that $k_\alpha=\alpha +o(\alpha^2)$. We obtain directly the following proposition, illustrated in \cref{fig:paraboloids}.  
\begin{proposition}
Let $E$ be an $\alpha$-slim subset of $\bHdC$. Then

\begin{equation}
 E \subset \bigcap_{a,b\in E,a\neq b} (\Pi_{a,b})^{-1}\left(N^{k_\alpha}(\gamma_{a,b})\right),
\end{equation}
where $N^{k_\alpha}(\gamma_{a,b})$ is the $k_\alpha$-tubular neighborhood in $L_{a,b}$ of the geodesic $\gamma_{a,b}$.
\end{proposition}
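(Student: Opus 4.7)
The proposition is essentially an immediate translation of the slimness hypothesis through the cited identity \eqref{eq:Cartan-interpretation}, so the plan is just to spell out that translation carefully and handle the minor issue of projecting the points $a$ and $b$ themselves, which do not lie in the domain of $\Pi_{a,b}$.

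My plan is as follows. Fix $a \neq b$ in $E$, and let $c \in E$ with $c \notin \{a,b\}$. By the definition of $\alpha$-slimness we have $|\A(a,b,c)| \leqslant \alpha < \pi/2$. Since $\tan$ is increasing on $[0,\pi/2)$ and $\sinh^{-1}$ is increasing on $\R$, applying formula \eqref{eq:Cartan-interpretation} yields
\[
d(\Pi_{a,b}(c),\gamma_{a,b}) \;=\; \sinh^{-1}\bigl(|\tan \A(a,b,c)|\bigr) \;\leqslant\; \sinh^{-1}(\tan\alpha) \;=\; k_\alpha,
\]
so $\Pi_{a,b}(c) \in N^{k_\alpha}(\gamma_{a,b})$, i.e. $c \in \Pi_{a,b}^{-1}(N^{k_\alpha}(\gamma_{a,b}))$.

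It remains to address the two points $c=a$ and $c=b$, which are not in the domain of $\Pi_{a,b}$. I would handle this by the convention that $\Pi_{a,b}$ extends continuously to $\S^3$ by sending $a$ and $b$ to themselves (viewed as boundary points of the hyperbolic disc $L_{a,b} \cap \HdC$); these lie on $\partial_\infty \gamma_{a,b}$, hence in the closure of $N^{k_\alpha}(\gamma_{a,b})$. Alternatively one simply restates the conclusion as a bound on $\Pi_{a,b}(c)$ for all $c \in E \setminus \{a,b\}$, which is what the displayed inclusion really expresses.

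Taking the intersection over all pairs $a \neq b$ in $E$ gives the asserted inclusion. There is no substantive obstacle: the geometric content sits entirely in formula \eqref{eq:Cartan-interpretation}, which is quoted from \cite{Goldman}, and the rest is monotonicity. The only mildly delicate point, as noted, is the domain issue for the orthogonal projection, and it is resolved by a continuity argument.
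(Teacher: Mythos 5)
Your proof is correct and follows exactly the route the paper takes: the paper derives the proposition ``directly'' from the identity \eqref{eq:Cartan-interpretation} together with the monotonicity of $\tan$ and $\sinh^{-1}$, which is precisely your argument. Your additional remark about the points $a$ and $b$ not lying in the domain of $\Pi_{a,b}$ is a reasonable clarification of a detail the paper leaves implicit.
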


 In coordinates, we can always assume that $a=[0,0]$ and $b=\infty$ in the Heisenberg group, so that the complex line through $a$ and $b$ corresponds to those vectors $[z, 0, 1]^T$, where $\Re(z)<0$. The orthogonal projection of a point $c = [z_1 , z_2 , 1]^T$ in $\S^3$ is just $[z_1 , 0 , 1]^T$, and the Cartan invariant is $\A(a,b,c)=\arg(-z_1)$. Thus the projection of $c$ belongs to the cone $\arg(z)\leqslant \alpha$

\subsubsection{Projection on the complex line in $\CP^2$ through $a$ and $b$ in $E$}
\label{geom_interpretation_3}

We can visualize the former property in the whole complex line $\mathcal L(a,b)$ in $\CP^2$, for $a$, $b$ in $E$, and link it to another projection of $E$, defined as follows.
\begin{equation}
\begin{matrix} \Pi_{a,b}^*: & \S^3\setminus \{a,b\} &\to & \mathcal L(a,b)\\
& e & \mapsto& \mathcal{L}(e)\cap \mathcal{L}(a,b)
\end{matrix}
\end{equation}
Note that this intersection point is always outside the ball $\HdC$.
This projection is more adapted to projective geometry and inspired by considerations for \emph{generalized Hilbert distance} in \cite{FalbelGuillouxWill}. We will see shortly that our two projections $\Pi_{a,b}^\perp$ and $\Pi_{a,b}^*$ are closely related, but they serve different purposes. The former is well-adapted to arguments relating to hyperbolic geometry. 
The latter, instead, will give information on the large scale Hilbert geometry of the complement of a slim set which we will pursue in a forthcoming work.

Here again, we can chose coordinates so that $a=[0,0]$ and $b=\infty$. If 
$e=[z_1,z_2,1]^T$ is a point in $\S^3$, the intersection of $\mathcal{L}(e)$ and $\mathcal{L}(a,b)$ is the point $[ -\overline{z_1}, 0 , 1]^T$.


We obtain thus a link between $\Pi$ and  $\Pi^*$:
\begin{lemma}
We have $-\Pi_{a,b} = \overline{\Pi_{a,b}^*}$.
\end{lemma}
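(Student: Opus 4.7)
The approach is a direct verification in a normal form, exploiting the formulas derived just above the lemma. Both $\Pi_{a,b}$ and $\Pi^*_{a,b}$ are $\PU(2,1)$-equivariant in $(a,b)$, and the two operations "$-\mathrm{id}$" and "$\overline{\cdot}\,$" on $\mathcal{L}(a,b)$ have intrinsic descriptions: "$\overline{\cdot}\,$" is the unique anti-holomorphic involution of the projective line $\mathcal{L}(a,b)$ whose fixed-point set is the geodesic $\gamma_{a,b}$, while "$-\mathrm{id}$" is the unique holomorphic involution of $\mathcal{L}(a,b)$ fixing $a$ and $b$ (which automatically swaps the hyperbolic disk $\HdC \cap \mathcal{L}(a,b)$ with its complement disk). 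Both are $\PU(2,1)$-equivariant in $(a,b)$, so by $2$-transitivity of $\PU(2,1)$ on ordered pairs of distinct points of $\S^3$ it suffices to verify the identity in any normal form.

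Taking the chart $a=[0,0]$, $b=\infty$ of the preceding paragraph, $\mathcal{L}(a,b)$ is parametrised by $z \mapsto [z,0,1]^T$ (together with $b=\infty$); the hyperbolic disk is the left half-plane $\{\Re z < 0\}$ and the geodesic $\gamma_{a,b}$ is the negative real half-line. In this chart "$\overline{\cdot}\,$" acts as $z \mapsto \bar z$ and "$-\mathrm{id}$" as $z \mapsto -z$. For $e = [z_1,z_2,1]^T \in \S^3 \setminus \{a,b\}$, the two formulas already established give $\Pi_{a,b}(e) \leftrightarrow z_1$ and $\Pi^*_{a,b}(e) \leftrightarrow -\bar z_1$, so the desired equality reduces to the tautology $\overline{-\bar z_1} = -z_1$.

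I do not expect a genuine obstacle. The only point requiring attention is fixing the chart-independent meaning of the symbols "$-\mathrm{id}$" and "$\overline{\cdot}\,$" on $\mathcal{L}(a,b)$ indicated above, so that the statement is unambiguous and the reduction to one coordinate system is legitimate; once this is done, the content of the lemma is exhausted by the two coordinate computations performed in the preceding paragraph.
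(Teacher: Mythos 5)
Your proposal is correct and follows essentially the same route as the paper, which states the lemma as an immediate consequence of the two coordinate formulas $\Pi_{a,b}(e)\leftrightarrow z_1$ and $\Pi^*_{a,b}(e)\leftrightarrow -\overline{z_1}$ in the normal form $a=[0,0]$, $b=\infty$ (the paper in fact gives no further proof). Your additional care in giving chart-independent meanings to the symbols $-\mathrm{id}$ and $\overline{\,\cdot\,}$ and invoking $2$-transitivity to justify the reduction is a welcome precision, not a different argument.
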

The previous discussion translates into the following 

\begin{figure}
\begin{center}
\scalebox{0.5}{\includegraphics{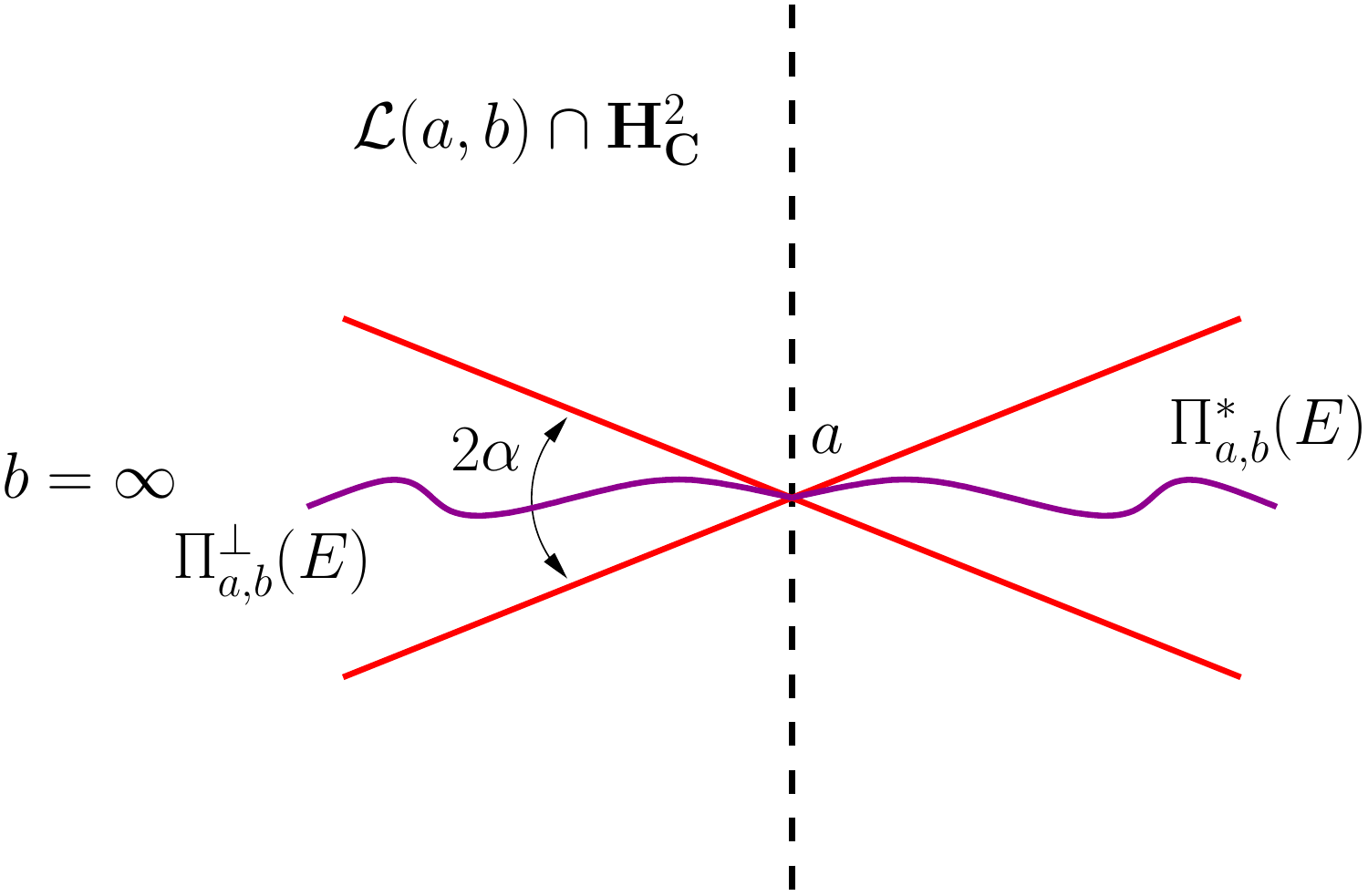}}
\end{center}
\caption{Projections of an $\alpha$-slim set $E$ on the complex line through two of its points.}
\end{figure}

\begin{proposition}\label{projection_line}
Let $E\subset \S^3$ be $\alpha$-slim for some $0 \leqslant \alpha < \dfrac{\pi}{2}$ containing at least two distinct points $a$ and $b$. Then, in the line $(ab)$ equipped by the previous coordinate, both our coordinates are included in positive cones of angle $2\alpha$:
\begin{itemize}
	\item $\Pi^*_{a,b}$ is included in $\left\{ z \in \C, \quad |\arg(z)|\leq \alpha\right\}$.
	\item $\Pi_{a,b}$ is included in $\left\{ z \in \C, \quad |\arg(-z)|\leq \alpha\right\}$.
\end{itemize}
\end{proposition}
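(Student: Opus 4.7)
The plan is to reduce to the normalized situation where $a = [0,0]$ and $b = \infty$ in Heisenberg/Siegel coordinates, carry out the quantitative check there, and then transport the result back by equivariance. Both projections $\Pi_{a,b}$ and $\Pi^*_{a,b}$ are built from $\PU(2,1)$-equivariant data (orthogonal projection on a complex line, respectively the line map composed with intersection with $\mathcal L(a,b)$), so if a statement about the image is proved for one pair, the appropriate conjugate version holds for any pair after acting by the element sending $(a,b)$ to $([0,0],\infty)$. Since the Cartan invariant is also $\PU(2,1)$-invariant, the $\alpha$-slimness condition survives the reduction.

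The next step is the coordinate computation, which is already essentially written in the excerpt between the statements of $\Pi_{a,b}^\perp$ and $\Pi_{a,b}^*$. Given $a=[0,0]$, $b=\infty$ and $e\in\S^3\setminus\{a,b\}$ with standard lift ${\bm e}=[-|z_1|^2-|z_2|^2+it,\, z_1,\,1]^T$ (writing the horospherical coordinate as $e=[z_1,z_2,1]^T$), one obtains
\[
\Pi_{a,b}(e) = [z_1, 0, 1]^T, \qquad \Pi^*_{a,b}(e) = [-\overline{z_1}, 0, 1]^T,
\]
and a direct application of \eqref{eq:Cartan} with the standard lifts of $a, b, e$ gives
\[
\A(a,b,e) \;=\; \arg(-z_1).
\]
The computation of $\A(a,b,e)$ is the one place where care is required: one needs to keep track of the sign/complex-conjugation convention in \cref{defi:Cartan} so that the argument returned is indeed that of $-z_1$ rather than that of $z_1$ or $\overline{z_1}$. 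This is the only potentially tricky bookkeeping, but it has already been carried out in section~\ref{geom_interpretation_3} of the excerpt, so we may invoke it.

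Finally, $\alpha$-slimness applied to the triple $(a,b,e)$ gives $|\arg(-z_1)|\leq\alpha$. Writing $-z_1 = re^{i\theta}$ with $|\theta|\leq\alpha$, we read off directly that $\Pi_{a,b}(e)$ has complex coordinate $z_1 = -re^{i\theta}$, so $|\arg(-\Pi_{a,b}(e))|\leq\alpha$, proving the second bullet. For the first bullet, $\Pi^*_{a,b}(e)$ has complex coordinate $-\overline{z_1} = re^{-i\theta}$, whose argument $-\theta$ still satisfies $|-\theta|\leq\alpha$, proving $|\arg(\Pi^*_{a,b}(e))|\leq\alpha$. Since $e\in E$ was arbitrary, both inclusions follow. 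There is no real obstacle: the work is front-loaded in the setup of the two projections and in the normalization, and the conclusion is a one-line consequence of the bound on $\A$.
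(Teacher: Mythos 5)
Your proof is correct and follows essentially the same route as the paper: normalize $(a,b)$ to $([0,0],\infty)$ by $\PU(2,1)$-equivariance, compute $\Pi_{a,b}(e)=[z_1,0,1]^T$, $\Pi^*_{a,b}(e)=[-\overline{z_1},0,1]^T$ and $\A(a,b,e)=\arg(-z_1)$ in Siegel coordinates, and read off both cone inclusions from $|\A(a,b,e)|\leq\alpha$. The only blemish is notational: your displayed lift $[-|z_1|^2-|z_2|^2+it,\,z_1,\,1]^T$ conflates the Heisenberg coordinate with the first lift coordinate, but since you defer to the paper's computation and your final bookkeeping ($-\overline{z_1}=\overline{-z_1}$) is right, the argument stands.
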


\subsubsection{Projection on a tangent line at $e\in E$}\label{sec:Pi_e}

This last geometric interpretation uses the line map and in fact only relies on the hyperconvexity. Indeed, let $E$ be a hyperconvex subset of $\S^3$ and $e\in E$. We can project $E$ to the line $e^\perp$ via the map $\Pi_e$ defined on $\S^3$ by $\Pi_e(p) = p \boxtimes e$ and $\Pi_e(e) = e$. Geometrically, for $p\neq e$, $\Pi_e(p)$ is the intersection point of the two lines $e^\perp$ and $p^\perp$.
We have:
\begin{proposition}\label{prop:Pi_e}
The map $\Pi_e$ is surjective.

Moreover, suppose that $E$ is hyperconvex. Then the map $\Pi_e$ restricted to $E$ is injective.
\end{proposition}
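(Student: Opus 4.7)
The plan is to use the polarity between $\CP^2$ and $\CPstar$ to reinterpret $\Pi_e$ geometrically: for $p\neq e$, the point $\Pi_e(p)=p\boxtimes e$ is the unique point polar to the complex line $\mathcal{L}(p,e)$, equivalently the intersection point of the two tangent lines $\mathcal{L}(e,e)=e^\perp$ and $\mathcal{L}(p,p)=p^\perp$. In particular $\Pi_e(p)$ automatically lies in $e^\perp$.

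For surjectivity, the key geometric fact I would first establish is that $e^\perp\setminus\{e\}\subset\HuuC$. Since $e\in\S^3$, the tangent line $e^\perp$ meets $\S^3$ only at $e$ (this is the last item in \Cref{lem:box-polar}'s discussion of polarity), so no other point of $e^\perp$ is in $\S^3$. A brief computation in the Siegel model of \Cref{section:coordinates}, taking $e=\infty$ with lift $\bm\infty=(1,0,0)^T$, shows $e^\perp$ consists of vectors $(z_1,z_2,0)^T$ with Hermitian norm $2|z_2|^2\geqslant 0$, vanishing exactly on $\C\bm\infty$; hence every point of $e^\perp$ different from $e$ has positive type. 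Given such $q\in e^\perp\setminus\{e\}$, the symmetry $\la\bq,\be\ra=0\Leftrightarrow\la\be,\bq\ra=0$ gives $e\in q^\perp$, and since $q\in\HuuC$ the projective line $q^\perp$ meets $\HdC$, so $q^\perp\cap\S^3$ is a $\C$-circle through $e$. Any point $p\neq e$ on this $\C$-circle then satisfies $\mathcal{L}(p,e)=q^\perp$ and therefore $\Pi_e(p)=p\boxtimes e=q$. Together with $\Pi_e(e)=e$, this yields surjectivity onto $e^\perp$.

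For injectivity on a hyperconvex $E$, suppose $p_1,p_2\in E$ satisfy $\Pi_e(p_1)=\Pi_e(p_2)=q$. If $q=e$, then for $p\neq e$ we would have $\mathcal{L}(p,e)=e^\perp$, forcing $p\in e^\perp\cap\S^3=\{e\}$, a contradiction; so $p_1=p_2=e$. If $q\neq e$, the surjectivity analysis shows that $p_1$ and $p_2$ both lie on the $\C$-circle $q^\perp\cap\S^3$, which also contains $e$. If $p_1\neq p_2$, the triple $\{e,p_1,p_2\}\subset E$ consists of three distinct points on a common $\C$-circle, contradicting hyperconvexity; hence $p_1=p_2$.

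The main obstacle is the surjectivity step, and specifically the observation that $e^\perp\setminus\{e\}$ is contained entirely in $\HuuC$: without this, the preimage construction via the $\C$-circle $q^\perp\cap\S^3$ would not be available. Once this is in hand, injectivity is a direct consequence of the definition of hyperconvexity applied to the triple $(e,p_1,p_2)$.
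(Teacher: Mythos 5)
Your proof is correct and follows essentially the same route as the paper: identify the preimage of a point $q\in e^\perp$ with the intersection of its polar line with $\S^3$, deduce surjectivity from non-emptiness of that intersection, and deduce injectivity from hyperconvexity applied to the $\C$-circle through $e$ and the two putative preimages. The only difference is that you explicitly verify that $e^\perp\setminus\{e\}\subset\HuuC$ (so that the polar line of any $q\neq e$ in $e^\perp$ genuinely meets $\HdC$ and cuts out a $\C$-circle through $e$), a point the paper's proof leaves implicit; this is a worthwhile clarification but not a different argument.
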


\begin{proof}
The preimage of any $x\in e^\perp$ by $\Pi_e$ is the intersection between the polar line to $x$ and $\S^3$. This preimage is non empty for any point $x\in e^\perp$.

More precisely the preimage of $e\in e^\perp$ in $\S^3$ is the singleton $\{e\}$. On the other hand, the preimage of $x = \Pi_e(q) \neq e$ is exactly the $\C$-circle through $e$ and $q$. When $E$ is hyperconvex, this $\C$-circle can intersect $E$ at most twice. And we already know two intersection points: $e$ and $q$.
This proves that $\Pi_e: E \to e^\perp$ is injective.
\end{proof}

The projections $\Pi_e(E)$ play the role of space-like geodesics on $\mathcal{L}(E,E)$. In the case where $E = \partial_\infty\HdR$, they are exactly these geodesics. 

\begin{remark}\label{rmk:projection-tangent-line}
Assume that $e=\infty$. Then if a point $m\in E$ has Heisenberg coordinates $[z,t]$, it is easy to see that $\Pi_\infty(m)$ lifts to the vector $[2\bar z,1,0]^T$. In particular, the slimness of $E$ cannot be deduced from its projection on $\mathcal{L}(e,e)$ for $e\in E$, as the coordinate $t$ disappears. In the case where $e=\infty$ the map $\Pi_e$ corresponds, up to a factor $2$ and complex conjugation to the vertical projection onto 
$\C$, and the fact that it is one-to-one says that two points of $E$ cannot be vertically aligned if $E$ contains $\infty$.
 \end{remark}

\subsection{Examples and non-examples\label{section:examples}}

We describe here  examples of slim or non-slim subsets. We also introduce limit sets of surface groups, on which we will further focus in the next sections.

\subsubsection{$\R$-circles and bent $\R$-circles}\label{section:bent}

As recalled in \cref{prop:Cartan-properties}, three points are on a common $\R$-circle iff their Cartan invariant is $0$. As such, any $\R$-circle is $0$-slim. One can go a step further: those are maximal slim subsets:
\begin{proposition}\label{prop:Rcirclemaximal}
Let $E$ be a slim subset of $\S^3$ containing an $\R$-circle $R$. Then $E$ equals $R$.
\end{proposition}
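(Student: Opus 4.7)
The plan is to argue by contradiction using the foliation of $\S^3\setminus R$ by arcs of $\C$-circles established in \cref{coro:foliation}. This is the cleanest way to produce a triple of points in $E$ sitting on a common $\C$-circle, which is forbidden by slimness.

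More precisely, I would start by assuming $E$ is $\alpha$-slim for some $\alpha<\pi/2$ and that $R\subset E$, and suppose for contradiction that there exists $p\in E\setminus R$. Since $p$ lies in the open set $\S^3\setminus R$, \cref{coro:foliation} provides two distinct points $a,b\in R$ such that $p$ belongs to the arc $\arc{a}{b}$. By definition of such an arc, the three points $a,b,p$ lie on the same $\C$-circle (the one supporting the arc) and are pairwise distinct.

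Now I would invoke \cref{prop:Cartan-properties}(3): a triple of pairwise distinct points that lies on a $\C$-circle has Cartan invariant of absolute value $\pi/2$. Since $a,b\in R\subset E$ and $p\in E$, this gives $|\A(a,b,p)|=\pi/2$, forcing $\A(E)\geqslant \pi/2$ and contradicting $\alpha$-slimness.

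The argument is very short and presents essentially no obstacle: the only subtle point is making sure one really recovers a triple of distinct points of $E$ on a $\C$-circle, which is immediate from the hypothesis $p\notin R$ (so $p\neq a$ and $p\neq b$) and from the fact that the two endpoints of an arc are distinct. No explicit computation in Heisenberg coordinates is needed; the whole content has been absorbed into the foliation statement.
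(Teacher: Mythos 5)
Your proof is correct and follows exactly the same route as the paper: both invoke the foliation of $\S^3\setminus R$ by arcs of $\C$-circles from \cref{coro:foliation} to place a hypothetical point of $E\setminus R$ on a $\C$-circle together with two points of $R$, then conclude via \cref{prop:Cartan-properties} that the Cartan invariant of this triple is $\pm\pi/2$, contradicting slimness. Your write-up is if anything slightly more careful than the paper's, since you explicitly check that the three points are pairwise distinct.
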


\begin{proof}
  The set of arcs of $\C$-circles connecting two points of $R$ defines a foliation of the complement of $R$ in $\S^3$, as stated in \cref{coro:foliation}. 
  
  Therefore, if $E$ contains a point outside $R$, this point belongs to (exactly) one of these arcs. This gives three points in $E$ that lie on a $\C$-circle, thus have Cartan invariant equal to $\pm \pi/2$. So if $E$ strictly contains $R$, $E$ is not slim.
\end{proof}

A simple example of a slim set which is not an $\R$-circle is given by the union of two half $\R$-circles through $2$ points, see the beginning of \cref{subsec:horizontality-orbits-1}. In coordinates, we may write:
\begin{proposition}\label{prop:bent-Rcircles-slim}
For all $0<\theta < 2\pi$, the union 
\[ E_\theta = \{[x,0], r\in \R_+\} \cup \{[ye^{i\theta},0], r\in \R_+\}\]
is $\alpha$-slim for $\alpha = \left| \dfrac{\pi-\theta}{2}\right|$.
\end{proposition}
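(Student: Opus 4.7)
The plan is to pivot every Cartan invariant at the common endpoint $\infty$ of the two half-rays comprising $E_\theta$ and exploit the cocycle relation from \cref{prop:Cartan-properties}: for any $p,q,r \in E_\theta\setminus\{\infty\}$,
\[
\A(p,q,r) = \A(\infty,p,q) + \A(\infty,q,r) - \A(\infty,p,r).
\]
This reduces the problem to controlling the pivoted invariants $\A(\infty,a,b)$ for $a,b \in E_\theta$.

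The first step would be to establish the key lemma: for any $a,b \in E_\theta\setminus\{\infty\}$, one has $|\A(\infty,a,b)| \leq |\pi-\theta|/2$, and moreover $\A(\infty,a,b)=0$ whenever $a,b$ lie on the same half-ray. Using the standard Siegel lifts ${\bm \infty}=[1,0,0]^T$ and $\ba = [-r_a^2,\, r_a e^{i\phi_a},\, 1]^T$ for $a = [r_a e^{i\phi_a},0]$ with $\phi_a \in \{0,\theta\}$, the three Hermitian products are computed directly and one finds
\[
-\la{\bm \infty},\ba\ra\,\la\ba,\bb\ra\,\la\bb,{\bm \infty}\ra = r_a^2 + r_b^2 - 2 r_a r_b\, e^{i(\phi_a-\phi_b)}.
\]
When $\phi_a=\phi_b$ this is a non-negative real number, so the invariant vanishes. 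When $\{\phi_a,\phi_b\}=\{0,\theta\}$, AM-GM gives a lower bound $2 r_a r_b(1-\cos\theta) > 0$ on the real part while the imaginary part has modulus $2 r_a r_b |\sin\theta|$; dividing yields $|\tan\A(\infty,a,b)| \leq |\sin\theta|/(1-\cos\theta) = |\cot(\theta/2)| = \tan(|\pi-\theta|/2)$. The sign of $\A(\infty,a,b)$ is governed by the ordered pair $(\phi_a,\phi_b)$.

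With the lemma in hand, I would then bound $\A(p,q,r)$ for an arbitrary triple in $E_\theta^3$ by a short case analysis. If some point equals $\infty$, a cyclic permutation reduces directly to the lemma. Otherwise, the pigeonhole principle forces two of the three points, say $p$ and $q$, onto the same half-ray (the corner $0$ is absorbed cleanly since the lemma's formula gives $\A(\infty,0,\cdot)=0$). Then $\A(\infty,p,q)=0$ and the cocycle collapses to $\A(p,q,r) = \A(\infty,q,r) - \A(\infty,p,r)$. If $r$ also lies on the same ray, both pivoted invariants vanish; if $r$ lies on the opposite ray, both pivoted invariants carry the same sign and each lies in the interval $[-M,M]$ with $M = |\pi-\theta|/2$, so their difference is bounded in absolute value by $M$, which is the desired bound.

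The main obstacle is that a naive triangle inequality on the cocycle would yield only $3M$. The improvement to the sharp bound $M$ relies on two structural facts that must both be used: the vanishing of the pivoted invariant on two same-ray points (which kills one cocycle term via pigeonhole), and the common sign of the two remaining pivoted invariants (which makes them enter as a difference rather than a sum). Sharpness is witnessed by triples $(\infty, [r,0], [re^{i\theta},0])$ with $r>0$, where the AM-GM inequality is saturated and $\A$ attains exactly $|\pi-\theta|/2$.
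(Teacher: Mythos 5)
Your proof is correct and takes essentially the same route as the paper's: both use the Cartan cocycle identity to pivot every triple at a corner point of the bent circle, kill the term coming from two same-ray points, and bound the difference of the two surviving pivoted invariants, which share a sign and lie in $[-|\pi-\theta|/2,\,|\pi-\theta|/2]$. The only (immaterial) difference is that you pivot at $\infty$ while the paper pivots at $0$; the resulting quantity $\arg\bigl(r_a^2+r_b^2-2r_ar_b e^{i(\phi_a-\phi_b)}\bigr)$ is the same expression the paper writes as $\arg\bigl(1-\sin(2t)e^{-i\theta}\bigr)$.
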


\begin{proof}
Let $R_1$ and $R_2$ be the two sets appearing in the union $E_\theta$. We want to compute the maximum of $\A(p,q,r)$ for $(p,q,r)$ in $E_\theta^{(3)}$. First, if they all belong
 to $R_1$ or all to $R_2$, as they are halves of $\R$-circles, this Cartan invariant is $0$. So we may assume, up to permutations, that $p,q$ are in $R_1$ and $r$ in $R_2$. Denote by $0$ the point $[0,0]$.
 
 Using the last point of \cref{prop:Cartan-properties} and the fact that $p,q$ and $0$ belong to $R_1$, we have the equality:
 \[
 \A(p,q,r) = \A(p,q,0) - \A(p,r,0) + \A(q,r,0) = \A(q,r,0)-\A(p,r,0)
 \]
 We first estimate $\A(q,r,0)$. Write $q=[x,0]$ and $r=[ye^{i\theta},0]$ where $x$ and $y$ are positive. We can write $x+iy = \rho e^{it}$ with $0<t<\pi/2$. Then, a direct computation in Heisenberg coordinates gives:
 \[\A(q,r,0) = \arg(-<\bq,\bm r>) = \arg(1-\sin(2t)e^{-i\theta}).\] 
 Note that $0< \sin(2t)\leq 1$. It is easily seen that $0<\A(q,r,0)\leq \frac{\pi-\theta}{2}$ with the maximum attained at $t=\pi/4$ or equivalently $x=y$.
 
 It implies that the difference $\A(p,q,r) =  \A(q,r,0)-\A(p,r,0)$ (where $p$ and $q$ are in $R_1$ and $r$ in $R_2$) is bounded between $\frac{\theta-\pi}{2}$ and $\frac{\pi-\theta}{2}$, proving the proposition.
 \end{proof}

\subsubsection{Slim circles are unknotted}

We will from now on be especially interested in slim subsets homeomorphic to circles. We give a straightforward name to these sets:
\begin{definition}\label{def:slim_circle}
A subset $E \subset \S^3$ is a slim circle if it is both homeomorphic to the circle and a slim subset of $\S^3$. 
\end{definition}

We remark here that slim circles are unknotted. This rules out a non-trivial knot in the sphere being slim.
We prove it by constructing a diagram of the knot without self-intersection.
\begin{proposition}\label{unknotted}
A slim circle is unknotted.
\end{proposition}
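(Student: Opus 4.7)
The plan is to follow the hint by producing a planar projection of $E$ without self-intersections, and then to upgrade this ``crossingless diagram'' into an embedded disc in $\S^3$ with boundary $E$, which will imply that $E$ is unknotted.

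I will apply an element of $\PU(2,1)$ to arrange that both $\infty$ and $\bo = [0,0]$ belong to $E$, and I fix $\alpha < \pi/2$ such that $E$ is $\alpha$-slim. The projection to be used is the vertical projection $\pi_v : [z,t] \mapsto z$ in Heisenberg coordinates, which by \cref{rmk:projection-tangent-line} coincides (up to antiholomorphic normalization) with $\Pi_\infty$. Since slimness implies hyperconvexity, \cref{prop:Pi_e} gives that $\pi_v$ is injective on $E \setminus \{\infty\}$. The quantitative additional input is the paraboloid inequality~\eqref{eqn:paraboloids} applied to the triple $(\infty, \bo, p)$: for every $p = [z,t] \in E$ one has $|t| \leq \tan(\alpha)\,|z|^2$. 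As a consequence, any sequence $[z_n, t_n] \in E$ converging to $\infty$ in $\S^3$ must satisfy $|z_n| \to \infty$ in $\C$. Hence $\pi_v$ extends to a continuous injection $\bar\pi_v : E \to \S^2 = \C \cup \{\infty_\C\}$ by $\bar\pi_v(\infty) = \infty_\C$. Since $E$ is compact, $\bar\pi_v$ is a topological embedding and its image $\tilde\gamma := \bar\pi_v(E)$ is a Jordan curve in $\S^2$.

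I then realize $E$ as a graph and fill it in. Let $\gamma := \tilde\gamma \cap \C$, which is closed in $\C$. The set $E \setminus \{\infty\}$ is the graph $\{[z, f(z)] : z \in \gamma\}$ of a continuous map $f : \gamma \to \R$, which I extend to a continuous $\tilde f : \C \to \R$ by Tietze's extension theorem. The Jordan curve theorem in $\S^2$ gives a topological disc $D \subset \S^2$ with $\partial D = \tilde\gamma$. I then define $\phi : D \to \S^3$ by $\phi(z) = [z, \tilde f(z)]$ for $z \in D \cap \C$ and $\phi(\infty_\C) = \infty_{\S^3}$. Continuity at $\infty_\C$ follows from $|z|^2 + |\tilde f(z)| \geq |z|^2 \to \infty$, and injectivity is immediate. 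Since $D$ is compact, $\phi$ is a topological embedding, so $\Sigma := \phi(D)$ is an embedded disc in $\S^3$ with $\partial \Sigma = \phi(\tilde\gamma) = E$. Therefore $E$ bounds an embedded disc in $\S^3$, hence is unknotted.

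The heart of the argument is the first step: producing a planar projection injective on $E$ and extending continuously at the removed point $\infty$. Here slimness is crucial -- hyperconvexity alone yields injectivity, but the paraboloid bound is what ensures the continuous extension at $\infty$. Once this Jordan curve picture is in place, the construction of the bounding disc is a standard application of the Tietze and Jordan curve theorems.
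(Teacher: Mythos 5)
Your proof is correct and rests on the same mechanism as the paper's: you project along the $\C$-circles through a point of $E$ (your vertical projection is exactly $\Pi_\infty$ up to the normalisation of \cref{rmk:projection-tangent-line}) and use hyperconvexity, via \cref{prop:Pi_e}, to get injectivity. The paper stops there, declaring the injective projection to be a crossingless diagram of the knot and hence the knot trivial; you instead upgrade the crossingless picture to an explicit bounding disc via Tietze extension and the Jordan curve theorem. Two remarks. First, you correctly isolate a point the paper's proof passes over in silence: for the projected curve to be a homeomorphic (hence usable) image of $E$ one needs continuity of the projection at the centre of projection itself, which is exactly horizontality there; your use of the paraboloid bound \eqref{eqn:paraboloids} supplies this, and it is genuinely where slimness (rather than bare hyperconvexity) enters. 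Second, the one step of yours that deserves a word of justification is the final implication ``$E$ bounds a topologically embedded disc, hence is unknotted'': for an arbitrary, possibly wild, topological disc this is not a quotable fact. In your situation it is harmless because your disc is a graph: the shear $(z,t)\mapsto(z,\,t-\tilde f(z))$, extended by fixing $\infty$, is a self-homeomorphism of $\S^3$ carrying $E$ onto a Jordan curve in the flat sphere $\{t=0\}\cup\{\infty\}$, after which the two-dimensional Schoenflies theorem together with Alexander's trick concludes. With that one sentence added your argument is complete, and if anything more detailed than the paper's.
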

\begin{proof}
$E$ is a knot in $\S^3$. Let $e$ be a point in $E$, and consider the projection $\Pi_e$ defined in \cref{sec:Pi_e}. Then $\Pi_e(E)$ is a subset of the line $e^{perp}$ and as such is a diagram of the knot. By slimness and \cref{prop:Pi_e}, $\Pi_e$ is injective on $E$, so $\Pi_e(E)$ has no double points.

The image of $E$ under $\pi$ is a diagram of the knot with no double point: E is the trivial knot.
\end{proof}

The following corollary is proven in the same way.
\begin{corollary}
Suppose $E$ is a slim subset of $\S^3$. Let $F\subset E$ be a subset homeomorphic to the disjoint union of circles.
Then $F$ is an unknotted link.
\end{corollary}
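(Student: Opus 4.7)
The plan is to mimic the proof of \cref{unknotted}. Pick any $e \in E$ and consider the projection $\Pi_e$ introduced in \cref{sec:Pi_e}. Since $E$ is slim it is hyperconvex, hence by \cref{prop:Pi_e} the map $\Pi_e$ is injective on $E$, and a fortiori on $F \subset E$. Being continuous and injective on the compact set $F$, $\Pi_e$ realises a homeomorphism between $F$ and a disjoint union of embedded circles $\Pi_e(F) \subset e^\perp \cong \S^2$.

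I would then interpret this as a crossing-free diagram for the link $F$. The fibers of $\Pi_e$ on $\S^3 \setminus \{e\}$ are arcs of $\C$-circles through $e$ with $e$ removed, each homeomorphic to $\R$. Using Heisenberg coordinates centered at $e$ (see \cref{rmk:projection-tangent-line}), $\Pi_e$ identifies with the vertical projection from the Heisenberg group $\C \times \R$ onto $\C$, and therefore exhibits $\S^3 \setminus \{e\}$ as the trivial $\R$-bundle over $\R^2 \simeq e^\perp \setminus \{e\}$. In this product structure $\R^3 \simeq \R^2 \times \R$, the injectivity of $\Pi_e$ on $F$ means that $F$ is the graph of a continuous function defined on the planar link $\Pi_e(F) \subset \R^2$.

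The straight-line homotopy in the $\R$-direction then extends to an ambient isotopy of $\S^3 \setminus \{e\} \simeq \R^3$ carrying $F$ onto the planar link $\Pi_e(F) \times \{0\}$. A disjoint union of circles contained in a $2$-plane of $\R^3$ is always an unknotted link: by the Jordan curve theorem each component bounds a disk in the plane, and the possible nesting of these disks is resolved into pairwise disjoint spanning disks in $\R^3$ by pushing the innermost components slightly out of the plane, one at a time. The resulting disjoint spanning disks for the components of $F$ in $\S^3$ are by definition what it means for $F$ to be an unknotted link. The only nontrivial ingredient beyond \cref{prop:Pi_e} is this standard fact that a link with a crossing-free diagram is trivial; the rest is a direct repetition of the argument of \cref{unknotted}.
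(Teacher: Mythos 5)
Your proof is correct and follows essentially the paper's own route: the paper disposes of this corollary in one line by declaring it ``proven in the same way'' as \cref{unknotted}, i.e.\ by projecting through $\Pi_e$ and observing that injectivity (from hyperconvexity, \cref{prop:Pi_e}) gives a crossing-free diagram, which is exactly your argument plus a welcome elaboration of why a crossing-free diagram forces the link to be trivial. The only cosmetic point is that you pick ``any $e\in E$'': if $F=E$ the point $e$ lies on the link, so in the chart $\S^3\setminus\{e\}\simeq\R^2\times\R$ the component through $e$ is a properly embedded line rather than a circle and $\Pi_e(F)$ is not quite a compact planar link; choosing $e\in E\setminus F$ whenever possible, or keeping the diagram in $e^\perp\simeq\S^2$ instead of $\R^2$, removes this wrinkle.
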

For example, if $E$ is an immersion of a circle with several double points, $E$ contains disjoint circles. They cannot be knotted nor linked.

\subsubsection{Slim orbits of $1$-parameter subgroups}

\cref{subsec:horizontality-orbits-1} describes some very specific families of horizontal orbits of $1$-parameter subgroups. The previous \cref{prop:alphaslim_horizontal} implies that among all $1$-parameter orbits, those are the only one that can be slim. We will not go into the whole classification of slimness for these orbits and especially not look at all at the elliptic case. We will prove that horizontal orbits of $1$-parameter loxodromic subgroups are indeed slim. This gives concrete examples that are not $\R$-circles. On the contrary, horizontal orbits of $1$-parameter parabolic subgroups are not slim unless the group is horizontal unipotent. In this last case, the orbit is an $\R$-circle.

Let us first look at the parabolic case.
We indeed prove that invariance by a single parabolic transformation is compatible with slimness only if this transformation is horizontal unipotent.

\begin{proposition}\label{prop:invariant-parabolics}
Let $E$ be a closed slim subset of $\S^3$ wih at least two distinct points that is invariant under the action of parabolic element $u$ of $\PU(2,1)$. Then $u$ is horizontal unipotent.
\end{proposition}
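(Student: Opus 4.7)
The strategy is to classify parabolic elements of $\PU(2,1)$ up to conjugation and then, in each non-horizontal case, to exhibit triples of points in $E$ whose Cartan invariants tend to $\pm\pi/2$, which will contradict the slimness bound $\A(E) \leqslant \alpha < \pi/2$. First I would use $\PU(2,1)$-conjugation to send the unique fixed point of $u$ to $\infty$ in Heisenberg coordinates, and then invoke the classical normal form for parabolics: up to further conjugation inside the stabilizer of $\infty$ (the affine Heisenberg group extended by dilations and rotations about the vertical axis), $u$ is one of (i) a horizontal unipotent translation $[z,t] \mapsto [z + w, t + 2\Im(\bar w z)]$ with $w \neq 0$, (ii) a vertical unipotent $[z,t] \mapsto [z, t + s]$ with $s \neq 0$, or (iii) an ellipto-parabolic $[z,t] \mapsto [e^{i\theta}z, t + s]$ with $\theta \not\equiv 0 \pmod{2\pi}$ and $s \neq 0$. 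Case (i) is exactly ``horizontal unipotent''; I would assume $u$ is of type (ii) or (iii) and derive a contradiction.

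Since $\infty$ is the unique fixed point of $u$ on $\S^3$ and $E$ has at least two points, I can pick $q = [z_0, t_0] \in E$ with $q \neq \infty$; the whole orbit $u^n(q) = [e^{in\theta} z_0, \, t_0 + ns]$ then lies in $E$. With the standard lift $\bq_n = [-|z_0|^2 + i(t_0 + ns), \, e^{in\theta} z_0, \, 1]^T$ and the Siegel Hermitian form, the key identity to establish is
\[ \la \bq_n, \bq_m \ra \;=\; 2|z_0|^2\bigl( e^{i(n-m)\theta} - 1 \bigr) \;+\; i(n-m)s. \]
The first summand is bounded in modulus by $4|z_0|^2$, while the second grows linearly in $|n-m|$ since $s \neq 0$.

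I would then evaluate the Cartan invariant on the triples $(q_0, q_n, q_{2n})$ as $n \to \infty$. Each of the three pairings is asymptotically $\pm i n s$, so the product $-\la \bq_0, \bq_n\ra \la \bq_n, \bq_{2n}\ra \la \bq_{2n}, \bq_0\ra$ has leading term $-(-ins)(-ins)(2ins) = 2i n^3 s^3$; its argument therefore tends to $\pm\pi/2$, giving $|\A(q_0, q_n, q_{2n})| \to \pi/2$ and contradicting slimness. The main obstacle is to set up the classification precisely so that cases (ii) and (iii) are handled simultaneously by the single computation above; both are captured because the argument does not use whether $\theta = 0$, only that $s \neq 0$. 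As a consistency check, in the degenerate sub-cases $z_0 = 0$ or $\theta \equiv 0 \pmod{2\pi}$ the orbit lies entirely on the vertical $\C$-circle through $q$ and $\infty$, so hyperconvexity (and hence slimness, via \cref{rem:definition}(2)) fails directly without any asymptotic analysis.
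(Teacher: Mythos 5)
Your proof is correct, and for the ellipto-parabolic case it takes a genuinely different route from the paper's. The paper also argues by the same trichotomy (vertical unipotent / ellipto-parabolic / horizontal unipotent), and for a vertical unipotent it observes, as you do in your consistency check, that the orbit of $q$ lies on the $\C$-circle through $q$ and the fixed point, killing hyperconvexity outright. But for the ellipto-parabolic case the paper's argument is soft: it notes that the orbit lies on a cylinder foliated by $\C$-circles whose polar lines form a compact set avoiding $p^\perp$, extracts a subsequence $q_j\to p$ along which the lines $(q_jq_{j+1})$ converge to a line different from $p^\perp$, and invokes \cref{lem:Cartan-coalesce} to force the supremum of Cartan invariants up to $\pi/2$. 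Your argument replaces this compactness-plus-coalescence step by the explicit identity $\la \bq_n,\bq_m\ra = 2|z_0|^2(e^{i(n-m)\theta}-1)+i(n-m)s$ and the resulting asymptotics $-\la\bq_0,\bq_n\ra\la\bq_n,\bq_{2n}\ra\la\bq_{2n},\bq_0\ra = 2in^3s^3+O(n^2)$, which I checked and which is right (the three points are pairwise distinct since their $t$-coordinates $t_0, t_0+ns, t_0+2ns$ differ when $s\neq 0$). What your approach buys is uniformity — cases (ii) and (iii) are dispatched by one computation that depends only on $s\neq 0$ — together with a quantitative rate at which $|\A|\to\pi/2$, and it avoids using that the fixed point itself belongs to $E$; what the paper's approach buys is reusability of \cref{lem:Cartan-coalesce}, which is the same lemma driving \cref{prop:alphaslim_horizontal}, so the two statements are seen to fail for the same structural reason.
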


\begin{proof}
We prove it by a case disjonction. Note that in any case, $E$ contains the fixed point $p$ of $u$ and another point $q\in E$.
\begin{itemize}
	\item If $u$ is vertical unipotent, then the (infinite) orbit $u^n(q)$ completely lies inside the $\C$-circle through $p$ and $q$. So $\A(p,q,u(q)) = \pm\dfrac{\pi}{2}$, which prevents the slimness of $E$.
	\item If $u$ is ellipo-parabolic, then the orbit $u^n(q)$ is contained in a cylinder foliated by $\C$-circles. Let $L$ be the compact set of lines in $\CPstar$ supported by this $\C$-circles. Note that $L$ does not contain $p^\perp$. We can extract a subsequence $q_j = u^{n_j}(q)$ such that the line $(q_jq_{j+1})$ converges to one of the line in $L$. Then, the sequence $(q_j)$ of points in $E$ converges to $p$ and the line $(q_jq_{j+1})$ does not converges to $p^\perp$. \cref{lem:Cartan-coalesce} proves that the supremum of Cartan invariants is $\dfrac{\pi}{2}$. 
\end{itemize}
The only remaining case is that $u$ is horizontal parabolic. It is of course possible, as an $\R$-circle is invariant under some horizontal parabolic elements.
\end{proof}

A direct corollary reads:
\begin{corollary}
A horizontal orbit of a $1$-parameter parabolic subgroup is slim if and only if the subgroup is horizontal unipotent. In this case, it is an $\R$-circle.
\end{corollary}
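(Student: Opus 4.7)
The plan is to prove the two implications separately. The forward direction is a quick consequence of \cref{prop:invariant-parabolics}, while the converse requires an explicit Heisenberg-coordinate identification of the orbit as an $\R$-circle.

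For the forward direction, suppose the horizontal orbit $\gamma$ of a $1$-parameter parabolic subgroup $\{u_s\}_{s\in\R}$ is slim. I would pass to the closure $E=\overline{\gamma}$, which is closed, remains slim by continuity of the Cartan invariant on triples of distinct points (any three distinct limit points are approached by distinct orbit triples), and has at least two points provided the orbit is non-trivial. Since $E$ is preserved by each $u_s$ and each nontrivial $u_s$ is parabolic, \cref{prop:invariant-parabolics} forces every $u_s$ with $s\neq 0$ to be horizontal unipotent, so the whole subgroup is horizontal unipotent.

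For the converse, I would place the subgroup's fixed point at $\infty$ in Heisenberg coordinates, so that the subgroup takes the explicit form
\[ u_s : (z,t) \longmapsto \bigl(z+sz_0,\; t + st_0 + 2s\Im(\bar z_0 z)\bigr),\]
with $z_0\neq 0$. The orbit through $p=(z,t)$ is then a straight line in Heisenberg coordinates with velocity $v = (z_0,\; t_0 + 2\Im(\bar z_0 z))$. Pairing $v$ with the contact form $\alpha = dt - 2x\,dy + 2y\,dx$ yields $\alpha_p(v) = t_0 + 4\Im(\bar z_0 z)$, so horizontality of the orbit pins the base point down to the affine constraint $\Im(\bar z_0 z) = -t_0/4$. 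Under this constraint, substituting the orbit parametrization into the affine equation of the contact plane at $p$ verifies the required equality. This shows the orbit is a line contained in the contact plane at $p$ and passing through $\infty$, which by \cref{example:RandC-circles} means the orbit together with $\infty$ is an $\R$-circle. Since $\R$-circles are $0$-slim by \cref{prop:Cartan-properties}(4), this gives slimness and concludes the converse.

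The main obstacle is the mixed case $t_0\neq 0$: it is a priori unclear both that such a subgroup admits any horizontal orbit and that, when it does, these orbits are genuine $\R$-circles rather than new slim curves. The affine condition $\Im(\bar z_0 z)=-t_0/4$ singles out the unique real $2$-plane of horizontal base points, and one can alternatively see the phenomenon by conjugating the subgroup by a suitable vertical Heisenberg translation, which absorbs the $t_0$-component and reduces matters to the pure horizontal case $t_0=0$ where the statement is transparent.
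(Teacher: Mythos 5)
Your proof is correct and follows exactly the route the paper intends (the paper labels this a ``direct corollary'' of \cref{prop:invariant-parabolics} and supplies no further argument): the forward implication is \cref{prop:invariant-parabolics} applied to the closure of the orbit, and the converse identifies a horizontal orbit of a horizontal unipotent one-parameter subgroup as a straight line through a point $p$ contained in the contact plane at $p$, hence (with $\infty$ adjoined) an $\R$-circle as in \cref{example:RandC-circles}, which is $0$-slim. One minor slip in your closing remark only: vertical Heisenberg translations are central, so they cannot absorb the $t_0$-component; conjugating instead by a \emph{horizontal} translation $(w,0)$ replaces $t_0$ by $t_0+4\Im(\bar{w}z_0)$ and achieves the stated reduction, and in any case your main computation with the contact form does not rely on this remark.
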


The proof of slimness in the loxodromic case, however, is more involved than for horizontality, as a marker of how much constrained is a slim set. Moreover, we are not able to estimate the parameter of slimness, leaving us with an indirect proof. We will just give a detailed sketch of the proof and spare some technicalities. Recall from \cref{eq:1-param-loxo} that the one-parameter loxodromic subgroups can be parametrized by:
\begin{equation*}
L_\alpha : s\longmapsto \begin{bmatrix} e^{s\alpha} & & \\ & e^{s(\overline{\alpha}-\alpha)} & \\ & & e^{-s\overline{\alpha}} \end{bmatrix},\mbox{where $\alpha\in\C^*$ satisfies $|\alpha|\neq 1$.}
\end{equation*}
Their horizontal orbits are described in \cref{subsec:horizontality-orbits-1}. We now prove:
\begin{proposition}
If an orbit of $L_s$ is horizontal, then it is slim.
\end{proposition}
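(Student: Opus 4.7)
The plan is to argue by contradiction. Suppose an orbit $\mathcal O=\{p_s\}_{s\in\R}$ of $L_\alpha$ is horizontal but not slim; then $\sup|\A|=\pi/2$ and there is a sequence of pairwise distinct triples $(p_{s_n},p_{t_n},p_{v_n})$ in $\mathcal O^3$ with $|\A(p_{s_n},p_{t_n},p_{v_n})|\to\pi/2$. Since $\mathcal O$ is $L_\alpha$-invariant and $\A$ is $\PU(2,1)$-invariant, we may apply $L_\alpha(-s_n)$ and reduce to triples of the form $(p_0,p_{t_n},p_{v_n})$. Equivalently we study the function $F(t,v):=\A(p_0,p_t,p_v)$ on the open region $\Omega=\{(t,v)\in\R^2:t,v,t-v\neq 0\}$ and want to show $\sup|F|<\pi/2$.

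Extract a subsequential limit $(t^*,v^*)\in[-\infty,+\infty]^2$. The first task is to rule out every degenerate limit. If two of $\{0,t^*,v^*\}$ coincide at a finite value $s$, then two of the points $\{p_0,p_{t^*},p_{v^*}\}$ coalesce along the orbit at the smooth point $p_s$. A second-order Taylor expansion of the Hermitian pairing at $p_s$, combined with the horizontality identity $\langle \dot{\bp}_s,\bp_s\rangle=0$ (hence also $\langle\ddot{\bp}_s,\bp_s\rangle=-\langle\dot{\bp}_s,\dot{\bp}_s\rangle$), gives
\[
\langle\bp_{s+\epsilon_1},\bp_{s+\epsilon_2}\rangle \;=\; -\tfrac{1}{2}\langle\dot{\bp}_s,\dot{\bp}_s\rangle\,(\epsilon_1-\epsilon_2)^2 + O(\epsilon^3),
\]
with $\langle\dot{\bp}_s,\dot{\bp}_s\rangle>0$ (horizontal tangent vectors at a null point have positive Hermitian norm). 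Inserting this into the triple product defining $\A$ yields a negative-real leading term, so $|\A|\to 0$, contradicting $|\A|\to\pi/2$. If instead some parameters diverge to $\pm\infty$, two of the points coalesce at $0$ or at $\infty$, the fixed points of $L_\alpha$. Using the explicit formula $\bp_s=[e^{2sa}(-|z_0|^2+it_0),e^{s\mu}z_0,1]^T$ (with $\mu=2\bar\alpha-\alpha$) from \cref{pro:paraboloids-1-parameter}, a direct computation shows that $\bar{\mathcal O}$ is CR-horizontal at the two fixed points, i.e.\ the lines $\mathcal L(p_{s_n},p_{t_n})$ converge to $\infty^\perp$ resp.\ $0^\perp$. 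By \cref{lem:Cartan-coalesce} applied to an auxiliary fixed point $x$ on the orbit, this prevents $|\A|$ from converging to $\pi/2$, again contradicting the assumption.

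There remains the case $(t^*,v^*)\in\Omega$, in which by continuity $|\A(p_0,p_{t^*},p_{v^*})|=\pi/2$, meaning the three distinct orbit points $p_0,p_{t^*},p_{v^*}$ lie on a common $\C$-circle $C$. Let $m\in\HuuC$ be polar to $C$. Applying $L_\alpha(r)$ gives, for every $r\in\R$, three orbit points $(p_r,p_{r+t^*},p_{r+v^*})$ on the $\C$-circle $L_\alpha(r)\cdot C$, polar to $L_\alpha(r)\cdot m$, again with $|\A|=\pi/2$. The projective dynamics of $L_\alpha$ on $\CP^2$ has attracting fixed point $[1,0,0]=\infty$, repelling $[0,0,1]=0$ and saddle $[0,1,0]\in\HuuC$; for any $m$ not on the saddle eigendirection, $L_\alpha(r)\cdot m\to\infty$ as $r\to+\infty$, and hence $L_\alpha(r)\cdot C$ degenerates onto the tangent line $\infty^\perp$. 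Combined with CR-horizontality of $\bar{\mathcal O}$ at $\infty$ and a refined version of the coalescence estimate above, this forces $|\A(p_r,p_{r+t^*},p_{r+v^*})|$ to drop strictly below $\pi/2$ for $r$ large enough, contradicting its constancy at $\pi/2$ under the $L_\alpha$-action.

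The hard part of this plan is the last refined estimate at the endpoint $\infty$: unlike at an interior point, the parametrization $s\mapsto p_s$ is not uniformly smooth at the fixed points (the orbit spirals into them along the paraboloid), so the quadratic expansion used in the smooth case does not apply verbatim. One must either reparametrize near $\infty$ or exploit directly how the polar points $L_\alpha(r)\cdot m$ approach $\S^3$ and how fast the $\C$-circles $L_\alpha(r)\cdot C$ collapse onto $\infty^\perp$ relative to the approach of the orbit. This is precisely the technicality the authors acknowledge skipping; because the contradiction arises only at the end of a compactness argument rather than from a direct inequality, no explicit quantitative bound on the slimness parameter $\alpha$ emerges.
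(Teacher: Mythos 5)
Your overall architecture (normalize one parameter to $0$, compactify, and treat each degenerate limit of triples separately) is reasonable, and your Case 1 — coalescence at a smooth interior point via the second-order expansion $\la \bp_{s+\epsilon_1},\bp_{s+\epsilon_2}\ra = -\tfrac12\la\dot\bp_s,\dot\bp_s\ra(\epsilon_1-\epsilon_2)^2+O(\epsilon^3)$ — is correct and matches the paper's observation that $\mathcal P(s)\to 0$ as $s\to 0$. But the two remaining cases, which carry all the content, do not go through.

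The most serious problem is Case 3 (hyperconvexity). You assume three distinct orbit points lie on a $\C$-circle $C$ and try to contradict this by letting $L_\alpha(r)$ push the configuration toward $\infty$, arguing that $|\A|$ must eventually ``drop strictly below $\pi/2$''. No such drop can occur: the Cartan invariant is $\PU(2,1)$-invariant, so for every $r$ the triple $(p_r,p_{r+t^*},p_{r+v^*})$ lies on the $\C$-circle $L_\alpha(r)\cdot C$ and has $|\A|$ exactly $\pi/2$ — three distinct points on a $\C$-circle always do, however degenerate the circle becomes and however horizontal the ambient set is. You are trying to contradict an invariant quantity using the very group that preserves it. Hyperconvexity must be proved directly; the paper does this geometrically, by checking that a $\C$-circle meets the invariant paraboloid $t=3|z|^2\Im(\alpha)/\Re(\alpha)$ in at most two points unless it is contained in a horizontal plane $t=\mathrm{cst}$, which the orbit never stays in.

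Case 2 misuses \cref{lem:Cartan-coalesce}. That lemma is one-directional: a \emph{non}-horizontal coalescence forces $|\A|\to\pi/2$. Its converse is false, because horizontality is a first-order condition while the Cartan invariant of a coalescing pair is governed by second-order data. Concretely, with $x=\infty$ and $e=0$ in Heisenberg coordinates, the points $q_n=[z_n,c_n|z_n|^2]$ satisfy $\A(\infty,0,q_n)=\arctan(c_n)$, which tends to $\pi/2$ when $c_n\to\infty$ even though the convergence $q_n\to 0$ can be kept tangent to the contact plane. So ``$\bar{\mathcal O}$ is CR-horizontal at the fixed points'' does not prevent $|\A|\to\pi/2$ along triples degenerating there, and your contradiction evaporates. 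The paper closes exactly this case by brute force: it computes the one-variable function $\mathcal P(s)=\A(p_-,p,p_s)=\arg\bigl(e^s(1+3ia)+e^{-s}(1-3ia)-2e^{3isa}\bigr)$, shows its boundary values are $0$ and $\pm\arctan(3a)$, hence $|\mathcal P|\leqslant m_a<\pi/2$, and reduces a general triple to $\mathcal P$ via the cocycle identity $\A(p_s,p,p_t)=-\mathcal P(-s)-\mathcal P(t-s)-\mathcal P(t)$. The ``technicalities'' the authors skip are routine limit checks on this explicit function; what your proposal skips is the substance of the proof.
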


\begin{proof}
Recall from \cref{subsec:horizontality-orbits-1} that the orbit $p_s = L_s\cdot p$ is horizontal if and only if the Heisenberg coordinates $[z,t]$ of $p$ satisfy  condition \eqref{eq:1-param-loxo-horizontal}, that is
$$t=3|z|^2\dfrac{\Im(\alpha)}{\Re(\alpha)}.$$
Denote by $\mathcal{P}_\alpha$ the paraboloid defined by the above condition. It is a simple computation in Heisenberg coordinates to verify that a $\C$-circle is either contained in $P_\alpha$ or intersects $P_\alpha$ in at most two points. Moreover, $\C$-circles contained in $P_\alpha$ are all contained in horizontal planes $t={\rm cst}$. Since the orbits of $L_s$ are never contained in such planes, this implies that the orbit we consider never intersects a $\C$-circle thrice, i.e. they are hyperconvex. Therefore no triple of points in the orbit has Cartan invariant equal to $\pi/2$. This means in particular that if the orbit were not slim, then the supremum of the Cartan invariant (which would be equal to $\pi/2$) would not be attained.

Up to a reparametrization of the $1$-parameter subgroup, and conjugating $\alpha$ if necessary, we may assume that $\alpha=1+ia$ for some $a>0$. Applying an element of the normaliser of $L_s$, we may moreover assume that the $z$-coordinate of $p$ is equal to $1$. The horizontality condition becomes 
then $p=[1,3a]$. Denote by $p_-=[0,0]= \lim_{s\to -\infty} p_s$. Taking lifts, we have

\begin{equation}
\bp_-=\begin{bmatrix}0\\0\\1\end{bmatrix}, \quad \bp = \begin{bmatrix} -1+3ia\\1\\1\end{bmatrix}\quad \mbox{and}\quad\bp_s = \begin{bmatrix} e^s(-1+3ia)\\e^{-3isa}\\e^{-s}\end{bmatrix}. 
\end{equation}

We obtain then directly

\[
\A(p_-,p,p_s) = \arg\left(e^s(1+3ia) + e^{-s}(1-3ia) -2e^{3isa}\right).
\]

Denote the latter quantity by: 
\[\mathcal P(s) =  \arg\left(e^s(1+3ia) + e^{-s}(1-3ia) -e^{-isa}\right).\]
When $s\to 0$, $\mathcal P(s)$ goes to $0$. When $s\to \pm \infty$, then $\mathcal P(s)$ goes to $\pm\arctan(3a)$. As the real part of the complex number inside the argument does not vanish, $|\mathcal P|<\dfrac{\pi}2$. So $|\mathcal P|$ admits a maximum $m_a<\dfrac{\pi}2$. Note moreover that $\mathcal P(-s) = -\mathcal P(s)$.

Recall from \eqref{eq:Cartan-cocycle} the cocycle property for the Cartan invariant, with $p,q,r,s$ four points in $\S^3$:
\[\A(p,q,r) + \A(p,q,s) + \A(p,r,s)+\A(q,r,s)=0.\]
Let $s<x<t$ be three reals. We want to estimate the Cartan invariant $\A(p_{s},p_{x},p_{t})$. By invariance of the Cartan invariant under the action of $L_{-x}$, one may assume that $x=0$, so that $p_0 = p$.
Using the cocycle equality to introduce $p_-$, we deduce that 
\[\A(p_{s},p,p_{t}) = -\mathcal P(-s)-\mathcal P(t-s)-\mathcal P(t).\]

To prove that the supremum of the Cartan invariant is strictly less than $\dfrac{\pi}{2}$, we have to control the boundary behaviour. So we are left to understand what happens when $s$ goes to $0$ or $-\infty$ and/or $t$ to $0$ or $+\infty$. In each case, one proves that at the limit the absolute value of the Cartan invariant remains bounded above by $m_a$.
\end{proof}

\subsubsection{Deformations of $\R$-Fuchsian limit sets}\label{sec:deformation-R-fuchsian}

Limit sets $\Lambda_\rho$ of convex cocompact representations $\rho$ of surface groups $\Gamma$ give rise topological circles in the sphere. A natural question is whether $\Lambda_\rho$ is slim. Clearly, if $\Lambda_\rho$ is $\alpha$-slim, with $\alpha\neq 0$, then $\rho(\Gamma)$ is discrete. A first observation is the following:

\begin{proposition}\label{pro:alphattained}
Let $\Lambda_\rho$ be the limit set of a convex cocompact representation $\rho$ of the fundamental group $\Gamma$ of a compact hyperbolic surface. Then the supremum of the Cartan invariant $\A(\Lambda_\rho)$ on the limit set $\Lambda_\rho$ is attained at a triple of distinct points.
\end{proposition}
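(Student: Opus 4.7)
The plan is to exploit the fact that $\A$ is $\PU(2,1)$-invariant (\cref{prop:Cartan-properties}) and hence $\Gamma$-invariant, and then reduce the supremum to a supremum over a compact fundamental domain for the action of $\Gamma$ on triples of pairwise distinct points in $\Lambda_\rho$.

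First, I would observe that the map $\A : (\Lambda_\rho)^{(3)} \to [-\pi/2,\pi/2]$, defined on the open subset of pairwise distinct triples, is continuous. The issue is purely at the diagonal: $\A$ (as defined in \cref{defi:Cartan}) is extended by $0$ when two points coincide, so $|\A|$ need not be upper semi-continuous across the diagonal. Thus direct compactness of $\Lambda_\rho^3$ does not suffice to conclude that the supremum is realised away from the diagonal.

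The next step is to use the hypothesis that $\rho$ is convex cocompact. This implies that the diagonal action of $\Gamma$ on the space $\Lambda_\rho^{(3)}$ of triples of pairwise distinct points of $\Lambda_\rho$ is proper and cocompact. Let $K \subset \Lambda_\rho^{(3)}$ be a compact fundamental domain, so that $\Gamma \cdot K = \Lambda_\rho^{(3)}$. Since $\A$ is $\PU(2,1)$-invariant, it is $\rho(\Gamma)$-invariant, and therefore
\[
\A(\Lambda_\rho) \;=\; \sup_{(p,q,r)\in \Lambda_\rho^{(3)}} |\A(p,q,r)| \;=\; \sup_{(p,q,r)\in K} |\A(p,q,r)|.
\]
Because $|\A|$ is continuous on $\Lambda_\rho^{(3)} \supset K$ and $K$ is compact, this supremum is attained at some triple $(p_0,q_0,r_0)\in K$, in particular at a triple of pairwise distinct points.

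The only non-routine ingredient is the cocompactness of the $\Gamma$-action on $\Lambda_\rho^{(3)}$. For a convex cocompact surface subgroup of $\PU(2,1)$, this follows from the general theory of Anosov representations (e.g.\ as in \cite{Labourie,GuichardWienhard,PozzettiSambarinoWienhard}), and in the $\R$-Fuchsian case it is classical since $\Lambda_\rho$ is identified with the visual boundary of $\HdR$ and triples of distinct boundary points correspond, up to a compact factor, to points of the unit tangent bundle of the compact quotient $\Gamma\backslash\HdR$. I would invoke this fact, which can be taken as part of the convex cocompactness hypothesis, rather than reprove it here.
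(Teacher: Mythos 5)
Your argument is correct and is essentially the paper's own proof: both reduce the supremum over distinct triples to a compact set using the $\PU(2,1)$-invariance of $\A$ together with the cocompactness of the $\Gamma$-action on triples of pairwise distinct limit points (which the paper phrases via the boundary map $B_\rho$ and the identification of $\Gamma\backslash\partial_\infty\Gamma^{(3)}$ with the unit tangent bundle, citing Bowditch). Working with a compact set $K$ satisfying $\Gamma\cdot K=\Lambda_\rho^{(3)}$ rather than with the compact quotient is only a cosmetic difference.
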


\begin{proof}
There exists a boundary map $B_\rho: \partial_\infty\Gamma \xrightarrow{\sim} \Lambda_\rho$. 
For any $\rho$, denote by $A_\rho$ the map defined on the set of triples of distinct points $\partial_\infty \Gamma^{(3)}$
by 
\[A_\rho(p,q,r) = \A(B_\rho(p),B_\rho(q),B_\rho(r)).\]
As $B_\rho$ is a bijection between $\partial_\infty \Gamma$ and $\Lambda_\rho$, we deduce that:
\[\A(\Lambda_\rho) = \sup_{(p,q,r) \in \partial_\infty \Gamma^{(3)}} A_\rho(p,q,r).
\]
As moreover $B_\rho$ is $\Gamma$-equivariant,  we have for all $\gamma \in \Gamma$ and for all $(p,q,r)\in \partial_\infty \Gamma^{(3)}$, the equality $A_\rho(p,q,r) = A_\rho(\gamma\cdot p, \gamma\cdot q, \gamma\cdot r)$.
The action of $\Gamma$ on the set of triples of distinct points $(\partial_\infty\Gamma)^{(3)}$ is cocompact: the quotient $X$ may be identified with the unit tangent bundle to the surface \cite{Bowditch-hyperbolic}.
 
To sum up, the map $A_\rho$ descends to a continuous map defined on the compact set $\Gamma \backslash \partial_\infty \Gamma^{(3)}$ and the supremun is attained.
\end{proof}

As a corollary, if the limit set of a convex cocompact representation is hyperconvex then it is slim: the supremum is attained and cannot be $\dfrac{\pi}{2}$ by hyperconvexity.  
The following proposition gives a rich family of examples of slim circles that are not $\R$-circles.
It is proven by Pozzetti-Sambarino-Wienhard \cite[Proposition 6.2]{PozzettiSambarinoWienhard}. They work with the notion of $(1,1,2)$-hyperconvex representations. But note, by the previous proposition,  that  the limit set $\Lambda$ of a convex-cocompact representation $\rho$ of $\Gamma$ is slim if and only if the representation is $(1,1,2)$-hyperconvex Anosov: if $x,y,z$ are in the limit set, then the projective complex line generated by $x,y$ does not contain the point $z$.

\begin{proposition}\label{proposition:deformation-R-fuchsian}[Pozzetti-Sambarino-Wienhardt]
Let $\Gamma$ be the fundamental group of a compact hyperbolic surface and let $\rho_0:\Gamma\longrightarrow PO(2,1)\subset PU(2,1)$ be a representation of $\Gamma$. Then for any sufficiently small deformation $\rho$ of $\rho_0$, the limit set of $\rho(\Gamma)$ is a slim circle.
\end{proposition}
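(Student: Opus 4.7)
The plan is to combine the structural result of \Cref{pro:alphattained} with the continuity of the boundary map under deformation, reducing the problem to a uniform convergence statement on a fixed compact quotient. The first step is to recall that an $\R$-Fuchsian representation is convex cocompact, and that convex cocompactness (equivalently, $P_1$-Anosov for surface groups) is an open condition in the representation variety. Hence for $\rho$ sufficiently close to $\rho_0$, the group $\rho(\Gamma)$ is discrete, faithful, and admits a continuous, $\rho$-equivariant, injective boundary map $B_\rho : \partial_\infty \Gamma \longrightarrow \S^3$ whose image is the limit set $\Lambda_\rho$. In particular $\Lambda_\rho$ is a topological circle. Moreover, the boundary map depends continuously on $\rho$: for any two nearby representations $\rho, \rho'$, the maps $B_\rho$ and $B_{\rho'}$ are uniformly close on $\partial_\infty \Gamma$.

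Next, mirroring the setup of \Cref{pro:alphattained}, define
\[ A_\rho : \partial_\infty\Gamma^{(3)} \longrightarrow \left[-\tfrac{\pi}{2},\tfrac{\pi}{2}\right], \qquad A_\rho(p,q,r) = \A\bigl(B_\rho(p), B_\rho(q), B_\rho(r)\bigr). \]
By $\Gamma$-equivariance of $B_\rho$ and $\PU(2,1)$-invariance of $\A$, the map $A_\rho$ descends to a continuous function on the compact quotient $X = \Gamma\backslash \partial_\infty\Gamma^{(3)}$, which is identified with the unit tangent bundle of the surface. Since $\rho_0$ is $\R$-Fuchsian, $\Lambda_{\rho_0} = \partial_\infty \HdR$ is an $\R$-circle, so every triple in it has vanishing Cartan invariant, i.e.\ $A_{\rho_0} \equiv 0$ on $X$.

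The core analytic step is to show that $A_\rho$ converges uniformly to $A_{\rho_0}$ on $X$ as $\rho \to \rho_0$. On any compact subset of triples in $\S^3$ that avoids the big diagonal, the Cartan invariant is a continuous function of the three points, so uniform closeness of $B_\rho$ to $B_{\rho_0}$ on $\partial_\infty\Gamma$ together with compactness of $X$ yields uniform closeness of $A_\rho$ to $A_{\rho_0}$ on $X$. The subtle point here, and the step that requires care, is ensuring that triples stay uniformly separated: distinct points in $\partial_\infty\Gamma$ map under $B_\rho$ to distinct points in $\S^3$, and modulo $\Gamma$ one works on the compact space $X$ where injectivity of $B_\rho$ gives a uniform lower bound on the pairwise distances of the three images. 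This is exactly what makes the $\A$-evaluation continuous up to the boundary of $X$.

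Combining these ingredients, for $\rho$ close enough to $\rho_0$ we obtain $\sup_X |A_\rho| < \pi/2$. Since $\A(\Lambda_\rho) = \sup_X A_\rho$ by $\Gamma$-invariance and $\Gamma$-cocompactness on $\partial_\infty\Gamma^{(3)}$, this shows $\Lambda_\rho$ is slim, and being the image of $S^1$ under the injective continuous map $B_\rho$, it is a slim circle. The main obstacle is therefore not the slimness bound itself but importing the two standard facts from the Anosov/convex cocompact theory: openness of convex cocompactness and continuous dependence of the boundary map on the representation. Both are now well-documented, and the argument is precisely the one indicated in \cite{PozzettiSambarinoWienhard}.
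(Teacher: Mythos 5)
Your argument is correct and is essentially the paper's own: the paper packages it as the continuity of $\rho\mapsto\A(\Lambda_\rho)$ on convex-cocompact representations (its \cref{pro:limit-set-continuous}), obtained by descending $A_\rho$ to the compact quotient $\Gamma\backslash\partial_\infty\Gamma^{(3)}$, using the continuous dependence of the boundary map $B_\rho$ on $\rho$ and of the maximum of a continuous function on a compact set, and then invoking $\A(\Lambda_{\rho_0})=0$. The only loose point in your write-up is that pairwise distances of the three images in $\S^3$ are not $\Gamma$-invariant (the $\PU(2,1)$-action is not by isometries of a round metric) and so do not descend to $X$; the uniform-separation step should instead be checked on a compact fundamental domain in $\partial_\infty\Gamma^{(3)}$ and then propagated using the $\Gamma$-invariance of $A_\rho$ itself.
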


We nevertheless give a proof of this proposition as it is important for our work. In fact, we prove the following proposition, which implies the previous one:
\begin{proposition}\label{pro:limit-set-continuous}
If $\Gamma$ is the fundamental group of a compact hyperbolic surface, then the sup of the Cartan invariant $\A(\Lambda_\rho)$ on the limit set $\Lambda_\rho$ of a convex cocompact representation $\rho$ varies continuously with $\rho$ in the set of convex-cocompact representations of $\Gamma$.
\end{proposition}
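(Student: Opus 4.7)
The plan is to exploit the compactness of $\Gamma\backslash\partial_\infty\Gamma^{(3)}$ already used in the proof of \cref{pro:alphattained}, combined with the continuous dependence of the boundary map on the convex-cocompact representation, to obtain joint continuity of the rescaled Cartan function and then deduce continuity of its supremum.

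First, I would recall the classical fact that for any convex-cocompact representation $\rho$ of $\Gamma$, there is a $\rho$-equivariant homeomorphism $B_\rho : \partial_\infty\Gamma \to \Lambda_\rho$, and that $\rho \mapsto B_\rho$ is continuous for the topology of uniform convergence on $\partial_\infty\Gamma$ (this is classical for convex-cocompact representations in rank one, and also a standard consequence of the Anosov property à la Guichard--Wienhard). Fix a convex-cocompact $\rho_0$ and an open neighborhood $U$ of $\rho_0$ consisting of convex-cocompact representations. I would then consider the map
\[
F : \partial_\infty\Gamma^{(3)} \times U \longrightarrow \R, \qquad F(p,q,r,\rho) = \bigl|\A\bigl(B_\rho(p),B_\rho(q),B_\rho(r)\bigr)\bigr|.
\]

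The key step is to prove that $F$ is continuous. Since $B_\rho$ is injective for each $\rho \in U$, the image triple is pairwise distinct, and the Cartan invariant $\A$ is continuous on the open set of pairwise distinct triples in $(\S^3)^3$. Joint continuity in $(p,q,r,\rho)$ then follows from continuity of $B_\rho$ in $\rho$ in the uniform topology, composed with continuity of $\A$ off the diagonal. Moreover $F$ is $\Gamma$-invariant (by equivariance of $B_\rho$ and $\PU(2,1)$-invariance of $\A$), so it descends to a continuous function
\[
\widetilde F : X \times U \longrightarrow \R, \qquad X := \Gamma\backslash \partial_\infty\Gamma^{(3)},
\]
where $X$ is compact (since $\Gamma$ acts properly cocompactly on $\partial_\infty\Gamma^{(3)}$; see the proof of \cref{pro:alphattained}).

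The last step is the standard fact that if $\widetilde F : X \times U \to \R$ is continuous with $X$ compact, then $\rho \mapsto \max_{x \in X} \widetilde F(x,\rho)$ is continuous on $U$. Since this maximum equals $\A(\Lambda_\rho)$ by the identity
\[
\A(\Lambda_\rho) = \sup_{(p,q,r)\in \partial_\infty\Gamma^{(3)}} |A_\rho(p,q,r)|
\]
established in the proof of \cref{pro:alphattained}, this proves continuity of $\rho \mapsto \A(\Lambda_\rho)$ on $U$, and hence on the whole set of convex-cocompact representations.

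The main obstacle is the continuity of $B_\rho$ in $\rho$ in a sufficiently strong topology: without this, one only gets a semi-continuity statement (as already remarked in point 4 of \cref{rem:definition} for arbitrary Hausdorff-continuous families of closed subsets). Here convex-cocompactness is essential, both to get a genuine boundary homeomorphism $B_\rho$ (so that injectivity prevents the Cartan invariant from seeing the diagonal singularity highlighted in \cref{prop:L-not-continuous} and \cref{lem:Cartan-coalesce}) and to get the uniform continuity in $\rho$, which is the rank-one manifestation of the Guichard--Wienhard stability theorem.
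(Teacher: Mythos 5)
Your proposal is correct and follows essentially the same route as the paper: continuity of the boundary map $B_\rho$ in $\rho$ (the paper cites Bourdon, you invoke the Anosov/Guichard--Wienhard stability, which is the same classical fact), descent of the Cartan function to the compact quotient $\Gamma\backslash\partial_\infty\Gamma^{(3)}$ already used in \cref{pro:alphattained}, and continuity of the maximum of a continuously varying function over a compact set. Your write-up is merely more explicit about the joint continuity of $F$ and the role of injectivity of $B_\rho$ in avoiding the diagonal singularity of $\A$, which the paper leaves implicit.
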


\begin{proof}
It is known that the limit set $\Lambda_\rho$ of $\rho(\Gamma)$ varies continuously when $\rho$ varies inside convex-cocompact representations \cite[Lemma 5.5.4 and Remark 5.5.7]{Bourdon}. We consider again the boundary map $B_\rho: \partial_\infty\Gamma \xrightarrow{\sim} \Lambda_\rho$ which varies continuously with $\rho$. 

The map $\rho\mapsto A_\rho$ is continuous, as $\rho\to B_\rho$ is continuous.
The max on the compact set $\Gamma \backslash \partial_\infty \Gamma^{(3)}$ of the continuous function $A_\rho$ depends continuously on the function. As the function varies continuously, the dependance in $\rho$ of $\A(\Lambda_\rho) = \max_{\Gamma \backslash \partial_\infty \Gamma^{(3)}} A_\rho$ is continuous.
\end{proof} 

This gives  examples of slim circles  having low regularity. We will come back on these examples in \cref{sec:Crowns}.

\subsubsection{Deformation of the Farey triangulation and (non)-slimness}\label{subsec:Farey}

 We explore here another family $(\Lambda_\alpha)$, for $-\pi/2\leq\alpha\leq {\pi}/{2}$, of limit sets of surface groups. In this geometrically finite setting, we can see that the supremum of Cartan invariant of $\Lambda_\alpha$  is always ${\pi}/{2}$ except for $\alpha = 0$ where it vanishes. This proves that, in general, semicontinuity of $E\mapsto\A(E)$ is the best we can hope. It also makes clear that in the previous section, the cocompactness assumption was crucial.

Let us describe the explicit construction that appeared in the works on $\PU(2,1)$-representations of the modular group $\mathrm{PSL}(2,\Z)$ by Falbel-Koseleff \cite{Falbel-Koseleff}, Gusevskii-Parker \cite{Gusevskii-Parker} and Falbel-Parker \cite{FJP} (see also \cite[Section 8]{Pierre-Survey} for a survey). 

Let $\Gamma$ be the group $(\Z_2)^{*3}=\la \iota_1,\iota_2,\iota_3| \iota_k^2=1\ra$. We fix $(T_\alpha)_{\alpha\in[-\pi/2,\pi/2]}$ a continuous family of ideal triangles such that $\A(T_\alpha)=\alpha$. We denote by $p_1, p_2, p_3\in \S^3$ the ideal vertices of $T_\alpha$, and $q_k$ the projection of $p_k$ onto the geodesic 
$(p_{k+1}p_{k-1})$ (indices are taken mod. $3$). Consider the half-turns $R_k$ about $q_k$. The $R_k$'s are conjugate to the transformation given by $(z_1,z_2)\longmapsto (-z_1,-z_2)$ in ball model coordinates.  Then one defines a representation $\rho_\alpha : \Gamma\longrightarrow {\PU(2,1)}$ by setting

\[\rho_A(\iota_k)=R_k, \quad k=1,2,3.\]

This gives rise to a continuous $1$-parameter familly of representations of $\Gamma$ in $\PU(2,1)$. The groups $\rho_0(\Gamma)$ and $\rho_{\pm \pi/2}(\Gamma)$ are respectively $\R$ and $\C$-Fuchsian : they are discrete, isomorphic to $\Gamma$ and preserve totally geodesic copies of the Poincaré disc, that are real if $\alpha=0$ and complex if $\alpha=\pm\pi/2$. The orbits of the geodesics connecting the $p_k$'s generate the Farey tesselation in the $\R$ or $\C$-Fuchsian case.
The striking result of the aforementionned works is 
\begin{theorem*}[Falbel-Koseleff, Gusevskii-Parker]
 For any value of $\alpha$, the representation $\rho_\alpha$ is discrete and faithful. Morever, the type of elements remains the same all along the deformation. 
\end{theorem*}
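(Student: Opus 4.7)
The plan is to prove discreteness and faithfulness via a Poincaré polyhedron argument, and preservation of conjugacy types via continuity of traces along the family. First, using the $\PU(2,1)$-homogeneity of triples of pairwise distinct points with prescribed Cartan invariant (\cref{prop:Cartan-properties}(2)), I would fix a continuous normalisation so that the triangle $T_\alpha$, the projection points $q_k(\alpha)$, and the generators $R_k(\alpha) = \rho_\alpha(\iota_k)$ all depend continuously on $\alpha$. The endpoints are classical: at $\alpha = 0$ the representation is the standard $\R$-Fuchsian realisation of $(\Z_2)^{*3}$ in $\PO(2,1)$ generated by half-turns at the feet of the altitudes of an ideal triangle in $\HdR$; at $\alpha = \pm \pi/2$ one has a $\C$-Fuchsian realisation inside a complex line. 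Both boundary cases are well-known to be discrete and faithful, and the dynamical type of every element (generators elliptic of order $2$, every other cyclically reduced word loxodromic) is explicit.

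For each $\alpha$, I would construct a fundamental polyhedron $D_\alpha$ for $\rho_\alpha(\Gamma)$ bounded by three bisectors $\mathcal{B}_k$, chosen so that $R_k$ swaps the two sides of $\mathcal{B}_k$ and thus furnishes the $k$-th face pairing. A natural symmetric choice is to let $\mathcal{B}_k$ be the bisector equidistant from a base point $o_\alpha$ lying on the symmetry locus of the configuration $\{p_1,p_2,p_3\}$ and its image $R_k(o_\alpha)$. The complex hyperbolic Poincaré polyhedron theorem then yields discreteness of $\rho_\alpha(\Gamma)$ together with the presentation $\langle \iota_k \mid \iota_k^2 \rangle$, provided one checks that (i) the three bisectors bound a polyhedron with disjoint closed faces in $\HdC$, and (ii) the cycle and tessellation conditions around edges and ideal vertices reduce to the defining relations $\iota_k^2 = 1$. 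Both are open conditions satisfied at $\alpha = 0$, and the continuity of the construction together with the three-fold symmetry of the polyhedron are expected to propagate them throughout $[-\pi/2,\pi/2]$.

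For the second assertion, I would use that every element of $(\Z_2)^{*3}$ is conjugate to a cyclically reduced word $w = \iota_{k_1}\cdots\iota_{k_n}$ with $k_j \neq k_{j+1}$, and that $\tau_\alpha(w) := \mathrm{tr}\bigl(\rho_\alpha(w)\bigr)$ depends continuously on $\alpha$. By Goldman's trichotomy, the dynamical type of $\rho_\alpha(w)$ is read off from the sign of the Goldman discriminant at $\tau_\alpha(w)$, so preservation of type amounts to showing that $\tau_\alpha(w)$ never crosses the parabolic curve $\{f=0\}$. The existence of $D_\alpha$ as a fundamental polyhedron yields a combinatorial displacement bound for each non-trivial cyclically reduced word (its syllables enforce a uniform minimal translation length along a broken geodesic meeting the faces of $D_\alpha$), preventing any such word from becoming elliptic or parabolic; the generators $R_k$ remain elliptic of order $2$ by construction.

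The main obstacle is the uniform verification of the Poincaré polyhedron hypotheses (i)--(ii). Bisectors in $\HdC$ are not totally geodesic and their pairwise intersections can develop tangencies or change combinatorial type under deformation, so continuity alone is not enough; the verification typically proceeds by explicit analysis exploiting the three-fold symmetry of the configuration and tracking the intersection loci $\mathcal{B}_k \cap \mathcal{B}_\ell$ analytically in $\alpha$, in the spirit of \cite{Falbel-Koseleff} and \cite{FJP}. Once the polyhedron is controlled throughout the family, the type-preservation statement follows from trace continuity and the displacement estimate supplied by $D_\alpha$.
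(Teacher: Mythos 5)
The paper does not prove this statement: it is imported from \cite{Falbel-Koseleff}, \cite{Gusevskii-Parker} and \cite{FJP}, so there is no internal proof to compare against. Those references do proceed by building explicit fundamental domains bounded by bisectors and applying a complex hyperbolic Poincar\'e polyhedron theorem, so your overall strategy matches the source. As a proof, however, your text has one genuine gap and one concrete error. The gap is the one you yourself flag: openness of the polyhedron conditions only propagates discreteness to a neighbourhood of $\alpha=0$, and the global verification over all of $[-\pi/2,\pi/2]$ --- tracking the combinatorics of the bisector intersections $\mathcal{B}_k\cap\mathcal{B}_\ell$ as $\alpha$ varies --- is precisely the hard content of the cited papers, not something continuity supplies.

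The error is in your type-preservation argument. You assert that the fundamental polyhedron gives a displacement bound ``preventing any such word from becoming elliptic or parabolic,'' and that at the endpoints every cyclically reduced non-generator word is loxodromic. Both claims are incompatible with this family: the element $\iota_1\iota_2\iota_3$ is parabolic for \emph{every} $\alpha$ ($2$-step unipotent at $\alpha=0$, screw-parabolic for $\alpha\neq 0$); this is exactly the primitive parabolic class the paper invokes immediately afterwards, together with Proposition \ref{prop:invariant-parabolics}, to conclude that $\A(\Lambda_\alpha)=\pi/2$ for $\alpha\neq 0$. The group is geometrically finite with a cusp, not convex-cocompact, so no uniform positive translation length exists for all cyclically reduced words. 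Consequently your polyhedron must carry an ideal vertex whose cycle transformation is $\rho_\alpha(\iota_1\iota_2\iota_3)$, and the Poincar\'e theorem then demands a separate check that this cycle map remains parabolic (its trace staying on Goldman's null locus) for all $\alpha$ --- an identity to verify, not a consequence of trace continuity. Once that class is accounted for, you still need an a priori reason why the traces of the remaining loxodromic classes never cross the parabolic locus; continuity alone does not rule out a crossing, and the cited works obtain this from the fundamental-domain analysis itself rather than from a displacement estimate that cannot hold here.
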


The limit set $\Lambda_\alpha$ of $\rho_\alpha(\Gamma)$ is a $\C$-circle when $\alpha=\pm \pi/2$, an $\R$-circle if $\alpha=0$ and a circle when $0<|\alpha|<\pi/2$. It is not slim unless $\alpha=0$:
\begin{proposition}
We have $\A(\Lambda_\alpha)=\dfrac{\pi}2$ unless $\alpha=0$, in which case $\A(\Lambda_\alpha)=0$.
\end{proposition}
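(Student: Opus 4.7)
The two extremes are immediate. When $\alpha=0$, $\rho_0$ is $\R$-Fuchsian, so $\Lambda_0=\partial\HdR$ is an $\R$-circle and $\A(\Lambda_0)=0$ by \cref{prop:Cartan-properties}. When $\alpha=\pm\pi/2$, $\rho_{\pm\pi/2}$ is $\C$-Fuchsian, so $\Lambda_{\pm\pi/2}$ is contained in a $\C$-circle, any three of its distinct points have Cartan invariant $\pm\pi/2$, and $\A(\Lambda_{\pm\pi/2})=\pi/2$.

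For the remaining values of $\alpha$ the plan is to exhibit a parabolic element of $\rho_\alpha(\Gamma)$ that is not horizontal unipotent and then invoke \cref{prop:invariant-parabolics}. A natural candidate is
\[u_\alpha \;:=\; R_1R_2R_3 \;=\; \rho_\alpha(\iota_1\iota_2\iota_3).\]
A direct computation of the action on the ideal vertices gives $u_\alpha(p_2)=p_2$ for every $\alpha$, and the Falbel--Koseleff / Gusevskii--Parker theorem cited above guarantees that $u_\alpha$ stays parabolic throughout the deformation, since $u_0$ is a parabolic cusp of the Farey tesselation of $\HdR$. The key fact to establish is that $u_\alpha$ is horizontal unipotent \emph{only} at $\alpha=0$: at $\alpha=0$ it preserves $\HdR$, so it is horizontal unipotent; at $\alpha=\pm\pi/2$ it preserves a complex line, so it is vertical unipotent; and these two types of unipotent elements lie in distinct $\PU(2,1)$-conjugacy classes, distinguished by the rank of $u-\mathrm{Id}$ (which is $2$ in the horizontal case and $1$ in the vertical case).

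To propagate the type distinction to all $\alpha\in(-\pi/2,\pi/2)\setminus\{0\}$ I would work in Siegel coordinates with $p_1=[0,0]$, $p_2=\infty$, and $p_3=[1,-\tan\alpha]$ (which realises $\A(p_1,p_2,p_3)=\alpha$), write down explicit $\SU(2,1)$-matrix representatives for $R_1,R_2,R_3$ as functions of $\alpha$, and verify that $\mathrm{tr}(u_\alpha)=3$ only at $\alpha\in\{0,\pm\pi/2\}$, with vertical-type Jordan structure at $\pm\pi/2$. This would show that for every $\alpha\neq 0$ the parabolic $u_\alpha$ is ellipto-parabolic or vertical unipotent, so that \cref{prop:invariant-parabolics} applied to the $u_\alpha$-invariant closed set $\Lambda_\alpha$ (a topological circle, hence with at least two distinct points) rules out slimness and forces $\A(\Lambda_\alpha)=\pi/2$.

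The main technical obstacle is precisely the matrix computation above. A partial conceptual shortcut comes from combining continuity of $\alpha\mapsto u_\alpha$ with the disjointness of the horizontal and vertical unipotent conjugacy classes: this alone delivers an open neighbourhood of $\pm\pi/2$ on which $u_\alpha$ is not horizontal unipotent, and the upper semicontinuity of $\alpha\mapsto\A(\Lambda_\alpha)$ from point~4 of \cref{rem:definition} then propagates the value $\pi/2$ to the closure of that interval. Covering $\alpha$ close to $0$ is the delicate part, and may instead be handled by showing that the loxodromic $R_1R_2$ becomes non-real for $\alpha\neq 0$ and invoking \cref{lem:Cartan-coalesce} along a spiralling orbit converging to an attracting fixed point in $\Lambda_\alpha$.
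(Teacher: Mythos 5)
Your overall strategy is exactly the paper's: observe that $\Lambda_\alpha$ is invariant under the parabolic $R_1R_2R_3$, show that this parabolic fails to be horizontal unipotent when $\alpha\neq 0$, and conclude by \cref{prop:invariant-parabolics} that $\Lambda_\alpha$ is not slim, hence $\A(\Lambda_\alpha)=\pi/2$. The difference is in how the type of the parabolic is established. The paper does not compute anything: it quotes the works of Falbel--Koseleff and Gusevskii--Parker, which show that this element is screw-parabolic for every $\alpha\neq 0$ and $2$-step (horizontal) unipotent only at $\alpha=0$. You instead leave this as an unexecuted matrix computation and explicitly flag it as ``the main technical obstacle''; as written, the proof therefore has a genuine hole at precisely the step that carries all the content. (Two smaller inaccuracies in that sketch: the trace of an element of $\PU(2,1)$ is only defined up to cube roots of unity, so the criterion should be phrased via $\mathrm{tr}(u_\alpha^3)$ or a fixed lift; and ``preserves a complex line'' does not imply ``vertical unipotent'' --- at $\alpha=\pm\pi/2$ the element is in fact ellipto-parabolic, with trace different from $3$. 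Neither affects the conclusion, since \cref{prop:invariant-parabolics} excludes both vertical unipotent and ellipto-parabolic elements, and the case $\alpha=\pm\pi/2$ is trivial anyway because $\Lambda_{\pm\pi/2}$ is a $\C$-circle.)

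The fallback you propose for $\alpha$ close to $0$ does not repair the gap. Invariance of a closed set under a non-real loxodromic element is \emph{not} incompatible with slimness: the paper itself exhibits slim horizontal orbits of non-real loxodromic one-parameter subgroups, and these spiral towards the fixed points exactly as you describe. What non-real loxodromic invariance obstructs is the injectivity of $\F_E$ (\cref{thm:non-injectivity}), i.e.\ the foliation property, not hyperconvexity or slimness; and \cref{lem:Cartan-coalesce} only applies if you can show that the lines $\mathcal L(e_n,f_n)$ between nearby points of $\Lambda_\alpha$ fail to converge to the tangent complex line, which the spiralling alone does not give you. So the correct way to close the argument is either to carry out the computation you sketch for all $\alpha\in(-\pi/2,\pi/2)\setminus\{0\}$, or to do as the paper does and invoke the cited classification of the parabolic type along this deformation. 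Your semicontinuity remark is correct but only yields the conclusion on the closure of the set where the parabolic type is already controlled, so it cannot substitute for the missing step near $\alpha=0$.
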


\begin{proof}
The group $\rho_\alpha(\Gamma)$ contains a primitive class of parabolic elements, unique up to conjugation in $\Gamma$, which is the one of $R_1R_2R_3$. It follows from the above works that this parabolic element is screw-parabolic for any value $\alpha\neq 0$, and $2$-step unipotent if $\alpha=0$. By Proposition \ref{prop:invariant-parabolics}, this proves that $\Lambda_\alpha$ is not slim unless $\alpha=0$, in which case it is an $\R$-circle, so $0$-slim.
\end{proof}

\section{Deforming the foliation by arcs of $\C$-circles}\label{section:deform-foliation}

We now come back to the foliation described in \cref{coro:foliation}. Recall that it expresses that the complement of an $\R$-circle $R$ is foliated by arcs of $\C$-circles with endpoints on $R$. We study in this section how this picture deforms when $R$ is deformed among slim curves.
One important tool to understand this is to realize any slim circle as the boundary of a Möbius band in $\HuuC$.

\subsection{The foliation and the unit tangent bundle over $\HdR$}\label{sec:foliation-unit-bundle}

Before actually deforming slim circles, we explain another way to look at the foliation described in \cref{subs:foliation}, that will be more adapted to the study of deformations. Along the way, we will come back to the natural isomorphism between the foliation and the unit tangent bundle $\unit{\HdR}$ over $\HdR$. This last point will be useful for studying limit sets of surface groups.

\subsubsection{Reinterpretation of the foliation property.}

Consider a subset $E$ in the sphere $\S^3$. We first define a notation for its complement and the subset of the sphere sweeped by arcs of $\C$-circles with endpoints in $E$:
\begin{definition}\label{def:M(E)}
For any subset $E$ in the sphere, we define the sets
\[
\Omega_E  = \S^3\setminus E\quad \textrm{ and }\quad
M_E = \{(x,y,p)\in \left(\S^3\right)^3 \textrm{ such that } x\neq y \in E,\, p\in \arc xy\}.
\]
Moreover, let $\F_E: M_E\to \S^3$ be the forgetful map $(x,y,p)\mapsto p$.
\end{definition}
When the context makes things clear, we may drop the notation of the dependence in $E$, considering the sets $\Omega$, $M$ and the map $\F$. \cref{coro:foliation} may be rephrased as the following equivalent statement:
\begin{corollary}\label{coro:foliation-2}
If $R$ is an $\R$-circle, the map $\F_R$ realizes an homeomorphism $M_R\xrightarrow{\sim} \Omega_R$.
\end{corollary}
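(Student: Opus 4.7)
The plan is to deduce this corollary directly from \cref{coro:foliation} by unpacking what the foliation property means for the forgetful map $\F_R$. The three things to verify are bijectivity, continuity of $\F_R$, and continuity of its set-theoretic inverse.

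First, I will address bijectivity. Surjectivity of $\F_R$ onto $\Omega_R$ is immediate from \cref{coro:foliation}: every point of $\S^3\setminus R$ lies on an arc $\arc{x}{y}$ with $x,y\in R$, which provides a preimage $(x,y,p)\in M_R$. For injectivity, suppose $(x,y,p)$ and $(x',y',p)$ both lie in $M_R$. The two arcs $\arc{x}{y}$ and $\arc{x'}{y'}$ both contain $p$, so by the foliation statement they coincide as oriented arcs, which forces $(x,y)=(x',y')$. Continuity of $\F_R$ itself is obvious since it is the restriction of the projection $(\S^3)^3\to\S^3$ onto the third factor.

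The main point is the continuity of the inverse map $\F_R^{-1}:\Omega_R\to M_R$. Take a sequence $p_n\to p$ in $\Omega_R$ and write $(x_n,y_n,p_n)=\F_R^{-1}(p_n)$. By compactness of $R$, after extracting a subsequence we may assume $x_n\to x$ and $y_n\to y$ in $R$. I must rule out the degenerate case $x=y$. If $x=y$, then since $R$ is horizontal (it is an $\R$-circle, hence Legendrian), the line map gives $\mathcal{L}(x_n,y_n)\to \mathcal{L}(x,x)=x^\perp$, which is the complex line tangent to $\S^3$ at $x$ and meets $\S^3$ only at $x$. The arcs $\arc{x_n}{y_n}$ therefore converge in the Hausdorff sense to the single point $x$, which forces $p=x\in R$, contradicting $p\in\Omega_R$. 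Hence $x\neq y$.

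With $x\neq y$, the line map is continuous at $(x,y)$, so $\mathcal{L}(x_n,y_n)\to \mathcal{L}(x,y)$ and the whole oriented $\C$-circles converge. Since $p_n\in\arc{x_n}{y_n}$ and $p\notin\{x,y\}$, the limit $p$ lies on one of the two arcs determined by $\{x,y\}$; comparing orientations, $p\in\arc{x}{y}$. Thus $\F_R^{-1}(p)=(x,y,p)$, and since this limit does not depend on the chosen subsequence the whole sequence $(x_n,y_n,p_n)$ converges to $(x,y,p)$. The only step requiring care is the degeneration argument $x=y$, and that is handled cleanly by horizontality of $R$ together with \cref{prop:L-not-continuous}; everything else is formal.
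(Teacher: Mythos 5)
Your proof is correct. The paper does not actually write out a proof of \cref{coro:foliation-2}: it presents the statement as a rephrasing of \cref{coro:foliation}, which only supplies the set-theoretic content (every point of $\Omega_R$ lies on exactly one arc, hence $\F_R$ is a continuous bijection), and the fact that $\F_R$ is a homeomorphism rather than merely a continuous bijection is justified elsewhere through the explicit identification of $M_R$ and $\Omega_R$ with the unit tangent bundle $\unit{\HdR}$ (the map $\phi$ of \cref{subs:foliation} and \cref{proposition:homeoUT}). You instead verify the homeomorphism directly: bijectivity from the foliation statement, and continuity of the inverse by a compactness argument in $(\S^3)^3$ whose only delicate point --- the possible degeneration $x_n,y_n\to x$ --- you exclude by horizontality of the $\R$-circle, i.e.\ continuity of the line map on $R\times R$ at the diagonal with limit $x^\perp$, a line meeting $\S^3$ only at $x$; this forces the limit $p$ to lie in $R$, a contradiction. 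This route is self-contained, bypasses the hyperbolic-geometric model entirely, and uses only horizontality and hyperconvexity of $R$, so it is closer in spirit to the deformation arguments of \cref{section:deform-foliation}: the same continuity-of-inverse argument would apply verbatim to any slim circle $E$ for which $\F_E$ is known to be bijective. What the paper's route buys instead is the extra geometric information that the leaves correspond to orbits of the geodesic flow on $\unit{\HdR}$, which is what is actually used later for the crown construction.
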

The previous corollary splits in fact in three substatements, that we will study for slim circles:
\begin{itemize}
\item The map $\F_R$ takes values inside $\Omega_R$,
\item it is actually surjective on $\Omega_R$,
\item and it is injective.
\end{itemize}

The first point generalizes readily in the context of slim subsets:
\begin{lemma}
Let $E$ be a slim subset of $\S^3$. Then the map $\F_E: M_E\to \S^3$ takes values inside $\Omega_E$.
\end{lemma}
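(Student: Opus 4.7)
The plan is to unpack the definitions and observe that the statement is essentially a restatement of the hyperconvexity consequence of slimness (item 2 of \cref{rem:definition}). Let $(x,y,p) \in M_E$, so $x \neq y$ are both in $E$ and $p$ lies on the arc $\arc{x}{y}$. By the definition of the arc of a $\C$-circle (as a connected component of $C \setminus \{x,y\}$, where $C$ is the $\C$-circle through $x$ and $y$), the point $p$ is distinct from both $x$ and $y$. The goal is therefore to show $p \notin E$.

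Suppose, for contradiction, that $p \in E$. Then $(x,y,p)$ is a triple of pairwise distinct points of $E$ lying on the $\C$-circle $C$. By \cref{prop:Cartan-properties}, item 3, this forces $|\A(x,y,p)| = \pi/2$. But $E$ is slim, meaning $\A(E) \leqslant \alpha < \pi/2$ for some $\alpha$, contradicting the equality. Hence $p \in \Omega_E$. Since this holds for every $(x,y,p) \in M_E$, the map $\F_E$ takes values in $\Omega_E$, as claimed. There is no subtle step here — the result is an immediate formal consequence of the definition of slimness via the Cartan invariant.
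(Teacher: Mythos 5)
Your proof is correct and follows essentially the same route as the paper: the paper's proof also observes that $p$ is on the $\C$-circle through $x$ and $y$ and distinct from both, and invokes slimness (via the hyperconvexity consequence, i.e.\ that three distinct points of $E$ on a $\C$-circle would force a Cartan invariant of $\pm\pi/2$) to conclude $p\notin E$. You have merely unpacked that one step explicitly via \cref{prop:Cartan-properties}.
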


\begin{proof}
Let $(x,y,p)$ be an element of $M_E$. Then $p$ is a point of the $\C$-circle through $x$ and $y$, distinct from $x$ and $y$.
This $\C$-circle meets $E$ at $x$ and $y$. As $E$ is slim, it cannot meet $E$ also in $p$. So $p$ belongs to the complement $\Omega_E$ of $E$.
\end{proof}

The goal of this whole \cref{section:deform-foliation}  is to understand what happens with the last two points when $E$ is a slim circle more general that an $\R$-circle. We will prove the following theorem:
\begin{theorem}\label{thm:foliation-deformation}
Let $E$ be a slim circle and consider the map $\F_E: M_E \to \Omega_E$. We have:
\begin{itemize}
\item {\bf [Surjectivity]} If there is a continuous family of slim circles $(E_t)$, for $0\leq t\leq 1$, with $E=E_1$ and $E_0$ a $\R$-circle, then $\F_E$ is surjective.
\item {\bf [Non-Injectivity]} If $E$ is invariant by a non real loxodromic transformation, then $\F_E$ is not injective.
\end{itemize}
\end{theorem}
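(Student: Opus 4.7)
My plan handles the two halves of the theorem by quite different techniques. The surjectivity is of a topological/homological nature, and I would prove it by extending slim circles to Möbius bands in $\CP^2$ and using a deformation argument. The non-injectivity is a direct geometric obstruction coming from the rotational part of a non-real loxodromic transformation.

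For surjectivity, I would first show (as advertised in \cref{intro:R-circles}) that any horizontal hyperconvex circle $\Lambda \subset \S^3$ extends canonically to an embedded Möbius band $\mathcal{M}_\Lambda \subset \HuuC \cup \Lambda$ with $\partial \mathcal{M}_\Lambda = \Lambda$. The construction sends an unordered distinct pair $\{a,b\} \subset \Lambda$ to the positive-type point $a \boxtimes b \in \HuuC$ polar to $\mathcal{L}(a,b)$, and extends to the diagonal by $a \boxtimes a := a$; its continuity and embeddedness follow from the behaviour of the line map captured by \cref{lem:local}, and its topology is identified as a Möbius band via the $\R$-circle model of \cref{lem:RP2-Ccircle}. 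This construction is Hausdorff-continuous in $\Lambda$, so along the given family $(E_t)$ one obtains a continuous homotopy $(\mathcal{M}_t)$ of embedded Möbius bands. Now for $p \in \Omega_{E_1}$, the tangent projective line $\mathcal{L}(p,p)$ is a $\CP^1$ meeting $\S^3$ only at $p$ and otherwise contained in $\HuuC$; by \cref{prop:tangent-line-RP2} it meets $\mathcal{M}_0$ transversely in exactly one point. Since $p$ stays in $\Omega_{E_t}$ throughout (possibly after shrinking the path), the boundary of $\mathcal{M}_t$ never touches $\mathcal{L}(p,p)$, so the algebraic intersection number of $\mathcal{L}(p,p)$ with $\mathcal{M}_t$ is a homotopy invariant, hence nonzero at $t = 1$. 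This yields a point $m \in \mathcal{M}_1 \cap \mathcal{L}(p,p)$, whose polar is a $\C$-circle through $p$ meeting $E_1$ in two points $a, b$; orientation tracking (continuous in $t$, anchored at the $\R$-Fuchsian base case) gives $p \in \arc{a}{b}$.

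For non-injectivity, let $g$ be a non-real loxodromic preserving $E$ with fixed points $p_\pm$, and pass to horospherical coordinates placing $p_- = [0,0]$ and $p_+ = \infty$. A $\PU(2,1)$-conjugate of $g$ then acts as $[z,t] \mapsto [e^{a-3ib} z, e^{2a} t]$ with $a \neq 0$ and $b \neq 0$ (the condition $b \neq 0$ is the non-reality, cf.\ \cref{eq:1-param-loxo}). Denote $C = \mathcal{L}(p_-, p_+)$. For $x = [z_0, t_0] \in E \setminus \{p_\pm\}$ and $n \in \Z$, let $q_n(x) := \mathcal{L}(x, g^n(x)) \cap C$. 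A direct computation with standard lifts shows that a lift $\bq_n(x)$ of $q_n(x)$ satisfies
\[ \la \bq_n(x), \bq_n(x) \ra \;\propto\; |z_0|^2 \bigl(\cos(3nb)\cosh(na) - 1\bigr) + t_0\, \sin(3nb)\, \sinh(na). \]
When $b = 0$ (the $\R$-Fuchsian case), this quantity is strictly positive for $n \neq 0$, which matches the classical foliation picture. When $b \neq 0$, the oscillatory factor $\cos(3nb)$ takes values arbitrarily close to $-1$ as $n$ varies in $\Z$, forcing the expression to become negative for many $n$ and positive for others. Combined with the freedom to vary $x$ along $E$ (in particular to adjust the ratio $t_0/|z_0|^2$), an interpolation argument produces a configuration $(x, n)$ with $q_n(x) \in \S^3$. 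At such a point, $q_n(x)$ is a common point of the $\C$-circles $\mathcal{L}(x, g^n(x)) \cap \S^3$ and $C \cap \S^3$; after verifying orientations it lies in the interior of both $\arc{x}{g^n(x)}$ and one of the two arcs of $C \cap \S^3$ with endpoints $p_\pm$, producing two distinct arcs with endpoints on $E$ that meet in $\Omega_E$.

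The main obstacle in the surjectivity half is the rigorous construction of $\mathcal{M}_\Lambda$ as a continuously varying embedded submanifold together with the verification that the intersection number with $\mathcal{L}(p,p)$ is homotopy-stable, which requires checking transversality and that no intersection point escapes to the boundary along the deformation. In the non-injectivity half, the delicate step is promoting the sign change of the displayed quantity into an actual choice of $(x, n)$ with $x \in E$ and with $q_n(x)$ lying inside both arcs (rather than in their complements on the supporting $\C$-circles); this crucially uses that $E$ is a full topological circle invariant under the infinite-order transformation $g$, so that the combined freedom in $x \in E$ and $n \in \Z$ is rich enough to realise the vanishing.
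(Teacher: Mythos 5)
Your overall architecture matches the paper's --- a M\"obius--band extension of the slim circle plus a homological intersection argument for surjectivity, and an explicit Hermitian sign computation for non-injectivity --- but each half has a concrete gap.

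\emph{Surjectivity.} You intersect the tangent line $\mathcal L(p,p)=p^\perp$ with the M\"obius band $\mathcal L(E_t,E_t)$ alone, which is a surface \emph{with boundary} $E_t$. Its intersection number with $p^\perp$ is a homotopy invariant only as long as the boundary stays off $p^\perp$; since $p^\perp\cap\S^3=\{p\}$, this is exactly the condition $p\notin E_t$ for \emph{every} $t$. You only know $p\notin E_1$, and nothing prevents the circles $E_t$ from sweeping through $p$ at intermediate times; ``shrinking the path'' does not repair this, because the count must be anchored at the $\R$-circle $E_0$. The paper closes this hole by capping the band with a cone $D(E_t)\subset\HdC$ over $E_t$, producing a closed surface $\mathrm{RP}(E_t)\simeq\RP^2$ whose class in $H_2(\CP^2;\Z/2\Z)$ is constant along the homotopy; the pairing $i([p^\perp],[\mathrm{RP}(E_1)])=1$ is then evaluated at $t=1$ only, where $p\notin E_1$ and $p^\perp\cap\HdC=\emptyset$ force the intersection point into the band. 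You should add this capping (or an equivalent relative-homology argument). Incidentally, the final ``orientation tracking'' is superfluous: once $p$ lies on the chain through $a\neq b\in E$, it lies on one of $\arc ab$, $\arc ba$, and both are in the image of $\F_E$.

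\emph{Non-injectivity.} Here the gap is fatal as written. Your formula for $\la\bq_n(x),\bq_n(x)\ra$ is correct, but the ``interpolation argument'' cannot be run. Writing $\sigma=t_0/|z_0|^2$, the displayed quantity is $|z_0|^2\,G_n(\sigma)$ with $G_n(\sigma)=\cos(3nb)\cosh(na)-1+\sigma\sin(3nb)\sinh(na)$, affine in $\sigma$; and since $E$ is $\alpha$-slim and contains $0$ and $\infty$, every $x\in E$ satisfies $|\sigma|\leq\tan\alpha$ by \eqref{eqn:paraboloids}. A zero of your quantity on $E$ therefore requires the root $\sigma_n$ of $G_n$ to lie in $[-\tan\alpha,\tan\alpha]$. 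But whenever $\cos(3nb)<0$ one has $|\sigma_n|=\frac{1+|\cos(3nb)|\cosh(na)}{|\sin(3nb)\sinh(na)|}>|\cot(3nb)|$, and when $\cos(3nb)=1$ one has $G_n=\cosh(na)-1>0$ identically. For rotation factor $2\pi/3$, say, every $n$ falls into one of these two cases with $|\cot(3nb)|=1/\sqrt{3}$, so if $\tan\alpha<1/\sqrt{3}$ then your quantity is nonzero for every $n\neq 0$ and every $x\in E$: the sign does oscillate with $n$, exactly as you observe, but $n$ is an integer, so the oscillation never passes through zero and your family of chains $\mathcal L(x,g^n x)$ never meets $C$ inside $\S^3$. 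The missing idea is to decouple the two endpoints: the paper takes $a\in E$ near $p^-$ and $b\in E$ near $p^+$ \emph{independently}, uses the spiral lemma (\cref{lem:arg-unbounded}) to show that the two local arguments sweep all of $\R$ separately, and exhibits the leading term $-\Re(x\bar y)$ of the relevant Hermitian square changing sign along a genuinely continuous path of pairs, where the intermediate value theorem applies.
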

We will prove the two parts of this theorem independently and with arguments of very distinct flavour. The surjectivity property will be proven in \cref{sec:surjectivity}, see \cref{thm:surjectivity}. The non-injectivity statement, instead, will be proven in  \cref{sec:non-injectivity}. This theorem raises in particular the following question: is $\F_E$ always surjective onto $\Omega_E$ for any slim
circle $E$?

Before going 
further, we continue to review the case of \cref{coro:foliation} and its link with the unit tangent bundle $\unit{\HdR}$. It will in particular help understand better the set $M_E$.

\subsubsection{Back to the unit tangent bundle}

The relevance of the set $M_E$ is made clearer when we see how closely it is related to the unit tangent bundle $\unit{\HdR}$ over $\HdR$. Recall that the latter is homeomorphic to the set of triples $(x,y,z)$ of distinct points in $\bHdR$ that are cyclically positively oriented: a triple $(x,y,z)$ is associated to the unit tangent vector to the geodesic from $x$ to $y$ in $\HdR$ with base point the orthogonal projection of $z$ on this geodesic.

Let $E$ be a slim circle. Using a parametrization $\phi: \bHdR \xrightarrow{\sim} E$, we now define a natural map $\unit{\HdR} \to M_E$. A straightforward geometric lemma will prove useful:
\begin{lemma}
Let $x,y$ and $z$ be distinct points in $\S^3$. Then, there exists a unique point $p\in \arc xy $ such that the projections of $p$ and $z$ on the (real) geodesic $xy$ coincide.
\end{lemma}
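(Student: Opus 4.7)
The plan is to reduce the lemma to a two-dimensional hyperbolic geometry problem inside the complex line $L = \mathcal{L}(x,y)$, which is an isometric copy of $\HuC$ (a real hyperbolic plane of curvature $-1$). The real geodesic $\gamma$ joining $x$ and $y$ lies inside $L$, and the $\C$-circle $\partial L$ contains the arc $\arc{x}{y}$. Since $L$ is totally geodesic in $\HdC$, the nearest-point projection $\pi_\gamma : \HdC \to \gamma$ factors through the projection onto $L$ followed by the projection inside $L$ onto $\gamma$. By continuous extension from the interior, it extends to a map $\bHdC \setminus \{x,y\} \to \overline{\gamma}$, and is thus defined both on $z$ and on every point of $\arc{x}{y}$.

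The heart of the proof is the following claim: the restriction $\pi_\gamma|_{\arc{x}{y}} : \arc{x}{y} \to \gamma$ is a homeomorphism onto the open geodesic. Granted this, the lemma follows immediately: setting $q = \pi_\gamma(z)$, the hypothesis $z \neq x, y$ ensures that $q$ is an interior point of $\gamma$, and the unique preimage of $q$ inside $\arc{x}{y}$ is the point $p$ we seek.

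To establish the claim, I would pass to the Poincar\'e disc model of $L$, in which $\gamma$ appears as a diameter and $\partial L$ as the unit circle. The arc $\arc{x}{y}$ then becomes one of the two open semi-circles cut out by $\{x,y\}$. Reflection across $\gamma$ induces an isometric involution of $L$ which fixes $\gamma$ pointwise, commutes with $\pi_\gamma$, and swaps the two open arcs of $\partial L \setminus \{x,y\}$. Consequently, for any interior point $q \in \gamma$, the two preimages under $\pi_\gamma|_{\partial L}$ lie symmetrically on opposite arcs, so exactly one of them belongs to $\arc{x}{y}$. This gives bijectivity, and together with continuity and the fact that $\pi_\gamma$ extends to the closed arc by sending its endpoints to $x$ and $y$, it upgrades to a homeomorphism.

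The main care must be taken at the boundary: one needs to verify that $\pi_\gamma(z)$ is indeed an interior point of $\gamma$ for every $z \in \bHdC \setminus \{x,y\}$, and that the extension of $\pi_\gamma$ to $\overline{\arc{x}{y}}$ sends the endpoints $x,y$ to themselves. These extension statements are classical once the problem is restricted to the totally geodesic complex line $L$, and are the only subtleties in an otherwise elementary two-dimensional argument.
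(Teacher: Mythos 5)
Your proof is correct. Note that the paper itself offers no proof of this lemma --- it is introduced as ``a straightforward geometric lemma'' and left to the reader --- so there is no argument of the authors' to compare against; your write-up is a legitimate filling-in of the gap. The two-dimensional core (perpendicular geodesics to the diameter $\gamma$ foliate the disc $L\cong\HuC$ and each hits either open semicircle of $\partial L$ exactly once, so $\pi_\gamma$ restricted to $\arc{x}{y}$ is a bijection onto the open geodesic) is exactly right, and the reflection argument cleanly isolates the single preimage on the chosen arc. The two boundary points you flag are indeed the only ones needing care, and both check out: the factorisation of $\pi_\gamma$ through the orthogonal projection onto the complex line $L$ follows from the Pythagorean-type identity for projections onto complex geodesics (this is implicit in the paper's formula $\sinh^{-1}|\tan\A(a,b,c)|=d(\Pi_{a,b}(c),\gamma_{a,b})$, whose right-hand side is computed after projecting to $L_{a,b}$), and in Siegel coordinates with $x=[0,0]$, $y=\infty$ one sees that $\Pi_{x,y}(z)=[z_1,0,1]^T$ equals $x$ or $y$ only when $z$ does, so $\pi_\gamma(z)$ never degenerates to an endpoint at infinity. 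If you wanted to shorten the argument, you could bypass the factorisation entirely: the lemma only requires that $\pi_\gamma(z)$ lie in the open geodesic and that the intrinsic projection $\partial L\setminus\{x,y\}\to\gamma$ be two-to-one with one preimage per arc, both of which are statements internal to the totally geodesic disc $\overline L$.
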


This lemma is used in the following construction:
\begin{definition}\label{def:Phi_E_phi}
Let $E$ be a slim circle in $\S^3$ and $\phi : \bHdR \to E$ an homeomorphism. Then we define the map $\Phi_{E,\phi} : \unit{\HdR} \to M_E$ by sending a point $(x,y,z)$ to the point $(\phi(x),\phi(y),p)$ in $M_E$ where $p \in \arc xy$ is the unique point whose projection on the real geodesic $\phi(x),\phi(y)$ coincides with the one of $\phi(z)$.
\end{definition} 

As before, we will often denote this map simply by $\Phi$. This map is natural: if $\phi$ is equivariant for a representation $\rho$ of a group $\Gamma \subset \mathrm{Isom}(\HdR)$ to $\PU(2,1)$, then so is $\Phi$.

In the case where $R$ is an $\R$-circle, $R$ is the boundary of a real hyperbolic plane. So one can parametrize it by a map $\phi: \bHdR \to R$ which is $\mathrm{Isom}(\HdR)$-equivariant. Denote for simplicity $M = M_R$, $\Omega = \S^3\setminus R$, $\F = \F_R$ the forgetful map and $\Phi = \Phi_{R,\phi}$ the map we just defined. The following proposition then expresses the foliation described in Section \ref{subs:foliation} using this map $\Phi$.
\begin{proposition}\label{proposition:homeoUT}
Let $R$ be an $\R$-circle. Then, the map $\Phi : \unit{\HdR} \to M_R$ is a $\mathrm{Isom}(\HdR)$-equivariant homeomorphism. 

Moreover the composition $\F_R \circ \Phi$ induces a $\mathrm{Isom}(\HdR)$-equivariant homeomorphism $\unit{\HdR} \simeq \Omega_R$. This homeomorphism sends orbits of the geodesic flow to arcs $\arc ab$, $a\neq b \in R$.
\end{proposition}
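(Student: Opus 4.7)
The plan is to identify the composition $\F_R \circ \Phi : \unit{\HdR} \to \Omega_R$ with the classical homeomorphism $\psi : (p, \vec u) \mapsto \gamma(p, -J_p \vec u, +\infty)$ described just before \cref{R-Fuchsian-UTB}, and then deduce everything from \cref{coro:foliation-2}. Indeed, \cref{coro:foliation-2} already tells us that $\F_R : M_R \to \Omega_R$ is a homeomorphism when $R$ is an $\R$-circle, so once we know $\F_R \circ \Phi = \psi$ we get for free both that $\Phi = \F_R^{-1} \circ \psi$ is a homeomorphism and that the induced map on $\Omega_R$ coincides with the classical one.

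To verify the identity $\F_R \circ \Phi = \psi$, I would fix $(p, \vec u) \in \unit{\HdR}$, set $x = \gamma(p, \vec u, -\infty)$ and $y = \gamma(p, \vec u, +\infty)$, and pick $z \in \bHdR$ cyclically positive with respect to $(x, y)$ whose orthogonal projection onto the real geodesic $xy$ is $p$; then $(x, y, z)$ represents $(p, \vec u)$ in the triples parametrization of $\unit{\HdR}$. Writing $X = \phi(x)$, $Y = \phi(y)$, the point $q := \gamma(p, -J_p \vec u, +\infty)$ lies on the $\C$-circle $\partial \mathcal{L}(X, Y)$ since $-J_p \vec u$ is tangent at $p$ to the complex line $\mathcal{L}(X, Y)$. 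Two things must be checked: (a) $q$ lies in the arc $\arc{X}{Y}$ rather than the opposite arc, and (b) the orthogonal projection of $q$ onto the real geodesic $xy$ equals $p$. Item (b) is immediate because, by construction, the geodesic from $p$ in direction $-J_p \vec u$ meets $xy$ perpendicularly at $p$. Item (a) is a sign check comparing the complex orientation of $\partial \mathcal{L}(X, Y)$ with the cyclic order of $(x, y, z)$; I would carry it out in Heisenberg coordinates with $x = 0$, $y = \infty$. Together these give $\F_R(\Phi(x, y, z)) = q = \psi(p, \vec u)$.

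Equivariance then follows by naturality: the parametrization $\phi$ can be chosen $\PO(2,1)$-equivariant since $R = \partial \HdR$, and the line map, the arcs, and the orthogonal projection onto a geodesic are all $\PU(2,1)$-equivariant; hence $\Phi$ and $\F_R \circ \Phi$ are $\mathrm{Isom}(\HdR)$-equivariant. For the statement about the geodesic flow, note that along a flow line $(p_t, \dot\gamma_t)$ the ideal endpoints $x, y$ remain constant, hence so do $X, Y$; only $p_t$ varies, tracing out the entire geodesic $xy$. Via the identified map $\psi$, the image $q_t$ then sweeps the whole arc $\arc{X}{Y}$ bijectively, so the orbit maps homeomorphically onto $\arc{X}{Y}$.

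The only mildly delicate step is the orientation sign check in (a); everything else is a routine verification from the definitions.
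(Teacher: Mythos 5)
Your proof is correct and follows essentially the route the paper intends: the paper in fact states \cref{proposition:homeoUT} without any written proof, treating it as a reformulation of \cref{coro:foliation-2} together with the classical homeomorphism $(p,\vec u)\mapsto\gamma(p,-J_p\vec u,+\infty)$ of \cref{subs:foliation}. Your identification $\F_R\circ\Phi=\psi$ — including the orientation check that the image lands on $\arc{X}{Y}$ rather than the opposite arc, which matches the paper's remark that the relevant arc is the one whose complex orientation agrees with $\vec u$ — is precisely the verification the paper leaves implicit.
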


Note that if now one consider an $\R$-Fuchsian representation $\rho$ of a surface group $\Gamma \subset \mathrm{Isom}(\HdR)$, then the map $\F\circ \Phi$ descends into a CR-spherical uniformization $U(\Gamma \backslash \HdR) \simeq \rho(\Gamma)\backslash \Omega$.

With the foliation fully reinterpreted, we can move on and see how each of its aspects vary when deforming the $\R$-circle into a slim circle $E$. The first tool is the construction of a surface $\RP(E)$ in $\CP^2$, homeomorphic to $\RP^2$ and that extends $E$ outside the sphere.

\subsection{Extensions of slim circles}\label{sec:extension}

In this section we will usually identify a line $\mathcal L(x,y)$ between two points in $\S^3$  to its 
polar point in $\CP^2$. Indeed, any point $p$ outside the ball in $\RP^2$ is orthogonal to a unique geodesic inside $\HdR$, whose endpoints $(x,y)$ verify $x\boxtimes y= p$. Moreover, any point $p$ in $\S^3$ equals $\mathcal L(p,p)$.

Consider the simplest example of a slim circle, i.e. an $\R$-circle $R$. By definition, it is the intersection of a copy of a real projective plane $\RP^2\subset \CP^2$ with the sphere $\S^3$. The part outside of the ball $\HdC$ is the projective plane minus an open disc: it is a Möbius band. Moreover, we have a natural parametrization of this closed Möbius band by $(R\times R)/(x,y)\sim (y,x)$, given by the map $(x,y) \to \mathcal L(x,y)$ from $R\times R$ to $\CP^2$.

The goal of this section is to extend this construction to
any slim circle $E$. We will define an
extension $\mathrm{RP}(E)$ to $\CP^2$ similar to the case of $\R$-circles. Our construction of the extension outside
the ball $\HdC$ is canonical, whereas inside we make some arbitrary choices. We will mainly focus on what happens
outside later on, so this will not be a problem. Recall that a subset of the sphere is hyperconvex if no three points are contained in a $\C$-circle, see \cref{definition:hyperconvex}.

\begin{proposition}\label{prop:extension_outside}
Let $E$ be a horizontal and hyperconvex  circle. Then the map $\mathcal L$ from $E\times E$ to $\CP^2$ defines an embedding of the Möbius band $E\times E/(x,y)\sim (y,x)$ into $\HuuC$ whose intersection with the sphere $\S^3$ is $E$.
\end{proposition}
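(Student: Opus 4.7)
The plan is to construct the embedding by showing that the line map $\mathcal{L}$, which a priori is only defined on a non-diagonal part of $\CP^2 \times \CP^2$ and on $\Delta_{\S^3}$, restricts nicely to $E \times E$ thanks to horizontality, and then factors through the quotient to give an injection into $\CPstar \simeq \CP^2$. Via polarity, I will treat values of $\mathcal{L}$ as points of $\CP^2$ so that $\mathcal{L}(x,y) = x \boxtimes y$ for $x \neq y$ and $\mathcal{L}(x,x) = x$.

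First I would set up the quotient. Since $E$ is a circle, $(E \times E)/\bigl((x,y) \sim (y,x)\bigr)$ is a compact Möbius band whose boundary circle is the image of the diagonal $\Delta_E = \{(x,x) : x \in E\}$. The map $\mathcal{L}$ is symmetric in $(x,y)$, so it descends to a well-defined map $\widetilde{\mathcal{L}}$ on this quotient. By \cref{def:horizontality}, horizontality of $E$ is exactly the continuity of $\mathcal{L}$ on $E \times E$, so $\widetilde{\mathcal{L}}$ is continuous.

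Next I would locate the image. For $x \neq y$ both in $\S^3$, the complex line $\mathcal{L}(x,y)$ meets $\S^3$ in a $\C$-circle bounding a complex hyperbolic disc; hence the restriction of $\la\cdot,\cdot\ra$ to its underlying plane in $\C^3$ has signature $(1,1)$, and its polar point $x \boxtimes y$ has positive type, i.e.\ lies in $\HuuC$. For $x = y$, the tangent line $\mathcal{L}(x,x)$ is polar to $x$ itself (\cref{lem:box-polar}), which lies on $\S^3$. Thus the image of the interior lies in $\HuuC$ and the image of the boundary is exactly $E$.

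Then I would prove injectivity, which is where hyperconvexity enters. Suppose $\widetilde{\mathcal{L}}([x,y]) = \widetilde{\mathcal{L}}([x',y'])$. There are three cases.
\begin{itemize}
\item If $x \neq y$ and $x' \neq y'$, then both sides are the unique complex projective line through the respective pair, so $x, y, x', y'$ are four points of $E$ on a common complex line, hence on a common $\C$-circle. By hyperconvexity (\cref{definition:hyperconvex}), at most two points of $E$ lie on that $\C$-circle, forcing $\{x,y\} = \{x',y'\}$.
\item If $x = y$ and $x' \neq y'$, the common value is the tangent line $\mathcal{L}(x,x)$, whose intersection with $\S^3$ is just $\{x\}$ (this is the tangency property recalled in \cref{section:subspaces}); but then $x'$ and $y'$ must both equal $x$, contradicting $x' \neq y'$.
\item If $x = y$ and $x' = y'$, then $x^\perp = x'^\perp$ forces $x = x'$.
\end{itemize}
Hence $\widetilde{\mathcal{L}}$ is injective, and since its domain is compact and $\CP^2$ is Hausdorff, it is a topological embedding. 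The intersection with $\S^3$ is the image of the boundary circle, which is $E$, as noted above.

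The main obstacle is conceptual rather than technical: one has to realize that horizontality is precisely the condition that rescues the line map from its discontinuity on $\Delta_{\S^3}$ (recall \cref{prop:L-not-continuous}), and that hyperconvexity is exactly what is needed to prevent two distinct unordered pairs of points of $E$ from spanning the same complex line. Once these two dictionary entries are recognized, the rest is a straightforward compactness-plus-Hausdorff argument. It is worth noting that the construction inside $\HdC$ referred to in the surrounding discussion requires further choices, but the outside/boundary part proved here is canonical.
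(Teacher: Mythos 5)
Your proposal is correct and follows essentially the same route as the paper: use horizontality to get continuity of the line map on $E\times E$, observe via polarity that off-diagonal pairs land in $\HuuC$ while the diagonal lands back on $E$, deduce injectivity on the quotient Möbius band from hyperconvexity (at most two points of $E$ on any $\C$-circle), and conclude by compactness. Your case analysis for injectivity is a slightly more explicit version of the paper's split into $p\in E$ versus $p\notin E$, and your signature justification for why $x\boxtimes y\in\HuuC$ is a detail the paper leaves implicit, but there is no substantive difference.
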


\begin{proof}
As $E$ is slim, it is horizontal. This means that $\mathcal L$ is continuous. Moreover, for any $x\neq y$ in $E$, we have $\mathcal L(x,y) = x\boxtimes y$, which is a point outside the closed ball, whereas $\mathcal L(x,x)=x \in E$. So $\mathcal L$ descends into a continuous immersion of $E\times E/(x,y)\sim (y,x)$ into $\CP^2 \setminus \HdC$ whose intersection with the sphere $\S^3$ is $E$.

As $E\times E/(x,y)\sim (y,x)$ is compact, the last point to prove is the injectivity of our map. We have to check that $\mathcal L(x,y) = \mathcal L(x',y')$ if and only if 
$(x,y)=(x',y')$ or $(x,y) = (y',x')$. So let $p=\mathcal L(x,y)= \mathcal L(x',y')$. First, if $p\in E$, then we have $p=x=y=x'=y'$ which proves what we want. Assume now that  $p\not\in E$. This means $x\neq y$, $x'\neq y'$ and $p=x\boxtimes y =x'\boxtimes y'$. This translates into the fact that $x,y,x'$ and $y'$ all lie in the $\C$-circle $p^\perp \cap \S^3$. As $E$ is hyperconvex, it intersects at most twice this $\C$-circle. This means, as wanted, that the sets $\{x,y\}$ and $\{x',y'\}$ are equal.
\end{proof}

We also want to extend $E$ inside. We do not have a nice construction as above, so will arbitrarily choose a good enough extension. From now on, we choose an arbitrary origin $o$ in $\HdC$. We denote by $D(E)$ the union of all (real) geodesics from $o$ to a point $x \in E$. As two distinct geodesics from $o$ cannot meet again in $\HdC$, the set $D(E)$ is a disk inside the ball $\HdC$, whose closure meet the sphere $\S^3$ exactly at $E$.

\begin{definition}
For a slim circle $E$, we denote by $\mathrm{RP}(E)$ the union of $D(E)$ and $\mathcal L(E,E)$.
\end{definition}

The Möbius band $\mathcal L(E,E)$ is invariant by any $\PU(2,1)$-transformation leaving $E$ invariant, by construction. Moreover we will see that it varies continuously with $E$ varying among slim circles. But the disc $D(E)$ does not enjoy the first property. This raises the question:
Is it possible to construct a natural disc $D(E)$ bounded by $E$, i.e. such that it is invariant by any $\PU(2,1)$-transformation leaving $E$ invariant and it varies continuously with $E$?

From the previous discussion, we deduce that $\mathrm{RP}(E)$ is topologically a projective plane $\RP^2$:

\begin{proposition}
For any slim circle $E$, the set $\mathrm{RP}(E)$ is homeomorphic to $\RP^2$.
\end{proposition}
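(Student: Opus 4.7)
The strategy is to recognise $\mathrm{RP}(E)$ as the classical decomposition of the real projective plane: a closed disc attached to a closed Möbius band along their common boundary circle. The plan is to verify three topological statements about $\mathrm{RP}(E)=D(E)\cup\mathcal{L}(E,E)$ and then invoke this standard identification.

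First, I would argue that $D(E)\cup E$ is homeomorphic to a closed $2$-disc with boundary $E$. Two distinct geodesic rays from $o\in\HdC$ meet only at $o$, so parametrising each ray by $s\in[0,1]$ (with $s=0$ at $o$ and $s=1$ at the ideal endpoint on $\S^3$) gives a continuous surjection $[0,1]\times E\to D(E)\cup E$, injective once $\{0\}\times E$ is collapsed. Compactness of the source upgrades this to a homeomorphism from the cone on the circle $E$ to $D(E)\cup E$, which is thus a closed $2$-disc with boundary $E$.

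Second, \cref{prop:extension_outside} already provides the Möbius band piece: since $E$ is slim, it is horizontal and hyperconvex by \cref{prop:alphaslim_horizontal} and \cref{rem:definition}, and the proposition gives an embedding of $E\times E/((x,y)\sim(y,x))$ into $\CP^2$ whose intersection with $\S^3$ is exactly $E$. The standard model $E\times E/\sim$ of the closed Möbius band has boundary equal to the image of the diagonal, mapped to $\{\mathcal{L}(x,x)=x:x\in E\}=E$. Hence $\mathcal{L}(E,E)$ is a closed Möbius band with boundary $E$. Moreover, for $x\neq y$ in $\S^3$ the polar point $x\boxtimes y$ lies in $\HuuC$, so $\mathcal{L}(E,E)\setminus E\subset \HuuC$.

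Third, the two pieces intersect exactly along $E$: $D(E)\subset \HdC\cup E$, while $\mathcal{L}(E,E)\setminus E\subset \HuuC$, and $\HuuC$ is disjoint from $\HdC\cup E$. So $D(E)\cap\mathcal{L}(E,E)=E$, and $\mathrm{RP}(E)$ is the pushout of a disc and a Möbius band glued by a homeomorphism of their boundary circles $E$. Since $\RP^2$ itself admits this exact decomposition, and since any self-homeomorphism of $S^1$ extends to the disc by the Alexander trick, the attaching homeomorphism is irrelevant up to global homeomorphism; therefore $\mathrm{RP}(E)\cong\RP^2$. No real obstacle arises; the only care needed is to confirm the three topological claims above, the most substantial being the disc claim for $D(E)$, which relies only on uniqueness of geodesic rays from $o$.
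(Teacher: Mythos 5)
Your proposal is correct and follows essentially the same route as the paper: identify $D(E)$ as a disc with boundary $E$, identify $\mathcal{L}(E,E)$ as a Möbius band with boundary $E$ via \cref{prop:extension_outside}, and conclude that their union along $E$ is $\RP^2$. The extra details you supply (the cone construction for $D(E)$, the disjointness of the interiors via $\HdC$ versus $\HuuC$, and the irrelevance of the attaching homeomorphism) are all consistent with, and merely flesh out, the paper's shorter argument.
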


\begin{proof}
The disc $D(E)$ is a disc whose boundary is $E$. $\mathcal L(E,E)$ is a Möbius band whose boundary is also $E$. Their union is thus homeomorphic to the gluing of a disc and a Möbius band along their boundary: it is a real projective plane.
\end{proof}

Now, we want to understand how these surfaces $\mathrm{RP}(E)$ deform when deforming $E$ inside the set of slim circles.

\subsection{Deformations of slim subsets and surjectivity}\label{sec:surjectivity}

We investigate in this section $\mathcal C^0$-deformations of horizontal sets. However, it is easily seen that 
$\mathcal C^0$-deformations do not preserve horizontality. For example, fix a loxodromic one-parameter subgroup $A$, and take a continuous family $p_t$ of points in the sphere such that only $p_0$ belongs the surface singled out by \cref{pro:paraboloids-1-parameter}. Then the familiy of sets $(\overline{A\cdot p_t})_t$ is $\mathcal C^0$-continuous and only $\overline{A\cdot p_0}$ is horizontal.

But the additional quantitative information given by slimness guarantees that deformations remain horizontal and the projective lines given by the line map $\mathcal L$ vary continuously.
\begin{proposition}\label{slim-deformations}
Let $E \subset \S^3$ be an horizontal subset and $\epsilon>0$. For $t\in (-\epsilon,\epsilon)$, let $E_t = f_t(E)$ be a continuous deformation. We assume that there is $0\leq \alpha <\dfrac{\pi}{2}$ such that all the sets $E_t$ are $\alpha$-slim.

Then the sets $E_t$ are all horizontal and the map $(t,p,q) \mapsto \mathcal L(f_t(p),f_t(q))$ is continuous on $(-\epsilon,\epsilon)\times E^2$.
\end{proposition}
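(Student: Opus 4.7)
The first claim is immediate: each $E_t$ is $\alpha$-slim with $\alpha<\pi/2$, hence horizontal by \cref{prop:alphaslim_horizontal}. For the continuity of $\Psi(t,p,q):=\mathcal{L}(f_t(p),f_t(q))$ at a point $(t_0,p_0,q_0)\in(-\epsilon,\epsilon)\times E^2$, set $e=f_{t_0}(p_0)$ and $e'=f_{t_0}(q_0)$ and split into two cases. When $e\neq e'$, the line map $\mathcal{L}$ is continuous at $(e,e')$ by \cref{prop:L-not-continuous}, and the joint continuity of $(t,p)\mapsto f_t(p)$ suffices. The substantial case is $e=e'$, where one must show $\Psi(t_n,p_n,q_n)\to \mathcal{L}(e,e)=e^\perp$ for every sequence converging to $(t_0,p_0,q_0)$.

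I would argue by contradiction. After extraction, suppose $\Psi(t_n,p_n,q_n)\to\ell\neq e^\perp$. Whenever $f_{t_n}(p_n)=f_{t_n}(q_n)$, one has $\Psi(t_n,p_n,q_n)=f_{t_n}(p_n)^\perp\to e^\perp$, which contradicts the extraction; hence assume $f_{t_n}(p_n)\neq f_{t_n}(q_n)$ throughout. In the generic situation where $E_{t_0}$ has at least two points, pick $r\in E$ such that $x:=f_{t_0}(r)$ differs from $e$, and set $x_n:=f_{t_n}(r)$. Joint continuity of $f$ gives $x_n\to x\neq e$. (If instead $E_{t_0}=\{e\}$, then along $\{t_0\}\times E^2$ the map $\Psi$ is identically $e^\perp$ and the statement reduces to a direct continuity check in $\S^3$.)

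To invoke \cref{lem:Cartan-coalesce}, which is stated with a fixed third point, I would use the $\PU(2,1)$-equivariance of both the Cartan invariant and the line map. Pick $g_n\to\mathrm{Id}$ in $\PU(2,1)$ with $g_n(x_n)=x$, which is possible by local transitivity of the $\PU(2,1)$-action on $\S^3$. Then
\[
\A(x_n,f_{t_n}(p_n),f_{t_n}(q_n))=\A(x,g_n(f_{t_n}(p_n)),g_n(f_{t_n}(q_n))),
\]
the two sequences $g_n(f_{t_n}(p_n))$ and $g_n(f_{t_n}(q_n))$ still converge to $e$, and the complex line they span converges to $\ell\neq e^\perp$. \cref{lem:Cartan-coalesce} then yields $|\A(x_n,f_{t_n}(p_n),f_{t_n}(q_n))|\to\pi/2$. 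Since $\bigl(x_n,f_{t_n}(p_n),f_{t_n}(q_n)\bigr)$ is a triple in the $\alpha$-slim set $E_{t_n}$, this contradicts $\alpha<\pi/2$ and completes the argument.

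The main obstacle is precisely the diagonal degeneracy of $\mathcal{L}$ described in \cref{prop:L-not-continuous}: off the diagonal, continuity of $\Psi$ comes for free, while on the diagonal the proof relies entirely on the \emph{uniform} quantitative slimness bound (mere horizontality of each $E_t$ would not be enough). The $\PU(2,1)$-equivariance step is a minor technical adjustment to transfer \cref{lem:Cartan-coalesce} to the sequence of varying third points $x_n\to x$.
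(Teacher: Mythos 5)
Your proof is correct and follows essentially the same route as the paper's: off the diagonal the continuity is automatic, and on the diagonal one derives a contradiction with the uniform $\alpha$-slimness bound via \cref{lem:Cartan-coalesce}. Your additional $\PU(2,1)$-equivariance step is a careful patch of a detail the paper glosses over (it applies \cref{lem:Cartan-coalesce} directly to a moving third point $f_{t_n}(r_n)\to r$ even though the lemma is stated for a fixed one); otherwise the two arguments coincide.
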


\begin{proof}
The horizontality of $E_t$ is granted by the assumption that they are slim. What we really want to control is the second point.

We argue by contradiction. Suppose there exists converging sequences $p_n\to p$, $q_n\to q$ in $E$ and $t_n\to t$ in 
$(-\epsilon, \epsilon)$ such that the sequence of lines 
$l_n = \mathcal L(f_{t_n}(p_n),f_{t_n}(q_n))$
 does not converge to $l =\mathcal L(f_t(p),f_t(q))$. 
 
 We first note that this cannot happen if $p\neq q$, by 
 the assumption that $E_t$ is a continuous deformation: we have $f_{t_n}(p_n) \to f_t(p)$ and $f_{t_n}(q_n) \to f_t(q)$. $f_t$ is still an homeomorphism, so $f_t(p)\neq f_t(q)$ and the lines $l_n$ converge to $l$ by continuity of $\mathcal L$ outside the diagonal.
 
 Assume now $p=q$. Fix a sequence $r_n$ of points in $E$ such that $f_{t_n}(r_n) \to r\neq p$. Suppose by contradiction that $l_n \to l_\infty \neq l=\mathcal L(p,p)$. Then, by \cref{lem:Cartan-coalesce}, $\A(f_{t_n}(p_n),f_{t_n}(q_n),f_{t_n}(r_n))$ goes to $\pm \pi/2$. It is impossible, as all the $f_s$ are $\alpha$-slim. This concludes the proof.
\end{proof}

A corollary is that the surfaces $\mathrm{RP}(E_t)$ vary continuously.
\begin{corollary}\label{cor:continous_RP}
Under the hypothesis of the previous proposition, the map $t,p \mapsto f_t(p)$ can be extended into a homotopy $t,p \mapsto F_t(p)$ between $\mathrm{RP}(E)$ and $\mathrm{RP}(E_s)$.
\end{corollary}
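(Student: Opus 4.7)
The plan is to build the homotopy piecewise on the two halves of $\mathrm{RP}(E)=D(E)\cup\mathcal L(E,E)$ using the natural parametrizations of each, and then verify that the two partial definitions agree and glue continuously along the common boundary $E$.

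On the Möbius-band piece, \cref{prop:extension_outside} provides the parametrization $E\times E/\!\sim\;\xrightarrow{\sim}\mathcal L(E,E)$ via $(p,q)\mapsto\mathcal L(p,q)$. I set
\[
F_t\bigl(\mathcal L(p,q)\bigr)=\mathcal L\bigl(f_t(p),f_t(q)\bigr).
\]
This is symmetric in $(p,q)$, hence descends to a well-defined map on $\mathcal L(E,E)$; it is continuous in $(t,p,q)$ by \cref{slim-deformations}; and for each $t$ it takes values in $\mathcal L(E_t,E_t)\subset\mathrm{RP}(E_t)$. On the disc piece, I use the radial parametrization $\Psi\colon E\times[0,1]\to D(E)$ where $\Psi(p,s)$ is the point at normalised hyperbolic arc-length parameter $s$ on the geodesic segment from the chosen origin $o$ to $p$ (with $\Psi(\cdot,0)\equiv o$ and $\Psi(\cdot,1)=\mathrm{id}_E$). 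Letting $\Psi_t$ be the analogous parametrization of $D(E_t)$, I set $F_t(\Psi(p,s))=\Psi_t(f_t(p),s)$. This is continuous in $(t,p,s)$ because the geodesic segments issued from $o$ in $\HdC$ depend continuously on their endpoint at infinity, and $f_t$ is continuous in $(t,p)$.

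The two pieces agree on the shared boundary $E\subset\mathrm{RP}(E)$: under the polarity identification used to define the line map, $\mathcal L(p,p)=p$, so the Möbius-band formula gives $F_t(p)=f_t(p)$, while the disc formula gives $F_t(\Psi(p,1))=\Psi_t(f_t(p),1)=f_t(p)$. They therefore combine into a single map $F_t\colon\mathrm{RP}(E)\to\mathrm{RP}(E_t)$, which coincides with $f_t$ on $E$ and with $\mathrm{id}$ at $t=0$. The delicate point --- and the one where the $\alpha$-slimness hypothesis is genuinely used --- is continuity of this glued map at points of $E$. From the disc side it is immediate from the radial parametrization. From the Möbius-band side it demands that $\mathcal L(f_t(p_n),f_t(q_n))\to f_t(p)$ whenever $(t_n,p_n,q_n)\to(t,p,p)$, i.e.\ exactly the diagonal continuity of the line map jointly in the deformation parameter; this is precisely the content of \cref{slim-deformations}, whose proof relied on \cref{lem:Cartan-coalesce} to rule out the only obstruction. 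This yields the required homotopy $(t,p)\mapsto F_t(p)$ extending $(t,p)\mapsto f_t(p)$ between $\mathrm{RP}(E)$ and $\mathrm{RP}(E_t)$.
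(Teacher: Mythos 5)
Your proof is correct and follows essentially the same route as the paper's: define $F_t$ radially on the disc $D(E)$ and via $\mathcal L(f_t(\cdot),f_t(\cdot))$ on the Möbius band $\mathcal L(E,E)$, with continuity across the gluing locus $E$ supplied by \cref{slim-deformations}. You are in fact a little more explicit than the paper about why the two pieces match up and why continuity holds at points of $E$, which is where the slimness hypothesis enters.
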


\begin{proof}
Fix $t\in (-\epsilon,\epsilon)$ and let us define the map $F_t: \mathrm{RP}(E) \to \mathrm{RP}(E_t)$. Let $p$ be a point in $\mathrm{RP}(E)$. We shall consider three cases:
\begin{enumerate}
	\item if $p$ is in $E$, then we set $F_t(p) = f_t(p)$.
	\item if $p$ is in $D(E)$, then it is on a geodesic $ox$ for a unique $x\in E$, at distance $d\geq 0$ of $o$. We set $F_t(p)$ to be the point at distance $d$ of $o$ in the geodesic $o f_t(x)$.
	\item if $p$ is outside the ball, there are two points $x,y \in E$ such that $p = \mathcal L(x,y)$. Note that the pair ${x,y}$ is unique: they are the only $2$ intersections between $E$ and the $\C$-circle polar to $p$. We define $F_t(p)$ to be the point $\mathcal L(f_t(x), f_t(y))$.
\end{enumerate}
From the previous section and proposition, we see that $t,p \to F_t(p)$ is continuous, that $F_0$ is the identity map and that for each $t$, $F_t$ realizes a homeomorphism from $E$ to $E_t$.
\end{proof}

This corollary is the crucial point to prove that for slim deformations $E$ of an $\R$-circle, the map $\F_E$ is still surjective or, equivalently, they still are maximal slim subset of $\S^3$. The following proposition rephrases the Surjectivity item of \cref{thm:foliation-deformation}:
\begin{proposition}\label{thm:surjectivity}
Fix $0<\alpha<\dfrac{\pi}{2}$. For $t\in [0,1]$, let $\phi_t: \S^1 \to \S^3$ be a continuous deformation such that,  for each $t$, the set $E_t = \phi_t(\S^1)$ is $\alpha$-slim and moreover $E_0$ is a $\R$-circle.

Then, for all $t$, the map $\F_{E_t} : M_{E_t}\to \Omega_{E_t}$ is surjective. Equivalently, the set $E_t$ is a maximal slim circle of $\S^3$: any slim set containing $E_t$ is $E_t$ itself.
\end{proposition}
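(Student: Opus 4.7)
The plan is to reformulate surjectivity of $\F_{E_t}$ as an intersection statement in $\CP^2$ and then deduce it from a mod $2$ homology argument applied to the extended surface $\mathrm{RP}(E_t)$.

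Fix $p \in \Omega_{E_t}$. Saying that $p$ lies in the image of $\F_{E_t}$ amounts to finding distinct $x, y \in E_t$ with $p$ on the $\C$-circle through $x$ and $y$; by polarity this is equivalent to $x \boxtimes y \in p^\perp$, i.e.\ to the fact that the polar complex line $p^\perp$ meets the Möbius band $\mathcal L(E_t, E_t) \setminus E_t \subset \HuuC$. To lift this to the whole extended surface, I would use that for $p \in \S^3$ the complex line $p^\perp$ satisfies $p^\perp \cap \overline{\HdC} = \{p\}$: the restriction of the Hermitian form to $p^\perp$ is positive semi-definite with $p$ as unique null direction. Since $p \notin E_t$, this already forces $p^\perp \cap E_t = \emptyset$ and $p^\perp \cap (D(E_t) \setminus E_t) = \emptyset$, so that $p^\perp \cap \mathrm{RP}(E_t) = p^\perp \cap \bigl(\mathcal L(E_t, E_t) \setminus E_t\bigr)$. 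It therefore suffices to prove $p^\perp \cap \mathrm{RP}(E_t) \neq \emptyset$.

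For this I would choose the auxiliary origin $o$ used to build $D(\cdot)$ inside $\HdR$. Then $D(E_0) = \HdR$ and $\mathrm{RP}(E_0)$ is the standard real projective plane in $\CP^2$, which represents the nonzero class of $H_2(\CP^2; \mathbb{Z}/2) \cong \mathbb{Z}/2$. Combining \cref{prop:extension_outside} and \cref{cor:continous_RP} with the fact that two distinct hyperbolic geodesics from $o$ meet only at $o$, the family $\mathrm{RP}(E_s)$, $s \in [0, t]$, is a continuous homotopy of embedded closed surfaces homeomorphic to $\RP^2$. Hence $[\mathrm{RP}(E_t)] = [\mathrm{RP}(E_0)] \neq 0$ in $H_2(\CP^2; \mathbb{Z}/2)$, and since the mod $2$ intersection pairing on this group is nondegenerate with $[\CP^1]^2 = 1$, we obtain $[p^\perp] \cdot [\mathrm{RP}(E_t)] = 1$. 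This forces $p^\perp \cap \mathrm{RP}(E_t) \neq \emptyset$, which produces the desired pair of distinct $x, y \in E_t$ with $p \in \mathcal L(x, y)$; since $p \notin E_t$, the point $p$ then lies on one of the two arcs $\arc{x}{y}$ or $\arc{y}{x}$, proving surjectivity. The equivalence with maximality is then immediate from the definition of slimness, as three points on a common $\C$-circle have Cartan invariant $\pm \pi/2$.

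The main obstacle is to check that \cref{cor:continous_RP} really supplies a homotopy through \emph{embedded} surfaces of $\CP^2$, so that the homology class is preserved along the deformation. The outside piece $\mathcal L(E_s, E_s)$ is embedded by \cref{prop:extension_outside} thanks to slimness (which implies both horizontality and hyperconvexity), the inside piece $D(E_s)$ is embedded because distinct geodesics from a common basepoint in $\HdC$ meet only at that basepoint, and the two pieces are glued along their common boundary $E_s$ while living, respectively, in $\overline{\HdC}$ and in $E_s \cup \HuuC$; so the glued surface stays embedded throughout the deformation and its $\mathbb{Z}/2$-homology class is preserved.
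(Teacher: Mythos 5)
Your proposal is correct and follows essentially the same route as the paper: both reduce surjectivity to showing that the polar line $p^\perp$ must meet the Möbius band $\mathcal L(E_t,E_t)$, using that $p^\perp$ touches the closed ball only at $p$, and both establish the nonempty intersection via the homotopy of \cref{cor:continous_RP} and the nontriviality of $[\mathrm{RP}(E_0)]=[\RP^2]$ in $H_2(\CP^2;\Z/2\Z)$ paired against $[\CP^1]$. The only cosmetic difference is that you worry about embeddedness of the deformed surfaces, which is not actually needed for the homological step (a continuous family of maps suffices to preserve the pushed-forward class, and a map missing a line factors through the contractible complement).
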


The proof uses the construction of the surfaces $\mathrm{RP}(E_t)$. More precisely, we will use that such surfaces intersect any (complex) line in $\CP^2$, as shown in the following lemma.
\begin{lemma}
Under the assumption of the theorem,
for any line $l\subset \CP^2$, and any $0\leq t \leq 1$, the intersection between $l$ and $\mathrm{RP}(E_t)$ is non-empty.
\end{lemma}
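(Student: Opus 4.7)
The plan is to argue via mod-$2$ intersection theory in $\CP^2$. Recall that $H_2(\CP^2,\Z/2)=\Z/2$ is generated by the class of a complex projective line, and that the non-zero class has non-trivial self-intersection (so in particular non-trivial intersection with a line). Since $\mathrm{RP}(E_t)$ is non-orientable, the $\Z/2$ coefficients are forced, but that is enough for our purposes.

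First I will establish that $\mathrm{RP}(E_0)$ represents the generator of $H_2(\CP^2,\Z/2)$. It is convenient to choose the origin $o$ in $\HdR$ (the real plane bounded by the $\R$-circle $E_0$). Then the disc $D(E_0)$, made of real geodesics from $o$ to points of $E_0$, lies entirely in $\HdR$ and fills the hyperbolic disc bounded by $E_0$. Together with the Möbius band $\mathcal L(E_0,E_0)$, which is exactly the portion of $\mathrm{\RP}^2\subset\CP^2$ lying outside $\HdC$, this gives $\mathrm{RP}(E_0)=\mathrm{\RP}^2$, the set of real points of $\CP^2$. It is a classical fact that the standard $\mathrm{\RP}^2$ generates $H_2(\CP^2,\Z/2)$: for instance, a generic complex line defined by a complex-linear equation with real coefficients meets $\mathrm{\RP}^2$ transversally in a single real point.

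Next, by \cref{cor:continous_RP}, the formula $(t,p)\mapsto F_t(p)$ defines a continuous homotopy from the inclusion $F_0:\mathrm{RP}(E_0)\hookrightarrow \CP^2$ to the map $F_t:\mathrm{RP}(E_0)\to \CP^2$ whose image is $\mathrm{RP}(E_t)$. Consequently $F_t$ and $F_0$ represent the same class in $H_2(\CP^2,\Z/2)$, and by the previous paragraph this class is the generator.

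Finally, suppose for contradiction that $\mathrm{RP}(E_t)\cap \ell=\emptyset$ for some complex projective line $\ell$. Then $F_t$ would factor through the open set $\CP^2\setminus \ell$. But $\CP^2\setminus \ell$ is homeomorphic to the affine chart $\C^2$, hence is contractible, so $F_t$ would be null-homotopic. This would force $[F_t]=0$ in $H_2(\CP^2,\Z/2)$, contradicting the fact that $[F_t]=[F_0]$ is the generator. The main subtlety in this plan is verifying that the homotopy provided by \cref{cor:continous_RP} genuinely realises $F_t$ and $F_0$ as homotopic maps of the same abstract $\mathrm{\RP}^2$ into $\CP^2$ (and not just as continuously varying subsets), which is why the explicit construction of $F_t$ in the proof of that corollary, rather than just its image, is important.
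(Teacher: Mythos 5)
Your proof is correct and follows essentially the same route as the paper: both arguments show that $\mathrm{RP}(E_t)$ represents the generator of $H_2(\CP^2,\Z/2\Z)$ by homotoping it back to the standard $\RP^2$ via \cref{cor:continous_RP}, and then conclude that it must meet every complex line. The only (welcome) refinements are that you fix the origin $o$ in $\HdR$ so that $\mathrm{RP}(E_0)$ is literally the standard $\RP^2$, and that you justify the final step by noting that a map avoiding $\ell$ factors through the contractible set $\CP^2\setminus\ell$, rather than invoking the intersection form $i([l],[\RP^2])=1$ directly as the paper does.
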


\begin{proof}
Any complex line $l$ meets the usual $\RP^2$ at any point inside $l\cap \bar l$. The intersection is moreover transverse unless $l$ is a real line.

Now, we work in the homology group $H_2(\CP^2, \frac{\Z}{2\Z})$, with its intersection form denoted by $i$. The previous remark translates in this setting into the property $i([l],[\RP^2]) = 1\in\frac{\Z}{2\Z}$. Note that working with $\frac{\Z}{2\Z}$-coefficients avoids problems related to the non-orientability of $\RP^2$.

The previous \cref{cor:continous_RP} proves that, for any $t$, the surface $\mathrm{RP}(E_t)$ is a continuous deformation of $\mathrm{RP}(E_0)$. By assumption, $E_0$ is an $\R$-circle so that $\mathrm{RP}(E_0)$ is a copy of $\RP^2$. So we obtain $[\mathrm{RP}(E_t)] = [\RP^2]$. This in turn translates into the intersection property $i([l],[\mathrm{RP}(E_t)]) = 1$.

We conclude that any line $l$ intersects $\mathrm{RP}_t$. Moreover, if the intersection is transverse, it is an odd number of points.
\end{proof}

We can now conclude the proof of the theorem.
\begin{proof}
Fix $t\in [0,1]$, and $p\in \S^3 \setminus E_t$. We want to prove that $p$ belongs to an arc $\arc xy$, with $x\neq y \in E_t$. This implies that $p$ belongs to the image of $\F_{E_t}$.

Consider the line $p^\perp$ in $\CP^2$. By the previous lemma, it intersects $\mathrm{RP}(E_t)$. The intersection of $p^\perp$ with $\HdC$ is empty, whereas its intersection with the sphere is reduced to $\{p\}$. Note that, by assumption, $p$ is not in $E_t = \mathrm{RP}(E_t)\cap \S^3$.
So the intersection point is not in the disc $D(E_t)$. This means by construction that $p$ is a point $x\boxtimes y$  for some $x\neq y \in E_t$. This implies that one of the arcs $\arc xy$ or $\arc yx$ contains $p$.

We have just proven that any point $p\not \in E_t$ belongs to an arc. This proves the surjectivity of $\F_{E_t}$. 
\end{proof}

\begin{remark}
The proof of \cref{thm:surjectivity} is valid as soon as $E_0$ verifies that $i([RP(E_0)],[l])=1$.
\end{remark}

Before moving on and proving the last point of \cref{thm:foliation-deformation}, we exhibit in the next subsection a simple example where the foliation does indeed deform as a new foliation.

\subsection{A one-parameter deformation of the foliation by arcs of $\C$-circles}
Let us come back to the example we have studied in \cref{section:bent}. For any angle $0<\theta<2\pi$, we consider the subset of $\S^3$ defined in Heisenberg coordinates by
$$E_{\theta} = \left\{[x,0],\,x\in \R_+\right\}\cup \left\{[y e^{i\theta},0],\,y\in \R_+\right\}.$$ 
Note that when $\theta=\pi$, the curve $E_\theta$ is the boundary of $\HdR$ in $\S^3$.

\begin{theorem}\label{prop:deform-standard-foliation}
For any $\theta\in [\pi/2,3\pi/2]$, the set of arcs of $\C$-circles with endpoints in $E_\theta$ defines a foliation of $\S^3\setminus E_{\theta}$.
\end{theorem}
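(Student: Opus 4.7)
The plan is to verify the two ingredients of a foliation of $\S^3\setminus E_\theta$ by arcs of $\C$-circles with endpoints in $E_\theta$: that every point of the complement lies on such an arc (surjectivity of the map $\F_{E_\theta}$ from \cref{def:M(E)}) and that two distinct such arcs are disjoint (injectivity).

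For surjectivity I would apply \cref{thm:surjectivity} to the continuous path $s\mapsto E_s$ joining $E_\pi=\partial\HdR$ to $E_\theta$. By \cref{prop:bent-Rcircles-slim} each $E_s$ is $\tfrac{|\pi-s|}{2}$-slim, hence uniformly $\tfrac{\pi}{4}$-slim along the path, while $E_\pi$ is an $\R$-circle. The hypotheses of \cref{thm:surjectivity} are therefore met and every point of $\Omega_{E_\theta}$ lies on some arc $\arc{x}{y}$ with $x\neq y$ in $E_\theta$.

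For injectivity, slimness implies hyperconvexity (\cref{rem:definition}), so two distinct arcs $\arc{x_1}{y_1}$ and $\arc{x_2}{y_2}$ with $\{x_1,y_1\}\neq\{x_2,y_2\}$ lie on two distinct complex lines in $\CP^2$ meeting at the single point $(x_1\boxtimes y_1)\boxtimes(x_2\boxtimes y_2)$; by \cref{lem:Ccircles-meet} this point lies in $\S^3$ precisely when the two $\C$-circles meet in $\S^3$. Injectivity on $\Omega_{E_\theta}$ is therefore equivalent to the assertion that whenever this box product lies in $\S^3$, it already lies in $E_\theta$, so that the two arcs meet only at a common endpoint. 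I would split into cases according to how the four points are distributed between the rays $R_1$ and $R_2$. If all four points lie in a single ray, say $R_1$, then the two pairs are inscribed in the full $\R$-circle $\partial\HdR\supset R_1$, and \cref{coro:foliation} already gives disjointness; any common point in $\S^3$ is an asymptotic endpoint in $R_1\subset E_\theta$. The symmetric statement, after rotation by $e^{i\theta}$, handles four points in $R_2$. In the remaining \emph{mixed} cases, exploit the loxodromic dilation $\delta_\lambda:[z,t]\mapsto[\lambda z,\lambda^2 t]$ (which preserves $E_\theta$) to normalize one parameter, compute the standard lifts via \eqref{eq:lift-Siegel}, and use the identities \eqref{eq:box-expansion}--\eqref{eq:box-det-vect} to express $(x_1\boxtimes y_1)\boxtimes(x_2\boxtimes y_2)$ explicitly; the condition that this point lies on $\S^3$ becomes a polynomial equation in the remaining real parameters with coefficients depending on $\cos\theta$ and $\sin\theta$.

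The main obstacle is this last computation: I expect the polynomial to admit no admissible solution for $\cos\theta\leqslant 0$ (that is, $\theta\in[\pi/2,3\pi/2]$) except at degenerate configurations where the two pairs share one of the common corners $[0,0]$ or $\infty$, with the sign of a discriminant or leading coefficient flipping exactly at $\cos\theta=0$. This sharp threshold matches the caveat stated after the theorem that arcs do intersect once $|\pi-\theta|>\pi/2$, confirming that the range is optimal.
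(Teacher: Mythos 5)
Your proposal follows essentially the same route as the paper: surjectivity via \cref{thm:surjectivity} applied to the bending homotopy (with the uniform slimness bound $|\pi-s|/2\leq\pi/4$ along the path), and disjointness via the criterion of \cref{lem:Ccircles-meet} together with a case division according to how the four endpoints are distributed on the two rays, with \cref{coro:foliation} disposing of the single-ray case. The reduction of injectivity to ``the box product $(x_1\boxtimes y_1)\boxtimes(x_2\boxtimes y_2)$ lies in $\S^3$ only at a shared endpoint'' is correct, since hyperconvexity forces distinct endpoint pairs to span distinct complex lines.

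The one thing you leave unverified is the decisive computation in the mixed cases, which you only ``expect'' to work out; as written this is the entire content of the disjointness half of the theorem. The paper does carry it out: in the hardest case (each arc having one endpoint on each ray), with lifts as in \eqref{eq:lift-Siegel} parametrized by nonnegative reals $x,y,z,t$, the non-vanishing condition becomes
\[
(x-z)(t-y)\bigl(-\alpha \cos^2\theta +\beta \cos\theta -\gamma\bigr)\neq 0
\]
with explicit $\alpha,\beta\geqslant 0$ and $\gamma>0$, so that for $\cos\theta\leqslant 0$ the product can only vanish when $x=z$ or $t=y$, i.e.\ when the arcs share an endpoint; the remaining mixed cases are degree one in $\cos\theta$ and simpler. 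So your prediction that the threshold sits exactly at $\cos\theta=0$ is confirmed, but the proof is incomplete until that polynomial is actually exhibited and its sign analyzed.
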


\begin{proof}
To prove Proposition \ref{prop:deform-standard-foliation}, we need to
\begin{enumerate}
\item prove that any point $p\in\S^3$ outside $E_\theta$ belongs to some arc of $\C$-circle hitting $E_\theta$ twice;
\item prove that any two arcs of $\C$-circle both hitting $E_\theta$ twice never meet unless they share at least one endpoint in $E_\theta$.
\end{enumerate}

The first point follows directly from \cref{thm:surjectivity} and \cref{prop:bent-Rcircles-slim} : $E_\theta$ is $|\pi-\theta|/2$-slim, and it is obtained from an $\R$-circle by a homotopy which is given by the bending.

To prove the second point, we use a numerical criterion to determine when two $\C$-circles are disjoint. From \cref{lem:Ccircles-meet}, we know that, given four points $a,b,c,d$ in $\S^3$ such that $a\neq b$ and $c\neq d$, the $\C$-circles $C_{ab}$ and $C_{cd}$ spanned by $(a,b)$ and  $(c,d)$ respectively are disjoint if and only if 
\begin{equation}\label{eq:condi-C-cercles}\la (\ba\boxtimes \bb)\boxtimes (\bc\boxtimes \bd), (\ba\boxtimes \bb)\boxtimes (\bc\boxtimes \bd)\ra\neq 0\end{equation}

So what we need to do is to take $a,b,c,d$ as above in $E_{\theta}$ and prove the the left-hand side of \eqref{eq:condi-C-cercles} doesn't vanish when $|\theta-\pi|\leqslant \pi/2$ unless one of $a=c$, $a=d$, $b=c$, $b=d$ happens. We denote by 
$\Delta_1 = \left\{[x,0],\,x\in \R_+\right\}$ and $\Delta_2=\left\{[y e^{i\theta},0],\,y\in \R_+\right\}$ the two half-lines whose union is $E_\theta$.

Considering the possible relative positions of the two arcs of $\C$-circles, we are left with the following four cases.
\begin{multicols}{2}
\begin{enumerate}
\item The four endpoints all belong to one of the $\Delta_i$'s,
\item Three of the endpoints lie in $\Delta_1$ and one in $\Delta_2$,
\item One of the two arcs has its endpoints in $\Delta_1$ and the other in $\Delta_2$,
\item Both arcs have one endpoint in $\Delta_1$ and one in $\Delta_2$.
\end{enumerate}
\end{multicols}

The first case follows direcly from \cref{coro:foliation}.  We will not give details for each of the other three cases, but let us consider the fourth one, which is the most intricate.
Assume that $a$ and $c$ are in $\Delta_1$ and that $b$ and $d$ are in $\Delta_2$. We may then chose lifts as in \eqref{eq:lift-Siegel} so that there exists four non-negative real numbers $x$, $y$, $z$, $t$, such that ($x\neq 0$ or $z\neq 0$) and ($y\neq 0$ or $t\neq 0$), for which we have:

\begin{equation}\label{lifts-Delta1-Delta2}
\ba = \begin{bmatrix} -x^2\\x\\1\end{bmatrix},\,
\bb = \begin{bmatrix} -y^2\\ye^{i\theta}\\1\end{bmatrix},\,
\bc = \begin{bmatrix} -z^2\\z\\1\end{bmatrix},\,
\bd = \begin{bmatrix} -t^2\\te^{i\theta}\\1\end{bmatrix},\,
\end{equation}

Plugging these values into the left-hand side of \eqref{eq:condi-C-cercles} and reorganising, the condition becomes

\begin{equation}\label{eq:condi-2}
0\neq  (x-z)(t-y)\Bigl(-\alpha \cos^2\theta +\beta \cos\theta -\gamma\Bigr)
\end{equation}
where
\begin{eqnarray}
\alpha & = & 16xyzt(x+z)(y+t) \nonumber\\
\beta & = & 4 ((xt+yz)^2+(ty+xz)^2+2(tx + yz)(xy + tz))(ty + xz)\nonumber\\
\gamma & = &  4\Bigl(t^2 + 2ty + z^2)ty^2z+(t^2 + 2xz + z^2)tx^2z \nonumber\\
& & \quad\quad+ (x^2 + 2ty + y^2)t^2xy+ (x^2 + y^2 + 2xz)xyz^2\Bigr)\nonumber
\end{eqnarray}

The conditions on $x$, $y$, $z$ and $t$ impose that $\alpha\geqslant 0$, $\beta\geqslant 0$ and $\gamma>0$. Since $\cos\theta \leqslant 0$  the right-hand side of \eqref{eq:condi-2} can only vanish if $x=z$ or $t=y$, which is the expected result in this case.

The remaining two cases are treated in the same way, only simpler since the analogue of  \eqref{eq:condi-2} has degree 1 in $\cos\theta$.

\end{proof}

\subsection{Obstruction to the foliation property and rigidity}\label{sec:non-injectivity}

We prove in this section the second part of \cref{thm:foliation-deformation}, namely the non-injectivity. It shows that the deformation of the previous subsection is very specific and we can not hope to deform the foliation in general.
First of all, we introduce a bit of vocabulary for loxodromic transformations. Let $\gamma \in \PU(2,1)$ be loxodromic. Let $\lambda = re^{i\theta}$ be its eigenvalue of greatest modulus, with $\theta \in (-\pi,\pi]$ and $r>1$. $\lambda$ is well-defined only up to multiplication by $e^{\frac{2i\pi}{3}}$, so we normalize by choosing $-\pi/3<\theta\leq \pi/3$. Up to conjugation, we choose the representative of $\gamma$ in $\SU(2,1)$ to be the diagonal matrix with diagonal entries $(\lambda, \bar \lambda/\lambda, 1/\bar \lambda)$. The trace of this matrix is real if and only if $\lambda$ is real. Equivalently, the trace of $\gamma^3$ is well-defined for $\gamma \in \PU(2,1)$ and it is real iff the trace of the choosen lift is real.
\begin{definition}
A loxodromic element $\gamma$ is \emph{non-real} if its trace is non-real.

Its \emph{rotation factor} is the angle $3\theta \in (-\pi,\pi]$ where $\theta$ is the normalized argument of its eigenvalue of greatest modulus.
\end{definition}
Note that $3\theta=\pi$ corresponds to a real loxodromic element even if its rotation factor is not $0$.
With this definition, we can state the following slightly more general version of the non-injectivity property. Note in particular that the slimness assumption is not fully needed and the hyperconvexity property is enough:
\begin{theorem}\label{thm:non-injectivity}
Let $E$ be a hyperconvex circle which is invariant under the action of a non-real loxodromic map
$\gamma\in \PU(2,1)$ with fixed repulsive and attractive fixed points $p^-,p^+\in E$.
Then there exists an infinite family of $\C$-circles $C_n$ such that

\begin{enumerate}
\item For all $n$, $C_n$ meets $E$ twice.
\item For all $n$, $C_n$ meets the $\C$-circle through $p^-$ and $p^+$ outside $\{p^-,p^+\}$.
\end{enumerate}
\end{theorem}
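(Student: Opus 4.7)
The plan is to use $\gamma$-equivariance to reduce the problem to producing a single $\C$-circle $C$ meeting $E$ in two points and meeting $C_0$ at a point outside $\{p^-,p^+\}$. By \Cref{lem:box-polar}, $\C$-circles in $\S^3$ are in bijection with points of $\HuuC$ via polarity, and $\gamma$ (like any of its nonzero powers) has the three fixed points $p^-, p^+, q_0=p^-\boxtimes p^+$ in $\CP^2$, only $q_0$ lying in $\HuuC$; hence $C_0$ is the unique $\C$-circle fixed by any $\gamma^k$ with $k\neq 0$. Therefore, once one such $C\neq C_0$ is produced, its $\gamma$-orbit $\{\gamma^n C\}_{n\in\Z}$ consists of pairwise distinct $\C$-circles, each of which meets $E$ in the $\gamma^n$-images of the two original intersection points (since $E$ is $\gamma$-invariant) and meets $C_0=\gamma^n C_0$ at $\gamma^n p_0\in C_0\setminus\{p^\pm\}$ (since $\gamma$ fixes $p^\pm$). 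This yields the claimed infinite family.

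To construct $C$, I normalise so that $p^-=[0,0]$ and $p^+=\infty$ in Heisenberg coordinates, making $C_0$ the vertical $t$-axis and $L=\mathcal L(p^-,p^+)$ its supporting complex line; $\gamma$ then lifts diagonally as $\mathrm{diag}(\lambda,\bar\lambda/\lambda,1/\bar\lambda)$ with $\lambda=re^{i\theta}$, and acts on Heisenberg by $[z,t]\mapsto[\mu z,r^2 t]$ with $\mu=re^{-3i\theta}$; non-realness of $\gamma$ translates into $\sin(3\theta)\neq 0$. For $q=[z_0,t_0]\in E\setminus\{p^\pm\}$ (so $z_0\neq 0$) and $n\in\Z$, I would use the box-product formulas of \Cref{section:complex-lines} to compute the intersection of the complex line $\mathcal L(q,\gamma^n q)$ with $L$; in the affine chart $\{v_3=1\}$ on $L$, writing $a_1=-|z_0|^2+it_0$, a direct calculation gives
\[v_1(q,n) \;=\; a_1\cdot\frac{r^{2n}-\mu^n}{1-\mu^n}.\]
This intersection lies in $C_0\setminus\{p^\pm\}$ exactly when $v_1(q,n)$ is purely imaginary and nonzero, equivalently when $G_n(q):=|z_0|^2 A_n + t_0 B_n=0$, where $A_n=(1+r^{2n})\cos(3n\theta)-2r^n$ and $B_n=(r^{2n}-1)\sin(3n\theta)$.

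The crux of the proof, and its main obstacle, is to exhibit at least one pair $(q_0,n_0)$ realising $G_{n_0}(q_0)=0$. The function $\tau(q):=t_0/|z_0|^2$ (which equals $-\tan\A(p^-,p^+,q)$ by the computation of \Cref{sec:slimness}) is $\gamma$-invariant, continuous on $E\setminus\{p^\pm\}$ and constant on $\gamma$-orbits, hence descends to a continuous function on the compact circle $E/\gamma$; its image is a non-degenerate closed interval $I\subset\R$ because $E$ is hyperconvex but not contained in $C_0$. For each $n$ with $B_n\neq 0$, the equation $G_n=0$ reads $\tau=-A_n/B_n$, and asymptotically $-A_n/B_n\sim-\cot(3n\theta)$ as $|n|\to\infty$; the non-real rotation by $-3\theta$ forces this sequence of slopes to hit $I$ either densely (when $3\theta/\pi\notin\Q$) or, in the rational cases compatible with non-realness, at sufficiently many distinct values to meet $I$. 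A continuity/intermediate-value argument on the circle $E/\gamma$ then yields the required $q_0$, and $C=\mathcal L(q_0,\gamma^{n_0}q_0)\cap\S^3$ is the sought-for $\C$-circle. The delicate part is uniformly controlling the range of $\tau$ and the admissible slopes under only the hyperconvex hypothesis, where $E$ is not assumed smooth at $p^\pm$.
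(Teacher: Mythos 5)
Your opening reduction is sound: since the eigenvalues of $\gamma^k$ ($k\neq 0$) have pairwise distinct moduli, the only $\gamma^k$-invariant $\C$-circle is $C_0$, so the $\gamma$-orbit of any single admissible chord $C\neq C_0$ gives the required infinite family. Your computation of $\mathcal L(q,\gamma^n q)\cap\mathcal L(p^-,p^+)$ and of the condition $G_n(q)=|z_0|^2A_n+t_0B_n=0$ is also correct. The genuine gap is in producing the one chord $C$: you only search among chords $\mathcal L(q,\gamma^n q)$ whose two endpoints lie in a \emph{single} $\gamma$-orbit, and the resulting equation $\tau(q)=-A_n/B_n$ need not admit a solution. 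In particular, the assertion that the image $I$ of $\tau=t/|z|^2$ is a non-degenerate interval is false. Take $E$ to be $\{0,\infty\}$ together with two horizontal orbits of a loxodromic one-parameter group $L_\alpha$, both lying on the paraboloid $t=3|z|^2\,\Im(\alpha)/\Re(\alpha)$ of \cref{pro:paraboloids-1-parameter}: this is a topological circle; it is hyperconvex (a $\C$-circle not contained in the paraboloid meets it at most twice, and the $\C$-circles contained in it lie in planes $t=\mathrm{cst}$, each meeting each orbit exactly once); it is invariant under every $\gamma=L_\alpha(s_0)$, which is non-real for all $s_0$ outside a discrete set; and $\tau\equiv 3\Im(\alpha)/\Re(\alpha)=:c$ on it, so $I=\{c\}$. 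Your method then requires the exact identity $cB_n+A_n=0$ for some integer $n\neq 0$; since the continuous-parameter function $s\mapsto cB(s)+A(s)$ has a discrete zero set and $A_{-n}/B_{-n}=A_n/B_n$, one may choose $s_0$ avoiding the countable set of zeros divided by positive integers, and your family then contains no admissible chord at all, although the conclusion of the theorem does hold for this $E$. Even when $I$ is non-degenerate, for $3\theta\in\pi\Q$ the slopes $-A_n/B_n$ accumulate on only finitely many values and may all avoid $I$, so the claim that they meet $I$ "at sufficiently many distinct values" is unsubstantiated.

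The paper circumvents exactly this obstruction by decoupling the two endpoints of the chord. It takes $a\to p^-$ and $b\to p^+$ \emph{independently} along $E$, writes $\ba=\bp^-+x\,\bp^-\boxtimes\bp^++o(x)$ and $\bb=\bp^++y\,\bp^-\boxtimes\bp^++o(y)$ via the normal form of \cref{lem:local}, and reduces the intersection condition with $C_0$ to the vanishing of $-\Re(x\bar y)+o(|x|^2+|y|^2)$. \Cref{lem:arg-unbounded} shows that the non-real rotation factor forces each branch of $E$ to spiral, so that $\arg x$ and $\arg y$ each sweep out all of $\R$; hence $\Re(x\bar y)$ changes sign and an intermediate value argument yields infinitely many intersecting chords directly, with no Diophantine condition. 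If you wish to keep your (otherwise clean) orbit reduction, you must enlarge your one-parameter family $(q,\gamma^n q)$ to a two-parameter family of chords in which such a sign change can be forced.
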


Let us make a series of reductions before actually proving the theorem. First, note that $E\setminus\{p^-,p^+\}$ has two connected components. So, up to passing to the action of $\gamma^2$, we may assume that $\gamma$ preserves each of this component. Note that if $\gamma$ is non-real, $\gamma^2$ has a rotation factor different from $0$. Moreover, up to conjugation, one can suppose that $\gamma$ is diagonal with fixed points $0$ and $\infty$ in Heisenberg model. The action of $\gamma$ then forces $E$ to spiral around $0$. This can be stated, in Heisenberg coordinates:
\begin{lemma}\label{lem:arg-unbounded}
Let $\gamma$ be a loxodromic map fixing $0$ and $\infty$ with rotation factor $\beta = 3\theta\neq 0$. Let $c: [0,+\infty]$ be a path  $c(s)= [z(s),t(s)]$ that is an homeomorphism on its image. Assume that this image that is $\gamma$-invariant and hyperconvex, and moreover $c(0)=0$ and $c(+\infty)=\infty$. 

Then any continuous lift $s\mapsto \tilde{\arg}(z(s))$ of the argument of $z$ over $0<s<+\infty$ is onto $\R$ and proper. 
\end{lemma}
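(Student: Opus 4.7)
The plan is to diagonalize $\gamma$, translate the hypotheses into a cocycle identity for $\tilde{\arg}(z)$, and then deduce both properties from that cocycle.

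In horospherical coordinates one can conjugate $\gamma$ to the diagonal form $\mathrm{diag}(\lambda,\bar\lambda/\lambda,1/\bar\lambda)$ with $\lambda=re^{i\theta}$ and, after replacing $\gamma$ by $\gamma^{-1}$ if needed, $r>1$. A direct computation then gives the Heisenberg action $\gamma[z,t]=[re^{-i\beta}z,\,r^{2}t]$, so $\infty$ is the attractive fixed point. The hypothesis that $c$ is a homeomorphism onto its $\gamma$-invariant image gives a homeomorphism $\sigma$ of $(0,+\infty)$ with $\gamma\circ c=c\circ\sigma$. Since $\gamma$ fixes only $0$ and $\infty$, $\sigma$ has no fixed point in $(0,+\infty)$; hence it is monotone, and the attractivity of $\infty=c(+\infty)$ forces $\sigma(s)>s$. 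Hyperconvexity forces $z(s)\ne 0$ on $(0,+\infty)$: otherwise $c(s)$ together with the endpoints $0=c(0)$ and $\infty=c(+\infty)$ would lie on the vertical $\C$-circle $\{[0,t]:t\in\R\}\cup\{\infty\}$, a third incidence contradicting \cref{definition:hyperconvex}. So $z\colon(0,+\infty)\to\C^{*}$ is continuous and admits a continuous lift $\tilde{\arg}(z)$.

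The identity $z(\sigma(s))=re^{-i\beta}z(s)$ shows that $\tilde{\arg}(z(\sigma(s)))-\tilde{\arg}(z(s))\equiv-\beta\pmod{2\pi}$. The left-hand side is continuous in $s$, so the integer-valued jump is constant, and the left-hand side itself equals a constant $\beta_{0}=-\beta+2\pi k_{0}$. Since $\beta\in(-\pi,\pi]\setminus\{0\}$, we have $\beta_{0}\ne 0$. Iterating yields $\tilde{\arg}(z(\sigma^{n}(s_{0})))=\tilde{\arg}(z(s_{0}))+n\beta_{0}$ for all $n\in\Z$ and all $s_{0}\in(0,+\infty)$.

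Fix $s_{0}>0$; the interval $K=[s_{0},\sigma(s_{0})]$ is a compact fundamental domain for the $\sigma$-action on $(0,+\infty)$, on which $|\tilde{\arg}(z)|$ is bounded by some $M$. For any $s\in(0,+\infty)$ there is a unique $n(s)\in\Z$ with $\sigma^{n(s)}(s)\in K$, and the cocycle yields $|\tilde{\arg}(z(s))|\ge|n(s)|\cdot|\beta_{0}|-M$. As $s\to 0$ we have $n(s)\to+\infty$, and as $s\to+\infty$, $n(s)\to-\infty$, so $|\tilde{\arg}(z(s))|\to\infty$ at both ends: this is properness. Surjectivity then follows from the intermediate value theorem on the connected set $(0,+\infty)$, since $\tilde{\arg}(z(\sigma^{n}(s_{0})))\to\pm\infty$ in opposite signs as $n\to\pm\infty$. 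The main subtlety is securing $\beta_{0}\ne 0$ and the strict monotonicity $\sigma(s)>s$; once these are in place, the remainder is a standard fundamental-domain argument.
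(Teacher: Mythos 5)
Your proof is correct and follows essentially the same route as the paper's: hyperconvexity rules out $z(s)=0$ (the paper phrases this via injectivity of the vertical projection $\Pi_\infty$, you via three points on the vertical $\C$-circle), and the $\gamma$-equivariance forces the lifted argument to shift by the nonzero constant $-\beta+2k\pi$. Your explicit conjugated dynamics $\sigma$ and the fundamental-domain estimate merely flesh out the properness claim that the paper compresses into ``It proves it is onto $\R$ and proper,'' which is a welcome level of detail but not a different argument.
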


\begin{proof}
Since $c$ is hyperconvex, the vertical projection $s\mapsto z(s)=\frac 12\overline {\Pi_\infty(c(s))}$ is injective, see \cref{rmk:projection-tangent-line}. As $c(s)$ is never $0$ for $0<s<+\infty$, the quantity $z(s)$ never vanishes, and the lift of argument is well-defined once choosen the lift of the argument of $c(1)$.

Applying the loxodromic element in coordinates, we compute that $\gamma\cdot c(s) = [re^{-3i\theta}z(s),r^2 t(s)] = c(s')$. The lift $\tilde{\arg}(z(s'))$ of the argument of $z(s')$ is of the form $\tilde{\arg}(z(s))-3\theta+2k\pi$ for some $k\in \Z$. Note that, by our assumption on the rotation factor, we have $-3\theta + 2k\pi\neq 0$. So the image of $\tilde{\arg}(z(s))$ is invariant by translation by $-3\theta + 2k\pi\neq 0$. It proves it is onto $\R$ and proper.
\end{proof}

\begin{proof}[Proof of Theorem \ref{thm:non-injectivity}]
Following the discussions above, we can assume that $p^-=0$, $p^+=\infty$ with their usual lifts, and $\gamma$ preserves both connected components of $E\setminus\{p^-,p^+\}$. The proof below shows the the Theorem is true under the weaker assumption of horizontality.

Let $a$ be close to $p^-$ and $b$ to $p^+$. Using the local parametrization given by \cref{lem:local}, we can write for lifts:
\[ \ba = \bp^- + x \bp^-\boxtimes \bp^+ + o(x) \textrm{ and } \bb = \bp^+  +y \bp^-\boxtimes \bp^+ + o(y)\]
These coordinates are directly linked to the Heisenberg coordinates: we have $\ba = [z(a),t(a)]$ and $\bb =[z(b), t(b)]$ where: 
\[z(a)=-\frac{x}{2}+o(x) \quad \mathrm{ and }\quad\frac{2}{\overline z(b)}=y+o(y).\]
In particular, \cref{lem:arg-unbounded} implies that the arguments mod. $2\pi$ of $x$ and $y$ oscillates infinitely as $a$ goes to $p^-$ and $b$ to $p^+$.

By \cref{lem:Ccircles-meet}, the fact that the $\C$-circle through $a,b$ intersects the one through $p^-,p^+$ is equivalent to the equality $(\ba\boxtimes \bb)\boxtimes(\bp^-\boxtimes \bp^+)=0$. We can compute directly, once noticed that $\bp^-\boxtimes(\bp^-\boxtimes \bp^+) = \frac 12 \bp^-$ and $(\bp^-\boxtimes \bp^+)\boxtimes \bp^+ = \frac 12 \bp^+$. Indeed, we have 
\[\ba\boxtimes \bb = \bp^-\boxtimes \bp^+ + \frac {x\bp^+ + y\bp^-}{2} +o(\sqrt{|x|^2+|y|^2}).\]
Computing the box-product with $\bp^-\boxtimes \bp^+$ leads to:
\[(\ba\boxtimes \bb)\boxtimes(\bp^-\boxtimes \bp^+) = \frac{y\bp^- - x\bp^+}{2} + +o(\sqrt{|x|^2+|y|^2}).\]
One can now compute the square of the Hermitian norm of this last vector, getting:
\[
\la (\ba\boxtimes \bb)\boxtimes(\bp^-\boxtimes \bp^+),(\ba\boxtimes \bb)\boxtimes(\bp^-\boxtimes \bp^+)\ra = -\Re(x\bar y) +o(x^2+y^2)
\]

Since the arguments of $x$ and $y$ oscillates, this last value has an infinite number of change of signs as $a$ goes to $p^-$ and $b$ to $p^+$. Hence, it vanishes infinitely many times: this gives an infinite number of $\C$-circles hitting the one through $p^-,p^+$. 
\end{proof}

This concludes the proof of \cref{thm:foliation-deformation}.  It is the main ingredient in the following rigidity theorem. Let us recall that a representation $\rho$ is a convex-cocompact and slim deformation of a $\R$-fuchsian representation $\rho_0$ if there is a path of convex-cocompact and slim representations linking $\rho$ to $\rho_0$.

\begin{theorem}\label{thm:crown-uniformisations}
Let $\Sigma$ be a closed hyperbolic surface. 
Denote by $\Gamma$ the fundamental group of $\Sigma$.
Consider $\rho_0 : \Gamma \to \PO(2,1)\subset\PU(2,1)$ an $\R$-fuchsian representation.

Then, for any convex-cocompact and slim deformation $\rho$ of $\rho_0$, we have: 
\begin{enumerate}
	\item The limit set $\Lambda_\rho$ is a maximal slim circle.
	\item The arcs $\arc{x}{y}$, for $x\neq y \in \Lambda_\rho$, are pairwise disjoint if and only if $\rho$ is $\R$-fuchsian.
\end{enumerate}
\end{theorem}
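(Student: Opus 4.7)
The plan is to combine the surjectivity and non-injectivity results of \cref{thm:foliation-deformation} with the continuity of limit sets of convex-cocompact representations.

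First I would fix a continuous path $(\rho_t)_{t \in [0,1]}$ of convex-cocompact slim representations with $\rho_1 = \rho$, which exists by definition of ``slim deformation of $\rho_0$''. The continuity of the limit set under deformations of convex-cocompact representations (used in the proof of \cref{pro:limit-set-continuous}), combined with the continuity of $t \mapsto \A(\Lambda_{\rho_t})$ and compactness of $[0,1]$, gives a uniformly $\alpha$-slim continuous family $(\Lambda_{\rho_t})$ with $\alpha < \pi/2$, starting at the $\R$-circle $\Lambda_{\rho_0} = \partial\HdR$. The surjectivity part of \cref{thm:foliation-deformation} then yields that $\F_{\Lambda_\rho} : M_{\Lambda_\rho} \to \Omega_{\Lambda_\rho}$ is onto, which gives point $1$: any $p \in \S^3 \setminus \Lambda_\rho$ lies on an arc $\arc{x}{y}$ with $x, y \in \Lambda_\rho$, so $\Lambda_\rho \cup \{p\}$ contains three distinct points on a $\C$-circle and fails slimness.

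For point $2$, if $\rho$ is $\R$-fuchsian then cocompactness of $\rho(\Gamma)$ in $\PO(2,1)$ gives $\Lambda_\rho = \partial\HdR$, and pairwise disjointness follows from the foliation \cref{coro:foliation}. Conversely, suppose $\rho$ is not $\R$-fuchsian. I would show that $\rho(\Gamma)$ then contains a non-real loxodromic element $\delta$ (addressed in the last paragraph). Applying \cref{thm:non-injectivity} to $E = \Lambda_\rho$ (hyperconvex by slimness, cf.\ \cref{rem:definition}) and to $\delta$ produces $\C$-circles $C_n$ each intersecting $\Lambda_\rho$ at two points $x_n, y_n$ and intersecting the $\C$-circle $C$ through the fixed points $p^\pm$ of $\delta$ at some $q_n \notin \{p^\pm\}$. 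Hyperconvexity forces $C \cap \Lambda_\rho = \{p^\pm\}$, so $q_n \notin \Lambda_\rho$, and in particular $q_n \neq x_n, y_n$. Hence $q_n$ lies on one of the two arcs of $C$ with endpoints $p^\pm$ and simultaneously on one of the two arcs of $C_n$ with endpoints $x_n, y_n$; these are distinct arcs (they are supported by distinct $\C$-circles), both have endpoints in $\Lambda_\rho$, and they meet at $q_n$, contradicting pairwise disjointness.

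The main obstacle is the remaining sub-claim that a non-$\R$-fuchsian convex-cocompact slim representation $\rho$ must contain a non-real loxodromic element. My strategy would be to first establish that $\rho(\Gamma)$ is Zariski dense in $\PU(2,1)$: the only non-elementary proper algebraic subgroups to rule out essentially come from stabilizers of totally geodesic subspaces; those of a real plane yield an $\R$-fuchsian representation (ruled out by hypothesis), while those of a complex line would force $\Lambda_\rho$ into a single $\C$-circle, contradicting slimness since $\Lambda_\rho$ is infinite and a slim set meets any $\C$-circle in at most two points. Once Zariski density is in hand, the condition that every $\rho(\gamma)^3$ have real trace cuts out a proper real-algebraic subvariety in the character variety, incompatible with Zariski density; hence some $\rho(\delta)^3$ has non-real trace and $\delta$ is a non-real loxodromic. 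Carefully ruling out all proper (possibly disconnected) algebraic subgroups is the delicate technical step.
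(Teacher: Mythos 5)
Your proposal is correct and follows the same skeleton as the paper's proof: item 1 is deduced from the surjectivity half of \cref{thm:foliation-deformation} (i.e.\ \cref{thm:surjectivity}) applied to the path of slim limit sets, and item 2 from \cref{thm:non-injectivity} once a non-real loxodromic element of $\rho(\Gamma)$ is produced; your derivation of intersecting arcs from the circles $C_n$ is a correct unpacking of what the paper leaves implicit. Two points of comparison. First, you are more careful than the paper about the hypotheses of \cref{thm:surjectivity}: extracting a single $\alpha<\pi/2$ valid along the whole path via \cref{pro:limit-set-continuous} and compactness of $[0,1]$ is exactly what is needed to legitimize the application, and the paper does not spell this out. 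Second --- the genuine divergence --- the sub-claim you flag as the delicate step is disposed of by the paper in one line, by citing \cite{Acosta-real}: if every loxodromic element of $\rho(\Gamma)$ has real trace, then $\rho(\Gamma)$ preserves a totally geodesic real plane and $\rho$ is $\R$-fuchsian. Your Zariski-density route is a valid, essentially self-contained alternative that amounts to reproving the special case of that result needed here: a non-elementary discrete subgroup of $\PU(2,1)$ either is Zariski dense or stabilizes a point, a complex line or a real plane, and your elimination of the complex-line case via hyperconvexity of the slim limit set is correct. One phrasing correction: the real-trace condition should be read in the group, not the character variety --- the locus $\{g : \mathrm{tr}(g^3)\in\R\}$ is a proper Zariski-closed real subvariety of $\PU(2,1)$, so it cannot contain a subgroup whose Zariski closure is everything; combined with the fact that all nontrivial elements of a convex-cocompact surface group are loxodromic, this yields the non-real loxodromic element. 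The cost of your route is having to invoke (or prove) the classification of proper algebraic subgroups of $\PU(2,1)$; what it buys is independence from the external reference.
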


\begin{proof}
The first item follows from \cref{thm:surjectivity}: the map $\mathcal F$ is surjective on $\Omega_\rho$, which means that any point in $\Omega_\rho:=\S^3\setminus \Lambda_\rho$ belongs to an arc with endpoints in $\Lambda_\rho$. In particular, any superset of $\Lambda_\rho$ has $3$ points on a $\C$-circle, so is not slim. 

The second point is a corollary of \cref{thm:non-injectivity}. Indeed, from \cite{Acosta-real}  the fact that all loxodromic elements in the $\rho(\Gamma)$ have real trace implies that  $\rho(\Gamma)$ preserves a totally geodesic real plane and therefore $\rho$ is $\R$-fuchsian. So, let $\rho$ be a non-$\R$-fuchsian deformation of $\rho_0$. Then some element $\rho(\gamma)$ is a non-real loxodromic transformation. The limit set $\Lambda_\rho$ is invariant under this element. Then \cref{thm:non-injectivity} implies that some arcs $\arc xy$ intersect. The other implication is direct.
\end{proof}

\section{Crown-type spherical CR uniformisation of 3-manifolds}\label{sec:Crowns}

We now look at the geometric meaning of slimness and the deformed foliation in the equivariant case, i.e. assuming that the slim circle is the limit set of a convex-cocompact group. We will see that it gives CR-spherical uniformizations on unit tangent bundles and drilled unit tangent bundles.

So let $\Gamma$ be a lattice in $\PO(2,1)$ and denote by $\rho_0$ the $\R$-fuchsian representation given by $\Gamma\subset \PO(2,1) \subset \PU(2,1)$. Let $\Lambda_0$ be the $\R$-circles which is the limit set of $\rho_0(\Gamma)$ and $\Omega_0$ its complement. We have seen in \cref{sec:deformation-R-fuchsian} that we have a natural identification $\rho_0(\Gamma)\backslash \Omega_0\simeq \unit{(\Gamma\backslash\bHdR)}$. This is a CR-uniformization of the unit tangent bundle to the surface.

Now, let us deform $\rho_0$ into a convex cocompact representation $\rho$. Denote by $\Lambda_\rho$ the limit set of $\rho(\Gamma)$ and by $\Omega_\rho$ its complement.  In fact, $\rho(\Gamma)\backslash \Omega_\rho$ is still homeomorphic to $\unit{(\Gamma\backslash\bHdR)}$ (see \cref{proposition:deformation-quotient}). We want to use arcs of $\C$-circles to construct natural CR-uniformization on the unit tangent bundle drilled along a geodesic. For that, we need to define  supersets of the limit set, that we call \emph{crowns}.

\subsection{Crowns}

Let $\gamma\in\PO(2,1)$ be a loxodromic element. The axis of $\gamma$ in $\HdR$ is naturally oriented. We call again axis of $\gamma$ and denote by ${\rm axis}(\gamma)$ the oriented lift of the $\HdR$-axis of $\gamma$ to the unit tangent bundle $\unit{\HdR}$. The goal of this section is to explore the analogy between this notion of axis in $\unit{\HdR}$ and arcs of $\C$-circles in $\S^3$. This analogy has already been noticed in the discussion following \cref{coro:foliation}, and in \cref{proposition:homeoUT}. 

For any loxodromic element $\delta\in\PU(2,1)$, we denote by $\alpha_\delta$ the arc of $\C$-circle $\arc{a^-}{a^+}$, where
$a^+$ and $a^-$ are the attractive and repulsive fixed points of $\delta$. Note that $\alpha_\delta$ is naturally oriented toward $a^+$.  We call $\alpha_\delta$ the {\it axis at infinity} of $\delta$.

\begin{definition}
Let $\Delta$ be a convex-cocompact subgroup of $\PU(2,1)$ whose limit set $\Lambda_\Delta$ is a topological circle, and $\delta\in\Delta$ be a loxodromic element.  We call the {\it crown associated to $\delta$} the subset of $\S^3$ defined as
\begin{equation} {\rm Crown}_{\Delta,\delta} = \Lambda_\Delta \cup\Bigl(\bigcup_{g\in\Delta} g\cdot\alpha_\delta\Bigr).
\end{equation}
We denote by $\Omega_{\Delta,\delta}\subset \Omega_\Delta$ the complement of ${\rm Crown}_{\Delta,\delta}$ in $\S^3$.
\end{definition}

Note that by construction ${\rm Crown}_{\Delta,\delta}$ is closed and $\Delta$-invariant,
and $\Omega_{\Delta,\delta}$ is open and $\Delta$-invariant. These two objects only depend on the  
$\Delta$-conjugacy class of $\delta$. Moreover, the action of $\Delta$ on $\Omega_{\Delta,\delta}$ is properly 
discontinuous. Eventually, the stabilizer in $\Delta$ of $\alpha_\delta$ is the cyclic group generated by $\delta$, so that the union may be rewritten:  
\[\bigcup_{g\in\Delta} g\cdot\alpha_\delta = \bigcup_{[g]\in \Delta/<\delta>} [g]\cdot\alpha_\delta.\]

\begin{definition}
We say that ${\rm Crown}_{\Delta,\delta}$ is {\it embedded} whenever the arcs of $\C$-circles $g\cdot\alpha_\delta$ are pairwise disjoint.
\end{definition}

For a cocompact $\R$-fuchsian group $\Gamma\subset \PO(2,1)\subset\PU(2,1)$, the situation is clear:
\begin{proposition}\label{proposition:R-Fuchsian-crown-embedded}
Let $\Gamma \subset \PO(2,1)\subset\PU(2,1)$ be a cocompact $\R$-Fuchsian group with limit set $\Lambda=\bHdR\subset\S^3$. 
Denote by $\Sigma$ the surface $\Gamma\backslash\HdR$.
Then, for any   loxodromic element $\gamma\in\Gamma$, we have
\begin{enumerate}
\item  ${\rm Crown}_{\Gamma,\gamma}$ is embedded.
\item The quotient $\Gamma\backslash \Omega_{\Gamma,\gamma}$ is homeomorphic to the 3-manifold obtained by drilling out from the unit tangent bundle $\unit{\Sigma}$ the orbit of the geodesic flow corresponding to $\alpha$.
\end{enumerate}
\end{proposition}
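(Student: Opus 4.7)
Both items follow by combining two earlier results of the excerpt: the foliation of $\Omega_\Gamma=\S^3\setminus\bHdR$ by arcs of $\C$-circles with endpoints in $\bHdR$ (Corollary~\ref{coro:foliation}), and the $\Gamma$-equivariant homeomorphism $\Psi:\unit{\HdR}\to\Omega_\Gamma$ provided by Proposition~\ref{R-Fuchsian-UTB}, under which orbits of the geodesic flow correspond to the leaves of this foliation.

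For point~(1), I note that $\Gamma$ preserves $\Lambda=\bHdR$ and that every element of $\Gamma\subset\PU(2,1)$ preserves the natural orientation of any $\C$-circle (which is induced by the complex structure of the complex line it bounds). Hence for every $g\in\Gamma$ the translate
\begin{equation*}
g\cdot\alpha_\gamma=\arc{g\cdot a^-}{g\cdot a^+}
\end{equation*}
is again an arc of $\C$-circle with endpoints in $\Lambda$, i.e.\ a leaf of the foliation of Corollary~\ref{coro:foliation}. Two leaves of a foliation are either equal or disjoint, so the family $\{g\cdot\alpha_\gamma\}_{g\in\Gamma}$ is automatically disjoint in the sense required for the crown to be embedded.

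For point~(2), I pull the statement back through $\Psi$. The unique orbit of the geodesic flow on $\unit{\HdR}$ that $\Psi$ sends to the leaf $\alpha_\gamma$ is the natural oriented lift ${\rm axis}(\gamma)\subset\unit{\HdR}$ of the axis of $\gamma$: both curves are determined by the pair of endpoints $\{a^-,a^+\}\subset\S^3$, since the explicit formula of Proposition~\ref{R-Fuchsian-UTB} shows that the image under $\Psi$ of a flow orbit is precisely the arc of $\C$-circle between its two endpoints at infinity. By $\Gamma$-equivariance one therefore has
\begin{equation*}
\Psi\Bigl(\bigcup_{g\in\Gamma}g\cdot{\rm axis}(\gamma)\Bigr)=\bigcup_{g\in\Gamma}g\cdot\alpha_\gamma,
\end{equation*}
and $\Psi$ restricts to a $\Gamma$-equivariant homeomorphism from $\unit{\HdR}$ minus the full $\Gamma$-orbit of ${\rm axis}(\gamma)$ onto $\Omega_{\Gamma,\gamma}$. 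Passing to $\Gamma$-quotients on both sides yields the desired homeomorphism between $\Gamma\backslash\Omega_{\Gamma,\gamma}$ and $\unit{\Sigma}$ drilled along the closed orbit of the geodesic flow corresponding to $\gamma$.

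There is no substantial obstacle in this argument; the only bookkeeping step that has to be checked carefully is that the $\Gamma$-orbit of ${\rm axis}(\gamma)$ in $\unit{\HdR}$ is exactly the preimage, under $\unit{\HdR}\to\unit{\Sigma}$, of a single closed orbit of the geodesic flow on $\unit{\Sigma}$. This amounts to identifying the $\Gamma$-stabilizer of ${\rm axis}(\gamma)$ with the maximal cyclic subgroup of $\Gamma$ containing $\gamma$, which is standard for cocompact Fuchsian groups, and guarantees that the crown definition (unions over $\Gamma$) and the drilled unit tangent bundle description (a single closed orbit removed) match.
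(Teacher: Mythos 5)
Your proof is correct and follows essentially the same route as the paper: item (1) via the foliation of \cref{coro:foliation} (distinct leaves are disjoint), and item (2) by transporting the $\Gamma$-orbit of ${\rm axis}(\gamma)$ through the equivariant homeomorphism of \cref{R-Fuchsian-UTB}/\cref{proposition:homeoUT} and passing to the quotient. You merely spell out details (orientation-preservation on $\C$-circles, the stabilizer bookkeeping) that the paper's very terse proof leaves implicit.
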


\begin{proof}
The first item follows directly from \cref{coro:foliation}. The second item follows from \cref{proposition:homeoUT}. Using the notation therein, the map $\mathcal{F}_\Lambda\circ\Phi$ restricts as a $\Gamma$-equivariant homeomorphism from the complement in $\unit{\HdR}$ of the union of all axes of elements conjugate to $\gamma$ in $\Gamma$ to $\Omega_{\Gamma,\gamma}$. The result is obtained by taking quotient.
\end{proof}

\begin{figure}
\begin{center}
 \begin{tabular}{cc}
 \scalebox{0.55}{\includegraphics{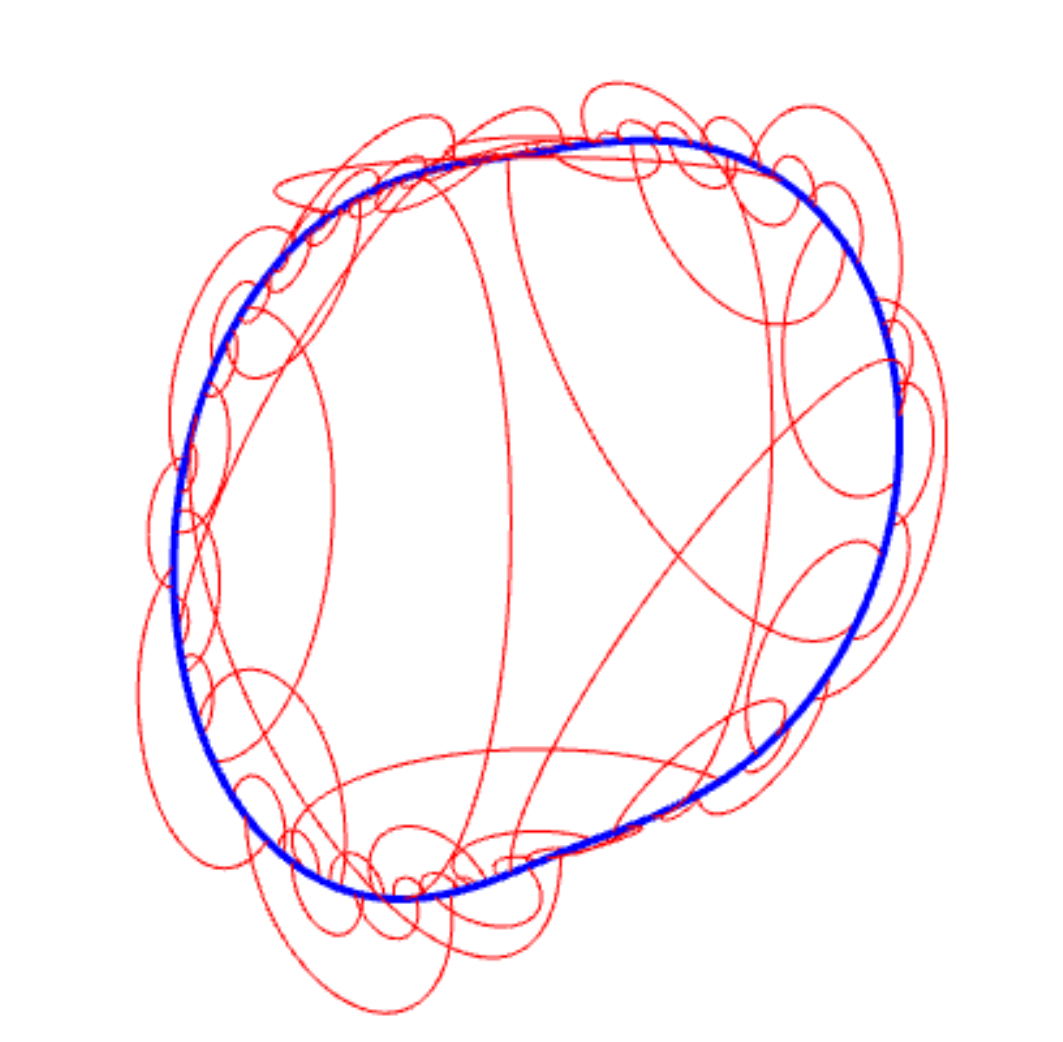}}
   & 
\scalebox{0.55}{\includegraphics{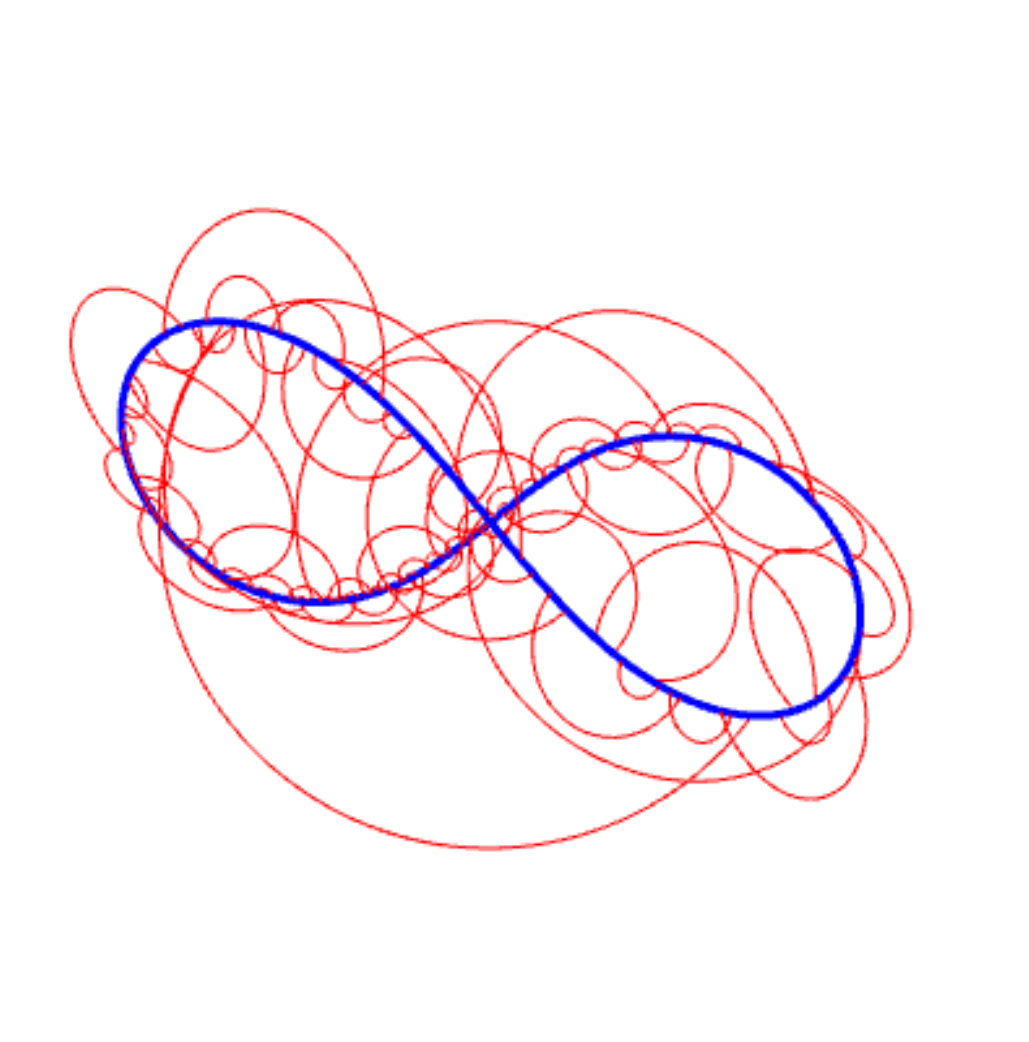}}\\
   \end{tabular}
\caption{Two views of an approximation of the crown $C_{\Gamma,\gamma}$, where $\Gamma$ is the $\R$-Fuchsian $(3,3,4)$-triangle group, and $\gamma$ is the word $\iota_3\iota_2\iota_1\iota_2$ (see \Cref{ex:triangle-groups}). Here the blue curve is the $\R$-circle which is the limit set of the group $\Gamma$. The red curves form the orbit of the axis at infinity of $\gamma$.\label{fig:Crowns}}
\end{center}   
\end{figure}

Let $\Sigma$ be a closed hyperbolic surface and $\lambda$ be a closed oriented geodesic of $\Sigma$. We say that $\lambda$ is {\it filling} whenever the complement of $\lambda$ in $\Sigma$ is a union of topological discs. We denote by $\unit{\Sigma(\lambda)}$ the unit tangent bundle drilled out along the natural lift of the oriented geodesic $\lambda$. A direct corollary of \cref{proposition:R-Fuchsian-crown-embedded} reads:

\begin{corollary}\label{coro:drilled}
For any hyperbolic surface $\Sigma$ and any closed oriented geodesic $\lambda$, the $3$-manifold $\unit{\Sigma(\lambda)}$ admits a CR-spherical uniformisation with a real fuchsian holonomy.
\end{corollary}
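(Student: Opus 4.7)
The plan is to reduce the statement to a direct application of Proposition \ref{proposition:R-Fuchsian-crown-embedded}. Fix once and for all a uniformization $\Sigma \simeq \Gamma \backslash \HdR$ by a cocompact Fuchsian group $\Gamma \subset \PO(2,1)$, and view this inclusion as an $\R$-Fuchsian representation $\rho_0 : \pi_1(\Sigma) \to \PO(2,1) \subset \PU(2,1)$. This is the representation that will supply the ``real fuchsian holonomy''.

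Next, I would translate the oriented closed geodesic $\lambda$ into group-theoretic data. The oriented free homotopy class of $\lambda$ corresponds to a $\Gamma$-conjugacy class of primitive loxodromic elements $\gamma \in \Gamma$, where the orientation of $\lambda$ is recorded by the choice of attractive versus repulsive fixed point of $\gamma$ on $\partial \HdR$. Under the identification $\phi : \unit{\HdR} \xrightarrow{\sim} \Omega_0 = \S^3 \setminus \partial \HdR$ of \cref{R-Fuchsian-UTB}, the axis ${\rm axis}(\gamma) \subset \unit{\HdR}$ (the natural oriented lift of the axis of $\gamma$) is sent exactly to the arc $\alpha_\gamma = \arc{\gamma_-}{\gamma_+}$, because the orbit of the geodesic flow through a unit vector tangent to an oriented geodesic $\eta \subset \HdR$ is mapped by $\phi$ to the arc of $\C$-circle connecting the endpoints of $\eta$ with the orientation induced by the complex structure (which matches the one coming from $\eta$, see the discussion preceding \cref{R-Fuchsian-UTB}).

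Now I would invoke Proposition \ref{proposition:R-Fuchsian-crown-embedded} directly: the crown ${\rm Crown}_{\Gamma,\gamma}$ is embedded, and $\Gamma \backslash \Omega_{\Gamma,\gamma}$ is homeomorphic to the complement in $\unit{\Sigma}$ of the image of $\alpha_\gamma$ under $\phi^{-1}$, projected to the quotient. By the previous paragraph this image is the natural lift of $\lambda$ (together with the natural lifts of all its $\Gamma$-translates, which project to the same closed orbit in $\unit{\Sigma}$). Hence
\[
\Gamma \backslash \Omega_{\Gamma,\gamma} \;\simeq\; \unit{\Sigma(\lambda)}.
\]
Since $\Gamma$ acts properly discontinuously on the open $\Gamma$-invariant subset $\Omega_{\Gamma,\gamma} \subset \S^3$ via $\rho_0$, and $\rho_0$ takes values in $\PO(2,1) \subset \PU(2,1)$, this provides the desired CR-spherical uniformization with real fuchsian holonomy.

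The only non-bookkeeping step is matching orientations in the second paragraph; once the identification $\phi$ is set up carefully so that the $J$-twist is consistent with the orientation convention chosen for $\arc{\gamma_-}{\gamma_+}$, everything else is a direct transcription of \cref{proposition:R-Fuchsian-crown-embedded}. I do not expect any serious obstacle beyond fixing these conventions consistently.
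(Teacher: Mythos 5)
Your proposal is correct and follows essentially the same route as the paper: choose a cocompact $\R$-Fuchsian realization of $\pi_1(\Sigma)$, take a primitive $\gamma$ whose oriented axis projects to $\lambda$, and apply \cref{proposition:R-Fuchsian-crown-embedded}. The orientation bookkeeping you flag is already absorbed into \cref{proposition:homeoUT} and the discussion preceding \cref{R-Fuchsian-UTB}, so nothing further is needed.
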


\begin{proof}
Let $\Gamma \subset \mathrm{PO}(2,1) \subset\PU(2,1)$ be the fundamental group of $\Sigma$ and $\gamma\in \Gamma$ be a primitive element whose oriented axis is $\lambda$. Then, \cref{proposition:R-Fuchsian-crown-embedded} states that $\unit{\Sigma(\lambda)}$ is homeomorphic to $\Gamma \backslash \Omega_{\Gamma,\gamma}$.
\end{proof}

Note that one can construct a lot of cusped hyperbolic $3$-manifolds in that way: Theorem 1.12 in \cite{FoulonHasselblatt} states that $\unit{\Sigma(\lambda)}$ is  hyperbolic as soon as $\lambda$ is filling (see also Calegari's blog \cite{Blog-Calegari}).

\begin{example}\label{ex:triangle-groups}
Let $2\leq p\leq q \leq r$ be three integers, with $\frac 1p +\frac 1q + \frac 1r < 1$. 
Consider the group
$$\Gamma = \la \iota_1,\iota_2,\iota_3 \vert \iota_k^2 = (\iota_1\iota_2)^p=(\iota_2\iota_3)^q=(\iota_3\iota_1)^r=1\ra.$$ 

It is the $(p,q,r)$-triangle group, which is hyperbolic. It can be seen uniquely - up to conjugacy - as a subgroup of  $\PO(2,1)\subset \PU(2,1)$. Each of the $\iota_k$'s is a complex reflection of order two that fixes pointwise a complex line of $\HdC$ which intersects $\HdR$ along a geodesic $\gamma_k$. Let $\Gamma_2$ be the even subgroup of $\Gamma$, and let $\gamma$ be the geodesic in $\HdR$ which is the axis of a (hyperbolic) element $w\in\Gamma_2$.  By \cref{proposition:R-Fuchsian-crown-embedded}, the quotient of $\S^3\setminus {\rm Crown}_{\Gamma_2,\gamma}$ is homeomorphic to the complement of the axis of $\gamma$ in the unit tangent bundle of the orbisurface $\Gamma_2\backslash\HdR$.

In the special case where $(p,q,r)=(3,3,4)$ and $w$ is the word $\iota_3\iota_2\iota_1\iota_2$, then the resulting $3$-manifold is the figure eight knot complement. This fact is proved in \cite{Dehornoy}.
\end{example}

\subsection{Deformations}

We now prove that, after a small deformation of $\rho_0$, the crown deforms and gives new CR-spherical uniformization of the drilled unit tangent bundle, with non real holonomy. By the analysis of the previous \cref{section:deform-foliation}, the arcs of $\C$-circles in the crown could intersect. We prove that it is not the case, at least locally.

\begin{theorem}\label{pro:deformed_crowns}
Let $\Sigma$ be a hyperbolic surface 
and $\lambda$ be an oriented closed geodesic.
 Denote by $\Gamma$ the fundamental group of $\Sigma$
 and by $\gamma$ a primitive element whose axis lifts $\lambda$. Consider $\rho_0 : \Gamma \to \PO(2,1)\subset\PU(2,1)$ an $\R$-fuchsian representation.

Then there exits a neighborhood $U$ of $\rho_0$ (of convex-cocompact and slim deformations $\rho$ of $\rho_0$) such that for any $\rho$ in $U$, we have:
\begin{enumerate}
	\item $\mathrm{Crown}_{\rho(\Gamma),\rho(\gamma)}$ is embedded and homotopic in $\S^3$ to $\mathrm{Crown}_{\rho_0(\Gamma),\rho_0(\gamma)}$.
	\item The quotient $\rho(\Gamma) \backslash \Omega_{\rho(\Gamma),\rho(\gamma)}$ is homeomorphic to $\unit{\Sigma(\lambda)}$.
\end{enumerate}
\end{theorem}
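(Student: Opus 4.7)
The strategy is to derive everything from the $\R$-fuchsian case (\cref{proposition:R-Fuchsian-crown-embedded}) by continuity, the main tool being the Ehresmann--Thurston principle recalled in \cref{proposition:deformation-quotient}. First observe that every ingredient building the crown depends continuously on $\rho$: the limit set $\Lambda_\rho$ varies continuously in the Hausdorff topology (\cref{pro:limit-set-continuous}), the fixed points $\rho(g\gamma g^{-1})_\pm$ are continuous functions of $\rho$, and the line map is continuous away from the diagonal so the arcs $\alpha_{\rho(g\gamma g^{-1})}$ depend continuously on $\rho$. Taking a continuous path $(\rho_s)_{s\in[0,1]}$ from $\rho_0$ to $\rho$ entirely contained in the neighborhood $U$ to be constructed, the family $s\mapsto \mathrm{Crown}_{\rho_s(\Gamma),\rho_s(\gamma)}$ is then the homotopy in $\S^3$ required by part~(1).

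For the embeddedness of the crown and for part~(2), I would work in the quotient. \cref{proposition:deformation-quotient} provides a neighborhood $U_0$ of $\rho_0$ and a continuous family of homeomorphisms
\[
H_\rho : \rho_0(\Gamma)\backslash\Omega_{\rho_0(\Gamma)} \longrightarrow \rho(\Gamma)\backslash\Omega_{\rho(\Gamma)}, \qquad H_{\rho_0}=\mathrm{id},
\]
under the identification $\rho_0(\Gamma)\backslash\Omega_{\rho_0(\Gamma)} \simeq \unit\Sigma$ of \cref{R-Fuchsian-UTB}. Write $\bar\alpha_\rho$ for the image of $\alpha_{\rho(\gamma)}$ in $\rho(\Gamma)\backslash\Omega_{\rho(\Gamma)}$ and set $c_\rho := H_\rho^{-1}(\bar\alpha_\rho)\subset \unit\Sigma$. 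Then $\rho\mapsto c_\rho$ is a continuous family of smooth closed curves in $\unit\Sigma$, and $c_{\rho_0}$ coincides with the natural lift $\unit\lambda$ of $\lambda$, which is embedded. Since being embedded and isotopic to $\unit\lambda$ is an open condition (any curve sufficiently close to $\unit\lambda$ lies inside a fixed tubular neighborhood of $\unit\lambda$ and, when close enough, is embedded and freely homotopic to the core), there exists a neighborhood $U\subset U_0$ on which $c_\rho$ is embedded and isotopic to $\unit\lambda$. Pulling back to the cover, the embeddedness of $c_\rho$ is equivalent to pairwise disjointness of the arcs $\{\rho(g)\cdot\alpha_{\rho(\gamma)}\}_{g\in\Gamma/\langle\gamma\rangle}$, which is exactly the embeddedness of the crown. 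Part~(2) then follows from
\[
\rho(\Gamma)\backslash\Omega_{\rho(\Gamma),\rho(\gamma)} \;=\; \bigl(\rho(\Gamma)\backslash\Omega_{\rho(\Gamma)}\bigr)\setminus \bar\alpha_\rho \;\underset{H_\rho^{-1}}{\simeq}\; \unit\Sigma \setminus c_\rho \;\simeq\; \unit{\Sigma(\lambda)},
\]
the last homeomorphism using that $c_\rho$ is isotopic to $\unit\lambda$.

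The main obstacle is that the crown is an \emph{infinite} union of arcs $\rho(g)\cdot\alpha_{\rho(\gamma)}$ indexed by the infinite coset space $\Gamma/\langle\gamma\rangle$, and no finite-scale continuity argument in $\S^3$ itself could produce a uniform neighborhood $U$ controlling all of them simultaneously. The reduction to the compact manifold $\unit\Sigma$, where the entire family descends to a single closed curve, is precisely what converts this infinite uniform-control problem into a local openness statement and lets the deformation argument go through.
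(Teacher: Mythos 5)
Your overall architecture is the same as the paper's: reduce to the $\R$-Fuchsian case of \cref{proposition:R-Fuchsian-crown-embedded}, use Ehresmann--Thurston (\cref{proposition:deformation-quotient}) to identify the ambient quotient with $\unit{\Sigma}$ throughout the deformation, and deduce part (2) from embeddedness plus an isotopy of the drilled curve to $\unit{\lambda}$. The homotopy claim in part (1) and the derivation of part (2) from embeddedness are fine.

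However, there is a genuine gap at the crucial step. You claim that ``being embedded and isotopic to $\unit\lambda$ is an open condition'' for the $C^0$-continuous family of closed curves $c_\rho$, justified by saying that a curve close to $\unit\lambda$ lies in a tubular neighbourhood and is therefore embedded. This is false: a closed curve that is $C^0$-close to an embedded curve, lies in a tubular neighbourhood of it and is freely homotopic to the core can perfectly well have self-intersections (embeddedness is open in the $C^1$ topology, not the $C^0$ one, and the homeomorphisms $H_\rho$ and the boundary maps only give you $C^0$ control). The infinite-union problem you correctly identify in your last paragraph does not disappear upon descending to $\unit\Sigma$; it reappears as exactly this failure of $C^0$-openness: a self-intersection of $c_\rho$ created near a point $p$ corresponds upstairs to an intersection of two translates $\rho(\delta)\cdot\alpha_{\rho(\gamma)}$ and $\rho(\delta')\cdot\alpha_{\rho(\gamma)}$ passing near a lift of $p$, and a priori the relevant cosets $[\delta],[\delta']$ could degenerate to infinity as $\rho\to\rho_0$, so no finite continuity argument applies. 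The paper closes this gap with \cref{lem:finite-C-circles-K}: using polarity, the orbit of the polar point of $\alpha_{\rho(\gamma)}$ is discrete in the M\"obius band $\mathcal L(\Lambda_\rho,\Lambda_\rho)\subset\HuuC$, so only finitely many arcs of the orbit meet a fixed compact $K\subset\Omega_\rho$ containing a fundamental domain, uniformly in $\rho$ near $\rho_0$. One is then reduced to finitely many pairwise disjoint compact pieces, whose disjointness genuinely is an open condition under continuous deformation. Some such properness or finiteness input is indispensable, and your proposal does not supply it.
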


In order to prove this theorem, we need two different arguments: first that the crowns $\mathrm{Crown}_{\rho(\Gamma),\rho(\gamma)}$ remain embedded along a small deformation of $\rho_0$ and second that the whole quotient $\rho(\Gamma)\backslash \Omega_\rho$ is always homeomorphic to the unit tangent bundle $\unit{\Sigma}$. 

For the first argument, we prove in the following lemma that we have indeed only a finite number of arcs to watch to insure that no intersection happen.
\begin{lemma}\label{lem:finite-C-circles-K}
Let $\rho: \Gamma \to \PU(2,1)$ be a convex cocompact representation with a slim limit set $\Lambda_\rho$. Fix a compact subset $K$ of $\Omega_\rho$.

Then, for all $\gamma\in \Gamma$ with $\gamma$ loxodromic, the following set is finite:
\[
\left\{ [\delta]\in \Gamma/<\gamma> \textrm{ such that } \rho(\delta)\cdot \alpha_{\rho(\gamma)}\cap K\neq \emptyset\right\}.
\]
\end{lemma}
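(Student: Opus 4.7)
The plan is to argue by contradiction. Suppose there is a sequence of pairwise distinct classes $[\delta_n] \in \Gamma/\langle\gamma\rangle$ together with points $x_n \in \rho(\delta_n)\cdot\alpha_{\rho(\gamma)} \cap K$. Write $p_n^\pm$ for the endpoints of $\rho(\delta_n)\cdot\alpha_{\rho(\gamma)}$; these are the repulsive and attractive fixed points of $\rho(\delta_n)\rho(\gamma)\rho(\delta_n)^{-1}$, so in particular they lie in $\Lambda_\rho$. By compactness of $K$ and $\Lambda_\rho$, I pass to a subsequence along which $x_n\to x\in K\subset \Omega_\rho$ and $p_n^\pm\to p^\pm\in\Lambda_\rho$. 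The strategy is then to split according to whether $p^-=p^+$ or not and reach a contradiction in each case.

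In the degenerate case $p^-=p^+=:p$, I invoke slimness via horizontality (\cref{prop:alphaslim_horizontal}): the restriction of the line map $\mathcal{L}$ to $\Lambda_\rho\times\Lambda_\rho$ is continuous, so $\mathcal{L}(p_n^-,p_n^+)\to\mathcal{L}(p,p)=p^\perp$ in $\CPstar$. Since $p^\perp\cap\S^3=\{p\}$, the $\C$-circles $\mathcal{L}(p_n^-,p_n^+)\cap\S^3$ converge to $\{p\}$ in Hausdorff distance, and the arcs $\rho(\delta_n)\cdot\alpha_{\rho(\gamma)}$, which they contain, also shrink to $\{p\}$. This forces $x_n\to p$, so $x=p\in\Lambda_\rho$, contradicting $x\in K\subset\Omega_\rho$. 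I expect this degenerate case to be the crucial use of the slimness hypothesis: without horizontality, the arcs could remain large even while their endpoints coalesce, and the lemma would fail.

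In the non-degenerate case $p^-\neq p^+$, I switch to the interior. Let $g_0\subset\HdC$ be the real geodesic axis of $\rho(\gamma)$, with endpoints $\rho(\gamma)_\pm$ at infinity. The geodesic $\rho(\delta_n)\cdot g_0$ has endpoints $p_n^\pm$, so as $p_n^\pm\to p^\pm$ with $p^-\neq p^+$, these geodesics converge, uniformly on compact subsets, to the geodesic $g$ from $p^-$ to $p^+$. In particular, some fixed compact $C\subset\HdC$ meets $\rho(\delta_n)\cdot g_0$ for all large $n$. Choose a compact fundamental arc $D\subset g_0$ for the action of $\langle\rho(\gamma)\rangle$ on $g_0$ and fix a reference point $q\in D$. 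Since $g_0=\bigcup_{k\in\Z}\rho(\gamma^k)\cdot D$, for each large $n$ there exists $k_n\in\Z$ such that $\rho(\delta_n\gamma^{k_n})\cdot D$ meets $C$, and hence $\rho(\delta_n\gamma^{k_n})\cdot q$ lies in the compact set $C':=N_{\mathrm{diam}(D)}(C)\subset\HdC$. Because $[\delta_n\gamma^{k_n}]=[\delta_n]$ remain pairwise distinct in $\Gamma/\langle\gamma\rangle$ and $\rho$ is discrete and faithful (as $\rho$ is convex cocompact), the elements $\rho(\delta_n\gamma^{k_n})$ yield infinitely many distinct elements of $\rho(\Gamma)$ sending $q$ into $C'$, contradicting the proper discontinuity of the $\rho(\Gamma)$-action on $\HdC$.

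The only delicate point in this outline is the degenerate case, where slimness genuinely enters; the non-degenerate case is a standard reduction to proper discontinuity in $\HdC$ via the identification between pairs of distinct limit points and real geodesics of $\HdC$.
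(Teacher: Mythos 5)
Your proof is correct, but it follows a genuinely different route from the paper's. The paper argues via polarity: it identifies each $\C$-circle with its polar point in $\HuuC$, observes that the orbit of the polar $p_\gamma=\rho(\gamma)_-\boxtimes\rho(\gamma)_+$ is a discrete subset of the M\"obius band $\mathcal L(\Lambda_\rho,\Lambda_\rho)$ (being the image, under the embedding induced by the boundary map, of the discrete orbit of polars of geodesics in $\HuuR$), and then shows that the set $H$ of polars of $\C$-circles meeting $K$ has compact intersection with that M\"obius band because $\overline H\cap\S^3=K$ while the band meets $\S^3$ exactly in $\Lambda_\rho$, and $K\cap\Lambda_\rho=\emptyset$; a discrete set meets a compact set finitely often. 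There, horizontality enters through the compactness and embeddedness of the M\"obius band (\cref{prop:extension_outside}), i.e.\ through the continuity of $\mathcal L$ at the diagonal of $\Lambda_\rho\times\Lambda_\rho$. Your argument instead proceeds by sequential compactness and splits into two cases, and the role of slimness is isolated exactly where it must be: in the degenerate case $p^-=p^+$, horizontality forces $\mathcal L(p_n^-,p_n^+)\to p^\perp$, hence the arcs collapse onto $p\in\Lambda_\rho$, contradicting $x\in K$; the non-degenerate case is a standard reduction, via the correspondence between distinct pairs of limit points and real geodesics of $\HdC$, to proper discontinuity of the $\rho(\Gamma)$-action on $\HdC$ (with the mild caveat that you use faithfulness of $\rho$ to pass from distinct cosets $[\delta_n]$ to distinct isometries $\rho(\delta_n\gamma^{k_n})$, which is implicit in the paper's convex-cocompactness hypothesis). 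Your version is more elementary in that it does not rely on the extension machinery of \cref{sec:extension}, at the cost of the two-case analysis; the paper's version is shorter given that machinery and makes the geometric picture (discrete orbit in a compact piece of the M\"obius band) more visible.
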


For the proof of this lemma, we use the polarity, that identifies any $\C$-circle with a point in $\HuuC$. Recall from \cref{sec:extension} that the Möbius band $\mathcal L(\Lambda_\rho,\Lambda_\rho)$ in $\mathrm{RP}(\Lambda_\rho)$ is exactly the set of points in $\HuuC$ polar to $\C$-circles that hit $\Lambda_\rho$ twice.

\begin{proof}
Let $H\subset \HuuC$ be the set of polar to $\C$-circles meeting $K$:
\[H:= \left\{p\in \HuuC | (\partial L_p) \cap K \neq \emptyset\right\}.\] The intersection of the closure of $H$ with $\S^3$ is exactly $K$: indeed, it consists of points $p$ in $\S^3$ whose polar line $p^\perp$ meets $K$. But the only point in $\S^3\cap p^\perp$ is $p$ itself, so $p\in K$.

The chosen compact $K$ avoids $\Lambda_\rho$: $\Lambda_\rho \cap K = \emptyset$. Moreover, $\Lambda_\rho$ is the intersection of $\S^3\cap \mathrm{RP}(\Lambda_\rho)$ (see \cref{sec:extension}). We deduce that the intersection beween $H$ and the Möbius band $\mathcal L(\Lambda_\rho,\Lambda_\rho)$ is compact.

Now, the orbit of the geodesic $\mathrm{axis}(\gamma)$ in $\HdR$ is discrete in the space of geodesic of $\HdR$. Equivalently, using polarity in the real case, the orbit $\mathcal O_0$ of polars to these geodesic axis in $\RP^2$ is discrete in the Möbius band $\HuuR= \partial_\infty \Gamma^2/(x,y)\sim(y,x)$. Denote by $p_\gamma$ the polar point to the axis at infinity $\alpha_{\rho(\gamma)}$ of $\rho(\gamma)$. By construction, we have $p_\gamma = \rho(\gamma)_- \boxtimes \rho(\gamma)_+$. For any $\delta\in \Gamma$, the polar to  $\rho(\delta) \cdot \alpha_{\rho(\gamma)}$ is the point $\rho(\delta)\cdot p_\gamma = (\rho(\delta)\cdot\rho(\gamma)_-) \boxtimes (\rho(\delta)\cdot\rho(\gamma)_+)$. One can express this in other, more adapted, terms. Recall from \cref{sec:deformation-R-fuchsian} that we have a boundary map $B_\rho: \partial_\infty \HdR \to \Lambda_\rho$. This boundary map induces an embedding 
$(x,y) \to \mathcal L(B_\rho(x),B_\rho(y))$ of $\HuuR$ into $\HuuC$ whose image is the Möbius band $\mathcal L(\Lambda_\rho,\Lambda_\rho)$, see \cref{prop:extension_outside}.

The above expression of $\rho(\delta)\cdot p_\gamma$ means that the orbit $\mathcal O_\rho$ of $p_\gamma$ in $\mathrm{RP}(\Lambda_\rho)$ is exactly the image of $\mathcal O_0$ by this embedding. It implies that this orbit $\mathcal O_\rho$ is discrete in the Möbius band $\mathcal L(\Lambda_\rho,\Lambda_\rho)$. As $H\cap \mathcal L(\Lambda_\rho,\Lambda_\rho)$ is compact, the intersection between the orbit $\mathcal O_\rho$ and $H$ is a finite number of points.

By polarity, we have proven that the set of axis at infinity in the orbit of $\alpha_{\rho(\gamma)}$ that intersect $K$ is finite.
\end{proof}

The second argument is classical in the world of geometric structure and follows from Ehresmann-Thurston principle. The next proposition is a consequence of a theorem by Guichard and Wienhard \cite[Thm 9.12]{GuichardWienhard}. The language in which \cite{GuichardWienhard} states and proves this theorem is quite different from ours, so we provide a short proof.

\begin{proposition}\label{proposition:deformation-quotient}
 Let $\rho$ be a convex cocompact deformation of $\rho_0$. Then $\rho(\Gamma)\backslash \Omega_\rho$ is homeomorphic to $\unit{\Sigma}$. 
\end{proposition}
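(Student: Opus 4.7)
The plan is to apply the Ehresmann--Thurston deformation principle for $(\PU(2,1),\S^3)$-structures to the CR-spherical structure on $\unit{\Sigma}$ inherited from the $\R$-fuchsian case. By \cref{R-Fuchsian-UTB}, the homeomorphism $\rho_0(\Gamma)\backslash\Omega_{\rho_0}\simeq \unit{\Sigma}$ equips the compact manifold $\unit{\Sigma}$ with a $(\PU(2,1),\S^3)$-structure of holonomy $\rho_0$, whose developing map is a homeomorphism $\mathrm{dev}_0:\widetilde{\unit{\Sigma}}\to\Omega_{\rho_0}\subset\S^3$.

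For any representation $\rho$ sufficiently close to $\rho_0$, the Ehresmann--Thurston principle then produces a nearby $(\PU(2,1),\S^3)$-structure on $\unit{\Sigma}$ of holonomy $\rho$, hence a $\rho$-equivariant local homeomorphism $\mathrm{dev}_\rho:\widetilde{\unit{\Sigma}}\to\S^3$ depending continuously on $\rho$ and reducing to $\mathrm{dev}_0$ at $\rho_0$. The resulting geometric structure makes the action of $\rho(\Gamma)$ on $\mathrm{dev}_\rho(\widetilde{\unit{\Sigma}})$ properly discontinuous with compact quotient $\unit{\Sigma}$; in particular the image avoids the limit set $\Lambda_\rho$ and so lies in $\Omega_\rho$.

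To identify this image with all of $\Omega_\rho$, I would exploit the convex-cocompactness assumption on $\rho$: the quotient $\rho(\Gamma)\backslash\Omega_\rho$ is compact, so the inclusion $\mathrm{dev}_\rho(\widetilde{\unit{\Sigma}})\hookrightarrow \Omega_\rho$ descends to a local homeomorphism $\unit{\Sigma}\to \rho(\Gamma)\backslash\Omega_\rho$ between compact $3$-manifolds of equal dimension, which is therefore a finite covering. At $\rho=\rho_0$ the covering has degree one; the degree varies continuously with $\rho$, hence stays equal to one throughout a small neighborhood, and the covering is a homeomorphism. For a (not necessarily small) convex-cocompact deformation $\rho$, I would iterate this argument along a compact path of convex-cocompact representations joining $\rho_0$ to $\rho$, covering it by finitely many Ehresmann--Thurston neighborhoods.

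The main obstacle is identifying the image of $\mathrm{dev}_\rho$ with the full discontinuity domain $\Omega_\rho$: one must simultaneously guarantee that the deformed developing map cannot accumulate on $\Lambda_\rho$ (proper discontinuity from the deformed geometric structure) and that its image exhausts $\Omega_\rho$ (cocompactness of the two quotients combined with the degree-one initial condition). This balancing act is precisely the substantive content of the Guichard--Wienhard statement being reproved here.
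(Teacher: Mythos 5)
Your overall skeleton is the same as the paper's: invoke the Ehresmann--Thurston principle to deform the spherical CR structure on $\unit{\Sigma}$, observe that the deformed developing map lands in $\Omega_\rho$, descend to a local homeomorphism between compact connected $3$-manifolds, and conclude by a covering-degree/continuity argument, extending over the whole connected component of convex-cocompact deformations by connectedness. However, there is a genuine gap at the pivotal step. You justify the inclusion $\mathrm{dev}_\rho(\widetilde{\unit{\Sigma}})\subset\Omega_\rho$ by asserting that ``the resulting geometric structure makes the action of $\rho(\Gamma)$ on $\mathrm{dev}_\rho(\widetilde{\unit{\Sigma}})$ properly discontinuous.'' This is not a consequence of having a $(\PU(2,1),\S^3)$-structure on a compact manifold: in general the developing map of such a structure need not be a covering onto its image, and the holonomy group need not act properly discontinuously on that image (affine structures on tori already provide counterexamples). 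So the proper discontinuity you invoke is exactly what must be established, and you acknowledge as much in your final paragraph without closing the loop.

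The paper closes this gap with a concrete compactness-plus-continuity argument that you should supply: fix a compact fundamental domain $D$ for the $\Gamma$-action on the cover $\hat{X}$ of $X_{\rho_0}\simeq\unit{\Sigma}$; the initial developing map sends $D$ into a compact subset of $\Omega_{\rho_0}$, at positive distance from $\Lambda_{\rho_0}$. Since the deformed developing map $s_{\rho'}$ depends continuously on $\rho'$ (Ehresmann--Thurston) \emph{and} the limit set $\Lambda_{\rho'}$ varies continuously with $\rho'$ among convex-cocompact representations (\cref{pro:limit-set-continuous}), for $\rho'$ close to $\rho_0$ the compact set $s_{\rho'}(D)$ still avoids $\Lambda_{\rho'}$; by $\rho'$-equivariance the entire image $s_{\rho'}(\hat X)=\rho'(\Gamma)\cdot s_{\rho'}(D)$ then lies in $\Omega_{\rho'}$. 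Only after this does the rest of your argument (descent to a covering between compact connected manifolds, degree one by deformation from the identity, propagation along a path covered by finitely many such neighborhoods) go through as you describe. Note also that the continuity of the limit set is an input you never mention, yet it is indispensable: without it, even a small deformation could in principle move $\Lambda_{\rho'}$ onto the image of $D$.
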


\begin{proof}
For any convex-cocompact representation $\rho$, let $X_\rho$ denote the compact quotient $\rho(\Gamma)\backslash \Omega_\rho$. Note that $\Lambda_\rho$ is a circle in the sphere, so the quotient $X_\rho$ is connected. We prove, following Guichard-Wienhard, that the diffeomorphism class of $X_\rho$ is constant under small deformation.

Indeed, we know already that such $\rho$ is the holonomy representation of a spherical CR uniformisation of $X_{\rho}$. Let $\hat X$ be the $\Gamma$-covering of $X_{\rho}$ and fix a compact fundamental domain $D\subset \hat X$. The developing map $s_\rho : \hat X \to \Omega_\rho \subset \S^3$ sends $D$ far away from the compact $\Lambda_\rho$. By the Ehresmann-Thurston principle \cite{Goldman-geometricstructures}, any small (convex-cocompact) deformation $\rho'$ is also a holonomy representation of a spherical CR structure on $X_\rho$. The developing map $s_{\rho'}$ is close to $s_\rho$. As $\Lambda_{\rho'}$ varies continuously (\cref{pro:limit-set-continuous}), $s_{\rho'}(D)$ avoids $\Lambda_{\rho'}$ for small enough deformations. Hence, $s_{\rho'}$ is a local diffeomorphism from $\hat X$ to $\Omega_{\rho'}$. By $\Gamma$-equivariance, it descends to a local diffeomorphism $\phi_{\rho'}$ from $X_\rho$ to $X_{\rho'} = \Omega_{\rho'}/\rho'(\Gamma)$. Both $X_\rho$ and $X_{\rho'}$ are compact and connected, so $\phi_{\rho'}$ is a covering. As $\phi_{\rho'}$ deforms to $\phi_\rho = \mathrm{Id}_{X_{\rho}}$ when $\rho'$ deforms to $\rho$, $\phi_{\rho'}$ is an actual diffeomorphism isotopic to the identiy.

By connexity arguments, for any $\rho$ in the whole connected component of $\rho_0$ in the space of convex cocompact representations, we have that $X_{\rho} =  \rho(\Gamma)\backslash \Omega_\rho$ is diffeomorphic to $\unit{\Sigma} = X_{\rho_0}$.
\end{proof}

With these two preliminary results at hand, we may proceed with the proof of \cref{pro:deformed_crowns}.

\begin{proof}[Proof of \cref{pro:deformed_crowns}]
In order to prove the first point, we have to prove that the arcs of $\C$-circles $\rho(\delta)\cdot \alpha_{\rho(\gamma)}$ are pairwise disjoint for $\rho$ close to $\rho_0$. 

We can choose a compact $K \subset \S^3$ such that for all small enough deformations $\rho$ of $\rho_0$, $K$ avoids $\Lambda_\rho$ and contains a fundamental domain for $\rho(\Gamma)$ acting on $\Omega_\rho$. 
So, if some intersection happens between two arcs $\rho(\delta)\cdot \alpha_{\rho(\gamma)}$ and $\rho(\delta')\cdot \alpha_{\rho(\gamma)}$, one such intersection also happens inside $K$. So we just have to control the behavior of arcs that meet $K$.

If the deformations are small enough, $\Lambda_\rho$ is always slim (\cref{proposition:deformation-R-fuchsian}). It implies that the set of arcs $\rho(\delta)\cdot \alpha_{\rho(\gamma)}$ intersecting $K$ is finite, by the previous \cref{lem:finite-C-circles-K}. Moreover this set is locally constant. So there is an open neighborhood $U$ of $\rho_0$, for which the following set is finite:
\[
\left\{ \delta \in \Gamma, \exists \rho \in U, \rho(\delta)\cdot \alpha_{\rho(\gamma)} \, \mathrm{ intersects }\, K\right\}.
\]

So we have to control a finite set of arcs of $\C$-circles. At $\rho_0$, from  \cref{proposition:R-Fuchsian-crown-embedded}, we know that these finite number of arcs do not meet. As they vary continuously with $\rho$, it remains true in a small neighborhood.

The second point follows: in the quotient $\rho(\Gamma) \backslash \Omega_{\rho(\Gamma)}\simeq \unit{\Sigma}$, the projection of the set of arcs $\delta\cdot \alpha_\gamma$ is a curve, which varies continuously with $\rho$ from the first point. For the $\R$-fuchsian representation $\rho_0$, the axis at infinity $\alpha_\gamma$ identifies with the geodesic $\mathrm{axis}(\gamma)=\lambda$ in $\unit{\Sigma}$, so its projection remains homotopic to $\lambda$ throughout the deformation. As a consequence, the quotient $\rho(\Gamma) \backslash \Omega_{\rho(\Gamma),\rho(\gamma)}$ is homeomorphic to $\unit{\Sigma(\lambda)}$.
\end{proof}

\bibliographystyle{alpha}
\bibliography{biblio}

\begin{flushleft}
  \textsc{E. Falbel\\
  Institut de Math\'ematiques de Jussieu-Paris Rive Gauche \\
CNRS UMR 7586 and INRIA EPI-OURAGAN \\
 Sorbonne Universit\'e, Facult\'e des Sciences \\
4, place Jussieu 75252 Paris Cedex 05, France \\}
 \verb|elisha.falbel@imj-prg.fr|
 \end{flushleft}
 
 \begin{flushleft}
  \textsc{A. Guilloux\\
  Institut de Math\'ematiques de Jussieu-Paris Rive Gauche \\
CNRS UMR 7586 and INRIA EPI-OURAGAN \\
 Sorbonne Universit\'e, Facult\'e des Sciences \\
4, place Jussieu 75252 Paris Cedex 05, France \\
Institut Fourier, UMR 5582, Laboratoire de math\'ematiques\\ 
  Universite Grenoble Alpes, CS 40700, 38058 Grenoble cedex 9, France\\}
 \verb|antonin.guilloux@imj-prg.fr|
 \end{flushleft}
 
\begin{flushleft}
 \textsc{ P. Will\\
  Institut Fourier, UMR 5582, Laboratoire de math\'ematiques\\ 
  Universite Grenoble Alpes, CS 40700, 38058 Grenoble cedex 9, France\\}
  \verb|pierre.will@univ-grenoble-alpes.fr|
\end{flushleft}

\end{document}